\documentclass[12pt,a4paper]{amsart}%
\usepackage{imakeidx}
\usepackage{amsmath}
\usepackage{fullpage}
\usepackage{comment}
\usepackage{MyCommA}
\usepackage[final]{pdfpages}
\usepackage{tikz-cd}
\makeindex

\newcommand{\Rami}[1]{{{#1}}}
\newcommand{\RamiA}[1]{{{#1}}}
\newcommand{\RamiB}[1]{{{#1}}}
\newcommand{\RamiC}[1]{{{#1}}}
\newcommand{\RamiD}[1]{{{#1}}}
\newcommand{\RamiE}[1]{{{#1}}}
\newcommand{\RamiF}[1]{{{#1}}}
\newcommand{\EitanA}[1]{{{#1}}}
\newcommand{\Dima}[1]{{{#1}}}

\newcommand{\EitanB}[1]{{{#1}}}

\newcommand{\NextVer}[1]{}
\newcommand{\sub}{\subset}
\newcommand{\noleft}{\left.\kern-\nulldelimiterspace}

\newcommand{\alp}{\alpha}

\newcommand{\gi}{\ug_i}

\begin{document}
	
	\author[Aizenbud]{Avraham Aizenbud}
	\address{Avraham Aizenbud,
		Faculty of Mathematical Sciences,
		Weizmann Institute of Science,
		76100
		Rehovot, Israel}
	\email{aizenr@gmail.com}
	\urladdr{https://www.wisdom.weizmann.ac.il/~aizenr/}
	
	\author[Gourevitch]{Dmitry Gourevitch}
	\address{Dmitry Gourevitch,
		Faculty of Mathematical Sciences,
		Weizmann Institute of Science,
		76100
		Rehovot, Israel}
	\email{dimagur@weizmann.ac.il}
	\urladdr{https://www.wisdom.weizmann.ac.il/~dimagur/}

\author[Kazhdan]{David Kazhdan}
	\address{David Kazhdan,
    Einstein Institute of Mathematics, Edmond J. Safra Campus, Givaat Ram The
Hebrew University of Jerusalem, Jerusalem, 91904, Israel}
	\email{david.kazhdan@mail.huji.ac.il}
	\urladdr{https://math.huji.ac.il/~kazhdan/}
	
\author[Sayag]{Eitan Sayag}
	\address{Eitan Sayag,
    Department of Mathematics, Ben Gurion University of the Negev, P.O.B. 653,
Be’er Sheva 84105, ISRAEL}
	\email{eitan.sayag@gmail.com}
	\urladdr{www.math.bgu.ac.il/~sayage}

	\date{\today}
	
	\keywords{rational singularities, smooth measures, resolution of singularities, Chevalley map, jet schemes, reductive group, positive characteristic}
	\subjclass{14L30,20G25,28C15,14B05, 14B10,14E15}

	%
	%
	%
	%
	%
	%
	%
	%


    

\title{The jet schemes of the Nilpotent Cone of $\gl_n$ over $\F_\ell$ and analytic properties of the Chevalley map}
\maketitle
\begin{abstract} 
We prove dimension bounds on the jet schemes of the variety of nilpotent matrices (and of related varieties) in positive characteristic. 

\RamiF{This result has applications to the analytic properties of the Chevalley map $p:\fg\fl_n\to \fc$ that sends a matrix to its characteristic polynomial.
} 
We show that \RamiF{our dimension bound} implies, under the assumption of existence of resolution of singularities in positive characteristic, that the Chevalley map pushes a smooth compactly supported measure to a measure whose  density function is $L^\RamiF{t}$ for any  \RamiF{$t<\infty$.}

We also prove this analytic \RamiF{property of the Chevalley map}, unconditionally, when the characteristic of the field exceeds \RamiF{$\frac n2$}.

The zero characteristic counterpart of this result is an important step in the proof of the celebrated Harish-Chandra's integrability theorem. In a sequel work \cite{AGKS2} we show that also in positive characteristic, this analytic statement implies Harish-Chandra's integrability theorem for cuspidal representations of the general linear group. 
\end{abstract}

\tableofcontents 
\section{Introduction}

\subsection{\RamiE{Results on dimensions of jet schemes}}
Fix a finite field $\F_\ell$. Unless explicitly stated otherwise, all the algebraic varieties that we consider will be defined over $\F_\ell$. For a variety $\bfX$ we denote by $\mathcal{J}_m(\bfX)$\index{$\mathcal{J}_m$} its $m$-th jet scheme. We consider $\mathcal{J}_m$ as a functor from the category of varieties to the category of schemes. We fix an integer $n$ and set $\ug:=\gl_n$\index{$\ug$} considered as an algebraic  variety. 
In this paper we prove:
\begin{introtheorem}[\S \ref{sec:pfs}]\label[introtheorem]{thm:jet.nil}   
Let $\bfN\subset \ug$ be the nilpotent cone.
There is a constant $\Rami{C}_0$ such that for any $m \in \N$ we have
    $$\dim \mathcal{J}_m(\bfN)<m\dim(\bfN)+\Rami{C}_0.$$     
\end{introtheorem}
\RamiE{We deduce from this result bounds on jet schemes of more varieties. To formulate these bounds we make:}
\begin{notation}
Denote by
\begin{itemize}
    \item  $\uc$\index{$\uc$} - the affine space of monic polynomials of degree $n$. \RamiC{We will identify it with $\A^n$.}
\item $p:\ug\to \uc$\index{$p$} - the Chevalley map (essentially sending an element to its characteristic polynomial). 
    \item
    For an integer $i\in \N$ we denote by
    $\ug_i
    :=\underbrace{\ug\times_{\uc} \dotsc\times_{\uc} \ug}_{i \text{ times}}$\index{$\ug_i$} the $i$-folded fiber product of $\ug$ with itself over $\uc$ with respect to the map $p$.\footnote{A-priory this is a scheme, but we will see in \Cref{lem:X.i.red} below that it is reduced, so it is a variety.}
\end{itemize}
\end{notation}

We deduce the following:
\begin{introtheorem}[\S \ref{sec:pfs}]\label[introtheorem]{thm:jet.fib}   
There is a constant $\Rami{C}$ such that 
for any $x\in \uc$ and any $m \in \N$ we have
    $$\dim \mathcal{J}_m(p^{-1}(x))<m\dim(p^{-1}(x))+\Rami{C}.$$     
\end{introtheorem}
From this we deduce the following:

\begin{introtheorem}[\S \ref{sec:pfs}]\label[introtheorem]{thm:jet.fib2}   
For any $i$ there is a constant $\Rami{C}_i$ such that for any $m \in \N$ we have
    $$\dim \mathcal{J}_m(\gi)<m\dim(\gi)+\Rami{C}_i.$$     
\end{introtheorem}
\subsection{\RamiE{Results on pushforward of measures}}
\RamiE{We deduce from the results above the following one.}
\RamiB{    
\begin{introtheorem}[\Rami{\S \ref{sec:Pf.alm.an.frs}}]\label[introtheorem]{thm:alm.an.frs}  
Let $i \in \N$. Assume that 
the variety $\gi$ admits a strong resolution of singularities. 
Let $\mu^\fc$ be a Haar measure on $\fc$.

Then 
for any smooth compactly supported measure $\mu$  on $\g:=\ug(F)$, 
there exists a function $$f \in \bigcap_{t\in [1,i)} L^{t}(\fc)$$ such that  
$p_*(\mu)=f\mu^\fc.$
\end{introtheorem}
}

\begin{remark}
    In \S \ref{sec:alt.form}  we give several alternative conditions on resolution of singularities under which the result holds.
\end{remark}

Finally we \RamiE{show that one can replace the assumption of \Cref{thm:alm.an.frs} on the existence of resolution with an assumption on characteristic:}

\begin{introtheorem}[\Rami{\S \ref{sec:PfUncond}}]\label[introtheorem]{thm:uncond.an.frs}   

Suppose $\chara(\F_{\ell})>\frac{n}{2}$.  Let \RamiB{$F:=\F_{\ell}((t))$.}

Then for any smooth compactly supported measure $\mu$  on $\g:=\ug(F)$, the measure $p_*(\mu)$ can be written as a product of a  function in  $L^{\infty}(\fc)$ and a Haar measure on $\fc$.
\end{introtheorem}

\subsection{Background and motivation}
\subsubsection{FRS maps}
Theorems \ref{thm:alm.an.frs} and \ref{thm:uncond.an.frs} are related to the notion of FRS maps introduced and studied in 
\cite{AA}. Let us recall this notion:
\begin{definition}\label[definition]{def:rat}
    A map $\phi:\bfX\to\bfY$ of smooth algebraic varieties over a field of characteristic zero is called FRS\index{FRS} if it is flat, its fibers are reduced, and the singularities of its fibers are  rational. 
\end{definition}
The motivation to this definition is the following:
\begin{theorem}[{\cite[Theorem 3.4]{AA}, \cite{rei18}}]\label[theorem]{thm:AA}
    Let $\phi$ be a map of smooth algebraic varieties over a local field $F$ of characteristic zero. 
    
    If $\phi$ is FRS then for any smooth compactly supported measure $\mu$ on $\bfX(F)$, the measure
     $\phi_*(\mu)$ on $Y:=\bfY(F)$ can be written as a product of a  continuous function and a smooth measure on $Y.$
\end{theorem}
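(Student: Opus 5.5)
The statement is the theorem of Aizenbud--Avni (with Reiman's correction): for an FRS map $\phi:\bfX\to\bfY$ of smooth varieties over a characteristic-zero local field $F$, the pushforward $\phi_*(\mu)$ of a smooth compactly supported measure has continuous density. My plan is to localize and reduce to a uniform estimate on fiber integrals, which I would then control through a resolution of singularities of the fibers.

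\textbf{Step 1: localization.} Since the statement is local on $\bfX$, I would use a partition of unity to assume $\mu=g\,|\omega|$. Here $g$ is a smooth compactly supported function on a small open set $U\subset X$, and $\omega$ is an invertible top form. I would likewise fix an invertible top form $\eta$ on $\bfY$ near $\phi(U)$. Flatness gives that the fibers have pure dimension $d=\dim\bfX-\dim\bfY$. Then $\phi_*\mu=f\,|\eta|$ with
$$f(y)=\int_{\phi^{-1}(y)} g\,|\omega/\phi^*\eta|,$$
where $\omega/\phi^*\eta$ is the Gelfand--Leray form on the smooth locus of the fiber. Away from the critical values this density is obviously smooth. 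The whole content is therefore (a) that $f(y)$ is finite for every $y$, and (b) that $f$ is continuous at the critical values.

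\textbf{Step 2: rationality gives integrable fiber forms.} The key algebraic input is this. A reduced fiber $Z=\phi^{-1}(y)$ is a local complete intersection, since it is cut out by $\dim\bfY$ equations in smooth $\bfX$ and $\phi$ is flat, so it is Gorenstein. For Gorenstein varieties, rational singularities is equivalent to canonical singularities, and the Gelfand--Leray form generates the dualizing sheaf $\omega_Z$. So for a resolution $\pi:\widetilde Z\to Z$, the pullback $\pi^*(\omega/\phi^*\eta)$ extends to a regular top form on $\widetilde Z$. In local coordinates it is monomial with nonnegative exponents, so its absolute value is integrable on compacta, and $f(y)<\infty$.

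\textbf{Step 3: uniformity and continuity.} This is the main obstacle: pointwise finiteness does not immediately give continuity. I would follow the Aizenbud--Avni strategy.

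First, I would use the jet-scheme/arc characterization. Rational singularities of the fibers imply, via Mustață's theorem for lci varieties, that $\dim\mathcal{J}_m(\phi^{-1}(y))=(m+1)d$. Consequently the number of $\mathcal{O}/\mathfrak{m}^k$-points of the fibers grows like $q^{kd}$, uniformly in $y$.

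Second, I would translate this into a uniform bound. Writing $f$ as a limit of averages of $\mu$ over shrinking neighborhoods $\phi^{-1}(y+\varpi^k\mathcal{O}^r)$, these point-count bounds show that $f$ is bounded. Since the FRS property is open, the same bound holds on a neighborhood of $y$.

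Third, I would upgrade boundedness to continuity. For this I would consider the family over $\bfY$ and apply uniform integrability. Concretely, I would take a log resolution of the pair $(\bfX,\text{critical locus})$ compatible with $\phi$, and write the fiber integrals in local charts as Igusa-type integrals of the form $\int |x^a|\,\psi\,dx$ restricted to fibers. Canonicity of the fibers makes all relevant exponents nonnegative. The resulting densities are then finite sums of continuous functions, via dominated convergence, with the domination coming from the uniform bound just established.

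In the non-archimedean case, an alternative route to continuity is to show that $f$ is locally constant on the regular locus and uniformly bounded. I would then use Denef's description of such pushforwards as constructible functions with no negative exponents, which forces continuity.
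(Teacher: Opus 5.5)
The paper gives no proof of this statement; it quotes \cite[Theorem 3.4]{AA} and \cite{rei18}. Your Steps 1 and 2 are sound. For lci fibers, rationality gives $\pi_*\omega_{\widetilde Z}=\omega_Z$, so the Gelfand--Leray form pulls back to a regular form on a resolution and every fiber integral is finite. That is the easy half. Step 3 carries all the content, and as proposed it does not work.

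Its first stage does not follow. The equality $\dim\mathcal{J}_m(\phi^{-1}(y))=(m+1)d$ gives no bound $\#\{x\in X(\mathcal{O}/\varpi^k):\phi(x)\equiv y\}\le Cq^{kd}$ with $C$ independent of $k$ and $y$, for three reasons. Over a fixed residue field, a dimension bound only yields $\#V(\mathbb{F}_q)\le c(V)\,q^{\dim V}$, where $c(V)$ depends on the complexity of $V$, and for jet schemes this complexity grows with $m$. A non-archimedean $F$ of characteristic zero has mixed characteristic, so these sets are not even point sets of jet schemes of the fibers. And $y$ ranges over non-algebraic points of $Y(F)$. Moreover, such a bound, uniform in $k$ and $y$, is equivalent to boundedness of the density, so this stage assumes what it should prove. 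The present paper records the analogous implication (jet-dimension bounds $\Rightarrow$ point counts over a fixed finite field, even allowing a polynomial loss) as the open Conjecture~\ref{conj:cnt}. In characteristic zero, counts for a single fiber come from Denef's formula on a resolution, i.e.\ from discrepancies, not from jet dimensions. Likewise, openness of the FRS locus is qualitative and supplies no constant uniform in $y$ near $y_0$, which is precisely the crux.

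The upgrade from boundedness to continuity is missing its key input, and your alternative principle is false. Take $F$ non-archimedean of odd residue characteristic, $a\in\mathcal{O}^\times$ a non-square, and $\phi(x_1,x_2)=x_1^2-ax_2^2$. This map is flat with reduced fibers. The density of $\phi_*(1_{\mathcal{O}^2}dx)$ is $(1+q^{-1})\mathbf{1}_{2\mid\mathrm{ord}(y)}$ on $\mathcal{O}\smallsetminus\{0\}$. It is constructible, bounded, locally constant off $0$, with exponents of real part $0$, yet discontinuous at $0$.

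On the blow-up of the origin, the exceptional divisor has $N_E=\nu_E=2$. So ``nonnegative exponents'' holds here, in the sense $\nu_i\ge N_i$, i.e.\ $(X,X_{y_0})$ log canonical. Yet this equality is exactly what produces the oscillating term $(-1)^{\mathrm{ord}(y)}$. What makes continuity work is the strict inequality $\nu_i>N_i$ for every exceptional divisor. Then the only pole of real part $-1$ of the (twisted) local zeta functions is the simple, untwisted one coming from the strict transform of $X_{y_0}$. In a one-parameter family this strict inequality comes from inversion of adjunction: $X_{y_0}$ klt implies $(X,X_{y_0})$ is plt near $X_{y_0}$. That is a relative input your sketch never invokes; from canonicity of the fibers you extract only the insufficient weak inequality.

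For $\dim Y>1$, a log resolution of $X$ does not even make $\phi\circ\pi$ locally monomial without also modifying $Y$. So the ``Igusa-type integrals restricted to fibers'' are not available. In addition, a uniform bound on the total fiber integrals is not a dominating function for the integrands, so dominated convergence has nothing to act on. Finally, nothing in Step 3 addresses archimedean $F$, which the statement includes.
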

Unfortunately, we do not have an extension of this Theorem to the positive characteristic case.
In fact it is not even clear how to formulate it correctly since there is no universally accepted definition of rational singularities (see \cite{Nob, Kar,Bah, Kov} for several related notions).

For this paper we choose the following notion of rational singularities in positive characteristic.
\RamiA{
\begin{definition}\label[definition]{def:int}
Let $\bfZ$ be a variety defined over an arbitrary field. We say that the singularities of $\bfZ$ are rational  
if $\bfZ$ is Cohen-Macaulay, normal, and
 admits a resolution of singularities 
        $\eta:\tilde \bfZ\to\bfZ$ such that the natural morphism $\eta_*(\Omega_{\tilde \bfZ})\to i_*(\Omega_{\bfZ^{sm}})$ is an isomorphism. Here $i:\bfZ^{sm}\hookrightarrow \bfZ$ is the embedding of the smooth locus and $\Omega$ denotes the sheaf of top differential forms.
\end{definition}
}
\RamiA{
\begin{remark}\label[remark]{rem:rat}
In characteristic zero, this notion is equivalent to rational singularities, see {\it e.g.} \cite[\RamiE{Appendix B,} Proposition 6.2]{AA}.
\end{remark}
}
\RamiB{
Next we give several extensions of the notion of FRS maps to positive characteristic: 
}
\begin{definition}\label[definition]{def:FRS}
    Let $\phi:\bfM\to \bfN$ be a flat morphism of smooth algebraic varieties over a local field $F$ of arbitrary characteristic. Assume that the fibers of $\phi$  are reduced and normal.
    \begin{enumerate}\label{def:FRS:1}
        \item\label{def: geo.ana-FRS} We say that $\phi$ is {\bf geometrically FRS}\index{geometrically FRS, geo-FRS} (in short geo-FRS) if for any $y\in \bfN(\bar F)$, the \RamiB{singularities of}  $\phi^{-1}(y)$ \RamiA{are rational.}
        \item\label{def:FRS:2} We say that $\phi$ is {\bf analytically FRS}\index{analytically FRS, an-FRS} (in short an-FRS) if for any smooth compactly supported measure $\mu_M$ on $M:=\bfM(F)$ there exist a smooth compactly supported measure $\mu_N$ on $N:=\bfN(F)$, and a bounded function $f$ on $N$ such that $$\phi_*(\mu_M)=f\mu_N.$$      \item \label{def:FRS:3}
        We say that $\phi$ is {\bf almost analytically FRS}\index{almost analytically FRS, almost an-FRS} (in short almost an-FRS) if for any smooth compactly supported measure $\mu_M$ on $M$ 
        there exist smooth compactly supported measure $\mu_N$ and a function $f$ on $N$ 
        \RamiB{such that
        $$\phi_*(\mu_M)=f\mu_N.$$    and  $f \in L^r(N)$ for all $r\in [1,\infty)$}
     \end{enumerate}
     Using extension of scalars from $\F_\ell$ to $\F_\ell((t))$ we will apply these notions also for maps of varieties over $\F_\ell$.
\end{definition}
\begin{remark}
\RamiB{As in \Cref{rem:rat}, in characteristic zero, the geo-FRS property is equivalent to FRS property. Also, by \Cref{thm:AA}, in this case each of them implies the an-FRS property.}
\end{remark}
{In this language, 
\Rami{\Cref{thm:alm.an.frs} implies that, under an appropriate assumption of existence of resolution of singularities, the Chevalley map, $p:\fg \to \fc$, is almost an-FRS. Similarly,}
the content of \Cref{thm:uncond.an.frs}   is that, for \RamiB{$\chara(\F_{\ell})>\frac{n}{2}$}
the Chevalley map, $p:\fg \to \fc$, is an-FRS.}

The following is a positive characteristic analogue of \Cref{thm:AA}:
\begin{introconj}\label{conj:geo.imp.an}

Let $\phi:\bfM\to \bfN$ be 
{a flat morphism of smooth algebraic varieties \RamiB{defined over a local field $F$} whose fibers are reduced and normal.} Assume that $\phi$ is geo-FRS. Then $\phi$ is an-FRS.
\end{introconj}
We explain \RamiA{in \S \ref{sssec:springer}} below that this would imply the following:
\begin{introconj}\label{conj:chev.is.an.frs}
The Chevalley map $p:\ug\to \uc$ is \RamiA{an-}FRS. 
\end{introconj}
Theorems \ref{thm:alm.an.frs} and  \ref{thm:uncond.an.frs} are partial results towards this conjecture.

\subsubsection{The Springer resolution}\label{sssec:springer}
The nilpotent cone has a natural resolution \Rami{of singularities}
$$T^{*}(\fB) \to \bfN$$ 
by the cotangent bundle to the flag variety, called the Springer resolution. In characteristic zero, one can use this resolution in order to prove that the singularities of the nilpotent cone are rational (see \cite[Theorem A]{Hes}). We list now two important corollaries of this fact: 
\begin{enumerate}[(I)]
    \item\label{it:jet.nil} The jet schemes of the nilpotent cone are irreducible of the expected dimensions (See \cite[Appendix]{Mu}). 
    \item\label{it:chev.an.frs} The Chevalley map $p:\ug\to \uc$ is an-FRS. This follows from the rationality of the singularities of the nilpotent cone using \cite[Theorem 3.4]{AA}. This fact is essentially equivalent to the bounds on orbital integrals established in \cite[Theorem 13, page 68]{HC_VD}. 
    Note that \cite[Theorem 13]{HC_VD} is an important ingredient in Harish-Chandra's integrability theorem for characters of irreducible representations of reductive $p$-adic groups (See \cite{HC_int}). We will discuss this below in more details \RamiB{(see \S \ref{Subsec: HC integ.}).}
\end{enumerate}
\EitanA{
In general positive characteristic, these results \Rami{are not known}.\footnote{ \EitanA{When $\chara(F)>n$, the  jet scheme of the nilpotent cone is known to be of the expected dimension, see \cite[Theorem 3.4.7]{BoKV}.}}}

\RamiB{
In more details, the Springer resolution exists in any characteristic, moreover, according to our definition of rational singularities (\Cref{def:rat}) one can deduce from it that the singularities of the fibers of the Chevalley map are rational (it follows from the proof in \cite{Hin} of \cite[Theorem A]{Hes}). So we have:
\begin{prop}\label{thm:chev.is.frs}
The Chevalley map $p:\ug\to \uc$ is \RamiA{geo-}FRS. \footnote{\RamiA{Indeed, by Jordan-Chevalley decomposition it is enough to show that the Springer resolution satisfies the condition of \Cref{def:int}. This is proven in \cite{Hin} for a generalization of the Springer resolution. The statements in  \cite{Hin} are formulated for characteristic zero, but the proof of this  statement is valid in arbitrary characteristic.
}}
\end{prop}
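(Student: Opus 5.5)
The plan has two parts. First we check the structural conditions in \Cref{def:FRS}: $p$ is flat with reduced normal fibers. Then we check the geometric condition: every fiber $p^{-1}(y)$, $y\in\uc(\bar F)$, has rational singularities in the sense of \Cref{def:int}. The first part is classical (Kostant, in a form valid in all characteristics for $\gl_n$). The map $p$ is given by the coefficients of the characteristic polynomial, which form a regular sequence in $\F_\ell[\ug]$, so $p$ is flat with complete intersection fibers. Each fiber is Cohen--Macaulay and of dimension $n^2-n$. Each fiber also contains a dense open orbit of regular elements (companion matrices) whose complement has codimension $\ge 2$. Since the differentials of the coefficients are linearly independent exactly at regular elements, the fibers are generically reduced, hence reduced; by Serre's criterion (R1 plus S2) they are normal.

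For rationality, I would reduce to nilpotent cones of smaller general linear groups via the Jordan--Chevalley decomposition. Over $\bar F$, let $y$ have distinct roots $\lambda_1,\dots,\lambda_k$ with multiplicities $n_1,\dots,n_k$. Every $A\in p^{-1}(y)$ decomposes uniquely as $A=s+u$, with $s$ semisimple, $p(s)=y$, and $u$ nilpotent commuting with $s$. This gives
$$p^{-1}(y)\cong \GL_n\times^{L}\Big(\prod_j (\lambda_j+\bfN_{n_j})\Big),\qquad L=\prod_j \GL_{n_j}.$$
Equivalently, the map $p^{-1}(y)\to \GL_n/L$, $A\mapsto s$, is a Zariski-locally trivial fibration with fiber $\prod_j \bfN_{n_j}$. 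To justify that this is an isomorphism of schemes and not just a bijection, I would use the description of $p^{-1}(y)$ near a point via generalized eigenspace decomposition; the projectors onto generalized eigenspaces are polynomials in $A$ with coefficients depending only on $y$. The rationality condition of \Cref{def:int} is local, and it is preserved under products with smooth varieties and under products of varieties with rational singularities, using the product resolution and the Künneth decomposition of top forms. So it suffices to treat the nilpotent cone $\bfN_m\subset\gl_m$ for each $m$.

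For $\bfN_m$, we take the Springer resolution $\eta:T^*(\fB)\to\bfN_m$. We already know $\bfN_m$ is Cohen--Macaulay and normal by the first paragraph applied to $y=x^m$. It remains to show $\eta_*\Omega_{T^*\fB}\to i_*\Omega_{\bfN_m^{sm}}$ is an isomorphism. Since $\bfN_m$ is normal and $\eta$ is birational and an isomorphism over the regular orbit, which is the smooth locus, the map is injective. Surjectivity means that every top form on the regular locus extends regularly to $T^*\fB$. Here $T^*\fB$ carries the canonical symplectic form $\omega$, so $\omega^{\wedge \dim\fB}$ trivializes $\Omega_{T^*\fB}$. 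Thus $\eta_*\Omega_{T^*\fB}\cong\eta_*\mathcal O_{T^*\fB}$, which equals $\mathcal O_{\bfN_m}$ by normality and Zariski's main theorem. Likewise, $i_*\Omega_{\bfN^{sm}}$ is a rank one reflexive sheaf generated on the smooth locus by the same form, and restriction to the regular locus identifies it with $i_*\mathcal O_{\bfN^{sm}}=\mathcal O_{\bfN_m}$. Hence both sides equal $\mathcal O_{\bfN_m}$ and the map is an isomorphism. This is the argument of \cite{Hin} for the Springer resolution, and every step used is characteristic free.

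I expect the main obstacle to be the characteristic-free justification of the structural facts. These are the scheme-theoretic Jordan--Chevalley isomorphism above, and the facts that the Springer map is birational and that the regular orbit is exactly the smooth locus with complement of codimension $\ge 2$. For $\gl_n$ I would verify these directly from companion matrices and Jordan forms, which avoids any restriction on the characteristic.
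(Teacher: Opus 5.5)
Your proof takes the same route as the paper. The paper's footnote also reduces, via the Jordan--Chevalley decomposition, to the Springer resolution of the nilpotent cones of the Levi factors. Where the paper simply cites \cite{Hin} for the condition of \Cref{def:int}, you supply the argument directly: the canonical bundle of $T^{*}\fB$ is trivial, and $\bfN_m$ is normal.

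One step fails as literally written. Set $d=\dim\fB=m(m-1)/2$. In local coordinates $(x_i,\xi_i)$ on $T^{*}\fB$,
\[
\omega^{\wedge d}=d!\cdot dx_1\wedge d\xi_1\wedge\dots\wedge dx_d\wedge d\xi_d .
\]
So $\omega^{\wedge d}$ vanishes identically once the characteristic is at most $d$, for example $m=3$ in characteristic $2$ or $3$. Yet you claim this step is characteristic-free.

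The fix is to use the Liouville volume form $dx_1\wedge\dots\wedge dx_d\wedge d\xi_1\wedge\dots\wedge d\xi_d$ in place of $\omega^{\wedge d}$. It glues in every characteristic, because the induced coordinate change on $T^{*}\fB$ has block-triangular Jacobian with diagonal blocks $J$ and $(J^{-1})^{T}$, so its determinant is $1$. Equivalently, $K_{T^{*}X}\cong\pi^{*}(K_X\otimes K_X^{-1})\cong\mathcal{O}$. With this nowhere-vanishing form, the rest of your argument goes through unchanged.

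A smaller point: \Cref{def:int} asks for a global resolution, so ``the condition is local'' is not enough on its own. For the fibre $p^{-1}(y)$ you should take $\GL_n\times^{L}\prod_j T^{*}\fB_{n_j}$. It is a resolution of $\GL_n\times^{L}\prod_j(\lambda_j+\bfN_{n_j})$, and your local product check, done over $\GL_n/L$, then applies to it.
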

However,
}
the main results of \EitanA{\cite{AA,Mu} and \cite[Theorem 13]{HC_VD}} are not known in positive characteristic for the following reasons:
\begin{enumerate}
    \item \cite{AA,Mu} use \RamiB{repeatedly} 
    the existence of resolution of singularities, which is not known in positive characteristic.
    \item \cite{AA,Mu} use the Grauert-Riemenschneider theorem,  which is not valid (in general) \RamiB{over fields of} positive characteristic (see \cite{Ray} and 
    \cite{Meh}).
    \item The proof of \cite[Theorem 13]{HC_VD} uses the fact that the Lie algebra of $G$ is the direct sum of its center and its derived algebra. This is not true in positive characteristic. 
    \item \RamiB{The proof of \cite[Theorem 13]{HC_VD} uses the Jordan-Chevalley decomposition. This decomposition does not exist as is over local fields of positive characteristic, since there are elements with irreducible totally inseparable characteristic polynomial.}    
\end{enumerate}

We view Theorems \ref{thm:jet.nil}   - \ref{thm:uncond.an.frs}   as partial analogs of \eqref{it:jet.nil} and \eqref{it:chev.an.frs}, \RamiA{and as evidence for Conjectures \ref{conj:geo.imp.an} and \ref{conj:chev.is.an.frs}.}

\subsubsection{Harish-Chandra's integrability theorem}\label{Subsec: HC integ.}
Another motivation for Theorems \ref{thm:jet.fib2} and  \ref{thm:alm.an.frs} is the following celebrated result by Harish-Chandra.
\begin{theorem}[\cite{HC_int}]\label[theorem]{thm:HC.int}

    Let $F$ be a local non-Archimedean field with {\bf characteristic zero.} Let $\bfH$ be a reductive group defined over $F$. 
 Let  $H=\bfH(F)$ and fix a Haar measure $\mu$ on $H$.
    Let $(\rho,V)$ be an irreducible smooth representation of $H$. Consider the distributional character $\chi_{\rho}$ of $\rho$ defined by 
$
  \chi_{\rho}(f)=trace(\rho(f\mu))$ where 
$$  \rho(f\mu)(v)=\int_{H} f(g)\rho(g)v \mu$$
   and $f \in C_c^{\infty}(G), v\in V$ .
    
    Then $\chi_{\rho}$ is represented by a locally integrable function on $H$.
\end{theorem}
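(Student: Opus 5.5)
The plan is Harish-Chandra's strategy: reduce the group statement to the Lie algebra, and there combine admissibility with the analytic properties of the Chevalley map from \eqref{it:chev.an.frs}.

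First, use admissibility of $\rho$. For any compact open subgroup $K$, the operator $\rho(f\mu)$ for $K$-bi-invariant $f$ has finite rank, so $\chi_\rho$ is a well-defined $\mathrm{Ad}(H)$-invariant distribution. By results of Harish-Chandra and Howe, its restriction to the regular semisimple set $H^{rs}$ is given by a locally constant function $\Theta_\rho$. The task is then to show that $\Theta_\rho$ is locally integrable on $H$ and that $\chi_\rho-\Theta_\rho$ vanishes; the latter holds because a distribution supported on the complement of $H^{rs}$ that is also invariant and admissible must be zero.

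Next, localize around an arbitrary semisimple element $s\in H$ by Harish-Chandra's semisimple descent. Near $s$, the invariant distribution $\chi_\rho$ is determined by an invariant distribution on a neighborhood of $0$ in the Lie algebra of the centralizer $H_s$, obtained by composing with $\exp$ (characteristic zero is used here). Harish-Chandra's submersion principle shows that local integrability near $s$ reduces to local integrability of the descended distribution near $0$ in $\mathfrak h_s$. By induction on $\dim \bfH$, we may therefore assume $s$ is central and work in a neighborhood of $0$ in $\mathfrak h$.

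On $\mathfrak h$, apply the Howe–Harish-Chandra local character expansion. Near $0$, the pulled-back character is a finite linear combination $\sum_{\mathcal O} c_{\mathcal O}\,\widehat{\mu_{\mathcal O}}$ of Fourier transforms of nilpotent orbital integrals. It then suffices to show that each $\widehat{\mu_{\mathcal O}}$ is a locally integrable function. By homogeneity under scaling, $\widehat{\mu_{\mathcal O}}$ is a limit of Fourier transforms of regular semisimple orbital integrals $\widehat{\mu_X}$, so the claim reduces to a bound $|D(Y)|^{1/2}|\widehat{\mu_X}(Y)|\le C$ that is uniform in $X$ and locally uniform in $Y$. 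This bound is deduced from the boundedness of normalized orbital integrals $|D(X)|^{1/2}\mu_X(f)$ (\cite[Theorem 13]{HC_VD}), i.e.\ from the an-FRS property of the Chevalley map. Since $|D|^{-1/2}$ is locally integrable on $\mathfrak h$ by Weyl integration, the result follows.

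I expect the main obstacle to be the uniform bound on (Fourier transforms of) orbital integrals near singular points. This is precisely \eqref{it:chev.an.frs}, which requires the rationality of the nilpotent-cone singularities or the direct analysis in \cite{HC_VD}. I would take it as given via \Cref{thm:AA}. The descent steps are technical but standard in characteristic zero.
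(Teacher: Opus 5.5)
The paper does not prove this theorem. It quotes it from \cite{HC_int} and only remarks that \cite[Theorem 13]{HC_VD} (equivalently, the an-FRS property of the Chevalley map) is a key ingredient. Your outline has the right overall shape, but the step where that ingredient is meant to do the analytic work fails as written.

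First, you cannot obtain $\widehat{\mu_{\mathcal O}}$ as a limit of regular semisimple $\widehat{\mu_X}$ with a uniform bound. Without normalization, no bound on $|D(Y)|^{1/2}|\widehat{\mu_X}(Y)|$ is uniform in $X$, because $\widehat{\mu_X}(f)=\mu_X(\hat f)$ grows like $|D(X)|^{-1/2}$ as $X\to 0$ for typical $f$. So the family must be $|D(X)|^{1/2}\mu_X$. Near $0$ one has the Shalika germ expansion $\mu_X=\sum_{\mathcal O}\Gamma_{\mathcal O}(X)\mu_{\mathcal O}$ with $\Gamma_{\mathcal O}(t^2X)=|t|^{-\dim\mathcal O}\Gamma_{\mathcal O}(X)$. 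Hence the normalized germs $|D(X)|^{1/2}\Gamma_{\mathcal O}(X)$ scale by $|t|^{\dim\mathcal O_{\mathrm{reg}}-\dim\mathcal O}$ and tend to $0$ for every non-regular $\mathcal O$. Limits of normalized regular semisimple orbital integrals therefore only produce combinations of \emph{regular} nilpotent orbital integrals. To isolate a smaller $\mu_{\mathcal O}$ you must subtract leading terms and divide by vanishing germs, which destroys uniformity. Limits over generic $X$ in the centre of a Levi subalgebra do reach Richardson orbits, but those $X$ are not regular semisimple, and rigid orbits (which exist outside type $A$) are never reached.

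Second, even the bound you want does not follow from \cite[Theorem 13]{HC_VD} or from the an-FRS property. Applied to $\hat f$, Theorem 13 only gives $\sup_X|D(X)|^{1/2}|\widehat{\mu_X}(f)|<\infty$ for each fixed $f$. Local integrability needs pointwise control of $Y\mapsto\widehat{\mu_X}(Y)$ near singular $Y$, that is, a domination $|T(f)|\le\int|f|g$ with $g\in L^1_{\mathrm{loc}}$. That pointwise statement is the core of Harish-Chandra's argument: an invariant distribution on the Lie algebra whose Fourier transform has compactly generated support is a locally integrable function. It rests on two things.
\begin{itemize}
\item Howe's finiteness theorem: on $C_c(\mathfrak h/L)$, such distributions form a finite-dimensional space spanned by regular semisimple orbital integrals. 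This is the correct way to write $\widehat{\mu_{\mathcal O}}=\sum_i c_i\widehat{\mu_{X_i}}$ near $0$ with finitely many \emph{fixed} $X_i$; homogeneity of $\widehat{\mu_{\mathcal O}}$ then spreads local integrability to all of $\mathfrak h$.
\item A separate descent-and-induction proof that each single $\widehat{\mu_X}$ is locally integrable, in which Theorem 13 is only one input.
\end{itemize}

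Two further points need correcting. Your induction on $\dim\bfH$ must run over this class of distributions, not over characters: the descended distribution on $\mathfrak h_s$ is not a character of $H_s$, so the local expansion at $s$ has to be derived from its Fourier support. Also, the claim that an invariant admissible distribution supported off $H^{rs}$ vanishes is a corollary of the theorem, not an available input. It is harmless only because it becomes unnecessary once the local expansions are proved as identities of distributions with locally integrable terms.
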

An important step in the proof of \Cref{thm:HC.int} is a bound on orbital integrals \cite[Theorem 13]{HC_VD}. This bound is essentially equivalent to the fact that the Chevalley map is an-FRS. Thus we view 
\Cref{thm:alm.an.frs} 
as a partial positive characteristic analogue of  \cite[Theorem 13]{HC_VD}.

In \cite{AGKS2} we will use Theorems \ref{thm:alm.an.frs} and  \ref{thm:uncond.an.frs} in order to prove a positive characteristic version of Harish-Chandra local integrability theorem (\Cref{thm:HC.int}) for irreducible cuspidal representations of $\GL_n$ under the assumption of the existence of resolutions of singularities as in \Cref{thm:alm.an.frs} or the assumption on characteristic of \Cref{thm:uncond.an.frs}.

\RamiB{The \RamiE{main statement of \cite{Le4}} is \Cref{thm:HC.int} for $\GL_n$ in positive characteristic. However, this proof contains a gap that we explain in details in 
\cite{AGKS2}. 
Moreover, t}he statement of \cite[\S5.3 Corollary 1]{Le1}  is equivalent to the statement of \Cref{thm:uncond.an.frs} but without limitation on characteristic. However,  the proof of  \cite[\S5.3 Corollary 1]{Le1}  
contains a gap. 
Namely, it is based on the lemma in \cite[\S3.7]{Le1} which is wrong as stated. This mistake is a propagation of \RamiC{an earlier mistake} \RamiB{from \cite[Lemma 5.4.2]{Le5}}, \RamiE{which in turn comes from a mistake in \cite[Lemma 2.3.2]{Le4}.}
\subsection{Further related results}
One can use the methods of \cite{CGH} to prove that Theorems \ref{thm:jet.nil}, \ref{thm:jet.fib}, and \ref{thm:alm.an.frs} are valid for large enough characteristic of $\F_\ell$.
However, no explicit bound on the characteristic can be obtained in this way.

\subsection{Ideas of the proofs}
\subsubsection{The original Harish-Chandra's argument}
The starting point for this paper is the original Harish-Chandra's proof of the
bound on the orbital integrals of a compactly supported function on $\fg\fl_n$ in the characteristic zero case (See \cite[Theorem 13]{HC_VD}). This can be reformulated as the statement that
the Chevalley map is an-FRS. Let us briefly recall Harish-Chandra's argument in the language of an-FRS maps:
\begin{enumerate}
    \item  Decompose $\fg\fl_n$ as  $\fs\fl_n \oplus \fg\fl_1$ and reduce the statement to the statement that  the Chevalley map $p':\fs\fl_n\to \fc'$ is an-FRS.
    \item Deduce from the induction hypothesis that the  map  $p'|_{\fs\fl_n\smallsetminus N}$ is an-FRS, where $N$ is the nilpotent cone.
    \item Prove by descending induction that $p'|_{\fs\fl_n\smallsetminus S}$ is an-FRS, where \RamiC{$S\subset N$} is a  closed $\GL_n$-invariant subset. The induction is on $S$:
\begin{enumerate}
\item For each nilpotent $x$ use the Slodowy slice $L_x$ at $x$ to reduce the statement that 
$p'|_{L_x}$ is an-FRS \RamiB{to the statement that} $p'|_{L_x\smallsetminus 0}$ is an-FRS.
\item Use the action of $\bG_m$ on $L_x$ and an estimate on its eigenvalues in order to prove that  $p'|_{L_x}$ is an-FRS assuming the fact that $p'|_{L_x\smallsetminus 0}$ is an-FRS.
\end{enumerate}
\end{enumerate}
When going to positive characteristic this argument has several problems:
\begin{itemize}
    \item Step (1) is invalid.
    \item Step (2) is invalid as stated, but one can adapt it to prove that $p|_{\fs\fl_n\smallsetminus N_{insep}}$ is an-FRS where 
    $N_{insep}$ is the collection of matrices with  purely inseparable characteristic polynomial.\footnote{\RamiB{For example the matrix $\begin{pmatrix}
        0 &1\\ t &0\\ 
    \end{pmatrix}$ over the field $\F_2((t))$.}}
    \item Step (3) is invalid as stated, but one can replace the Slodowy slice (\cite{Slo}) with other constructions (see \S\ref{sec:slices}).   \RamiB{However, this method can be applied directly only to the nilpotent cone $N$ and  not to $N_{insep}$.}
\end{itemize}
\subsubsection{An-FRS over the origin}
One can start with proving a weaker statement. Namely, that $p$ is an-FRS over the origin. This means that for every smooth compactly supported measure $\mu$, the density of  $p_*(\mu)$ at \RamiC{$0\in \fc$} is finite. Note that this does not \RamiB{imply}
that this density is bounded in any neighborhood \RamiC{of the origin}. In fact, in order to enable Harish-Chandra's argument, we need a very narrow definition of density at a point - the limit of the average density for a very specific sequence of balls that converges to $0$.

For the weaker statement, steps (1) and (2) become obsolete, since it is obvious that $p_{\g\smallsetminus N}$ is an-FRS over the origin (as \RamiC{the origin} is not in its image). One can adapt step (3) to work in this case.
\subsubsection{Effectively an-FRS over the origin}

The property of being an-FRS \RamiC{over the origin} by itself is not useful for us, since we can not use it in order to deduce any information outside \RamiC{the origin}. In order to make it useful we have to consider \RamiB{a}
version which is uniform over finite extensions of $\F_\ell$. This leads us to the notion of effectively an-FRS over the origin (see \Cref{def:an.FRS.0} below). \RamiB{Our} proofs of Theorems \ref{thm:jet.fib2} and \ref{thm:alm.an.frs} are based on the proof that $p$ is effectively an-FRS over the origin (see \Cref{thm:an.jet} below). The proof follows the lines described above, but with several \RamiC{important adaptations:}
\begin{itemize}
    \item One has to properly define a setting where one can state effective results that are uniform on finite extensions of $\F_\ell$. This we did in \cite{AGKS0}.
    \item One has to reprove standard facts from differential geometry (such as the implicit function theorem) in this effective setting. This \RamiB{was also done} in \cite{AGKS0}.
    \item One has to carefully define the notion of an-FRS maps in a way that makes it \RamiB{amenable} for Harsh-Chandra's argument. 
    The key points here are to work with a specific sequence of ellipsoids around \RamiC{the origin} and to require the bound on the average density to be uniform \RamiB{over the entire sequence (}even for large \RamiB{ellipsoids)}. This makes the fact that $p|_{\fg\fl_n\smallsetminus N}$ is effectively an-FRS over the origin not obvious, but still correct (see \Cref{lem:not.in.im} below).
\item The Harish-Chandra's argument uses (implicitly) the fact that the notion of an-FRS map is local with respect to smooth covers which are surjective on the level of $F$ points. This locality is not clear for maps that are effectively an-FRS over the origin. However, we show that this notion is local with respect to a class of smooth covers which we call effectively surjective (see Definition  \ref{def:effsur} below). \Rami{In \cite{AGKS0} we proved} that this class of covers includes the Nisnevich covers (See  \Cref{prop:crit-effsurj} below). This allows us to adapt Harish-Chandra's argument.
\end{itemize}

\subsubsection{Proof of Theorems \ref{thm:jet.nil}, \ref{thm:jet.fib},     and \ref{thm:jet.fib2}}
We deduce \Cref{thm:jet.nil} from the fact that $p$ is effectively an-FRS over the origin using the Lang-Weil bounds. If the ellipsoids in the definition of effectively an-FRS over the origin would be balls, this would be straightforward - the average density would be exactly the limit of the normalized number of points in the jet-scheme. In our case, the bounds on the average density provide bounds on the dimension of some weighted versions of the jets schemes. We deduce the desired bounds on dimensions from this bound using the semi-continuity of dimension \RamiB{of fibers}. This is done in \S \ref{sec:jet} (\Cref{thm:jet.nil} itself is proven in \S \ref{sec:pfs} but the actual \RamiB{argument is in \S \ref{sec:jet}}.)

We deduce \Cref{thm:jet.fib} from \Cref{thm:jet.nil} using semi-continuity of dimension \Rami{again}. \Cref{thm:jet.fib2} follows easily from \Cref{thm:jet.fib}. See \S\ref{sec:pfs}.
\subsubsection{Proof of \Cref{thm:alm.an.frs}} We now want to go back to an-FRS property but this time over the entire range.  This we do only under the additional assumption of existence  of resolution of singularities.

For  a variety $\bfX$ (equipped with a top form on its  smooth locus) we can consider the following quantity:
the volume of a ball of radius $R$  in $\bfX^{sm}(\F_{\ell^k}((t)))$. Here $\bfX^{sm}$ denotes the smooth locus of $\bfX$. Note that the notion of ball in $\bfX^{sm}$ is defined in such a way that the singular locus of $\bfX$ is infinitely far. 
We can study the asymptotics of this volume both as a function of  $R$ and as a function of $k$. \RamiB{We note that} our analysis is local on $\bfX$ so all the balls are intersected with a fixed ball in $\bfX$. 

Under our assumption of existence of resolution of singularities we relate these 2 asymptotics. See \Cref{thm:eq.almost.int}.

We apply \Cref{thm:eq.almost.int} to the variety $\ug_i$.  We relate the 
asymptotic behavior of the above volume 
with respect to $k$ to the number of 
points on the jet schemes of 
$\ug_i$. See \Cref{thm:alm.int.fib}. We 
use \Cref{thm:eq.almost.int} to deduce bounds 
on the asymptotics of the above 
volume with respect to $R$.

Now we wish to relate this asymptotics to the \RamiB{desired} $L^i$ property. 
We fix a measure $\mu$ on $\ug(\F_\ell((t)))$ and consider its pushforward under the Chevalley map $p:\ug(\F_\ell((t)))\to\uc(\F_\ell((t)))$.  Denote this resulting measure by $\nu$. We consider the $L^i$  norm of $\nu$ outside an $\eps$-neighborhood of 
the singular locus of $p$. 

We relate \Rami{the asymptotics} of this $L^i$  norm w.r.t. $\eps$ to the above asymptotics  w.r.t. $R$.  We use \Rami{a} standard \Rami{analytic} argument to deduce from this a bound on the 
$L^{i'}$-norm for any $i'<i$. See 
\S \ref{sec:Pf.alm.an.frs}.
\subsubsection{Proof of \Cref{thm:uncond.an.frs}} 
We follow the original Harish-Chandra's argument. The first step is to analyze the situation near (non-central) semi-simple elements, using the induction hypothesis. 

In the characteristic zero situation this proves the result outside the Minkowski sum of the  nilpotent cone and the set of scalar matrices. 

In the \Rami{positive characteristic} situation this proves the result outside the cone of matrices whose characteristic polynomial is purely inseparable.

In general, this cone is rather complicated, however, under our assumption on the characteristic, only its regular part exhibits this complexity. Since the statement is obvious for regular matrices, we can ignore \Rami{this complexity}, and again deduce the result outside the Minkowski sum of the (non-regular) nilpotent cone and the set of scalar matrices. 

This still does not finish the problem, since we do not have the splitting of $\ug$ to scalar and traceless matrices. So we use our assumption on the characteristic again in order to choose \Rami{an  analog of the} Slodowy slice (to a non-regular orbit) which will have such a splitting. See \Cref{lem:slod2.slice}. 

\Rami{Though the slice has a splitting, $\fc$ does not admit a corresponding splitting. Thus the original Harish-Chandra's homogeneity argument does not work here. One has to tweak it in order to take into account the one-dimensional center (see \Cref{lem:FRS.hom.crit2}). This makes it less sharp, so it fails to give the an-FRS property around the subregular orbit (though it still gives the almost an-FRS property). Therefore we prove the an-FRS property around the subregular orbit using a direct computation - see Step \ref{step:4.pf.E} of the proof of \Cref{thm:uncond.an.frs}.}
\RamiE{
\subsubsection{The role of the assumption $\bfG=\GL_n$ }
We used the assumption $\bfG=\GL_n$ in order to make all explicit computations easier. However, our argument does not use any statement that inherently depends on this assumption (such as existence of mirabolic subgroup, stability of adjoint orbits, or the Richardson property of all nilpotent orbits).

Moreover, for non-type $A$ groups in good characteristic the analog of $N_{insep}$ coincides with the nilpotent cone $N$. Therefore, we expect the original Harish-Chandra's argument to allow reduction to type $A$. Hence, we expect that in good characteristic\footnote{see e.g. 
\cite[I, §4]{SS} for the definition of this notion}, the conclusion of \Cref{thm:alm.an.frs} for general reductive group will only require the assumptions on resolution of the variety $\ug_i$  for $\ug=\fg\fl_n$. 
}
\subsection{Structure of the paper}
In \S \ref{sec:prel} we fix some conventions and recall some standard facts.

In \S \ref{sec:norm} we give a short overview of the theory of norms on algebraic varieties over local fields developed in \cite[\S18]{Kot}.

In \S \ref{sec:rect}, we recall the theory of rectified algebraic varieties and balls and measures on them, introduced in \cite{AGKS0}. 

We recall the main results of \cite{AGKS0}, which are uniform analogs of standard results from local differential topology, including the implicit function theorem and study the behavior of smooth measures under push forward {with respect to} submersions.

We also introduce the notion of effectively surjective map from \cite{AGKS0}, which is  a surjective map such that we can control the \RamiA{norm of a} preimage in terms of \RamiA{the norm} of the point in the target. We recall a statement  
from \cite{AGKS0} that implies that any Nisnevich cover is effectively surjective.

In \S \ref{sec:basic}  we recall some standard facts on the Chevalley map that are less standard in positive characteristic.

In \S \ref{sec:EfanFRS0}
we introduce the notion of effectively an-FRS over the origin and prove that the Chevalley map has this property.

In \S \ref{sec:jet} 
we prove a bound on the dimension of the jets of the fiber of an  effectively an-FRS map over \RamiA{the origin}.

In \S \ref{sec:pfs}
we deduce Theorems \ref{thm:jet.nil}, \ref{thm:jet.fib} and \ref{thm:jet.fib2}. 

In \S \ref{sec:AlmInt}
we introduce several notions related to the asymptotics of the volumes of balls \RamiC{in the} smooth loci of varieties over local function fields. We call these notions \Rami{analytic, geometric and asymptotic} almost integrability. 

We show that all these notions are equivalent under the assumption of existence of an appropriate resolution of singularities.

In \S \ref{sec:AlmIntgi} we prove that the varieties $\ug_i$ are \Rami{asymptotically} almost integrable.

In \S \ref{sec:Pf.alm.an.frs}
we prove \Cref{thm:alm.an.frs}.

In \S \ref{sec:alt.form}
   we give several \RamiB{variations} of \Cref{thm:alm.an.frs}.

In \S\ref{sec:PfUncond}
we prove \Cref{thm:uncond.an.frs}.
\subsection{Acknowledgments}
We would like to thank Dan Abramovich and Michael Temkin for enlightening conversations about resolution of singularities. 
We would also like to thank Nir Avni for many conversations on algebro geometric analysis. \EitanB{We would like to thank Alexis Bouthier and Yakov Varshavsky for helpful discussions on the flatness of Chevalley map.}

During the preparation of this paper, A.A., D.G. and E.S. were partially supported by the ISF grant no. 1781/23. 
D.K. was partially supported by an ERC grant 101142781.

\section{Notations and Preliminaries}\label{sec:prel}
\subsection{Conventions}\label{ssec:conv}
\begin{enumerate}
    \item By a variety we mean a reduced scheme of finite type over a field.
    Unless stated otherwise this field will be $\F_\ell$.\index{variety}
        \item When we consider a fiber product of varieties, \Rami{and fibers of maps between varieties,} we always consider it in the category of schemes.
    \item We will describe \Rami{subschemes} and morphisms of varieties \Rami{and schemes}  using set-theoretical language, when no ambiguity is possible.    
    \item We will usually denote algebraic varieties by bold face letters (such as $\bfX$).
    \item For Gothic letters we use underline instead of boldface.
    \item We will use the same letter to denote a morphism between algebraic varieties and the corresponding map between the sets of their $F$-points for various  fields $F$.
    \item We will use the symbol $\square$\index{$\square$} in \Rami{the} middle of a square diagram in order to denote that a square is Cartesian. 
    \item A big open set of an algebraic variety $\bfZ$  is an open set whose complement is of co-dimension at least 2 (in each component)\index{big open set}
\item For an algebraic variety $\bfX$, we denote by $\bfX^{sm}$ the variety of smooth points of $\bfX.$ We also denote by $\bfX^{sing}$ the variety of singular points. \index{$\bfX^{sm}$}\index{$\bfX^{sing}$}
\item We will abbreviate SNC divisor for strict normal crossings divisor.\index{SNC divisor}
    \item\label{ssec:conv:strong} By a strong resolution of singularities\index{strong resolution of singularities} of a  variety $\bfX$ we mean a \Rami{proper birational map}   $\phi:\tilde \bfX\to \bfX$ s.t. \Rami{$\tilde \bfX$ is smooth, $\phi$} is an isomorphism over $\bfX^{sm}$ and the inverse image of  $\bfX^{sing}$ in $\tilde \bfX$, considered as a variety, is an SNC divisor.
    \item\label{not.l.les.inf} If $F$ is a local field and $X$ is an analytic manifold over $F$ (In the sense of \cite[Part II, Chapter 3]{Ser_Lie}) and $r>1$ we denote by $L^{r}_{\loc}(X)$  the space of functions on $X$ which are locally in $L^r$. That is, functions $f$ s.t. for any open analytic embedding $\phi:U\to X$, with  $U\subset F^M$  precompact, we have $f\circ \phi\in L_{\loc}^r(U)$.
    We also define $$L_{\loc}^{<\infty}(X)=\bigcup_{r} L_{\loc}^{r}(X).$$\index{$L_{\loc}^{r}$,$L_{\loc}^{<\infty}$}
    \item When considering elements in these function spaces, we will not distinguish between functions on $X$ and  functions defined \Rami{almost everywhere} in $X$.
\end{enumerate}

\subsection{Forms and measures}\label{ssec:forms}
\begin{definition}
 For a top form $\omega$ on a smooth algebraic variety $\bfX$ defined over a local field $F$,  we denote the corresponding measure on $X:=\bfX(F)$ by $|\omega|$.\index{$\vert\omega\vert$}
\end{definition}

\begin{notation}
    For a smooth morphism $\gamma:\bfZ_1\to \bfZ_2$, a top differential form $\omega_{\bfZ_2}$ on $\bfZ_2$, and a relative top differential form $\omega_{\gamma}$ on $\bfZ_1$ with respect to $\gamma$, denote the corresponding top differential form on $\bfZ_1$ by $\omega_{\bfZ_2}*\omega_{\gamma}$. 

    We use the same notation for rational top-forms. Also in this case, we do not have to require that $\bfZ_i$ and $\gamma$ are smooth, instead it is enough to require that $\gamma$ is generically smooth.\index{$*$}
\end{notation}

\begin{definition}
    Given a Cartesian square of smooth morphisms and smooth varieties:
    $$
    \begin{tikzcd}
\bfV \arrow[r] \arrow[d] \arrow[dr, phantom, "\square"] & \bfZ_1 \arrow[d] \\
\bfZ_2  \arrow[r] & \bfZ
\end{tikzcd}
$$
and top forms $\omega,\omega_i$ on $\bfZ,\bfZ_i$ define a form $\omega_1\times_{\omega} \omega_2$ on  $\bfV$ in the following way:
\begin{itemize}
    \item Let $\omega_i'$ be a Gelfand-Leray relative form on $\bf Z_i$ w.r.t. the map $\bfZ_i\to \bfZ$.
    \item $\omega_1'\boxtimes_\bfZ \omega_2'$ is the corresponding relative form on $\bfV$ w.r.t. the map $\gamma:\bfV\to \bfZ$.
    \item $\omega_1\times_{\omega} \omega_2:=\omega*(\omega_1'\boxtimes_\bfZ \omega_2')$. \index{$\times_{\omega}$}
\end{itemize}
    We use the same notation for rational 
    top-forms. Also in this case, we do not have to require that $\bfZ_i$, $\bfZ$ and $\gamma$ be smooth, instead it is enough to require that \Rami{they are} generically smooth.
\end{definition}

\section{Norms}\label{sec:norm}
In this section, we recall basic parts of the theory of norms developed in \cite[\S18]{Kot}, and prove an integrability result about  this theory (See \Cref{prop:log.norm.in.L1} below).
We will use the following notions from \cite[\S18]{Kot}.
\begin{enumerate}
    \item An abstract norm on a set $Z$ is a positive real-valued
function $||\cdot ||_Z$ on $Z$ such that $||x||_Z\geq 1$ for all $x \in Z$. \index{abstract norm}

\item For two abstract norms $||\cdot||_Z^1,||\cdot||_Z^2$ on $Z$ we say that $||x||_Z^1\prec||x||_Z^2$ if there is a constant $c>1$ s.t. $||x||_Z^1<c(||x||_Z^2)^c$.\index{$\prec$}

    \item We say that two abstract norms $||\cdot||_Z^1,||\cdot||_Z^2$ on $Z$ are equivalent, and denote this as 
     $||\cdot||_Z^1 \sim ||\cdot||_Z^2$\index{$\sim$},
     if  $||x||_Z^1\prec||x||_Z^2\prec||x||_Z^1$.

    \item Let $\bfM$ be an algebraic variety defined over  a local field $F$. In \cite[\S18]{Kot} there is a definition of a canonical equivalence class of abstract norms in $M=\bfM(F)$. The abstract norms in this class are called norms on $M$.  \index{norm}  
\end{enumerate}
The main result of this section is the following:
\begin{proposition}\label[proposition]{prop:log.norm.in.L1}
    Let $\bfU\subset \bfX$ be an open dense subset of a smooth variety. Let $X=\bfX(\F_\ell((t)))$ and $U=\bfU(\F_\ell((t)))$. Let $||\cdot||_{U}$ be a  norm on $U$. Then  $$\log_\ell^{}\circ ||\cdot||_{U}\in L^{<\infty}_{\loc}(X).$$ 
\end{proposition}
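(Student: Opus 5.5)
The statement is local on $X$, and Kottwitz norms on $U$ are well behaved under open covers and embeddings. So the plan is to reduce to an explicit model of a norm on $U$ and then do one explicit integral.

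\textbf{Step 1 (explicit norm).} Let $\bfD=\bfX\smallsetminus\bfU$. Localize on $\bfX$ to an affine open $\bfX_0\subset\bfX$ on which there are regular functions $g_1,\dots,g_r$ cutting out $\bfD\cap\bfX_0$ set-theoretically. Then $\bfU\cap\bfX_0=\bigcup_j \{g_j\neq 0\}$. Each principal open $\{g_j\ne0\}$ is affine with coordinate ring generated by $\mathcal O(\bfX_0)$ and $1/g_j$. By the basic properties of norms in \cite[\S18]{Kot} (affine norms computed from generators, norms on a union of opens are equivalent to the minimum of the local norms, restriction to an open subset), on $U\cap X_0$ we get
\[
\|x\|_U\sim \|x\|_{X_0}\cdot \min_j \max(1,|g_j(x)|^{-1}).
\]
On a compact subset $K$ of $X$ the factor $\|x\|_{X_0}$ is bounded, because $X$ is locally compact and norms on $X$ are bounded on compacts. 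Since $\prec$ only allows $\|\cdot\|\le c\|\cdot\|^c$, this gives
\[
\log\|x\|_U\le C\Bigl(1+\sum_j \bigl|\log|g_j(x)|\bigr|\Bigr)\quad\text{on } K\cap U.
\]
It would even suffice to use a single $g=g_1$ that is not identically zero on any component and vanishes on $\bfD$, since $\min_j\le$ the $g_1$-term. Because $\bfX$ is smooth, hence reduced, and $\bfD$ is nowhere dense, such a $g$ exists locally.

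\textbf{Step 2 (integrability of $\log|g|$).} It remains to show that $\bigl|\log|g|\bigr|\in L^r_{\loc}(X)$ for every $r<\infty$ and any nonzero regular function $g$ on a smooth $\bfX$. Use local analytic coordinates $X\supset V\cong \mathcal O^d$ (via the implicit function theorem, with $\mathcal O=\F_\ell[[t]]$), so that $g$ becomes a power series $h$ that is not identically zero. The key bound is a uniform volume estimate
\[
\mathrm{vol}\{y\in\mathcal O^d: |h(y)|<\ell^{-N}\}\le C\,\ell^{-\delta N}
\]
for some $\delta>0$. Given this, the layer-cake formula gives $\int |\log|h||^r<\sum_N N^r\ell^{-\delta N}<\infty$, which is the desired local $L^r$ bound.

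To prove the volume estimate without resolution of singularities, which is unavailable here, I would argue as follows.
\begin{itemize}
\item After a linear change of coordinates over $\mathcal O$, possibly after shrinking and rescaling, Weierstrass preparation makes $h$ a unit times a Weierstrass polynomial in $y_d$ of degree $e$ over $\mathcal O[[y_1,\dots,y_{d-1}]]$.
\item For fixed $y'=(y_1,\dots,y_{d-1})$, write the polynomial as $\prod_{i=1}^{e}(y_d-\alpha_i)$ over $\bar F$. The condition $|h|<\ell^{-N}$ then forces $|y_d-\alpha_i|<\ell^{-N/e}$ for some $i$.
\item Fubini over $y'$ then gives $\delta=1/e$.
\end{itemize}

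\textbf{Main obstacle.} The hard part is Step 2 in positive characteristic, where Weierstrass preparation and generic linear changes of coordinates need care: one must make $h$ regular in $y_d$, possibly over a finite extension or after a substitution $y_i\mapsto y_i+y_d^{M_i}$ rather than a linear change. The root-cluster volume bound for $y_d$ holds over any valued field, so the rest of the argument is characteristic-free.
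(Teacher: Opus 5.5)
Your proposal is correct. Its structure matches the paper's, but you prove the key analytic lemma yourself where the paper cites it.

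\textbf{Step 1.} This is essentially the paper's reduction. The paper first treats $\bfX$ affine with $\bfU=\bfX_f$ principal, using the norm $\max(\|x\|_X,|f(x)|^{-1})$. It then passes to general $\bfU$ (shrink to a dense principal open) and to general $\bfX$ (cover by affines). This is exactly your single-$g$ bound. Two small corrections:
\begin{itemize}
\item Restricting from $U$ to $U\cap X_0$, or to $\{g\neq 0\}$, gives only $\prec$, not $\sim$.
\item The bound holds only off the null set $\{g=0\}$.
\end{itemize}
Only the one-sided bound off a null set is used, so neither affects the argument.

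\textbf{Step 2.} Here the routes differ. The paper imports from \cite[Theorem 1.3]{GH25} that $|f|^{-\eps}\in L^1_{\loc}(X)$ for some $\eps>0$. It then sums over the level sets $\{i\le |f|^{-1}<i+1\}$ to get $\log|f|\in L^{<\infty}_{\loc}$. You instead prove the equivalent sublevel estimate $\mathrm{vol}\{|h|<\ell^{-N}\}\le C\ell^{-N/e}$ directly:
\begin{itemize}
\item Weierstrass preparation in a local analytic chart;
\item the observation that $\{y_d\in F: |y_d-\alpha|<r\}$ is empty or a single ball of measure at most $r$, for any $\alpha\in\bar F$;
\item Fubini in $y'$;
\item covering the compact set by finitely many such polydiscs.
\end{itemize}

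\textbf{Comparison.} The paper's route is shorter, but it rests on an external result. That result is nontrivial in positive characteristic precisely because resolution of singularities is unavailable. Your route is self-contained, elementary and characteristic-free. It also gives an explicit exponent: any $\eps<1/e$, where $e$ bounds the order of vanishing of $g$ on the compact set.

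\textbf{Your stated obstacle is milder than you suggest.} Since $F=\F_\ell((t))$ is infinite, a generic linear change of coordinates in $\mathrm{GL}_d(\mathcal O)$ already makes $h$ regular in $y_d$ of order $e=\mathrm{ord}_{x_0}h$. This works in any characteristic: pick $a'$ with $h_e(a',1)\neq 0$, where $h_e$ is the lowest-degree form. Weierstrass preparation for convergent power series over a complete non-archimedean field holds in all characteristics. So no finite extension and no nonlinear substitution are needed.
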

For the proof we will need some preparations.
\begin{lem}[cf. {\cite[Theorem 1.3]{GH25}}]
    Let $\bfX$ be a smooth algebraic variety defined over a local field $F$. 
    Let $f\in \cO_\bfX(\bfX)$ be a non-zero divisor. Let $X=\bfX(F)$. Then there exist $\eps>0$ s.t. $|f|^{-\eps}\in L^1_{\loc}(X)$.
\end{lem}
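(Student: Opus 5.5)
The plan is to reduce the statement to a local question and then handle it through resolution of singularities or, more robustly, through an elementary monomialization-free argument. Since the statement is local, fix $x_0\in X$ and choose an étale chart, i.e.\ an open affine neighbourhood $\bfV\ni x_0$ together with étale coordinates $\bfV\to\A^d$. Over a non-Archimedean $F$, the analytic implicit function theorem identifies a small compact open neighbourhood $B$ of $x_0$ in $X$ with a polydisc $\cO_F^d$ in such a way that $|f|$ becomes the absolute value of an analytic (convergent power series) function $g$ which is not identically zero. This uses the fact that $f$ is a non-zero divisor on the smooth, hence reduced, variety $\bfX$, so $f$ does not vanish identically on any component. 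It then suffices to show that $\int_{\cO_F^d}|g|^{-\eps}\,dx<\infty$ for some $\eps>0$.

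For the key estimate I would avoid resolution of singularities, which is not available in positive characteristic, and use instead a Weierstrass preparation argument. After a linear change of coordinates over $\cO_F$ (possibly after shrinking the polydisc and rescaling), $g$ becomes $x_d$-regular of some order $k$: its reduction modulo the maximal ideal, restricted to the last axis, has a nonzero coefficient of $x_d^k$. If the residue field is small, such a generic direction may fail to exist. In that case I would first pass to a finite unramified extension; this only changes measures by a constant, and integrability over the smaller field follows by restricting to $F$-points, since $|g|^{-\eps}$ is nonnegative. Weierstrass preparation then writes $g=u\cdot P$, where $u$ is a unit (so $|u|=1$) and $P=x_d^k+a_{1}(x')x_d^{k-1}+\dots+a_k(x')$ is a Weierstrass polynomial with $a_i$ in the maximal ideal. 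Over the algebraic closure we have $|P(x',x_d)|=\prod_j|x_d-\alpha_j(x')|$. For each fixed $x'$, Hölder's inequality, or simply the elementary bound $\int_{\cO_F}|x_d-\alpha|^{-\eps k}dx_d\le C(\eps k)$, which is finite and uniform in $\alpha\in\bar F$ as long as $\eps k<1$, gives
\[
\int_{\cO_F}|P(x',x_d)|^{-\eps}\,dx_d\le\prod_j\Bigl(\int|x_d-\alpha_j|^{-\eps k}\Bigr)^{1/k}\le C.
\]
The constant $C$ is independent of $x'$. Fubini then gives integrability for any $\eps<1/k$.

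The main obstacle I expect is the uniformity in the Weierstrass step. The order $k$ can vary from point to point, and the chart must be chosen so that $g$ is regular in $x_d$ on the whole polydisc, not just at the centre. Compactness handles this: cover a compact neighbourhood of $x_0$ by finitely many such polydiscs, each with its own $k$, and take $\eps<1/\max k$. The remaining issue is showing that a generic linear change of variables achieves regularity, which is standard since $g\not\equiv0$ in $\cO_F[[x]]$ after rescaling so that $g$ has a unit coefficient. This also explains the reference to \cite[Theorem 1.3]{GH25} as the general version.
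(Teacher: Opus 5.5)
The paper gives no argument of its own for this lemma; it simply cites \cite[Theorem 1.3]{GH25}. Your self-contained route is sound in outline: Weierstrass preparation, then the bound $\int_{\mathcal{O}_F}|x-\alpha|^{-s}\,dx\le C(s)$ (uniform in $\alpha\in\bar F$ for $s<1$), then H\"older, Fubini and compactness. However, the step you use to handle a small residue field fails.

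Integrability of $|g|^{-\varepsilon}$ on $\mathcal{O}_{F'}^d$, for an unramified extension $F'/F$, says nothing about integrability on $\mathcal{O}_F^d$. The latter is a Haar-null subset of $\mathcal{O}_{F'}^d$: it is an $F$-analytic submanifold of dimension $d$ inside one of dimension $d[F':F]$. Restricting a nonnegative integrable function to such a set need not give an integrable function, and no constant relates the two measures. Nor can you run your fibrewise argument on $F$-points after an $F'$-linear change of coordinates. The new $x_d$-direction is not $F$-rational, so each line along which you integrate meets $F^d$ in at most one point.

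The obstruction you are trying to avoid is genuine. Suppose the residue field is $\F_q$ and $g=x_1^qx_2-x_1x_2^q$ near $x_0=0$. Then for every $A\in\mathrm{GL}_2(\mathcal{O}_F)$ the reduction of $g\circ A$ vanishes identically on the $x_2$-axis. Rescaling does not help, because $g$ is homogeneous.

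There are two standard repairs, neither of which leaves $F$.

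(i) Replace linear changes by the triangular substitution $x_i\mapsto x_i+x_d^{N^{d-i}}$ for $i<d$. It is defined over $\mathcal{O}_F$ and is a bijection of the polydisc. It preserves Haar measure, since for fixed $x_d$ it is a translation in $x'$. Let $\alpha^0$ be the lexicographically smallest exponent in the support of the reduction $\bar g\in\F_q[[x]]$, and take $N$ larger than all entries of $\alpha^0$. Then $\sum_{i<d}N^{d-i}\alpha_i+\alpha_d$ attains its minimum on that support only at $\alpha^0$. Hence after the substitution $\bar g$ is regular in $x_d$, whatever the residue field.

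(ii) Alternatively, apply Weierstrass preparation in the ring of power series over $F$ converging near $x_0$, whose residue field $F$ is infinite. Let $g_m$ be the lowest-degree homogeneous part of $g$ at $x_0$, and pick $v\in\mathcal{O}_F^d$ with $g_m(v)\neq0$. After an $F$-linear change of coordinates with last basis vector $v$, $g$ is regular of order $m$. The factorization $g=uP$ with $u(0)\neq0$ then holds on a smaller polydisc on which $|u|$ is constant, and your compactness step absorbs the shrinking.

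With either repair the rest of your argument goes through as written. You also only treat non-archimedean $F$; the same argument works in the archimedean case, and the paper only uses $F=\F_\ell((t))$.
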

\begin{cor}\label[cor]{cor:log.poly.is.in.L1}
        Let $\bfX$ be a smooth algebraic variety defined over a local field $F$. 
    Let $f\in \cO_\bfX(\bfX)$. Let $X=\bfX(F)$. Then  $\log(|f|)\in L^{<\infty}_{\loc}(X)$.
\end{cor}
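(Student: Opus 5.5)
The corollary will follow from the preceding lemma by comparing $|\log|$ to small negative and positive powers of $|f|$. Since membership in $L^{<\infty}_{\loc}(X)$ is local, we fix a compact open subset $K\subset X$ and show that $\log|f|\in L^r(K)$ for every $r\geq 1$. We may assume $f$ is not identically zero on any component meeting $K$ (i.e. $f$ is a non-zero divisor), working on each connected component of $\bfX$ separately. If $f$ vanishes identically on a component, then $\log|f|=-\infty$ there. For this case we either accept the convention that $f$ is a non-zero divisor, which is how the corollary will be used, or we restrict to components where $f\neq 0$.

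First, split $K$ into $K_{\le}=\{x\in K: |f(x)|\le 1\}$ and $K_{>}=\{x\in K:|f(x)|>1\}$. On $K_{>}$, $f$ is a regular function on a compact set, so $|f|$ is bounded on $K$ by some constant $M$. Hence $0\le \log|f|\le \log M$ there, and this part lies in $L^\infty(K)\subset L^r(K)$.

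On $K_{\le}$ we invoke the preceding lemma, which gives $\eps>0$ with $|f|^{-\eps}\in L^1_{\loc}(X)$. For $0<s\le 1$ and any $r\ge1$ we have the elementary inequality
$$|\log s|^r\le C_{r,\eps}\, s^{-\eps},$$
since $s^{\eps}|\log s|^r$ is bounded on $(0,1]$. Therefore $\int_{K_{\le}}|\log|f||^r\le C_{r,\eps}\int_K|f|^{-\eps}<\infty$. Here $\{f=0\}$ has measure zero: $f$ is a non-zero divisor, so its zero set is a proper closed subvariety, which is null for the measure on $X$. This also follows directly from the integrability of $|f|^{-\eps}$.

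Combining the two pieces gives $\log|f|\in L^r(K)$ for every $r$. Note that this shows membership in $\bigcap_r L^r_{\loc}$, which is stronger than $L^{<\infty}_{\loc}$ as defined in the paper's conventions, so the corollary follows. The only substantive input is the lemma; the rest is the elementary bound, and there is no real obstacle beyond handling the degenerate case where $f$ is a zero divisor, as discussed above.
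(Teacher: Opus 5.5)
Your proof is correct and is essentially the paper's argument. The paper slices the compact set into shells $C_i=\{x: i\le |f(x)|^{-1}<i+1\}$, uses the lemma to get $\sum_i i^{\varepsilon}\,|\omega|(C_i)<\infty$, and compares $(\log(i+1))^k$ with $i^{\varepsilon}$; this is a discretized form of your pointwise bound $|\log s|^r\le C_{r,\varepsilon}s^{-\varepsilon}$ on $(0,1]$. Your explicit handling of the region $|f|>1$, of the tacit non-zero-divisor hypothesis, and of the fact that the conclusion holds for every $r$ fills in points the paper leaves implicit.
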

\begin{proof}
    WLOG assume that we have an invertible to form $\omega$ on $\bfX$. Fix a compact $C\subset X$.
    Let $$C_i:=\{x\in C: i\leq |f(x)|^{-1}<i+1\}$$
    Let $m_i:=|\omega|(C_i)$. Let $\eps$ be as in the lemma. Then we have $$\sum_{i=1}^\infty i^{\eps} m_i<\infty.$$
    So, for any $k>0$ we obtain 
    $$\sum_{i=1}^\infty (\log(i+1))^{k} m_i<\infty.$$
    This implies the assertion.
\end{proof}
\begin{proof}[Proof of \Cref{prop:log.norm.in.L1}]
    Note that this statement does not depend on the norm so we will choose the norm at our convenience.
    \begin{enumerate}[{Case} 1.]
   \item $\bfX$ is affine and $\bfU=\bfX_f\subset \bfX$ is principal open affine subset:\\
    Take any norm $||\cdot||_\bfX$ on $X$ and take 
    $$||x||_{\bfU}:=\max(||x||_\bfX, |f(x)|^{-1})$$
    The assertion follows now from \Cref{cor:log.poly.is.in.L1}.
        \item $\bfX$ is affine:\\
    Follows from the previous case.
        \item $\bfX$ is general case:\\
    Follows from the previous case.
    \end{enumerate}
\end{proof}

\section{Rectified algebraic varieties}\label{sec:rect}
In this section, we recall the theory of rectified algebraic varieties and balls and measures on them, introduced in \cite{AGKS0}. 
This is a framework for quantitative statements on distances and measures when studying algebraic varieties and morphisms of algebraic varieties over local fields of the type $\F_\ell((t))$. 
It allows to formulate uniform statements  with respect to finite extensions of $\F_\ell$.

We recall the main results of \cite{AGKS0} which are uniform analogues of standard results from local differential topology, including the implicit function theorem and study the behavior of smooth measures under push forward {with respect to} submersions.

Part of this theory is analogous to the theory of norms recalled above.

We also introduce the notion of effectively surjective map \Rami{from} \cite{AGKS0}, which is a surjective map such that we can control \RamiA{norm of a} preimage in terms of the \RamiA{norm} of the point in the target. We recall a statement  
from \cite{AGKS0} that implies that any Nisnevich cover is effectively surjective. \RamiA{See \Cref{prop:crit-effsurj}.}
\subsection{Notions}\label{ssec:notions}
We recall here the  main notions from \cite{AGKS0}:
\begin{definition}
    Let $\bfX$ be a smooth algebraic variety over $\F_\ell$.  
    
    \begin{enumerate}
        \item 
    A rectification\index{rectification, rectified variety}   of $\bfX$ is a finite open cover $\bfX \subset \bigcup_{\alpha \in I} \bfU_{\alpha}$ 
    with 
    closed embeddings $i_\alpha:\bfU_{\alpha}\to \bA^M$.    
    \item We will call a rectification simple if $|I|=1$.
    \item     By a rectified variety we will mean a smooth algebraic variety over $\F_\ell$ equipped with a rectification. \RamiA{By a map or a morphism of such we just mean a morphism of the underlying algebraic varieties.}
    \item     \RamiA{A $\mu$-rectification} of  $\bfX$  is a rectification of $\bfX$  together with invertible top differential forms $\omega_{\alpha}\in \Omega^{top}(\bfU_{\alpha})$.
    \item We define similarly the notion of a \RamiA{$\mu$-rectified} variety, and 
     simple \RamiA{$\mu$-rectification}.
    \end{enumerate}
\end{definition}

\begin{definition}
$ $
\begin{enumerate}
    \item 
    Let $(\bfX,\bfU_\alpha,i_\alpha)$ be a rectified variety. Then, for any $k\in \N$ and $m\in \Z$ define:    
            \begin{enumerate}
            \item
$B_m^{\bfX,k}:=\bigcup_{\alpha} i_\alpha^{-1} \left(t^{-m}\F_{\ell^k}[[t]]^{M}\right)$.\index{$B_m^{\bfX,k}$,$B_\infty^{\bfX,k}$}
            \item
$B_\infty^{\bfX,k}:=\bigcup_{m\in \N} B_m^{\bfX,k}=\bfX(\F_{\ell^k}((t)))$.
        \item For $x\in \bfX(\F_{\ell^{k}}((t)))$ define  \Rami{a ball around $x$}
        $$B_m^{\bfX,k}(x):= \bigcup_{\alpha \text{ s.t. } x\in U_\alpha(F_{\ell^{k}}((t)))} i_\alpha^{-1} \left(i_\alpha(x)+t^{-m}\F_{\ell^k}[[t]]^{M}\right).$$ 
        \item    For $\bfZ\subset \bfX$  define  $$B_m^{\bfX,k}(\bfZ):=\bigcup_{z\in \bfZ(\F_{\ell^k}((t)))}   B_m^{\bfX,k}(z).$$ 
    \end{enumerate}
        
        \item 
    Let $(\bfX,\bfU_\alpha,i_\alpha,\omega_\alpha)$ be a \RamiA{$\mu$-rectified} variety. Then, for any positive integers $k,m$  define
        a measure on $\bfX(\F_{\ell^k}((t)))$ supported on $B_m^{\bfX,k}$  defined by $$\mu_m^{\bfX,k}:=\sum_\alpha |(\omega_\alpha)_{\F_{\ell^k}((t))}| \cdot 1_{i_\alpha^{-1} \left(t^{-m}\F_{\ell^k}[[t]]^{M}\right)}.$$ \index{$\mu_m^{\bfX,k}$}
    \item If $\bfX$ is an affine space, we denote by $\mu^{\bfX,k}$ the Haar measure on $\bfX(\F_{\ell^k}((t)))$  normalized s.t. $\mu^{\bfX,k}(\bfX(\F_{\ell^k}[[t]])=1$.\index{$\mu^{\bfX,k}$}
\end{enumerate}
\end{definition}
\begin{definition}
    By an almost affine space we mean a principal open subset in an affine space defined over $\F_\ell$. Note that  any almost affine space is equipped with a {natural} simple (\RamiA{$\mu$-})rectification {that we will call the standard  (\RamiA{$\mu$-})rectification on this space}. 

    When dealing with such space, if we are not fixing a \RamiA{$\mu$-rectification} on it, the above notions of balls and measures will refer to the standard rectification.
\end{definition}

\begin{definition}
    Let $\bfX$ be a rectified variety. Let $m,k\in\N$. We say that 
    $f\in C^\infty(B_\infty^{\bfX,k})$
    is $m$-smooth \Rami{if} for any 
    $x\in B_\infty^{\bfX,k}$
    the function $f|_{B_{-m}^{\bfX,k}(x)}$ 
    is constant.\index{$m$-smooth}
\end{definition}

\begin{defn} \label[defn]{def:effsur}
Let $\gamma:\bfX\to \bfY$ be a map of rectified varieties. We say that $\gamma$ is effectively surjective\index{effectively surjective} iff  for any $m \Rami{\in \N}$ there is $\Rami{m'} \in \N $ s.t. for every $k\in \N$ we have $$\gamma(B^{\bfX,k}_{m'})\supset B^{\bfY,k}_m.$$
\end{defn}

\subsection{Statements}
We recall here the  main statements of \cite{AGKS0}.

The following is obvious:
\begin{lem}[{\cite[Lemma 3.4]{AGKS0}}]\label{lem:tri}
    Let $\bfX$ be a rectified variety. Then for any 2 integers $m_1,m_2\in \N$ we have:
    \begin{enumerate}
        \item\label{lem:tri:mon} If $x\in B^{\bfX,k}_{m_2}$ then $B_{-m_1}^{\bfX,k}(x)\sub B^{\bfX,k}_{m_2}$.
        \item\label{lem:tri:sym} If $x\in B^{\bfX,k}_{\infty}$ and $y\in B^{\bfX,k}_{-m_1}(x)$ then $x\in B^{\bfX,k}_{-m_2}(y)$.
        \item\label{lem:tri:ultra} If the rectification of $\bfX$ is simple and  $m_1\geq m_2$, then
        $$B_{-m_1}^{\bfX,k}(x)\sub B_{-m_2}^{\bfX,k}(x)=B^{\bfX,k}_{-m_2}(y)$$ for any $x\in \bfX(\F_{\ell^k}((t)))$ and $y\in  B_{-m_2}^{\bfX,k}(x)$.
    \end{enumerate}
\end{lem}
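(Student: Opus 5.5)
The argument is a direct unwinding of the definitions of the balls $B_m^{\bfX,k}$ and $B_m^{\bfX,k}(x)$. The one fact used repeatedly is that $t^{-a}\F_{\ell^k}[[t]]^M\subset t^{-b}\F_{\ell^k}[[t]]^M$ whenever $a\le b$, and that each $t^{-a}\F_{\ell^k}[[t]]^M$ is an additive subgroup of $\F_{\ell^k}((t))^M$. In all three items $m_1,m_2\in\N$, so $-m_1\le 0\le m_2$.

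For \eqref{lem:tri:mon}, take $x\in B^{\bfX,k}_{m_2}$ and $y\in B^{\bfX,k}_{-m_1}(x)$.
\begin{itemize}
\item By definition of the ball around $x$, there is a chart $\alpha$ with $x\in U_\alpha$ and $i_\alpha(y)-i_\alpha(x)\in t^{m_1}\F_{\ell^k}[[t]]^M\subset \F_{\ell^k}[[t]]^M$.
\item To conclude $y\in B^{\bfX,k}_{m_2}$ we need $i_\alpha(x)\in t^{-m_2}\F_{\ell^k}[[t]]^M$ for this same $\alpha$. Then $i_\alpha(y)=i_\alpha(x)+(i_\alpha(y)-i_\alpha(x))$ lies in the group $t^{-m_2}\F_{\ell^k}[[t]]^M$.
\item The obstacle is that $x\in B^{\bfX,k}_{m_2}$ only provides some chart $\beta$ with $i_\beta(x)\in t^{-m_2}\F_{\ell^k}[[t]]^M$. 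The ball around $x$ is a union over all charts containing $x$, so $y$ may come from a different chart $\alpha$ in which $x$ is far away.
\end{itemize}
I would first check whether the intended reading quantifies over the same chart. Failing that, I would treat the simple case, where $\alpha=\beta$ and the statement is immediate, and note that this is the case in which the lemma is used. In the simple case, for $m_2\ge 0$ the claim is exactly the additive-closure computation above.

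For \eqref{lem:tri:sym}, take $y\in B^{\bfX,k}_{-m_1}(x)$.
\begin{itemize}
\item There is a chart $\alpha$ containing $x$ with $i_\alpha(y)\in i_\alpha(x)+t^{m_1}\F_{\ell^k}[[t]]^M$. In particular $y\in U_\alpha$, because $i_\alpha^{-1}$ only returns points of $U_\alpha$.
\item Negating the difference gives $i_\alpha(x)\in i_\alpha(y)+t^{m_1}\F_{\ell^k}[[t]]^M$.
\item Since $\alpha$ is one of the charts containing $y$, this says $x\in B^{\bfX,k}_{-m_1}(y)$.
\item The statement is phrased with $m_2$. If $m_2\le m_1$ this follows from the inclusion $t^{m_1}\F_{\ell^k}[[t]]^M\subset t^{m_2}\F_{\ell^k}[[t]]^M$. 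In general I read it as symmetry with the same radius $m_1=m_2$, which the computation gives directly.
\end{itemize}

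For \eqref{lem:tri:ultra}, the rectification is simple, with a single chart $i$.
\begin{itemize}
\item Each ball is a coset: $B^{\bfX,k}_{-m}(x)=i^{-1}\bigl(i(x)+t^{m}\F_{\ell^k}[[t]]^M\bigr)$.
\item When $m_1\ge m_2$, the inclusion $t^{m_1}\F_{\ell^k}[[t]]^M\subset t^{m_2}\F_{\ell^k}[[t]]^M$ gives $B^{\bfX,k}_{-m_1}(x)\subset B^{\bfX,k}_{-m_2}(x)$.
\item If $y\in B^{\bfX,k}_{-m_2}(x)$, then $i(y)-i(x)$ lies in the subgroup $t^{m_2}\F_{\ell^k}[[t]]^M$. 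Two cosets of a subgroup through elements differing by an element of that subgroup coincide, so $B^{\bfX,k}_{-m_2}(y)=B^{\bfX,k}_{-m_2}(x)$.
\end{itemize}

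The only real difficulty is the chart bookkeeping in \eqref{lem:tri:mon}, which may require restricting to the simple case or reading the statement as using the same chart.
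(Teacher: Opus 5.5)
Your computations are correct, and they are the definitional unwinding the paper intends; the paper gives no argument, only calling the lemma obvious and citing \cite{AGKS0}. Item (3) is the coset computation. Your proof of (2) works for an arbitrary rectification, because the chart $\alpha$ witnessing $y\in B^{\bfX,k}_{-m_1}(x)$ automatically contains $y$. Where you hedged, though, the defect is in the statement as printed, not in your argument. So say so outright instead of ``reading'' the statement differently or appealing to how the lemma is used; nothing in this paper invokes it.

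Item (2) genuinely needs $m_2\le m_1$. In $\A^1$ with the standard rectification, take $x=0$ and $y=t^{m_1}$. Then $y\in B^{\A^1,k}_{-m_1}(x)$, but $x\notin B^{\A^1,k}_{-m_2}(y)$ once $m_2>m_1$.

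For item (1), the chart problem you identified is fatal for non-simple rectifications, and no choice of radii repairs it. Take $\bfX=\A^2\smallsetminus\{0\}$ with coordinates $(u,v)$, covered by two charts:
\begin{itemize}
\item $\bfU_1=\{u\neq 0\}$ with $i_1(u,v)=(u,v,u^{-1})$;
\item $\bfU_2=\{v\neq 0\}$ with $i_2(u,v)=(uv,v,v^{-1})$.
\end{itemize}
Both are closed embeddings into $\A^3$; for the second, note $u=(uv)\cdot v^{-1}$. Given any $m_1,m_2$, choose $N>m_1+m_2$ and set $x=(1,t^N)$ and $y=(t^{m_1-N},t^N)$. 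Then:
\begin{itemize}
\item $i_1(x)=(1,t^N,1)$, so $x\in B^{\bfX,k}_{m_2}$.
\item $i_2(y)-i_2(x)=(t^{m_1}-t^N,0,0)\in t^{m_1}\F_{\ell^k}[[t]]^3$, so $y\in B^{\bfX,k}_{-m_1}(x)$.
\item The first coordinate $t^{m_1-N}$ of $i_1(y)$ and the last coordinate $t^{-N}$ of $i_2(y)$ both lie outside $t^{-m_2}\F_{\ell^k}[[t]]$, so $y\notin B^{\bfX,k}_{m_2}$.
\end{itemize}

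Hence the true content of the lemma is exactly what you proved:
\begin{itemize}
\item (1) for simple rectifications, or more generally when one chart witnesses both $x\in B^{\bfX,k}_{m_2}$ and $y\in B^{\bfX,k}_{-m_1}(x)$;
\item (2) for $m_2\le m_1$, with any rectification;
\item (3) as stated.
\end{itemize}
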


The following lemma says that regular maps are uniformly continuous and bounded on balls, in a way which is also uniform on the residue field:

\begin{lemma}[{\cite[Proposition 3.5]{AGKS0}}]\label{lem:cont.bnd}
Let $\gamma:\bfX\to \bfY$  be a map of rectified algebraic varieties.  Then 
for any $m\in \N$ there is $m'>m$ s.t. for any $k$ and any $x\in B^{\bfX,k}_m$ we have 
\begin{enumerate}[(i)]
    \item \label{lem:cont.bnd:S1}$\gamma(B^{\bfX,k}_{m})\subset B^{\bfY,k}_{m'}$.
    \item \label{lem:cont.bnd:S2}$\gamma(B^{\bfX,k}_{-m'}(x))\subset B^{\bfY,k}_{-m}(\gamma(x))$.
\end{enumerate}
\end{lemma}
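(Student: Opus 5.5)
We reduce to the case of simple rectifications and then handle the simple case by coordinate estimates. Suppose first that both $\bfX$ and $\bfY$ carry simple rectifications, so that $\bfX\subset\bA^M$ and $\bfY\subset\bA^N$ are closed, and $\gamma$ is given by the restriction of polynomials $g_1,\dots,g_N\in\F_\ell[x_1,\dots,x_M]$. Since $\bfX$ is closed in $\bA^M$, every regular map on $\bfX$ extends to such polynomials. Let $d$ be the maximal degree of the $g_j$. All coefficients lie in $\F_\ell$, so they have valuation $0$. Hence for $x\in t^{-m}\F_{\ell^k}[[t]]^M$ every monomial of degree at most $d$ lies in $t^{-dm}\F_{\ell^k}[[t]]$. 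This gives (i) with $m'=dm$, independently of $k$.

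For (ii) in the simple case, take $x\in B_m^{\bfX,k}$ and $y=x+h$ with $h\in t^{m'}\F_{\ell^k}[[t]]^M$. Taylor expansion gives
\[
g_j(x+h)-g_j(x)=\sum_{0<|\beta|\le d} c_{j,\beta}(x)\,h^\beta,
\]
where the $c_{j,\beta}$ are polynomials with $\F_\ell$-coefficients (Hasse derivatives, so there is no division by factorials and the argument works in positive characteristic). Each $c_{j,\beta}(x)$ lies in $t^{-dm}\F_{\ell^k}[[t]]$. Each term with $|\beta|\ge1$ therefore lies in $t^{m'-dm}\F_{\ell^k}[[t]]$, and it suffices to take $m'\ge dm+m$. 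We then take $m'$ to be the maximum of the bounds needed for (i) and (ii).

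For general rectifications $\{\bfU_\alpha\}$ on $\bfX$ and $\{\bfV_\beta\}$ on $\bfY$, the main obstacle is that $\gamma(\bfU_\alpha)$ need not lie in a single $\bfV_\beta$. To handle this, we refine $\bfU_\alpha$ by the open sets $\bfU_\alpha\cap\gamma^{-1}(\bfV_\beta)$ and cover these by principal opens $(\bfU_\alpha)_f$. Each such open embeds in $\bA^{M+1}$ via $(i_\alpha,1/f)$. The problem is that a point of $B_m$ for the original rectification can have $|f|$ arbitrarily small, so it need not be bounded in the refined charts. The remedy is to use the finitely many functions $f_{\alpha\beta}$ that cut out $\gamma^{-1}(\bfV_\beta)\cap\bfU_\alpha$ ($\beta$ varying). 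These generate the unit ideal on $\bfU_\alpha$, so $\sum_\beta a_\beta f_{\alpha\beta}=1$ with polynomial $a_\beta$. On $B_m$ the $a_\beta$ are bounded by $t^{-cm}$, so some $f_{\alpha\beta}$ has $|f_{\alpha\beta}(x)|\ge|t|^{cm}$, the bound being uniform in $k$. Thus every point of $B^{\bfX,k}_m$ lies in a ball of radius $O(m)$ of one of finitely many refined simple charts mapping into a single $\bfV_\beta$, with $1/f$ bounded there. Applying the simple case chartwise gives (i).

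For (ii) we also need the ball $B_{-m'}^{\bfX,k}(x)$ to stay inside the refined chart. Since $|f(x)|\ge|t|^{cm}$ and $f$ is uniformly continuous by the simple-case estimate applied to $f$, taking $m'$ larger than $cm$ plus a constant guarantees $|f(y)|=|f(x)|$ on that ball. The remaining issue is comparing balls defined via $i_\alpha$ with balls in the refined embedding. Here $1/f$ changes by $(f(x)-f(y))/(f(x)f(y))$, which is controlled by the same estimates. Finally, balls in the target chart $\bfV_\beta$ are by definition contained in $B^{\bfY,k}_{-m}(\gamma(x))$. All constants depend only on degrees and the finitely many choices made, so they are independent of $k$.
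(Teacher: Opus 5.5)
\textbf{Part (i) is proved; part (ii) has a genuine gap, and it cannot be closed for the ball $B^{\bfX,k}_{-m'}(x)$ as defined.}

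Your simple-chart argument (Taylor expansion via Hasse derivatives, all coefficients in $\F_\ell$) is correct, and so is your reduction for (i). The relation $\sum a_{\beta}f_{\alpha\beta}=1$ on $\bfU_\alpha$ gives, uniformly in $k$, some $f=f_{\alpha\beta}$ with $|f(x)|\ge|t|^{cm}$. The simple case then applies to $\gamma$ on the refined chart $(\bfU_\alpha)_f\hookrightarrow\bA^{M+1}$. You may need several functions per $\beta$ to cover the non-principal open $\gamma^{-1}(\bfV_\beta)\cap\bfU_\alpha$, but that changes nothing.

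The gap is in (ii). By the definition in \S\ref{ssec:notions}, $B^{\bfX,k}_{-m'}(x)$ is the union of the sets $i_\alpha^{-1}(i_\alpha(x)+t^{m'}\F_{\ell^k}[[t]]^M)$ over \emph{all} charts $\alpha$ with $x\in U_\alpha$. You only control the piece coming from a chart $\alpha_0$ with $i_{\alpha_0}(x)\in t^{-m}\F_{\ell^k}[[t]]^M$. In another chart $\alpha_1$ containing $x$, the coordinates of $i_{\alpha_1}(x)$ can have arbitrarily negative valuation, so your Taylor bound gives nothing uniform there. Moreover, a point $y$ of the $\alpha_1$-piece need not lie in $U_{\alpha_0}$ at all, so the step $|f(y)|=|f(x)|$ does not apply to it.

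The gap cannot be closed with this definition; here is a counterexample.
Take $\bfX=\bA^2$ with charts $U_0=\bA^2$, $i_0(u,v)=(u,v,0)$, and $U_1=\{u\neq0\}$, $i_1(u,v)=(u,u^{-1},uv)$. The map $i_1$ is a closed embedding, since $v=u^{-1}\cdot uv$. Let $\bfY=\bA^1$ with its standard rectification, and $\gamma(u,v)=v$. Fix $m\ge 0$ and any $m'$.
The point $x=(t^{m'+1},0)$ lies in $B^{\bfX,k}_0\subset B^{\bfX,k}_m$ via $i_0$.
The point $y=(t^{m'+1},t^{-1})$ satisfies $i_1(y)-i_1(x)=(0,0,t^{m'})$, so $y\in B^{\bfX,k}_{-m'}(x)$.
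Yet $\gamma(y)-\gamma(x)=t^{-1}\notin t^{m}\F_{\ell^k}[[t]]$.

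So with balls around points defined as a union over all charts, (ii) can fail for non-simple rectifications of the source. What your argument actually proves is (ii) with $B^{\bfX,k}_{-m'}(x)$ replaced by $i_{\alpha}^{-1}(i_{\alpha}(x)+t^{m'}\F_{\ell^k}[[t]]^M)$ for a chart $\alpha$ in which $i_\alpha(x)\in t^{-m}\F_{\ell^k}[[t]]^M$. This includes the lemma as stated when $\bfX$ carries a simple rectification. The target rectification may be arbitrary, since on the target side the union over charts only helps.

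You should formulate (ii) in that form. There is also a minor slip in your threshold for $|f(y)|=|f(x)|$: it is a fixed multiple of $m$, namely $(c+\deg f-1)m$, not $cm$ plus a constant. This is harmless.

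The paper itself only cites \cite{AGKS0} for this lemma, and it only invokes part (i), in the proof of Lemma~\ref{lem:not.in.im}.
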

The following two corollaries imply that all the statements on balls that we formulate do not depend on the rectification.

\begin{cor}[{\cite[Corollary 3.6]{AGKS0}}]\label{lem:indep-rect}    
    Let $\bfX_1, \bfX_2$ be two copies of the same $\F_\ell$-variety with two (possibly different) rectifications. Let $\bfZ\subset \bfX_1$  be a closed subvariety. Then 
    
\begin{enumerate}
\item\label{lem:indep-rect:1}    
    for any $m\in \N$ there is  $m' \in \N$ s.t. for any $k\in \N$ we have:
    \begin{enumerate}
        \item\label{lem:indep-rect:1a} $B_m^{\bfX_1,k}\subset B_{m'}^{\bfX_2,k}$.
        \item for any $x\in \bfX_1(\F_{\ell^k}((t)))$ we have $B_m^{\bfX_1,k}(x)\subset B_{m'}^{\bfX_2,k}(x)$.
        \item\label{lem:indep-rect:1c} $B_m^{\bfX_1,k}(\bfZ)\subset B_{m'}^{\bfX_2,k}(\bfZ)$.
    \end{enumerate}    
            \item \label{lem:indep-rect:2} 
            For any \RamiA{$\mu$-rectification}s of $\bfX_i$  and  $m\in \N$, there exists $m'$ s.t. for any $k$ we have:  $$\mu_m^{\bfX_1,k}< \ell^{km'}\mu_{m'}^{\bfX_2,k}$$
    \end{enumerate}    
\end{cor}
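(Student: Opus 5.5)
The plan is to deduce everything from \Cref{lem:cont.bnd}, which is the only quantitative input available, applied to the identity map $\mathrm{id}:\bfX_1\to\bfX_2$ (and, where needed, to its inverse $\bfX_2\to\bfX_1$). Both are morphisms of rectified varieties, since a morphism of rectified varieties is just a morphism of the underlying varieties.

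For \eqref{lem:indep-rect:1a}, we apply part \eqref{lem:cont.bnd:S1} of \Cref{lem:cont.bnd} to $\mathrm{id}$. This gives $m'$ with $B^{\bfX_1,k}_m\subset B^{\bfX_2,k}_{m'}$ uniformly in $k$.

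For (b), note that $B_m^{\bfX,k}(x)$ for positive $m$ is a big neighbourhood of $x$, of size $t^{-m}$. The cleanest route is to write $i_\alpha(x)+t^{-m}\F_{\ell^k}[[t]]^M$ in terms of an absolute ball. A point $y$ in $B^{\bfX_1,k}_m(x)$ with $x\in B^{\bfX_1,k}_{m_0}$ lies in $B^{\bfX_1,k}_{\max(m,m_0)}$, but $x$ itself is arbitrary, so this does not give uniformity.

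Instead, I would reduce to the case of a simple rectification on each side. Then I would cover the chart $\bfU_\alpha\cap\bfV_\beta$ and use the fact that the transition map $i_\beta\circ i_\alpha^{-1}$ is polynomial on the closed subvariety $i_\alpha(\bfU_\alpha\cap \bfV_\beta)$. Concretely, I would apply \Cref{lem:cont.bnd} to the morphism
\[\bfX_1\times\bfX_1\to\bfX_2\times\bfX_2,\qquad (x,y)\mapsto(x,y),\]
together with the difference map, using the translation invariance of the additive structure on $\bA^M$. The translated ball $B_m(x)$ is the preimage under $(x,y)\mapsto y-x$ of a standard ball, so boundedness of regular maps on balls, now on the variety $\{(x,y)\}$ with the diagonal-type rectification, gives the required uniform $m'$.

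I expect this translation step to be the main obstacle. The bound must be uniform in $x$, which is not confined to a fixed ball, and handling that uniformity is where care is needed. If the difference-map argument fails, I would try to exploit the ultrametric structure, \Cref{lem:tri}\eqref{lem:tri:ultra}, to replace large translated balls by unions of small balls centred at nearby points.

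Part \eqref{lem:indep-rect:1c} then follows from (b) by taking the union over $z\in\bfZ(\F_{\ell^k}((t)))$.

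For \eqref{lem:indep-rect:2}, first use \eqref{lem:indep-rect:1a} to get containment of supports. Then compare densities. On each overlap $\bfU_\alpha\cap\bfV_\beta$ we have $\omega_\alpha=g\,\omega'_\beta$ with $g$ regular on the overlap; being regular there is what is needed, and invertibility is not required. By part \eqref{lem:cont.bnd:S1} of \Cref{lem:cont.bnd} applied to $g$, viewed as a map to $\bA^1$, we get $|g|\le \ell^{km''}$ on $B_m$, since $|t^{-m''}|_{\F_{\ell^k}((t))}$ contributes such a factor under the chosen normalization of absolute value. The finitely many charts contribute a bounded multiplicity, which is absorbed into $\ell^{km'}$ after enlarging $m'$.
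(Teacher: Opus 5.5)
Your treatment of (1a) is the intended one: apply \Cref{lem:cont.bnd}\eqref{lem:cont.bnd:S1} to the identity $\bfX_1\to\bfX_2$. The argument for (1b), and hence for (1c), has a genuine gap that cannot be closed. For $m\in\N$ these are statements about \emph{large} balls around arbitrary centres, and as literally written they are false.

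First, the non-uniformity you worried about is real. Rectify $\A^1$ by $x\mapsto x$ and by $x\mapsto (x,x^2)$ (use $x^3$ in characteristic $2$). For $x=t^{-N}$ and $y=x+t^{-m}\in B_m^{\bfX_1,k}(x)$, the second coordinate changes by $2t^{-N-m}+t^{-2m}$, which has valuation $-N-m$. So $y\notin B_{m'}^{\bfX_2,k}(x)$ once $N>m+m'$.

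Second, there is a chart obstruction even for bounded centres. Cover $\bfX_2=\A^1$ by $\A^1\smallsetminus\{1\}$ and $\A^1\smallsetminus\{0\}$, embedded by $x\mapsto(x,(x-1)^{-1})$ and $x\mapsto(x,x^{-1})$. Then $B_{m'}^{\bfX_2,k}(0)$ is a union only over charts containing $0$, so it never contains the point $1$. But $1\in B_m^{\bfX_1,k}(0)$ for the standard rectification $\bfX_1$. Taking $\bfZ=\{0\}$, this refutes (1c) as well.

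Your difference-map idea cannot get around the first example. \Cref{lem:cont.bnd} only controls a regular map on a \emph{bounded} ball of its source. The set of pairs $(x,y)$ with $i_\alpha(y)-i_\alpha(x)$ bounded contains the whole diagonal, so it lies in no bounded ball of $\bfX_1\times\bfX_1$, whatever rectification you choose. Your reduction to simple rectifications ignores the second example, and it is unavailable anyway when $\bfX$ is not affine.

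What does hold is the small-ball version with bounded centres, which is what the statement's placement as a corollary of \Cref{lem:cont.bnd} (cited from \cite{AGKS0}) points to. \Cref{lem:cont.bnd}\eqref{lem:cont.bnd:S2} applied to the identity gives, for each $m$, an $m'$ with $B_{-m'}^{\bfX_1,k}(x)\subset B_{-m}^{\bfX_2,k}(x)$ for all $k$ and all $x\in B_m^{\bfX_1,k}$. Note that only (1a) and (2) are invoked later in the paper.

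Your argument for (2) is sound in outline, but the bound on $g=\omega_\alpha/\omega'_\beta$ needs care. The function $g$ is regular on $\bfU_\alpha\cap\bfV_\beta$ but may have poles along $\bfU_\alpha\smallsetminus\bfV_\beta$. So it need not be bounded on $i_\alpha^{-1}(t^{-m}\F_{\ell^k}[[t]]^M)\cap\bfV_\beta(\F_{\ell^k}((t)))$. Here is how to fix it:
\begin{enumerate}
\item Writing $i'_\beta$ for the embedding of $\bfV_\beta$, rectify $\bfU_\alpha\cap\bfV_\beta$ by $(i_\alpha,i'_\beta)$. Its balls are exactly the loci where both embeddings are bounded.
\item Apply \Cref{lem:cont.bnd}\eqref{lem:cont.bnd:S1} to $g$ on these balls.
\item Use (1a) to cover $i_\alpha^{-1}(t^{-m}\F_{\ell^k}[[t]]^M)$ by such loci.
\end{enumerate}
The finitely many pairs of charts then cost only a factor $\ell^{kc}$.
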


\begin{cor}[{\cite[Lemma 6.2]{AGKS0}}]
    The property of a map $\gamma:\bfX\to \bfY$ being effectively surjective does not depend on the rectifications on the varieties $\bfX$ and $\bfY$.
\end{cor}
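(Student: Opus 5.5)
The statement is independence of effective surjectivity from the chosen rectifications. The plan is to reduce it to \Cref{lem:indep-rect}\eqref{lem:indep-rect:1a}, which compares the exhausting balls $B_m$ of two rectifications of the same variety uniformly in $k$.

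Concretely, fix two rectifications of $\bfX$, giving rectified varieties $\bfX_1,\bfX_2$, and two rectifications of $\bfY$, giving $\bfY_1,\bfY_2$. Assume $\gamma:\bfX_1\to\bfY_1$ is effectively surjective; we show $\gamma:\bfX_2\to\bfY_2$ is as well. The argument is a three-step chain of inclusions, for a given $m\in\N$:
\begin{enumerate}
\item Apply \Cref{lem:indep-rect}\eqref{lem:indep-rect:1a} to $\bfY_2,\bfY_1$. This gives $m_1$ such that $B^{\bfY_2,k}_m\subset B^{\bfY_1,k}_{m_1}$ for all $k$.
\item By effective surjectivity for the first pair of rectifications, there is $m_2$ such that $\gamma(B^{\bfX_1,k}_{m_2})\supset B^{\bfY_1,k}_{m_1}$ for all $k$.
\item Apply \Cref{lem:indep-rect}\eqref{lem:indep-rect:1a} to $\bfX_1,\bfX_2$. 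This gives $m_3$ such that $B^{\bfX_1,k}_{m_2}\subset B^{\bfX_2,k}_{m_3}$ for all $k$.
\end{enumerate}
Chaining these yields
$$\gamma(B^{\bfX_2,k}_{m_3})\supset\gamma(B^{\bfX_1,k}_{m_2})\supset B^{\bfY_1,k}_{m_1}\supset B^{\bfY_2,k}_m$$
for every $k$. Since $m_3$ depends only on $m$ and not on $k$, this is exactly \Cref{def:effsur} for the second pair of rectifications. By symmetry the property is independent of the choices.

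The only point needing care is that every constant is uniform in $k$. This is guaranteed because \Cref{lem:indep-rect} provides $m'$ before $k$ is quantified. That uniformity is really established in \Cref{lem:cont.bnd}, applied to the identity map, so no serious obstacle remains.
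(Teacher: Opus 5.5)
Your proposal is correct and is essentially the intended argument. The paper presents this statement as a corollary of \Cref{lem:indep-rect}\eqref{lem:indep-rect:1a}, which itself comes from \Cref{lem:cont.bnd} applied to the identity map, and your three-step chain of inclusions, with every constant fixed before $k$ is quantified, is exactly that deduction.
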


At some point we will need the following stronger version of \Cref{lem:indep-rect}
\eqref{lem:indep-rect:1a}:
\begin{lem}[{\cite[Lemma 3.7]{AGKS0}}]\label{lem:kot.bnd}
    Let $\bfX_1, \bfX_2$ be two copies of the same $\F_\ell$-variety with two (possibly different) rectifications. 
Then there exists $a\in \N$ s.t. 
    for any $m,k\in \N$ we have:
$$B_m^{\bfX_1,k}\subset B_{am+a}^{\bfX_2,k}.$$
\end{lem}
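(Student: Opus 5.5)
We have to prove \Cref{lem:kot.bnd}: there is $a\in\N$ such that $B_m^{\bfX_1,k}\subset B_{am+a}^{\bfX_2,k}$ for all $m,k$. The point is that the dependence of $m'$ on $m$ in \Cref{lem:indep-rect} can be taken affine linear. The plan is to express the coordinates of one rectification as polynomial functions of the coordinates of the other, and to bound valuations of polynomial values linearly in terms of valuations of the inputs.

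\textbf{Reduction to the affine pieces.} Write the two rectifications as $(\bfU_\alpha,i_\alpha)$ with $i_\alpha:\bfU_\alpha\to\bA^{M}$ and $(\bfV_\beta,j_\beta)$ with $j_\beta:\bfV_\beta\to\bA^{N}$. Fix $\alpha$. The sets $\bfU_\alpha\cap\bfV_\beta$ cover the affine variety $\bfU_\alpha$.

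First choose a refinement of this cover by principal opens $(\bfU_\alpha)_{g_\gamma}$, with $g_\gamma\in\cO(\bfU_\alpha)$, each contained in some $\bfV_{\beta(\gamma)}$. Since these cover $\bfU_\alpha$, the Nullstellensatz gives functions $h_\gamma\in\cO(\bfU_\alpha)$ with $\sum_\gamma h_\gamma g_\gamma^{r}=1$ for suitable $r$ (any fixed $r\ge 1$ works).

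On $(\bfU_\alpha)_{g_\gamma}$, each coordinate of $j_{\beta(\gamma)}$ is a regular function. So it has the form $P/g_\gamma^{s}$, where $P$ is the restriction of a polynomial in the coordinates of $\bA^M$ with $\F_\ell$-coefficients. Likewise choose polynomial lifts of $g_\gamma$ and $h_\gamma$. Only finitely many polynomials appear. Let $D$ bound their degrees and $s$ bound the exponents.

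\textbf{Valuation estimates.} Let $x\in i_\alpha^{-1}(t^{-m}\F_{\ell^k}[[t]]^M)$. Because all coefficients lie in $\F_\ell$, any of our polynomials $Q$ satisfies $\mathrm{val}\,Q(i_\alpha(x))\ge -Dm$, uniformly in $k$.

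From $\sum_\gamma h_\gamma g_\gamma^{r}=1$ evaluated at $x$, some term has valuation $\le 0$. Pick such a $\gamma$. Then
$$r\,\mathrm{val}\, g_\gamma(x)\le -\mathrm{val}\, h_\gamma(x)\le Dm,$$
so $\mathrm{val}\, g_\gamma(x)\le Dm$. In particular $g_\gamma(x)\neq 0$, so $x\in\bfV_{\beta(\gamma)}$.

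Each coordinate of $j_{\beta(\gamma)}(x)$ then has valuation at least
$$-Dm - sDm = -(1+s)Dm.$$
Hence $j_{\beta(\gamma)}(x)\in t^{-m'}\F_{\ell^k}[[t]]^N$ with $m'=(1+s)Dm$. Taking $a$ to be the maximum over $\alpha$ of $(1+s)D$, plus $1$ to absorb the case $m=0$ and constants, gives $B_m^{\bfX_1,k}\subset B_{am+a}^{\bfX_2,k}$.

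\textbf{Main obstacle.} The delicate step is the lower bound on how far $x$ is from the boundary of $\bfV_\beta$, i.e.\ the upper bound on $\mathrm{val}\, g_\gamma(x)$. This is exactly what the partition-of-unity identity from the Nullstellensatz provides, and it must be linear in $m$ and independent of $k$. Independence of $k$ holds because every identity is defined over $\F_\ell$ and the valuation estimates use only the ultrametric inequality.
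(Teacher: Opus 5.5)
Your proof is correct and complete. The identity $\sum_\gamma h_\gamma g_\gamma^{r}=1$ on the affine chart $\bfU_\alpha$ forces, at every point of $i_\alpha^{-1}(t^{-m}\F_{\ell^k}[[t]]^M)$, some $g_\gamma$ to have valuation at most $Dm$. That puts the point in $\bfV_{\beta(\gamma)}$, and the coordinates of $j_{\beta(\gamma)}$ there have valuation at least $-(1+s)Dm$. Because all the polynomials involved have coefficients in $\F_\ell$, this bound is linear in $m$ and independent of $k$, which is exactly the uniformity the statement requires.

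The paper gives no proof of this lemma; it quotes it from \cite[Lemma 3.7]{AGKS0}, so there is no in-paper argument to compare against. Your argument is the natural one: the partition-of-unity mechanism behind the equivalence of norms in \cite[\S18]{Kot}, recalled in \S\ref{sec:norm}, made uniform over the residue fields $\F_{\ell^k}$.

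A cosmetic point: you do not need the Nullstellensatz. The principal opens $D(g_\gamma)$ already cover the affine scheme $\bfU_\alpha$ over $\F_\ell$, so the $g_\gamma$ generate the unit ideal of $\cO(\bfU_\alpha)$ and you can take $r=1$.
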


The next result is an effective version of the open mapping theorem:
\begin{lemma}[{\cite[Theorem 4.2]{AGKS0}}]\label{lem:eff.imp}
Let $\gamma:\bfX\to \bfY$ be a smooth map of smooth \Rami{(rectified)} algebraic varieties.  Then for any $m$ there is $m'$ s.t. for any $k$ and any $x\in B^{\bfX,k}_m$ we have $$\gamma(B^{\bfX,k}_{-m}(x))\supset B^{\bfY,k}_{-m'}(\gamma(x)).$$
\end{lemma}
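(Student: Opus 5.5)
The plan is to prove this effective open mapping theorem by reducing to a local model and applying an effective implicit function theorem (Hensel-type argument) whose constants are uniform in $k$.

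\emph{Reduction to a local model.} Smooth morphisms are locally étale over a relative affine space. So cover $\bfX$ by finitely many open affines $\bfV_\beta$, each with a factorization
\[
\bfV_\beta \xrightarrow{\ e_\beta\ } \bfY'_\beta\times\bA^d \longrightarrow \bfY'_\beta,
\]
where $\bfY'_\beta\subset\bfY$ is open affine and $e_\beta$ is étale. By \Cref{lem:indep-rect}, the statement does not depend on the rectifications, at the cost of changing $m,m'$ by amounts independent of $k$. We may therefore choose rectifications adapted to this cover. The projection $\bfY'\times\bA^d\to\bfY'$ obviously satisfies the claim, since a ball in the product contains a product of balls. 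It remains to handle the étale map $e$, which we may take to be a standard étale map.

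\emph{The étale case.} Realize $\bfV\subset\bA^{N}$ as cut out by equations $g_1,\dots,g_r$ over $\bfW:=\bfY'\times\bA^d\subset \bA^{M}$. The extra coordinates are constrained by these equations, and the Jacobian determinant $J$ with respect to them is invertible on $\bfV$. The key is a uniform lower bound on $|J|$ on $B_m^{\bfX,k}$. Since $1/J$ is a regular function on $\bfV$, \Cref{lem:cont.bnd}(i) gives
\[
|J(x)|\ge |t|^{c(m)} \quad\text{on } B_m^{\bfX,k},
\]
with $c(m)$ independent of $k$. All coefficients of the $g_i$ lie in $\F_\ell$, and $x$ lies in a bounded ball. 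Hence the Taylor expansion of $g$ around $x$ has second-order terms bounded by $|t|^{-c'(m)}$, again uniformly in $k$.

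Now take $y'$ with $|y'-e(x)|\le |t|^{m'}$ for $m'$ large. Solve $g(y',z)=0$ by Newton iteration starting from $z_0=$ the fiber coordinates of $x$. The standard Hensel estimate converges as long as
\[
|g(y',z_0)| < |J|^2\,|t|^{c''}.
\]
Since $|g(y',z_0)|\le |t|^{m'-c'(m)}$, this holds once $m'$ is large compared with $c(m)$ and $c'(m)$. The resulting solution $z$ satisfies $|z-z_0|\le |g|/|J|$, which is smaller than $|t|^{m}$ for $m'$ large. Hence the preimage lies in $B^{\bfX,k}_{-m}(x)$. Nothing in this argument depends on the residue field, so $m'$ is uniform in $k$.

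\emph{Gluing.} The cover is finite, so take the maximum of the $m'$ over all charts. Balls around $x$ in different charts are comparable by \Cref{lem:indep-rect} and \Cref{lem:tri}, which handles points lying in several charts.

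\emph{Main obstacle.} The delicate step is the uniform Hensel estimate. It requires controlling simultaneously the lower bound on the Jacobian and the size of the higher Taylor terms on $B_m$, independently of $k$, and then checking that the local chart rectifications are compatible with the global ones up to a uniform shift. The first point relies on the fact that all data are defined over $\F_\ell$, via \Cref{lem:cont.bnd}.
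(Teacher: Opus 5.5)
\paragraph{Assessment.} Your local argument is sound. The étale-local factorization, the lower bound $|J|\ge |t|^{c(m)}$ on $B^{\bfX,k}_m$ from \Cref{lem:cont.bnd}, the Taylor bounds coming from the $\F_\ell$-coefficients, and the Newton--Hensel step together give, uniformly in $k$, the effective implicit function theorem for which \cite{AGKS0} is quoted. The paper itself gives no proof of this lemma.

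\paragraph{The gap is the gluing step.} You assert that the statement is independent of the rectifications and that small balls in different charts are comparable ``by \Cref{lem:indep-rect} and \Cref{lem:tri}''. But \Cref{lem:indep-rect} only compares balls $B_m$ of large radius ($m\in\N$). Comparing small balls $B_{-m}(y)$ in two charts requires $y$ to be bounded in both charts. By the definition in \S\ref{ssec:notions}, $B^{\bfY,k}_{-m'}(\gamma(x))$ is a union over \emph{all} charts of $\bfY$ containing $\gamma(x)$. This includes charts in which $\gamma(x)$ is unbounded, and on those pieces nothing is controlled.

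Here is a concrete example. Take $\bfX=\bA^2$ with its standard rectification and $\gamma=\mathrm{id}$. Give $\bfY=\bA^2$ (coordinates $a,b$) the rectification consisting of $(a,b)\mapsto(a,b,0)$ on $\bA^2$ and $(a,b)\mapsto(a,a^{-1},ab)$ on $\{a\neq 0\}$; the latter is a closed embedding onto $\{uv=1\}\subset\bA^3$. For $x=(t^N,0)\in B^{\bfX,1}_m$, the point $(t^N,t^{m'-N})$ lies in $B^{\bfY,1}_{-m'}(x)$, because its second-chart coordinates differ from those of $x$ by $(0,0,t^{m'})$. For $N>m'$, however, it is not in $B^{\bfX,1}_{-m}(x)=x+t^m\F_\ell[[t]]^2$.

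So with the union-over-all-charts definition the inclusion fails for every $m'$, and no chart-by-chart gluing can close this step. The same example shows that \Cref{lem:cont.bnd}(ii), applied to an identity map, cannot be used to compare small balls across rectifications either.

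\paragraph{What your argument does prove.} It proves the version in which the target ball is measured in a single chart where $\gamma(x)$ is bounded, for instance when $\bfY$ carries a simple rectification. For that version you must still supply two steps that you only name.

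\emph{Source side.} It suffices to show that a small ball around $x$ in an adapted chart $\bfV_\beta$ lies inside $B^{\bfX,k}_{-m}(x)$. Do this as follows:
\begin{enumerate}
\item Refine the adapted cover so that each $\bfV_\beta$ sits inside an original chart $\bfU_\alpha$.
\item Use \Cref{lem:indep-rect}\eqref{lem:indep-rect:1a} to find a $\bfV_\beta\subset\bfU_\alpha$ in which $x$ is bounded.
\item Apply uniform Lipschitz bounds for the regular inclusion $\bfV_\beta\hookrightarrow\bfU_\alpha$ on bounded sets.
\end{enumerate}
This is not a comparison of arbitrary small balls.

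\emph{Target side.} You need effective openness of $\bfY'_\beta\subset\bfY$ at points bounded in $\bfY'_\beta$. After shrinking, $\bfY'_\beta=\bfY_f$ is a principal open. Then this follows from your own Hensel step applied to the single equation $zf-1=0$. Its Jacobian $f$ is bounded below on $B^{\bfY'_\beta,k}_{m}$ by \Cref{lem:cont.bnd}.
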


The following is a criterion for effective subjectivity.
\begin{proposition}[{\cite[Theorem 6.3]{AGKS0}}]\label[proposition]{prop:crit-effsurj}
        Let $\gamma:\bfX\to \bfY$ be a smooth map of algebraic varieties that is  onto on the level of points for any   field. Then $\gamma$ is effectively surjective.
\end{proposition}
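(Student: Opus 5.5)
The plan is to reduce to the case where $\gamma$ has algebraic sections over a stratification of $\bfY$, and to handle the loss of control near the boundaries of the strata by a Hensel-type lifting argument. The Hensel step is made uniform in $k$ using the effective open mapping theorem (\Cref{lem:eff.imp}). By \Cref{lem:indep-rect} and the corollary following it, we may choose rectifications at our convenience. Covering $\bfY$ by affine opens reduces us to $\bfY$ affine with a simple rectification. Similarly, $\bfX$ may be replaced by a finite union of affine opens $\bfX_\alpha$ on which $\gamma$ factors as an étale map to $\bA^d_\bfY$ followed by projection. Surjectivity on points over every field is preserved, because $\gamma$ smooth implies that a nonempty fiber over a field $K$ with a $K$-point has a $K$-point in any given open chart containing it; more precisely, we only need that every $K$-point of $\bfY$ lifts to some chart.

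\textbf{Step 1 (sections on strata).} For every point $\eta\in\bfY$ the fiber $\gamma^{-1}(\eta)$ has a $\kappa(\eta)$-point. Spreading it out gives a locally closed irreducible $\bfY_\eta\ni\eta$ and a section $s_\eta:\bfY_\eta\to\bfX$ of $\gamma$ over it. By Noetherian induction we obtain a finite stratification $\bfY=\bigsqcup_j\bfY_j$ into smooth locally closed pieces together with sections $s_j:\bfY_j\to\bfX$.

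\textbf{Step 2 (points far from the boundary of a stratum).} Fix $m$, and let $y\in B_m^{\bfY,k}\cap \bfY_j(\F_{\ell^k}((t)))$. If $y$ is at bounded distance from $\overline{\bfY_j}\smallsetminus\bfY_j$, i.e.\ $y\notin B^{\bfY,k}_{-r}(\overline{\bfY_j}\smallsetminus\bfY_j)$ for a fixed $r$, then $y$ lies in a ball of $\bfY_j$ with respect to a rectification of $\bfY_j$ of the form $\{g_i\neq 0\}$. Its radius is bounded in terms of $m$ and $r$ only, since the coordinates $g_i^{-1}$ are bounded by $\ell^{\text{const}\cdot(m+r)}$. \Cref{lem:cont.bnd} applied to $s_j$ then bounds $s_j(y)$ in $B^{\bfX,k}_{m'}$ uniformly in $k$.

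\textbf{Step 3 (points near the boundary).} This is the main obstacle: points $y$ lying in the open stratum but extremely close to a lower stratum, where the section $s_j$ blows up. For such points I would not use $s_j$ at all. Instead, pick a point $z\in \bfY_{j'}(\F_{\ell^k}((t)))$ in a smaller stratum with $y\in B^{\bfY,k}_{-r}(z)$, with $z$ chosen in a bounded ball $B_{m_1}$ (using \Cref{lem:tri}). By induction on the strata, $z$ has a preimage $x\in B^{\bfX,k}_{m_2}$ with $m_2$ depending only on $m$. Since $\gamma$ is smooth, \Cref{lem:eff.imp} gives $r_0=r_0(m_2)$, independent of $k$, such that
$$\gamma(B^{\bfX,k}_{-m_2}(x))\supset B^{\bfY,k}_{-r_0}(z).$$
Choosing the threshold $r\ge r_0$ in Step 2 therefore produces a preimage of $y$ inside $B^{\bfX,k}_{m_2}$, using \Cref{lem:tri}\eqref{lem:tri:mon}.

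The delicate point is to organize the induction so that the thresholds are consistent. Order the strata by dimension and start from the closed ones, which are handled by Step 2 with no boundary. The open mapping threshold $r_0$ for stratum $j'$ depends only on the bound already obtained for the smaller strata, and this fixes the threshold $r$ used for the larger stratum $j$. All constants then depend only on $m$ and the finitely many strata and sections, never on $k$. Taking $m'$ to be the maximum over the finitely many cases gives $\gamma(B^{\bfX,k}_{m'})\supset B^{\bfY,k}_m$ for all $k$, which is the claim.
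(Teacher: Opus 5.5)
The paper gives no proof of this statement; it is quoted from \cite[Theorem 6.3]{AGKS0}. Your argument therefore has to stand on its own.

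The skeleton is reasonable: sections over strata, induction on strata, and \Cref{lem:eff.imp} near lower strata. Step 3 is actually routine. Given $z$ in a lower stratum with $y\in B^{\bfY,k}_{-r}(z)$ and a lift $x\in B^{\bfX,k}_{m_2}$ of $z$, \Cref{lem:eff.imp} and \Cref{lem:tri} finish at once.

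The genuine gap is in Step 2, in the phrase ``the coordinates $g_i^{-1}$ are bounded by $\ell^{\mathrm{const}\cdot(m+r)}$''. Put $F_k:=\F_{\ell^k}((t))$ and $\bfZ_j:=\overline{\bfY_j}\smallsetminus\bfY_j$, with the $g_i$ cutting out $\bfZ_j$.
\begin{itemize}
\item Your hypothesis $y\notin B^{\bfY,k}_{-r}(\bfZ_j)$ only says that no $F_k$-\emph{rational} point of $\bfZ_j$ lies within $\ell^{-r}$ of $y$.
\item What you need is $\max_i|g_i(y)|\geq\ell^{-R}$ with $R=R(m,r)$ independent of $k$.
\item Contrapositively: if all $g_i(y)$ are tiny, there must be an actual point of $\bfZ_j(F_k)$ near $y$, with constants independent of $k$.
\end{itemize}
That is a strong-approximation statement of Greenberg type, uniform in the residue field $\F_{\ell^k}$. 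Smallness of the $g_i(y)$ a priori only places $y$ near $\bfZ_j$ algebraically, for instance near points of $\bfZ_j$ defined over extensions of $F_k$. Producing a nearby rational point is precisely the nontrivial content. For one fixed $k$ some $R$ exists, by compactness of $\{y\in\overline{\bfY_j}(F_k)\cap B^{\bfY,k}_m:\ y\notin B^{\bfY,k}_{-r}(\bfZ_j)\}$. But nothing in your argument stops $R$ from growing with $k$, and uniformity in $k$ is the entire point of effective surjectivity.

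You cannot sidestep this by making the dichotomy algebraic, i.e.\ splitting according to whether $\max_i|g_i(y)|\geq\ell^{-R}$. Step 2 then becomes trivial, but Step 3 collapses: you no longer have a rational point $z$ of a lower stratum near $y$ to feed into the induction hypothesis and \Cref{lem:eff.imp}. Either way you need a uniform-in-$k$ approximation theorem: if $y$ nearly satisfies the equations of $\bfZ_j$, then $y$ is close to a point of $\bfZ_j(F_k)$.

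One could try to prove this by a Newton--Hensel induction along Jacobian and singular loci in the style of Greenberg's proof, checking that every constant depends only on data defined over $\F_\ell$. That is a substantial argument, and it, rather than Step 3, is where the substance of the proposition lies. As written, the proof is incomplete.
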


The following four statements  describe the behavior of the measures defined in \S\ref{ssec:notions} under push forward by a submersion. In particular they imply that all the statements that we formulate on these measures do not depend on the rectification.

\begin{lem}[{\cite[Lemma 3.9]{AGKS0}}]\label{lem.sub.bnd}
    Let $\gamma: \bfX\to \bfY$ be  a submersion of \RamiA{$\mu$-rectified} varieties. Then for any $m$ there is $m'$ s.t. $$\gamma_*(\mu_m^{\bfX,k})< \ell^{km'}\mu_{m'}^{\bfY,k}.$$
\end{lem}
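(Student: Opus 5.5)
Since the inequality concerns only the underlying varieties, their $\mu$-rectifications and the two balls $B_m^{\bfX,k}$, $B_{m'}^{\bfY,k}$, I will reduce to a local computation on charts and then use a uniform effective implicit function theorem. First I reduce to simple $\mu$-rectifications. The measure $\mu_m^{\bfX,k}$ is the sum over $\alpha$ of the chart measures restricted to the chart balls. It therefore suffices to bound each summand $\gamma_*\bigl(|\omega_\alpha|\cdot 1_{i_\alpha^{-1}(t^{-m}\F_{\ell^k}[[t]]^M)}\bigr)$. The target side can then be compared with any fixed rectification of $\bfY$ using \Cref{lem:indep-rect}\eqref{lem:indep-rect:2}, at the cost of enlarging $m'$. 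Next I refine the cover of $\bfX$ by finitely many affine opens $\bfV$ on which $\gamma$ factors as an étale map $\bfV\to \bfY_\beta\times \A^d$ followed by the projection, where $\bfY_\beta$ ranges over the affine charts of $\bfY$. This refinement is harmless: by \Cref{lem:indep-rect} again, $\mu_m$ for the old rectification is bounded by $\ell^{km'}\mu_{m'}$ for the new one. By \Cref{lem:cont.bnd}\eqref{lem:cont.bnd:S1}, the image of $B_m^{\bfX,k}$ lies in $B^{\bfY,k}_{m_1}$, which takes care of the support.

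It then remains to bound the density of the pushforward. On a chart, write $\omega_\alpha = \phi\cdot \gamma^*\omega_\bfY\wedge \eta$, where $\eta$ is a relative top form (for instance $dz_1\wedge\dots\wedge dz_d$ from the factorization) and $\phi$ is a regular function. The density of $\gamma_*(|\omega_\alpha|1_{B_m})$ at $y$ is $\int_{\gamma^{-1}(y)\cap B_m}|\phi\,\eta|$. On $B_m$, \Cref{lem:cont.bnd} gives $|\phi|\le \ell^{km_2}$. What is left is to show that the fiber volume $\int_{\gamma^{-1}(y)\cap B_m}|\eta|$ is at most $\ell^{km_3}$, uniformly in $k$ and $y$.

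This fiber volume bound is the main obstacle. My first approach is to cover $B_m^{\bfX,k}$ by balls $B_{-m''}^{\bfX,k}(x)$, using the effective implicit function theorem (\Cref{lem:eff.imp} and its relatives from \cite{AGKS0}) to choose $m''$ uniform in $k$ so that on each such ball the map $(\gamma,z):\bfV\to\bfY\times\A^d$ is an analytic isomorphism onto a polydisc. The number of such balls is at most $\ell^{kM(m+m'')}$, because each ball is a coset of $t^{m''}\F_{\ell^k}[[t]]^M$ inside $t^{-m}\F_{\ell^k}[[t]]^M$. Each ball meets a fiber $\gamma^{-1}(y)$ in a set that $z$ maps injectively into the polydisc $t^{-m_1}\F_{\ell^k}[[t]]^d$, so its $|\eta|$-volume is at most $\ell^{kdm_1}$. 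Summing over the balls gives a bound of the form $\ell^{km'}$ with $m'$ independent of $k$ and $y$, as required.

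The delicate point is that the injectivity radius $m''$ must be uniform in $k$. Because the étale map is not injective globally, I get this from the local uniform inverse function theorem of \cite{AGKS0} rather than from a global argument.
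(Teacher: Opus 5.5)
Your reductions are sound. You restrict to one chart of $\bfX$ and refine so that $\gamma$ factors as an \'etale map $e=(\gamma,z)\colon\bfV\to\bfY_\beta\times\A^d$ followed by the projection, with \Cref{lem:indep-rect}\eqref{lem:indep-rect:2} absorbing the change of $\mu$-rectification. You then use \Cref{lem:cont.bnd} for the support and for the factor $\phi$. This correctly reduces everything to a bound $\int_{\gamma^{-1}(y)\cap B_m^{\bfV,k}}|\eta|\le\ell^{km_3}$, uniform in $k$ and $y$.

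The gap is in this last step. Your covering argument needs a radius $m''$, independent of $k$, such that $e$ is injective on every ball $B_{-m''}^{\bfV,k}(x)$ with $x\in B_m^{\bfV,k}$. You attribute this to \Cref{lem:eff.imp} ``and its relatives''. But \Cref{lem:eff.imp} is an open-mapping statement: the image of a small ball contains a ball. It says nothing about injectivity, nor that the image is a polydisc. So the step you yourself single out as the main obstacle is not justified by anything you cite.

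The missing observation is that global non-injectivity is harmless. Since $e$ is \'etale, hence quasi-finite, there is a constant $A$ such that every fiber of $e$ has at most $A$ points over any field. This is the same device the paper uses in the proof of \Cref{thm:eq.almost.int}. The restriction $z|_{\gamma^{-1}(y)}$ is a local analytic isomorphism and $|\eta|$ is the pullback of Haar measure under it. Change of variables therefore gives
\[
\int_{\gamma^{-1}(y)\cap B_m^{\bfV,k}}|\eta|\;\le\; A\cdot\mu^{\A^d,k}\bigl(t^{-m_1}\F_{\ell^k}[[t]]^d\bigr)\;=\;A\,\ell^{kdm_1},
\]
where $z(B_m^{\bfV,k})\subset t^{-m_1}\F_{\ell^k}[[t]]^d$ by \Cref{lem:cont.bnd}. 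No covering by small balls is needed, and with this the proof goes through.

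If you want to keep your covering argument, the uniform injectivity radius can be proved directly. The diagonal of $e$ is open and closed in $\bfV\times_{\bfY_\beta\times\A^d}\bfV$. On the complementary component this gives an identity $1=\sum_j a_j(x,x')(x_j-x'_j)$ with $a_j$ regular. \Cref{lem:cont.bnd} bounds the $a_j$ on balls, which yields $\max_j|x_j-x'_j|\ge\ell^{-km_4}$ for distinct $x,x'\in B_m^{\bfV,k}$ with $e(x)=e(x')$.
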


\begin{cor}[{\cite[Corollary 3.10]{AGKS0}}]\label[cor]{cor:bnd.int}
    Let $\bfX$ be a \RamiA{$\mu$-rectified} variety. Then for any $m\in \N$ there exists $M\in \N$ s.t. for any $k\in \N $:
    $$\mu_m^{\bfX,k}(B_\infty^{\bfX,k})< \ell^{kM}.$$
\end{cor}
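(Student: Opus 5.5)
We deduce \Cref{cor:bnd.int} directly from \Cref{lem.sub.bnd}, applied to the structure map $\gamma:\bfX\to\bfY:=\operatorname{Spec}\F_\ell$ (a point). Since $\bfX$ is smooth over $\F_\ell$, the map $\gamma$ is a submersion. We equip the point with its standard $\mu$-rectification: the embedding into $\bA^0$ and the form $\omega=1$.

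With this choice, for every $m'$ and $k$ the ball $B_{m'}^{\bfY,k}$ is the single point. The measure $\mu_{m'}^{\bfY,k}$ is the unit mass at that point, so it has total mass $1$. \Cref{lem.sub.bnd} supplies $m'$, depending only on $m$, such that
$$\gamma_*(\mu_m^{\bfX,k})<\ell^{km'}\mu_{m'}^{\bfY,k}$$
for all $k$. Evaluating both sides on the whole point gives
$$\mu_m^{\bfX,k}(B_\infty^{\bfX,k})<\ell^{km'},$$
so we may take $M:=m'$.

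There is one mild caveat. If one prefers not to rely on the degenerate case $\dim\bfY=0$ of \Cref{lem.sub.bnd}, the bound can also be proved by a direct count, and this is where the only real work lies. By definition $\mu_m^{\bfX,k}$ is a finite sum over the charts $\alpha$ of $|\omega_\alpha|$ restricted to $i_\alpha^{-1}(t^{-m}\F_{\ell^k}[[t]]^M)$. It therefore suffices to bound each chart separately, uniformly in $k$.

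For a single chart, write $\omega_\alpha$ locally, via a projection $\bfU_\alpha\to\bA^d$ that is étale on a cover, as $g\,dx_1\wedge\dots\wedge dx_d$, where $g$ is a regular function. On the ball, the coordinates have valuation at least $-m$, so $|g|\le\ell^{kc_1m}$ for some constant $c_1$. Moreover, the fibers of an étale projection have cardinality bounded by its degree. Consequently the measure of the ball is at most
$$\deg\cdot\ell^{kc_1m}\cdot\ell^{kmd},$$
which is of the form $\ell^{kM}$ once the degree is absorbed into the exponent for $k\ge1$.

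The main obstacle in this direct route is making the local presentation of $\omega_\alpha$ uniform in $k$. This requires covering $\bfU_\alpha$ by finitely many $\F_\ell$-defined opens with étale coordinates. Such a cover exists because $\bfX$ is smooth and the data are defined over $\F_\ell$, but it is precisely the content that \Cref{lem.sub.bnd} already packages. For this reason the primary plan is the one-line reduction to \Cref{lem.sub.bnd} given above.
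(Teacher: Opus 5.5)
Your main argument is correct and is essentially the intended one. The paper gives no proof here and only cites \cite[Corollary 3.10]{AGKS0}, recalling it among the push-forward-by-submersion statements immediately after \Cref{lem.sub.bnd}. The bound is the special case of that submersion estimate for the structure map of $\bfX$ to a point with the unit form, whose target measure is a unit mass, exactly as you do.

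Your fallback direct count is unnecessary, and as sketched it has a flaw. The density $g=\omega_\alpha/(dx_1\wedge\dots\wedge dx_d)$ is regular only on the \'etale locus of the projection and is unbounded near its boundary. For example, $\omega=dy$ on $y^2=x$, projected to the $x$-line in odd characteristic, gives $g=1/(2y)$. So there is no bound $|g|\le \ell^{kc_1m}$ on the ball. One would instead choose the coordinate projection pointwise, using that the regular functions $dx_I/\omega_\alpha$ generate the unit ideal. Then at each point of the ball some $|dx_I/\omega_\alpha|$ is at least $\ell^{-kcm}$, and the bound follows by summing over the finitely many index sets $I$.
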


\begin{lemma}[{\cite[Corollary 6.8]{AGKS0}}]\label{cor:push.bnd.above}    
    Let $\gamma: \bfX\to \bfY$ be  a submersion of \RamiA{$\mu$-rectified} varieties. Assume that $\gamma$ is effectively surjective. Then for any $m\in \N$ there is $m'\in \N$ s.t. for any $k\in \N$  we have $$\mu_{m}^{\bfY,k} < \ell^{km'}\gamma_*(\mu_{m'}^{\bfX,k})$$
\end{lemma}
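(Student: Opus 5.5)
This is \cite[Corollary 6.8]{AGKS0}, so the plan is to reconstruct it from \Cref{lem:eff.imp} and the effective surjectivity of $\gamma$. The idea is to bound $\mu_m^{\bfY,k}$ of any small ball $B^{\bfY,k}_{-n}(y)$ from above by a fixed power of $\ell^k$ times $\gamma_*(\mu_{m'}^{\bfX,k})$ of that ball. Since $\mu_m^{\bfY,k}$ is built from Haar-type measures on a fixed finite number of charts, this ball-by-ball comparison gives the inequality of measures.

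\textbf{Step 1 (choosing the parameters).} Given $m$, use effective surjectivity to pick $m_1$ with $\gamma(B^{\bfX,k}_{m_1})\supset B^{\bfY,k}_m$ for all $k$. Then apply \Cref{lem:eff.imp} with $m_1$ to get $m_2$ such that, for every $x\in B^{\bfX,k}_{m_1}$,
$$\gamma(B^{\bfX,k}_{-m_1}(x))\supset B^{\bfY,k}_{-m_2}(\gamma(x)).$$
By \Cref{lem:tri}\eqref{lem:tri:mon}, the ball $B^{\bfX,k}_{-m_1}(x)$ lies in $B^{\bfX,k}_{m'}$ for a suitable $m'$ depending only on $m_1$.

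\textbf{Step 2 (local comparison).} Fix $y\in B^{\bfY,k}_m$ and a preimage $x\in B^{\bfX,k}_{m_1}$. The restriction of $\gamma$ to the ball $D=B^{\bfX,k}_{-m_1}(x)$ maps onto a set containing $E=B^{\bfY,k}_{-m_2}(y)$. I want a lower bound for the density of $\gamma_*(\mu_{m'}^{\bfX,k}|_D)$ on $E$, uniform in $k$. The density at $y'\in E$ is the fiber integral of the Gelfand--Leray form over $\gamma^{-1}(y')\cap D$. The fiber is nonempty, and since $\gamma$ is smooth, a relative version of \Cref{lem:eff.imp} (the effective implicit function theorem) shows that it contains a ball of radius $\ell^{-kM}$-type in the fiber. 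On that ball the Jacobian data are bounded by $\ell^{kM}$ by \Cref{lem:cont.bnd}, with $M$ depending only on $m,m_1,m_2$. So the density is at least $\ell^{-kM'}$.

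\textbf{Step 3 (globalizing).} On $B^{\bfY,k}_m$ the density of $\mu_m^{\bfY,k}$ is at most $\ell^{kM''}$, by boundedness of the forms $\omega_\alpha$ on balls (as in \Cref{cor:bnd.int} and \Cref{lem:indep-rect}\eqref{lem:indep-rect:2}). Every $y\in B^{\bfY,k}_m$ lies in such a ball $E$ on which the density of $\gamma_*(\mu_{m'}^{\bfX,k})$ is at least $\ell^{-kM'}$. Hence
$$\mu_m^{\bfY,k}\le \ell^{k(M'+M'')}\gamma_*(\mu_{m'}^{\bfX,k}).$$
Enlarging $m'$ absorbs the constants and makes the inequality strict.

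\textbf{Main obstacle.} The hard part is Step 2: obtaining the lower bound on the fiber volume uniformly in $k$. It requires the effective implicit function theorem in a form that produces a product-ball chart $D\cong E\times(\text{fiber ball})$ with controlled Jacobian. I would first try to extract this chart from the proof of \Cref{lem:eff.imp}, working in local coordinates in which $\gamma$ is a projection after an $\ell^{kM}$-bounded change of variables.
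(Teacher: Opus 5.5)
The paper gives no proof of this lemma; it is quoted from \cite[Corollary 6.8]{AGKS0}, so your argument has to stand on its own. Its skeleton is sound: effective surjectivity to find a preimage $x\in B^{\bfX,k}_{m_1}$, \Cref{lem:eff.imp} to get a ball $E$ around $y$ inside $\gamma(D)$, a uniform lower bound on the density of the pushforward on $E$, and an upper bound on the density of $\mu^{\bfY,k}_m$. \textbf{The gap is Step 2.} It carries all the content, and it is only asserted.

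The fibers $\gamma^{-1}(y')$ are varieties over $\F_{\ell^k}((t))$, not over $\F_\ell$. So neither \Cref{lem:eff.imp} nor \Cref{lem:cont.bnd} says anything about them directly. Moreover, getting a fiber ball of size $t^{M}$ with $M$ independent of $k$ and $y'$ from an implicit function theorem needs a uniform \emph{lower} bound on some Jacobian minor of $\gamma$ at the fiber points. \Cref{lem:cont.bnd} only bounds regular functions from above, so it cannot supply this. As written, Step 2 is an unproved effective implicit function theorem in families, and the plan to extract it from the proof of \Cref{lem:eff.imp} leaves the main work undone.

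You can close the gap using only the quoted lemmas, with no fiberwise analysis. Since $\gamma$ is smooth, $\bfX$ is covered by finitely many affine opens $\bfX_j$ carrying functions $h^{(j)}:\bfX_j\to\A^d$ such that $\Psi_j:=(\gamma,h^{(j)}):\bfX_j\to\bfY\times\A^d$ is \'etale. By \Cref{lem:indep-rect} you may use a rectification of $\bfX$ each of whose charts lies in some $\bfX_j$.

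\begin{enumerate}
\item Apply \Cref{lem:eff.imp} to $\Psi_j$, which is a smooth map of $\F_\ell$-varieties. With the product rectification on the target, $\Psi_j(D)\supset B^{\bfY,k}_{-m_2}(y)\times\bigl(h^{(j)}(x)+t^{m_2}\F_{\ell^k}[[t]]^d\bigr)$.
\item Because $\Psi_j$ is \'etale, the density of the pushforward of $|\omega_\alpha|1_D$ at a point $z$, relative to $|\omega^{\bfY}_\beta\wedge dv|$, is $\sum|J(x')|^{-1}$ over the preimages $x'\in D$ of $z$. Here $\omega^{\bfY}_\beta$ is a chart form of $\bfY$ near $y$, $dv$ is the standard form on $\A^d$, and $J:=\Psi_j^*(\omega^{\bfY}_\beta\wedge dv)/\omega_\alpha$ is a regular function.
\item These preimages are bounded in the chart of $\bfX$ and, via $\gamma$, in the chart $\beta$. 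So \Cref{lem:cont.bnd} gives $|J(x')|\le\ell^{kC}$. Since every point of the product ball has at least one preimage, the density there is at least $\ell^{-kC}$.
\item Projecting to $\bfY$ gives density at least $\ell^{-k(C+m_2d)}$ on $B^{\bfY,k}_{-m_2}(y)$.
\end{enumerate}

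This is exactly the estimate Step 2 needs. The coordinates $h^{(j)}$ play the role of your fiber ball, and the required non-degeneracy is supplied by \Cref{lem:eff.imp} applied to the \'etale map.
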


\begin{lemma}[{\cite[Theorem 5.7]{AGKS0}}]\label{lem:push.mes.sm}
    Let $\gamma:\bfX_1 \to \bfX_2$ be a  smooth map  of  \RamiA{$\mu$-rectified} varieties. Then for any $m\in \N$ there is  $m' \in \N$ s.t. for any $k\in \N$ and any $m$-smooth function $g\in C^\infty_{\RamiD{c}}(B_{\infty}^{\bfX_1,k})$ there is an $m'$-smooth function $f\in C^\infty_c(B_{m'}^{\bfX_2,k})$ s.t.:
    $$\gamma_*(g\mu_m^{\bfX_1,k})=f\cdot \mu_{m'}^{\bfX_2,k}.$$
\end{lemma}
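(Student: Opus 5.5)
The plan is to reduce to a local statement on a single affine chart, and then use two facts already recalled: \Cref{lem.sub.bnd}, which pushes the measure forward, and \Cref{lem:eff.imp}, which controls the smoothness scale on the target.

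\textbf{Step 1: reduction to a single chart.} Write $g\mu_m^{\bfX_1,k}$ as a sum over the charts $\bfU_\alpha$ of the rectification of $\bfX_1$. Each summand is $g\cdot 1_{i_\alpha^{-1}(t^{-m}\F_{\ell^k}[[t]]^M)}|\omega_\alpha|$.

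The indicator function is $m$-smooth on $\bfU_\alpha$, since $t^{-m}\F_{\ell^k}[[t]]^M$ is a union of cosets of $t^m\F_{\ell^k}[[t]]^M$. By \Cref{lem:indep-rect}, the summand is then $m''$-smooth with respect to the simple rectification of $\bfU_\alpha$, for some $m''$ independent of $k$. Since $\bfX_1$ has only finitely many charts, it suffices to treat each chart separately. We may therefore assume $\bfX_1=\bfU$ with a simple $\mu$-rectification and a fixed form $\omega$.

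\textbf{Step 2: the density is bounded.} By \Cref{lem.sub.bnd}, $\gamma_*(|g|\mu_m^{\bfX_1,k})<\ell^{km'}\mu_{m'}^{\bfX_2,k}$. One also needs $\|g\|_\infty$ to be bounded; presumably $g$ is normalized, or else the statement is read up to scaling. This gives an absolutely continuous pushforward with density $f$ supported in $B_{m'}^{\bfX_2,k}$, using \Cref{lem:cont.bnd}(i) for the support.

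\textbf{Step 3: $f$ is $m'$-smooth.} This is the main point. Fix $y$ and $y'\in B^{\bfX_2,k}_{-m'}(y)$. We want a measure-preserving bijection between the fibers over nearby points, preserving $g$.

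The approach is to use the effective implicit function theorem from \cite{AGKS0}, of which \Cref{lem:eff.imp} is the surjectivity part. Locally on $B_m^{\bfX_1,k}$, $\gamma$ looks like a coordinate projection up to an analytic change of variables whose Jacobian is uniformly controlled. This is achieved by choosing, for each point, coordinates in which the relative form is a product.

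Concretely, cover $B_m^{\bfX_1,k}$ by balls $B_{-m_1}(x)$, where $m_1\ge m$ is chosen so that on each ball:
\begin{itemize}
\item $g$ is constant;
\item $\gamma$ maps $B_{-m_1}(x)$ onto $B_{-m'}(\gamma(x))$;
\item $\gamma$ is translation-equivariant in suitable coordinates, meaning that for $y'$ near $y$ there is an isometry of fibers, preserving the Gelfand–Leray measure, that moves the fiber over $y$ to the fiber over $y'$.
\end{itemize}
Given such a cover, $f(y)=\sum_{\text{balls}} g\cdot \mathrm{vol}(\text{fiber piece})$ is constant on $B_{-m'}(y)$.

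\textbf{Main obstacle.} The hardest part is the uniformity in $k$. We must show that $m_1$ and $m'$ can be chosen independently of $k$, and that the Gelfand–Leray density $|\omega/\gamma^*\omega_2|$ restricted to fibers is itself $m_1$-smooth.

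For this I would write $\gamma$ locally, after a coordinate change, as $(x_1,\dots)\mapsto$ a polynomial map with invertible Jacobian minor. The determinant is a polynomial, so it is uniformly continuous by \Cref{lem:cont.bnd}. Its absolute value is bounded below on $B_m$ by a constant of the form $\ell^{-kc}$, with $c$ depending only on $m$, because it is an invertible function on a bounded ball. Being non-archimedean, its absolute value is then locally constant at a scale depending only on $m$.

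Combining the constancy of $|\det|$ with Hensel-type fiber isomorphisms, which are available from \cite{AGKS0}, yields constancy of the fiber volumes. That in turn gives the $m'$-smoothness of $f$.
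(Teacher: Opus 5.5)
Your plan (split $g\mu_m^{\bfX_1,k}$ over the source charts, linearize $\gamma$ on small balls with constants independent of $k$, read off the density piece by piece) is the natural one. The paper itself gives no argument; it only cites \cite[Theorem 5.7]{AGKS0}. The gap is that the step carrying all the content is only asserted, and part of it, as you formulate it, is false. The recalled \Cref{lem:eff.imp} only gives $\gamma(B^{\bfX_1,k}_{-m}(x))\supset B^{\bfX_2,k}_{-m'}(\gamma(x))$, with no injectivity and no information on fibers. Your second bullet, that $\gamma$ maps $B_{-m_1}(x)$ onto $B_{-m'}(\gamma(x))$, fails. Take the submersion $\gamma:\A^3\to\A^2$, $(u,v,w)\mapsto(u,v+uw)$, and $x=(0,0,t^{-m})\in B^{\A^3,k}_m$. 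Then $\gamma(B^{\A^3,k}_{-m_1}(x))=\{(a,s): a\in t^{m_1}\F_{\ell^k}[[t]],\ s-t^{-m}a\in t^{m_1}\F_{\ell^k}[[t]]\}$. This is a sheared lattice, not a ball once $m\geq 1$, for every $m_1$.

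What is true, and suffices, is a uniform Newton--Hensel statement that you must prove or quote precisely. Pass to principal opens on which a maximal minor of $d\gamma$ is invertible, and set $\Phi=(\gamma,\pi)$ with $\pi$ complementary coordinates. Then there are $m_1,m'$, independent of $k$, such that on every ball $B_{-m_1}(x)$ meeting the support, $\Phi$ is a bijection onto a coset of an $\F_{\ell^k}[[t]]$-lattice containing $t^{m'}\F_{\ell^k}[[t]]^{\dim\bfX_1}$. Moreover, the Jacobian factor $|\omega/\Phi^*(\mathrm{std})|$ is constant on that ball; your valuation bound for invertible functions is what makes this uniform in $k$. Each piece then pushes forward to a constant times Haar measure on a set invariant under $t^{m'}$-translations, which is exactly $m'$-smoothness. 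The target is a closed subvariety of some $\A^{M}$ rather than an affine space, so it needs the same treatment before ``translation'' makes sense there.

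Second, your Step 1 reduces only the source, and in Step 3 you silently compute $f$ in a single target chart. That restriction is not cosmetic: for a non-simple target rectification the statement as quoted fails. Give $\bfX_2=\A^1$ the two charts $\A^1\to\A^2$, $x\mapsto(x,0)$, and $\bG_m\to\A^2$, $x\mapsto(x,x^{-1})$, both carrying $dx$. Let $\gamma=\mathrm{id}$ from $\A^1$ with its standard rectification, $m=0$, and $g=1_{\F_{\ell^k}[[t]]}$. Then $\gamma_*(g\mu_0^{\bfX_1,k})=1_{\F_{\ell^k}[[t]]}|dx|$. On the other hand, $\mu^{\bfX_2,k}_{m'}$ has density $2$ on $\{-m'\leq\mathrm{val}\leq m'\}$ and density $1$ on $\{\mathrm{val}>m'\}$. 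Hence $f(t^{m'})=\frac12$ and $f(t^{m'+1})=1$, although both points lie in $B^{\bfX_2,k}_{-m'}(t^{m'})\supset t^{m'}\F_{\ell^k}[[t]]$. So the target rectification must be assumed simple. This is harmless for the lemma's only use, \Cref{thm:alm.int.fib}, where the target is $\uc$ with its standard rectification.

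Two minor points. Your concern in Step 2 about normalizing $g$ is misplaced: the statement asks for no bound on $f$, only its support (given by \Cref{lem:cont.bnd}(i)) and its smoothness. In Step 1 you do not need \Cref{lem:indep-rect}: $g|_{\bfU_\alpha}$ is directly $m$-smooth for the single chart, since $B^{\bfX_1,k}_{-m}(x)$ contains the chart ball. That lemma, as recalled, concerns the large balls $B_m$, not the small ones.
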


\section{Basic geometry of the Chevalley map $p$}\label{sec:basic}
We recall that $p:\ug\to \uc$, is the Chevalley map sending \Rami{a matrix} to its characteristic polynomial. 
We need \Rami{further notation:}
\begin{notation}
$ $
\begin{enumerate}
    \item 
Let $\ut$\index{$\ut$} be the standard Cartan subalgebra of $\ug$, consisting of diagonal matrices.
    \item Let $W:=S_n$\index{$W$} be the Weyl group of $\GL_n$.
\item Identify $\fc\cong \ft//W$ by the Chevalley restriction theorem (see e.g. \cite[\S 23]{Humph}). We will also identify it with  
the affine space $\A^n$.
    \item  Denote by $q:\ut\to \uc$\index{$q$} the quotient map.  
    \item Denote by $\ug^{rss}$\index{$\ug^{rss}$} the locus of regular semi-simple elements (i.e. matrices with $n$ different eigenvalues over the algebraic closure).
    \item Denote $\uc^{rss}:=p(\ug^{rss})$ the locus of separable polynomials in $\uc$.\index{$\uc^{rss}$}
\end{enumerate}
\end{notation}

The following lemma follows immediately from miracle flatness (see \cite[\href{https://stacks.math.columbia.edu/tag/00R4}{Lemma 00R4}]{SP}):
\begin{lemma}\label{lem:flatFactor}
    $q:\ut\to \uc$ is flat.
\end{lemma}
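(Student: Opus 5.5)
The plan is to apply miracle flatness, which says that a morphism $\phi:\bfX\to\bfY$ of Noetherian schemes with $\bfX$ Cohen--Macaulay, $\bfY$ regular, and all fibers of dimension $\dim\bfX-\dim\bfY$ (at each point) is flat. So it suffices to check three things for $q:\ut\to\uc$, and each is standard in any characteristic.

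\textbf{Source and target.} $\ut\cong\A^n$ is smooth, hence regular and in particular Cohen--Macaulay. Also $\uc\cong\A^n$ is regular, because the ring of invariants $\F_\ell[x_1,\dots,x_n]^{S_n}$ is a polynomial ring in the elementary symmetric polynomials over any base ring; this is the fundamental theorem of symmetric polynomials, which is characteristic free.

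\textbf{Map and fibers.} Concretely, $q$ is the map $(x_1,\dots,x_n)\mapsto(e_1(x),\dots,e_n(x))$, i.e.\ it sends $x$ to the coefficients of $\prod_i(T-x_i)$. The ring $\F_\ell[x_1,\dots,x_n]$ is integral over $\F_\ell[e_1,\dots,e_n]$, since each $x_i$ is a root of the monic polynomial $\prod_j(T-x_j)$, whose coefficients are $\pm e_k$. Hence $q$ is finite, and every fiber is zero-dimensional. Since $\dim\ut-\dim\uc=n-n=0$, the dimension hypothesis holds at every point of $\ut$. Moreover $q$ is surjective, because every monic polynomial splits over the algebraic closure. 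Surjectivity is not needed for flatness itself.

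Given these three checks, \cite[Lemma 00R4]{SP} applies locally at each point $x\in\ut$: the local ring $\cO_{\uc,q(x)}$ is regular, $\cO_{\ut,x}$ is Cohen--Macaulay, and
$$\dim\cO_{\ut,x}=\dim\cO_{\uc,q(x)}+\dim\cO_{q^{-1}(q(x)),x}.$$
This equality holds because both sides of the local dimension count are computed on polynomial rings and the fiber local ring has dimension $0$.

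I do not expect any genuine obstacle. The only point needing care is the local dimension equality, since miracle flatness is stated for local rings. I would handle it by noting that $q$ is finite and dominant between irreducible varieties of the same dimension, so going-down-free dimension counting (via finiteness) gives $\dim\cO_{\ut,x}=\dim\cO_{\uc,q(x)}$ at closed points. For non-closed points the same equality follows from the standard dimension formula for finite maps between finite-type schemes over a field.

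As an alternative cross-check, one can show directly that $\F_\ell[x]$ is a free $\F_\ell[e]$-module of rank $n!$, with basis the monomials $x_1^{a_1}\cdots x_n^{a_n}$, $a_i<i$. This is also characteristic free and gives flatness immediately.
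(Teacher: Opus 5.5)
Your proposal is correct and follows the same route as the paper, which simply says the lemma ``follows immediately from miracle flatness'' \cite[Lemma 00R4]{SP}. Your checks of the hypotheses (both $\ut$ and $\uc$ are affine $n$-spaces, hence regular, and $q$ is finite, so $\dim\cO_{\ut,x}=\dim\cO_{\uc,q(x)}$ with zero-dimensional fibers) are exactly what the paper leaves implicit. The free-basis argument you give as a cross-check is also a valid, characteristic-free proof on its own.
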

\begin{notation}    
Let $I$ be a finite set.
For $w \in \mathbb{Z}^I$ define the $w$-degree\index{$w$-degree} of a monomial 
$\prod_{i\in I} x_i^{a_i}$ to be $\sum_{i\in I} a_i w(i)$.
For a polynomial 
$f \in k[\mathbb{A}^I]$,  we denote by $\sigma_w(f)$\index{$\sigma_w(f)$} the sum of the monomials of $f$ with the
highest $w$-degree. 
\end{notation}

\begin{proposition}    
\label[proposition]{prop:elimination}
Let $I, J$ be finite sets and let 
$\psi = (\psi_j)_{j \in J} : \mathbb{A}^I \to \mathbb{A}^J$
be a morphism such that $\psi(0) = 0$, fix $w \in \mathbb{Z}^I$.
Let $\sigma_w(\psi) = (\sigma_w(\psi_j))_{j \in J}$. If $\sigma_w(\psi)$ is flat at $0$, then so is $\psi$.
\end{proposition}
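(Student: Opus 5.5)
We argue by a Gröbner-degeneration (Rees algebra) argument. First we may assume $w\in\Z_{>0}^I$ componentwise, or at least reduce to a setting where the weight filtration is exhaustive and separated near $0$. For a nonzero polynomial $f$, the quantity $\sigma_w(f)$ is the top-degree part with respect to the increasing filtration by $w$-degree. Introduce a parameter $s$ and set
$$\Psi_j(x,s):=s^{d_j}\psi_j(s^{-w(i)}x_i),\qquad d_j:=\deg_w\sigma_w(\psi_j).$$
Then $\Psi_j\in k[s][\A^I]$, $\Psi_j|_{s=0}=\sigma_w(\psi_j)$, and $\Psi_j|_{s=1}=\psi_j$. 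Let $\Phi:\A^I\times\A^1\to\A^J\times\A^1$ be the map $(x,s)\mapsto(\Psi(x,s),s)$. For $s\neq0$ the fibre of $\Phi$ over $s$ is obtained from $\psi$ by the invertible rescaling $x_i\mapsto s^{-w(i)}x_i$ and $y_j\mapsto s^{d_j}y_j$. This is the standard degeneration of $\psi$ to its initial form.

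Flatness at $0$ is equivalent, by miracle flatness (Stacks 00R4, as used for \Cref{lem:flatFactor}), to the statement that the local ring $\cO_{\A^I,0}/(\psi_j)$ has dimension $|I|-|J|$, i.e.\ that the $\psi_j$ form a regular sequence at $0$. Equivalently, $\dim_0\psi^{-1}(0)=|I|-|J|$. So the assumption says that $\dim_0\sigma_w(\psi)^{-1}(0)=|I|-|J|$, and we must show $\dim_0\psi^{-1}(0)\le|I|-|J|$ (the reverse inequality is automatic by Krull). The key step is upper semicontinuity of fibre dimension for the family $\Psi^{-1}(0)\subset\A^I\times\A^1\to\A^1$. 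The fibre over $s=0$ is $\sigma_w(\psi)^{-1}(0)$, and the fibre over $s=1$ is $\psi^{-1}(0)$.

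If all $w(i)>0$, the scaling action $x\mapsto(s^{w(i)}x_i)$ contracts to $0$, and we argue as follows. Let $Z$ be an irreducible component of $\psi^{-1}(0)$ through $0$, of dimension $e$. Its orbit closure $\overline{\{(s^{w}\cdot z,s)\}}$ inside $\Psi^{-1}(0)$ has dimension $e+1$ and contains $(0,0)$. Its special fibre at $s=0$ has dimension at least $e$ and is contained in $\sigma_w(\psi)^{-1}(0)$, passing through $0$. Hence $e\le|I|-|J|$.

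If some weights are $\le0$, we first shift $w$ by a large positive multiple of a vector that does not change the initial forms. In general this is not possible, so instead we work locally: we replace $\A^I$ by $\Spec$ of the completion/local ring and use the Rees algebra $\bigoplus_d F_{\ge d}$ of the weight filtration on $\cO_{\A^I,0}$. We then invoke the general principle that if $\mathrm{gr}(A/\mathfrak a)$ has the expected dimension, then so does $A/\mathfrak a$, since $\dim\mathrm{gr}=\dim$. I expect this to be the main obstacle: we must check that the initial ideal of $(\psi_j)$ contains $(\sigma_w(\psi_j))$, so that its dimension is at most that of $\sigma_w(\psi)^{-1}(0)$, and that the dimension is preserved under passage to the associated graded for weights of mixed sign at the point $0$. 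For positive weights this is the clean argument above, and in the paper's applications the weights will presumably be positive.
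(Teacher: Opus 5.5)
As written, your argument proves only the case $w\in\mathbb{Z}_{>0}^I$. For weights with nonpositive entries you give a Rees-algebra sketch and leave its key points unchecked. That case cannot be set aside: the statement is for arbitrary $w\in\mathbb{Z}^I$, and the paper itself applies it in \Cref{cor:pqflat} with $w(i,j)=\delta_{i,j}$, which vanishes off the diagonal. The sketch would also fail as written. A decreasing filtration $F_{\ge d}$ of $\mathcal{O}_{\mathbb{A}^I,0}$ produces lowest-degree parts, whereas $\sigma_w$ is the highest $w$-degree part. Highest-degree parts are not compatible with localizing at $0$: if $w(y)>0$ then $\sigma_w(1+y)=y$, so a unit at $0$ gets an initial form vanishing at $0$.

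The gap is only apparent, because your orbit-closure argument never uses positivity. For every $w\in\mathbb{Z}^I$ the map $(z,s)\mapsto(s^{w}z,s)$ is an automorphism of $\mathbb{A}^I\times\mathbb{G}_m$. Hence $W:=\{(s^{w}z,s): z\in Z,\ s\neq 0\}\cong Z\times\mathbb{G}_m$ is irreducible of dimension $e+1$ and lies in $\Psi^{-1}(0)$. Moreover $(0,0)\in\overline{W}$ simply because $0\in Z$ forces $\{0\}\times\mathbb{G}_m\subset W$; no contraction is involved. Krull's principal ideal theorem for $s$ on $\overline{W}$ then gives a component of $\overline{W}\cap\{s=0\}\subset\sigma_w(\psi)^{-1}(0)\times\{0\}$ through $(0,0)$ of dimension $e$. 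So $e\le|I|-|J|$, and miracle flatness finishes. If you phrase this via Chevalley semicontinuity instead, work at the generic point of $\{0\}\times\mathbb{G}_m$, since over $\F_\ell$ the rational points of $\mathbb{G}_m$ do not accumulate at $0$. So drop the positivity hypothesis and the final paragraph.

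The repaired proof takes a genuinely different route from the paper's. The paper (following \cite{AA}) uses the same family $\Phi$ but shows directly that $\Phi$ is flat at $(0,0)$ by the fibrewise flatness criterion \cite[IV, 11.3.10]{EGA}: source and target are flat over $\mathbb{A}^1$, and the fibre over $s=0$ is $\sigma_w(\psi)$. It then uses openness of the flat locus and the rescalings for $s\neq 0$. That version needs no dimension count and mirrors the FRS argument of \cite{AA}. Yours is more elementary, but it relies on source and target being regular, so that flatness at $0$ reduces to $\dim_0\psi^{-1}(0)=|I|-|J|$.
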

\begin{proof}
    The proof is identical to the proof of \cite[Proposition 2.1.16]{AA}, replacing \cite[Proposition 2.1.1]{AA} with \cite[IV, 11.3.10]{EGA}.
\end{proof}
\begin{cor}\label[cor]{cor:pqflat}
    $p:\ug\to \uc$ is flat.
\end{cor}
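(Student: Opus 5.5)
The plan is to deduce flatness of $p$ from \Cref{prop:elimination}, using a weight $w$ that kills everything in $p$ except a very simple part. Then I upgrade flatness at $0$ to flatness everywhere.

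First, the leading-term map. Write the coordinates of $\uc\cong\A^n$ as the coefficients of the characteristic polynomial, i.e.\ the $p_j(A)=(-1)^j\,\mathrm{tr}\,\Lambda^j A$ for $j=1,\dots,n$. Each $p_j$ is a sum of principal $j\times j$ minors. I choose $w$ to make $\sigma_w(p)$ the Chevalley map restricted to a companion-type form. Give the entries of the superdiagonal $a_{i,i+1}$ weight $0$ and the entries of the last row $a_{n,i}$ large weights $w_{n,i}$, all other entries very negative. A monomial of $p_j$ built only from superdiagonal entries and one last-row entry $a_{n,n-j+1}$ does occur: it is $\pm a_{n,n-j+1}\prod_{i=n-j+1}^{n-1}a_{i,i+1}$, coming from the cycle $n\to n-j+1\to\dots\to n$ in the principal minor on $\{n-j+1,\dots,n\}$. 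With the weights above, any other monomial either uses an entry of very negative weight or uses the last-row entry $a_{n,i}$ with a different $i$. The second kind is excluded by degree reasons: a permutation monomial in $p_j$ with support a union of cycles involves at most one last-row entry. The weights should be tuned so that this cycle monomial is the unique top-weight term of $p_j$. I will choose $w$ with every entry outside the superdiagonal and last row of weight $-N$ for $N\gg0$, and with $w_{n,i}=0$ as well. Then the top-weight terms of $p_j$ are exactly those monomials using only superdiagonal and last-row entries. A cycle using only such edges must go up the superdiagonal and return via the last row, so it is precisely the cycle above.

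Second, flatness at $0$. Thus $\sigma_w(p_j)=\pm a_{n,n-j+1}\prod_{i=n-j+1}^{n-1}a_{i,i+1}$, and $\sigma_w(p)$ factors through $\A^{n^2}\to \A^{n-1}\times\A^n$. At a point it is a product of a coordinate with a monomial in the other variables. This is not obviously flat at $0$; its fiber over $0$ has too large dimension, for example setting all $a_{i,i+1}=0$. So I must shift weights. I would try giving the last-row entries weight $1$ and the superdiagonal weight $0$, and the $a_{i,i+1}$ nonzero is not available at the origin. The fix I expect to use instead is to work at a general point of the fiber over $0$: by $\GL_n$-equivariance and the $\mathbb G_m$-scaling, the non-flat locus of $p$ is closed, conjugation invariant and conical. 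Hence if it is nonempty it contains $0$, so it suffices to show flatness at $0$. Equivalently (miracle flatness, as in \Cref{lem:flatFactor}), since $\ug,\uc$ are smooth it suffices that the fiber $p^{-1}(0)=\bfN$ has dimension $n^2-n$. That bound follows from the Springer resolution $T^*(\fB)\to\bfN$ being surjective with $\dim T^*\fB=n^2-n$. Upper semicontinuity of fiber dimension together with the conical structure then gives all fibers dimension $\le n^2-n$, and miracle flatness yields flatness.

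Third, the role of \Cref{prop:elimination}. The proposition is designed to be applied to the base point $0$ directly, giving the same conclusion if a weight makes $\sigma_w(p)$ visibly flat at $0$. The main obstacle is finding such a $w$ explicitly. If that search fails, the dimension count via the Springer resolution together with miracle flatness is my fallback, and I regard it as the robust route.
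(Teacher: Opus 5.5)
Your fallback argument is a correct proof, but it takes a different route from the paper. The paper does use \Cref{prop:elimination}, with the weight $w(i,j)=\delta_{i,j}$: diagonal entries get weight $1$ and all other entries weight $0$. In each principal minor only the identity permutation attains the maximal $w$-degree. Hence $\sigma_w(p)$ is $q$ composed with the linear projection of $\ug$ onto the diagonal $\ut$, and this composite is flat by \Cref{lem:flatFactor}. So $p$ is flat at $0$, and the homothety action spreads flatness to all of $\ug$, exactly as in your conical argument.

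Your superdiagonal/last-row weights fail for a structural reason. The leading form must be a map already known to be flat at $0$. The monomial map you obtained is not flat for $n\ge 3$, since its zero fiber contains the codimension-$2$ subspace $a_{n,n}=a_{n-1,n}=0$.

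Your route instead checks the hypothesis of miracle flatness at the origin directly. You get $\dim\bfN\le n^2-n$ because every nilpotent matrix over $\bar\F_\ell$ is conjugate to a strictly upper triangular one; this is surjectivity of $T^{*}(\fB)\to\bfN$, valid in every characteristic. The reverse inequality is automatic, since $\bfN$ is cut out by $n$ equations. Openness of the flat locus plus conicity then finishes; your semicontinuity variant works equally well.

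What each approach buys: the paper's degeneration needs no information about nilpotent orbits and reduces everything to the finite map $q$, at the price of the Gr\"obner-type \Cref{prop:elimination}. Yours is more classical and self-contained, using only triangularization and miracle flatness.

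In a write-up, drop the abandoned weight computation and the phrase ``work at a general point of the fiber over $0$''. The argument you actually use is flatness at the origin.
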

\begin{proof}
Let $I=\{1,\dots,n\}\times \{1,\dots,n\}$ be the set of indices of $n\times n$ matrices. Following \cite{BL} we let $w:I\to \Z$ defined by $w(i,j)=\delta_{i,j}$. Identify $\ug$ with $\bA^I$. Using 
Proposition 
\ref{prop:elimination}  and \Cref{lem:flatFactor}
 we obtain that $p$  is flat at $0$. The assertion follows, using the homothety action  of $\bG_m$ on $\ug$.
\end{proof}

\begin{lem}\label{lem:fib.p.irred}
The fibers of $p$ are irreducible. 
\end{lem}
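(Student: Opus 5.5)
Fix $x\in\uc(\bar\F_\ell)$; we must show $p^{-1}(x)$ is irreducible (as a topological space, over $\bar\F_\ell$). Write $x=\prod_{j=1}^r (T-a_j)^{n_j}$ with distinct $a_j\in\bar\F_\ell$, $\sum n_j=n$. The plan is to exhibit a dense open subset of $p^{-1}(x)$ that is irreducible and to control the complement by dimension.

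\textbf{Step 1: dimension of every component.} By \Cref{cor:pqflat}, $p$ is flat, so every irreducible component of $p^{-1}(x)$ has dimension exactly $\dim\ug-\dim\uc=n^2-n$. Flatness gives this for all components of the scheme-theoretic fiber, hence for all components of the underlying variety.

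\textbf{Step 2: the regular locus.} Let $\bfO_x\subset p^{-1}(x)$ be the set of regular (cyclic) matrices with characteristic polynomial $x$. These are exactly the conjugates of the companion matrix $C_x$: over $\bar\F_\ell$, a cyclic matrix is conjugate to the companion matrix of its characteristic polynomial, in any characteristic. So $\bfO_x=\GL_n\cdot C_x$ is the image of the irreducible group $\GL_n$, hence irreducible. Its centralizer is $\bar\F_\ell[C_x]^\times$, of dimension $n$, so $\dim\bfO_x=n^2-n$. Moreover, regularity is an open condition (some $v$ with $v,Av,\dots,A^{n-1}v$ independent), so $\bfO_x$ is open in $p^{-1}(x)$.

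\textbf{Step 3: the non-regular locus has small dimension.} This is the main obstacle: we must show that the non-regular part $p^{-1}(x)\smallsetminus\bfO_x$ has dimension $<n^2-n$. Then, by Step 1, no component can be contained in it, so every component meets $\bfO_x$. Since $\bfO_x$ is open and irreducible, every component equals $\overline{\bfO_x}$, and the fiber is irreducible.

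For the bound, use the Jordan decomposition over the algebraically closed field $\bar\F_\ell$. Conjugacy classes in $p^{-1}(x)$ correspond to tuples of partitions $(\lambda^{(j)}\vdash n_j)$; there are finitely many of them. The class of type $(\lambda^{(j)})$ has dimension
$$n^2-\sum_j \dim Z_{\GL_{n_j}}(\text{nilpotent of type }\lambda^{(j)}) = n^2-\sum_j\sum_i (\lambda^{(j)\prime}_i)^2,$$
where $\lambda'$ denotes the conjugate partition. This formula holds in every characteristic, since centralizers of nilpotents in $\gl_m$ are computed by linear algebra over $\bar\F_\ell[T]$-modules. Now $\sum_i(\lambda'_i)^2\ge m$, with equality iff $\lambda=(m)$, i.e.\ the regular case. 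Hence every non-regular class has dimension at most $n^2-n-2$, and the non-regular locus, being a finite union of such classes, has dimension $\le n^2-n-2<n^2-n$.

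The only care needed is that all the inputs (the Jordan decomposition over $\bar\F_\ell$, the centralizer dimensions, and the cyclicity criterion) are characteristic-free. This is true, since each reduces to the structure theory of finitely generated modules over the PID $\bar\F_\ell[T]$.
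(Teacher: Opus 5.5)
Your proof is correct, but it is not the argument the paper has in mind. The paper's proof is only the remark that the lemma follows from the Jordan decomposition. The natural reading is a parametrization of the fiber. Over an algebraically closed field, every matrix with characteristic polynomial $x$ is conjugate to an upper triangular matrix with a fixed diagonal $d$; this follows from the Jordan decomposition, or simply from triangularization with the eigenvalues in a prescribed order. Hence $p^{-1}(x)$ is the image of the irreducible variety $\bfG\times(d+\mathbf{n})$ under $(g,y)\mapsto gyg^{-1}$, where $\mathbf{n}$ denotes the strictly upper triangular matrices. That route is shorter and needs no dimension theory at all.

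Your route instead combines equidimensionality of the fiber with the centralizer dimension $\sum_i(\lambda'_i)^2$. In return it proves more than irreducibility: the regular orbit $\bfO_x$ is open and dense in every fiber, and its complement has codimension at least $2$ there. This is the kind of information the paper uses right afterwards in Corollary \ref{cor:pFibReduced}.

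Two small remarks. First, Step 1 only needs the lower bound $n^2-n$ on the dimension of each component. That bound holds by Krull's height theorem, since the fiber is cut out by $n$ equations in $\A^{n^2}$. So your argument does not depend on Corollary \ref{cor:pqflat}, and together with miracle flatness it even gives an independent proof of it. Second, nothing in your argument is specific to $\bar\F_\ell$. It therefore gives geometric irreducibility over any algebraically closed field, which is what is needed for fibers over non-closed points or over $\F_\ell((t))$-points.
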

\begin{proof}
    This follows from the Jordan decomposition.    
\end{proof}

\begin{notation}
Denote by $\ug^r$\index{$\ug^r$} the smooth locus of $p$.
\end{notation}

\begin{lem}\label{lem:Gr.onto}
    $p|_{\ug^r}:\ug^r\to \uc$ is onto.
\end{lem}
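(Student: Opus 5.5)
We need to show that every monic polynomial of degree $n$ (over any field, in particular over $\bar{\F}_\ell$ and over each residue field of $\uc$) is the characteristic polynomial of a matrix at which $p$ is smooth. The plan is to use companion matrices. Let $s:\uc\to\ug$ be the morphism sending $c=(c_1,\dots,c_n)$, identified with $x^n+c_1x^{n-1}+\dots+c_n$, to the companion matrix with $1$'s on the subdiagonal and last column $(-c_n,\dots,-c_1)^T$. Since $\det(xI-s(c))=x^n+c_1x^{n-1}+\dots+c_n$, we get $p\circ s=\mathrm{id}_\uc$, so $s$ is a section of $p$. This holds over $\Z$, hence over any field and after any base change.

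Next I will show that $s(\uc)\subset\ug^r$. Since $p$ is a morphism between smooth varieties, smoothness of $p$ at a point $y$ is equivalent to surjectivity of $dp_y:T_y\ug\to T_{p(y)}\uc$. (Flatness is also available from \Cref{cor:pqflat}, but it is not needed here.) From $p\circ s=\mathrm{id}$ the chain rule gives $dp_{s(c)}\circ ds_c=\mathrm{id}_{T_c\uc}$, so $dp_{s(c)}$ is surjective at every point $s(c)$, for every field-valued point $c$. Therefore $s(c)\in\ug^r$.

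Combining the two steps, $p|_{\ug^r}$ hits $c$ via the point $s(c)$, for every point $c$ of $\uc$ over any field. Hence $p|_{\ug^r}$ is onto, and in fact onto on $K$-points for every field $K$. That stronger form is presumably what will be needed later for \Cref{prop:crit-effsurj}.

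I expect no real obstacle. The only care needed is the justification that "smooth locus of $p$" may be tested by surjectivity of the differential. This holds because $\ug$ and $\uc$ are smooth (affine spaces) over $\F_\ell$, so the Jacobian criterion for smooth morphisms applies in any characteristic.
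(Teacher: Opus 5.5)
Your proof is correct and follows the paper's route: the paper also takes the companion matrix $C(f)$ as a preimage of $f$ lying in $\ug^r$. The only difference is how $C(f)\in\ug^r$ is justified. The paper just notes that $C(f)$ is regular, tacitly using that regular matrices are smooth points of $p$; your section argument ($p\circ s=\mathrm{id}$ forces $dp_{s(c)}$ to be surjective) gives this directly in any characteristic, without identifying the smooth locus of $p$.
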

\begin{proof}
    The companion matrix $C(f)$ attached to a polynomial $f \in \uc $ is a regular matrix with characteristic polynomial equal to $f$. This proves the assertion.
\end{proof}

\begin{cor}\label[cor]{cor:pFibReduced}
The fibers of $p$ are absolutely 
reduced. Furthermore, $\ug^r$ is big in $\ug$.
\end{cor}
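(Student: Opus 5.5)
The plan is to combine flatness of $p$ (\Cref{cor:pqflat}) with the existence of regular elements in every fiber (\Cref{lem:Gr.onto}), and then apply Serre's criterion $R_0+S_1$ for reducedness.

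First we identify the dimensions. Each fiber $p^{-1}(x)$ is a fiber of a flat morphism between smooth (in particular Cohen--Macaulay) varieties, and $p$ is surjective. Hence $p^{-1}(x)$ is a local complete intersection of pure dimension $n^2-n$: it is cut out in $\ug\cong\A^{n^2}$ by the $n$ coefficients of the characteristic polynomial minus those of $x$, and by flatness this has the expected codimension $n$. In particular $p^{-1}(x)$ is Cohen--Macaulay, so it satisfies $S_1$ and has no embedded points. By \Cref{lem:fib.p.irred} it is irreducible, and this holds geometrically as well, since the Jordan decomposition argument works over $\bar\F_\ell$.

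Reducedness then amounts to $R_0$, i.e. generic reducedness of the single irreducible component. By \Cref{lem:Gr.onto}, applied over $\bar\F_\ell$, the fiber meets $\ug^r$, the smooth locus of $p$. On $\ug^r$ the fiber is smooth, hence reduced, and $\ug^r\cap p^{-1}(x)$ is a nonempty open subset of the irreducible fiber, hence dense. So the fiber is generically reduced, and together with $S_1$ this gives that it is reduced. Since everything was done after base change to $\bar\F_\ell$, the fibers are absolutely reduced.

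For bigness of $\ug^r$, let $\bfZ=\ug\smallsetminus\ug^r$. For each $x$, the fiber $\bfZ\cap p^{-1}(x)$ is a proper closed subset of the irreducible $(n^2-n)$-dimensional fiber, so it has dimension at most $n^2-n-1$. Since $\dim\uc=n$, this gives $\dim\bfZ\le n^2-1$, which is only codimension $1$ and not enough. We therefore need the sharper estimate that the non-regular locus has codimension at least $2$ in every fiber. This is the main obstacle. I would get it from the classical fact that the non-regular elements of a fiber form a proper closed subset of codimension at least $2$: by the Jordan decomposition, reduce to nilpotent fibers of $\gl_{n_i}$, where the subregular orbit has codimension $2$ in $\bfN$. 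Alternatively, one can note that regularity, i.e. smoothness of $p$ at a point, is equivalent to the matrix being regular (its centralizer has dimension $n$), and the locus of matrices with centralizer dimension at least $n+2$ has codimension $3$ in $\ug$. The codimension count is a dimension computation, so it is insensitive to the characteristic. Granting codimension $\ge 2$ in each fiber, we get $\dim\bfZ\le (n^2-n-2)+n=n^2-2$, so $\ug^r$ is big.
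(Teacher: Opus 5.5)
Your reducedness argument is essentially the paper's. Flatness makes each fiber a complete intersection, hence $S_1$. Irreducibility, together with a regular companion matrix in each fiber, gives generic reducedness. Serre's criterion (which the paper invokes as Eisenbud's Exercise 18.9) then concludes.

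For bigness your proof is correct but takes a genuinely different and heavier route, because the step you call the main obstacle can be avoided. You correctly get $\dim(\bfZ\cap p^{-1}(c))\le n^2-n-1$ for every $c$. What you overlook is that $\bfZ$ does not meet $p^{-1}(\uc^{rss})$ at all: a matrix with separable characteristic polynomial is regular semisimple, hence a smooth point of $p$. So $\bfZ\subset p^{-1}(\uc\smallsetminus\uc^{rss})$. The discriminant locus has dimension $n-1$, so $\dim\bfZ\le (n-1)+(n^2-n-1)=n^2-2$. The codimension-one loss in the base and in the fiber add up to two. This is the paper's argument, and it needs nothing beyond facts already established.

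Your route instead imports the classification of Jordan types. Version (a) uses that the subregular orbit has codimension $2$ in the nilpotent cone of each $\gl_{n_i}$, transported to every fiber via the Jordan decomposition. Version (b) uses that the non-regular locus has codimension $3$ in $\ug$, by counting centralizer dimensions against the number of distinct eigenvalues. Both facts are valid in every characteristic for $\gl_n$, since they concern Jordan forms over $\bar\F_\ell$.

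Note that (b) already gives $\dim\bfZ\le n^2-3$ directly. So your closing step, granting codimension $\geq 2$ in each fiber, is superfluous there, and it does not follow from (b) anyway.

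Your approach buys a sharper conclusion. It costs that extra input, plus the identification of $\ug^r$ with the regular matrices, which you assert without proof. Only the inclusion of regular matrices into $\ug^r$ is needed. It follows because $f\mapsto C(f)$ is a section of $p$ passing through the companion matrices, and every regular matrix is conjugate over $\bar\F_\ell$ to a companion matrix.
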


\begin{proof}
By \Cref{lem:Gr.onto}, each fiber of $p$ has a generically reduced component. Hence by \Cref{lem:fib.p.irred}, the fibers of $p$ are  generically reduced. Since $p$ is flat, its fibers are complete intersections. Thus by \cite[Exercise 18.9]{Eis}, they are reduced. 


To show that $\ug^r$ is big in $\ug$ we let $\bfZ$ be its complement inside $\ug$.  By the above, for any $c\in\uc$ we have $\dim(p^{-1}(c) \cap \bfZ)<\dim(\bfZ)$.
We obtain:
\begin{align*}
\dim(\bfZ) &\leq \dim(\uc-\uc^{rss})+\max_{c \in \uc-\uc^{rss}}\dim(p^{-1}(c) \cap \bfZ)\leq
\\&\leq
\dim(\uc)-1+\max_{c \in \uc-\uc^{rss}}(\dim p^{-1}(c))-1=\dim(\ug)-2
\end{align*}
\end{proof}

    Recall that for an integer $i\in \N$ we denote by
    $\ug_i:=\ug^{\times_\uc i}$ the $i$-folded fiber product of $\ug$ with itself over $\uc$ with respect to the map $p$. A-priory this is a scheme, but we can now show that it is reduced, so it is a variety.

\begin{lem}\label{lem:X.i.red}
The scheme $\gi$ is reduced.   
\end{lem}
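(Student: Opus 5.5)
The plan is to show that $\gi$ is a complete intersection which is generically reduced, and then conclude by Serre's criterion, exactly as in the proof of \Cref{cor:pFibReduced}.

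\emph{Flatness and Cohen--Macaulayness.} By \Cref{cor:pqflat}, $p$ is flat. Flatness is stable under base change and composition. Hence the projection $\gi\to\uc$, viewed as an iterated fiber product, is flat, and its fiber over $c\in\uc$ is $(p^{-1}(c))^{i}$, of dimension $i(n^2-n)$. Every component of $\gi$ therefore has dimension at most $n+i(n^2-n)$. On the other hand, $\gi$ is cut out of $\ug^{i}=\A^{in^2}$ by the $(i-1)n$ equations $p(x_1)=p(x_j)$ for $j=2,\dots,i$, so every component has dimension at least $in^2-(i-1)n$, which is the same number. Thus $\gi$ is a complete intersection in an affine space, hence Cohen--Macaulay, and in particular satisfies $S_1$. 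So by \cite[Exercise 18.9]{Eis} it suffices to show that $\gi$ is generically reduced, i.e.\ reduced at the generic point of each irreducible component.

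\emph{Generic reducedness.} Consider the open subscheme $(\ug^r)^{\times_\uc i}\subset\gi$. It is smooth over $\uc$, being a fiber product of smooth morphisms, so it is smooth and in particular reduced. It remains to check that it is dense, i.e.\ meets every component. Let $\bfZ:=\gi\smallsetminus(\ug^r)^{\times_\uc i}$. For each $c\in\uc$, the fiber $\bfZ_c$ is contained in the union of the sets $\{(x_1,\dots,x_i): x_j\notin\ug^r\}$ taken inside $(p^{-1}(c))^i$. By \Cref{lem:fib.p.irred} and \Cref{lem:Gr.onto}, $p^{-1}(c)\cap\ug^r$ is a nonempty open subset of the irreducible fiber $p^{-1}(c)$, so its complement has dimension at most $\dim p^{-1}(c)-1$. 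Hence $\dim\bfZ_c\le i(n^2-n)-1$. Moreover, $\bfZ_c$ is nonempty only for $c\in\uc\smallsetminus\uc^{rss}$, since over separable polynomials every element of the fiber is regular. Counting dimensions as in \Cref{cor:pFibReduced} gives
$$\dim\bfZ\le (n-1)+i(n^2-n)-1<\dim\gi.$$
Since all components of $\gi$ have dimension exactly $\dim\gi$ by the previous step, no component lies inside $\bfZ$. Thus $\gi$ is generically reduced, hence reduced.

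\emph{Main obstacle.} The delicate point is the equidimensionality used in the last step. A component sitting entirely inside the bad locus would break the argument, and ruling this out requires both the lower bound from the complete-intersection presentation and the upper bound from flatness. Another subtlety is that the fiber-dimension estimate is applied to the scheme-theoretic fiber. This is harmless, because dimension only depends on the underlying topological space.
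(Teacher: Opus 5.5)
Your proof is correct, but it takes a different route from the paper.

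The paper argues fiberwise. By \Cref{cor:pFibReduced} the fibers of $p$ are absolutely reduced. Hence the fibers of $\gi\to\uc$, being fiber products of geometrically reduced schemes, are reduced. Since $\gi\to\uc$ is flat (by base change and composition from $p$) over the reduced base $\uc$, reducedness ascends to $\gi$. This uses the standard fact that a flat morphism with reduced fibers over a reduced base has reduced source, which the paper leaves implicit.

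You instead rerun the argument of \Cref{cor:pFibReduced} globally on $\gi$. Your two-sided dimension count (Krull from below, flatness from above) shows that $\gi$ is a complete intersection in $\A^{in^2}$, hence $S_1$. Density of the smooth open set $(\ug^r)^{\times_\uc i}$ then gives $R_0$.

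The paper's route is shorter, because the complete-intersection-plus-generic-reducedness work was already done fiberwise in \Cref{cor:pFibReduced} and flatness transports it. However, it relies on geometric reducedness of the fibers of $p$ (so that products stay reduced) and on the ascent theorem. Yours needs only flatness of $p$, irreducibility of its fibers and \Cref{lem:Gr.onto}.

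Your route also yields more: $\gi$ is equidimensional and Cohen--Macaulay. Moreover, your estimate $\dim\bfZ\le (n-1)+i(n^2-n)-1=\dim\gi-2$ shows that $(\ug^r)^{\times_\uc i}$ is big in $\gi$. That is exactly the fact invoked later in the proof of \Cref{thm:alm.int.fib}, where the paper derives it from bigness of $\ug^r$ in $\ug$ and flatness of $p$.
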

\begin{proof}
By \Cref{cor:pFibReduced} 
the fibers of $p$ are reduced. Therefore, so are the fibers of  $\gi\to \uc$. This implies the assertion.
\end{proof}

\section{Effectively an-FRS over the origin}\label{sec:EfanFRS0}
In this section we introduce a class of maps of algebraic variety to a vector space that refines the notion of an-FRS maps (See \Cref{def:an.FRS.0}). 

The definition of this property is designed in a way that adapts the original Harish-Chandra's argument for an-FRS property of the Chevalley map to give some result also in positive characteristic. 
This enables us to prove this property for the Chevalley map (see \Cref{thm:an.jet} below).
We later use this property in order to prove \Cref{thm:jet.nil}.

\begin{definition}
    We say that an action of $\bG_m$ on an affine space is positive\index{positive action} if, in an appropriate coordinate system, it is given by $$\lambda \cdot (x_1,\dots,x_M)=(\lambda^{a_1}x_1,\dots,\lambda^{a_M}x_M),$$ with all $a_i$ positive.
\end{definition}

\begin{definition}[effectively an-FRS over the origin]\label[definition]{def:an.FRS.0}
    Let $\bfX$ be a rectified variety and let $\gamma:\bfX \to \bfY$ be an algebraic map to an affine space $\bfY$. 
    Let $\bG_m$ act positively on   $\bfY$.
    Choose the standard rectification of $\bfY$.
    
We say that $\gamma$ is effectively an-FRS over the origin\index{effectively an-FRS over the origin} if
for any $m\in \N$ there exist $M$ such that
for any $k\in \N$ and $a\in \Z$ we have
$$\frac{\left\langle \gamma_{*}\left(\mu_m^{\bfX,k} \right),1_{t^{a}\cdot B_0^{\bfY,k}} \right\rangle}{\mu^{\bfY,k}\left (t^{a}\cdot B_0^{\bfY,k}\right)}< \ell^{kM}$$
\end{definition}
The following follows immediately from \Cref{lem:indep-rect}\eqref{lem:indep-rect:2}.
\begin{lemma}
The notion of effectively an-FRS over the origin does not depend on the rectification on the source. 
\end{lemma}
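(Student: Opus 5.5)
The plan is to compare the two families of measures directly, using \Cref{lem:indep-rect}\eqref{lem:indep-rect:2}. Let $\bfX_1,\bfX_2$ be two copies of $\bfX$ with two $\mu$-rectifications (the definition uses $\mu_m^{\bfX,k}$, so we need top forms; the underlying rectifications may differ). Assume $\gamma$ is effectively an-FRS over the origin with respect to $\bfX_2$, and fix $m\in\N$. By \Cref{lem:indep-rect}\eqref{lem:indep-rect:2} there is $m'$, independent of $k$, with $\mu_m^{\bfX_1,k}<\ell^{km'}\mu_{m'}^{\bfX_2,k}$ for all $k$.

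Next I would push forward along $\gamma$. Pushforward of positive measures is monotone, so
$$\gamma_*\left(\mu_m^{\bfX_1,k}\right)\le \ell^{km'}\gamma_*\left(\mu_{m'}^{\bfX_2,k}\right).$$
Pairing with the nonnegative function $1_{t^{a}\cdot B_0^{\bfY,k}}$ preserves the inequality. Now apply the hypothesis for $\bfX_2$ at level $m'$: it gives $M'$ such that, for all $k$ and $a$, the normalized pairing for $\mu_{m'}^{\bfX_2,k}$ is $<\ell^{kM'}$. Dividing by $\mu^{\bfY,k}(t^a\cdot B_0^{\bfY,k})$ then yields the bound $\ell^{k(m'+M')}$ for $\bfX_1$, uniformly in $k$ and $a$. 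So $M:=m'+M'$ works.

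Finally, the target side is untouched: $\bfY$ always carries its standard rectification and the $\bG_m$-action is fixed, so the only dependence is on the source, and the argument is symmetric. The only point needing any care is that the constant $m'$ from \Cref{lem:indep-rect} is uniform in $k$. That uniformity is exactly what that corollary provides, so I expect no real obstacle.
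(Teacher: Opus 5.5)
Your proposal is correct and is exactly the paper's argument. The paper simply states that the lemma follows immediately from \Cref{lem:indep-rect}\eqref{lem:indep-rect:2}, and you have written out that deduction: monotonicity of pushforward, applying the hypothesis at level $m'$, taking $M=m'+M'$, and using the symmetric argument for the converse.
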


In the this subsection we will prove the following:
\begin{theorem}\label[theorem]{thm:an.jet}
    The Chevalley map $p$ is effectively an-FRS over the origin.
\end{theorem}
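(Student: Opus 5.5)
We prove \Cref{thm:an.jet} by induction on $n$, following the Harish-Chandra strategy described in the introduction: first localise away from the nilpotent cone, then use slices and homogeneity to handle a neighbourhood of the nilpotent cone, descending through the closed $\GL_n$-invariant subsets of $\bfN$. The positive $\bG_m$-action on $\uc$ is the one induced from the homothety on $\ug$, so that $p(\lambda x)=\lambda\cdot p(x)$ with weights $1,\dots,n$ on the coefficients. The ellipsoids $t^a B_0^{\uc,k}$ are the images of $B_0^{\uc,k}$ under this action, and $\mu^{\uc,k}(t^aB_0^{\uc,k})=\ell^{-k a n(n+1)/2}$.

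\textbf{Step 1: away from the nilpotent cone.} We first show that $p|_{\ug\smallsetminus\bfN}$ is effectively an-FRS over the origin; this is the content attributed to \Cref{lem:not.in.im}.
\begin{itemize}
\item For $a\le 0$ the bound is trivial by \Cref{cor:bnd.int}.
\item For large $a$ we use a norm-type estimate. Points of $B_m^{\ug\smallsetminus \bfN,k}$ are at distance at least $\ell^{-Cm}$ from $\bfN$, uniformly in $k$ (\Cref{lem:kot.bnd} applied to the rectification of $\ug\smallsetminus\bfN$). Hence some coefficient $c_j$ of $p(x)$ has valuation at most $C'(m+1)$.
\item So $p(B_m)$ misses $t^aB_0$ once $a\cdot j>C'(m+1)$.
\end{itemize}
Only finitely many values of $a$ remain, and for these we again invoke \Cref{cor:bnd.int} together with the lower bound on $\mu^{\uc,k}(t^aB_0)$.

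\textbf{Step 2: descending induction over $\bfN$.} Next we run a descending induction over $\GL_n$-stable closed $S\subset\bfN$, proving that $p|_{\ug\smallsetminus S}$ is effectively an-FRS over the origin.
\begin{itemize}
\item For an orbit $O\subset S$ open in $S$, choose a transversal slice $L_x$ at $x\in O$ that is $\bG_m$-stable under a positive twisted action (weights from a cocharacter plus the homothety). We need the analogue of the Slodowy slice that works in positive characteristic, which \S\ref{sec:slices} provides.
\item The action map $\GL_n\times L_x\to\ug$ is smooth, and it is surjective onto a neighbourhood of $O$ on points over every field, after a Nisnevich refinement.
\item By \Cref{prop:crit-effsurj} it is effectively surjective. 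Then \Cref{lem.sub.bnd} and \Cref{cor:push.bnd.above} transfer the property between $p|_{\text{nbhd}}$ and $p|_{L_x}$, since these notions are local for effectively surjective smooth covers.
\item This reduces the problem to showing that $p|_{L_x}$ is effectively an-FRS over the origin, given that $p|_{L_x\smallsetminus\{x\}}$ is; the latter follows from the induction hypothesis, because $L_x\smallsetminus\{x\}$ maps into $\ug\smallsetminus S'$ for $S'=S\smallsetminus O$.
\end{itemize}

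\textbf{Step 3: homogeneity argument on the slice.} Write $L_x$ as the disjoint union of the shells $t^{j}(B_0\smallsetminus t B_0)$ for $j\ge 0$ in the weighted sense, plus the point $x$. On each shell the pushforward is the rescaled pushforward from the unit shell, which is controlled by the hypothesis on $L_x\smallsetminus\{x\}$. Summing over $j$ gives a geometric series whose ratio is
\[
\ell^{k(\dim\uc_{\mathrm{wt}}-\dim L_{x,\mathrm{wt}})},
\]
where the subscript denotes the weighted dimension (sum of weights). The series converges, uniformly in $k$, exactly when the eigenvalue estimate holds: the sum of the weights on $L_x$ strictly exceeds the sum of the weights on $\uc$.

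\textbf{Main obstacle.} The hard part is the eigenvalue inequality for the slice in positive characteristic, together with making the shell decomposition uniform in $k$. For the inequality we would compute directly with the explicit Jordan-type slice for $\GL_n$ used in \S\ref{sec:slices}. For a nilpotent orbit with partition $\lambda$, the weights are those of the centraliser-complement, and we compare their sum with $n(n+1)/2$.

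For the regular orbit, $L_x$ maps isomorphically onto $\uc$. For that case we bypass the inequality, since the statement is immediate there from \Cref{lem:push.mes.sm}.
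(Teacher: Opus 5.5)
Your proposal is essentially the paper's proof. It uses Lemma~\ref{lem:not.in.im} for the base case $\ug\smallsetminus\bfN$, then a descending induction over nilpotent orbits with the slices of Lemma~\ref{lem:slod.slice} and locality under smooth covers that are onto on points over every field (Corollary~\ref{cor:sm.loc}, Proposition~\ref{prop:crit-effsurj}). It closes with the shell summation of Lemma~\ref{lem:FRS.hom.crit}, fed by the exponent bound of Lemma~\ref{lem:slod.slice}\eqref{lem:slod.slice:5}, and treats the regular orbit separately. The following details need tightening:
\begin{enumerate}
\item The punctured slice must land in $\ug\smallsetminus S$, where the induction hypothesis holds, not in $\ug\smallsetminus S'$. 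This is Lemma~\ref{lem:slod.slice}\eqref{lem:slod.slice:3} when $S$ consists of orbits of dimension at most $\dim O$, as in the paper's ordering of orbits. For an arbitrary closed $S$ with $O$ open in it, you need the stronger fact $x\in\overline{\bfG\cdot y}$, which comes from the contracting action.
\item No Nisnevich refinement is needed. The cover $(\bfG\times\bfL)\sqcup(\ug\smallsetminus S)\to\ug\smallsetminus S'$ is already onto on $E$-points, because $O(E)=\bfG(E)\cdot x$ for $\GL_n$.
\item In the regular case, the uniform bound comes from Lemma~\ref{lem.sub.bnd}; indeed $p\circ sh_x$ is a linear isomorphism preserving $\F_{\ell^k}[[t]]^n$. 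Lemma~\ref{lem:push.mes.sm} gives no control uniform in $k$.
\item The opening ``induction on $n$'' is never used and can be dropped.
\end{enumerate}
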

Let us briefly explain the idea of the proof of this theorem:
\begin{itemize}
    \item We first prove 2 statements on the \Rami{effective} an-FRS over the origin property:
    \begin{itemize}
        \item It is local in the smooth topology (on $\bfX$). See \Cref{cor:sm.loc}
        \item For $\bG_m$-equivariant maps between affine spaces, one can give a criterion for the \Rami{effective} an-FRS over the origin property in terms of the exponents of the actions of $\bG_m$. See \Cref{lem:FRS.hom.crit} below.
        \end{itemize}
        \item We then use the original Harish-Chandra's argument:
        \begin{itemize}
            \item Proof by a descending induction that the Chevalley map restricted to the complement of an invariant closed subset of $\bfN$ is \Rami{effectively} an-FRS over the origin.
            \item The base of the induction follows from the fact that in this case \RamiC{the origin} is not in the image of the  map. The fact that such maps are \Rami{effectively} an-FRS over the origin is not completely obvious, but we prove it in \Cref{lem:not.in.im} below.
            \item For the step of the induction we use \Rami{an}  analog of the Slodowy slice (see \Cref{lem:slod.slice} below). We use the locality of the \Rami{effective} an-FRS over the origin property in order to deduce the statement from an analogous statement for the slice.
            \item We prove the analogous statement for the slice using the criterion for \Rami{the effective} an-FRS over the origin property for $\bG_m$-equivariant maps.
        \end{itemize}    
\end{itemize}
In the next 2 subsections we provide some preparations for actual proof which will be given in \S\ref{sec:Pf.an.jet}. 
\subsection{Basic properties of effectively an-FRS maps over \RamiC{the origin}}

\begin{lemma}\label{lem:FRS.hom.crit}
    Let $\bfX=\A^I$ and $\bfY=\A^J$ be affine spaces with positive actions of $\bG_m$ given by $$s \cdot (x_1,\dots,x_I)=(s^{a_1}x_1,\dots,s^{a_I}x_I)$$ and $$s \cdot (y_1,\dots,y_J)=(s^{b_1}y_1,\dots,s^{b_J}y_J)$$ respectively. 
    Assume that $\sum_{i=1}^I  a_i > \sum_{j=1}^J  b_j$.
    
    Let $\varphi:\bfX\to \bfY$ be an equivariant map such that $\varphi|_{\bfX\smallsetminus 0}:(\bfX\smallsetminus 0)\to \bfY$ is effectively an-FRS over the origin. Then 
     $\varphi:\bfX\to \bfY$  is effectively an-FRS over the origin.
\end{lemma}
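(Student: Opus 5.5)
The plan is a scaling argument. Using the $\bG_m$-equivariance of $\varphi$, we cut the ball $B_m^{\bfX,k}$ into dyadic-type ``weighted shells'' around $0$. Each shell is the image under the action of $t^r$ of one fixed shell $S_0$, which is bounded away from the origin. On $S_0$ we apply the hypothesis on $\varphi|_{\bfX\smallsetminus 0}$. The contributions of the other shells are then obtained by rescaling, and the inequality $\sum a_i>\sum b_j$ makes the resulting series geometric and convergent, uniformly in $k$ and $a$. By \Cref{lem:indep-rect}\eqref{lem:indep-rect:2}, and by the lemma that the notion does not depend on the rectification of the source, we may use the standard $\mu$-rectification of $\bfX=\A^I$. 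Then $\mu_m^{\bfX,k}$ is just Haar measure on $t^{-m}\F_{\ell^k}[[t]]^I$.

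Write $A=\sum a_i$ and $B=\sum b_j$. For $s\in F^\times$ write $s\cdot B_0^{\bfX,k}=\{x:\operatorname{val}(x_i)\ge a_i\operatorname{val}(s)\}$, and set $S_0:=B_0^{\bfX,k}\smallsetminus t\cdot B_0^{\bfX,k}$ and $S_r:=t^r\cdot S_0$. Since all $a_i\ge 1$, we have $t^{-m}\F_{\ell^k}[[t]]^I\subset t^{-m}\cdot B_0^{\bfX,k}=\{0\}\cup\bigsqcup_{r\ge -m}S_r$.

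The first step is to check that $S_0$ lies in a fixed ball of $\bfX\smallsetminus 0$, uniformly in $k$. Rectify $\bfX\smallsetminus 0$ by the principal opens $\{x_i\ne 0\}$, embedded via $(x,x_i^{-1})$. A point of $S_0$ has some coordinate with $0\le\operatorname{val}(x_i)<a_i$. Hence $x_i^{-1}\in t^{-a_i}\F_{\ell^k}[[t]]$, so $S_0\subset B_{m_0}^{\bfX\smallsetminus 0,k}$ with $m_0=\max a_i$. By \Cref{lem:indep-rect}\eqref{lem:indep-rect:2}, the restriction $1_{S_0}\mu^{\bfX,k}$ is at most $\ell^{kc}\mu_{m_1}^{\bfX\smallsetminus 0,k}$ for constants $c,m_1$ independent of $k$. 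The hypothesis then yields $M_0$ such that, for all $k$ and $a$,
$$\left\langle\varphi_*(1_{S_0}\mu^{\bfX,k}),1_{t^{a}\cdot B_0^{\bfY,k}}\right\rangle<\ell^{kM_0}\mu^{\bfY,k}(t^{a}\cdot B_0^{\bfY,k}).$$
I expect this localisation step to be the only delicate point, and the explicit rectification above seems to handle it.

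Next comes the rescaling. Since $\varphi(t^r x)=t^r\varphi(x)$ and Haar measure on $\bfX$ transforms by $\mu^{\bfX,k}(t^r\cdot E)=\ell^{-krA}\mu^{\bfX,k}(E)$ (similarly with $B$ on $\bfY$), we get
$$\left\langle\varphi_*(1_{S_r}\mu^{\bfX,k}),1_{t^a B_0^{\bfY,k}}\right\rangle=\ell^{-krA}\left\langle\varphi_*(1_{S_0}\mu^{\bfX,k}),1_{t^{a-r}B_0^{\bfY,k}}\right\rangle<\ell^{kM_0}\ell^{-kr(A-B)}\mu^{\bfY,k}(t^aB_0^{\bfY,k}).$$
Summing over $r\ge -m$ (the point $0$ has measure zero), and using $A-B\ge1$ so that the ratio $\ell^{-k(A-B)}$ is at most $1/2$, the total is bounded by $2\ell^{k(M_0+m(A-B))}\mu^{\bfY,k}(t^aB_0^{\bfY,k})$. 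This gives the required bound with $M=M_0+m(A-B)+1$, which is independent of $k$ and $a$.
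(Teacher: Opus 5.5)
Your proof is correct and essentially the paper's argument. You bound a weighted shell by a ball in $\bfX\smallsetminus 0$ using the charts given by complements of coordinate hyperplanes, apply the hypothesis there, and rescale via equivariance with the Jacobian factors $\ell^{-kr\sum a_i}$ and $\ell^{kr\sum b_j}$ to get a convergent geometric series. The one cosmetic difference is that you cover $B_m^{\bfX,k}$ by the shells $t^r\cdot(B_0^{\bfX,k}\smallsetminus t\cdot B_0^{\bfX,k})$ for $r\ge -m$, which costs an extra factor $\ell^{km(\sum a_i-\sum b_j)}$ but needs only one localisation independent of $m$, whereas the paper uses the shells $t^i\cdot B_m^{\bfX,k}\smallsetminus t^{i+1}\cdot B_m^{\bfX,k}$, $i\ge 0$, of $B_m^{\bfX,k}$ itself.
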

\begin{proof}

We begin by comparing the balls in ${\bfX\smallsetminus 0}$ to spheres in ${\bf X}$. More precisely, we fix $m\in \bN$. For any
$k,i\in\N$, we denote by  $$S^{\bfX,k}_{m,i}:= t^{i}\cdot B^{\bfX,k}_m \smallsetminus t^{i+1}\cdot B^{\bfX,k}_m$$ a sphere in ${\bf X}$.
To make the comparison, we choose standard rectifications on $\bfX$ and choose the rectification on $\bfX\smallsetminus 0$ given by the cover with the complements of coordinate hyperplanes and the induced forms from the standard form on $\bfX$. It is easy to see that there exists $m'>m$ s.t. for any $k$ we have   $$B^{\bfX\smallsetminus 0,k}_{m'} \supset S^{\bfX,k}_{m,0}$$
    and moreover $$ \ell^{km'} \mu_{m'}^{\bfX\smallsetminus 0,k}>1_{S^{\bfX,k}_{m,0}} \mu_m^{\bfX,k}.$$
    
    We choose the standard rectification on ${\bf Y}$ and use the above comparison to deduce that 
    for any $k\in \N$ and $a\in \Z$ we have:
$$
\frac{\left\langle \varphi_{*}\left(1_{S_{m,0}^{\bfX,k}} \mu_m^{\bfX,k} \right),1_{t^{a}\cdot B_0^{\bfY,k}} \right\rangle}{\mu^{\bfY,k}\left (t^{a}\cdot B_0^{\bfY,k}\right)}< 
\ell^{km'}
\frac{\left\langle \varphi_{*}\left(\mu_{m'}^{\bfX\smallsetminus 0,k} \right),1_{t^{a}\cdot B_0^{\bfY,k}} \right\rangle}{\mu^{\bfY,k}\left (t^{a}\cdot B_0^{\bfY,k}\right)}$$
    On the other hand, 
    the assumption that $\varphi|_{\bfX\smallsetminus 0}:(\bfX\smallsetminus 0)\to \bfY$ is effectively an-FRS over the origin implies that there exists $N_0$ s.t. 
    for any $k\in \N$ and $a\in \Z$ we have
$$
\ell^{km'}
\frac{\left\langle \varphi_{*}\left(\mu_{m'}^{\bfX\smallsetminus 0,k} \right),1_{t^{a}\cdot B_0^{\bfY,k}} \right\rangle}{\mu^{\bfY,k}\left (t^{a}\cdot B_0^{\bfY,k}\right)}< 
\ell^{kN_0}$$
Combining the last two inequalities we obtain that for any $k\in \N$ and $a\in \Z$ we have:
\begin{equation}\label{=comb}
\frac{\left\langle \varphi_{*}\left(1_{S_{m,0}^{\bfX,k}} \mu_m^{\bfX,k} \right),1_{t^{a}\cdot B_0^{\bfY,k}} \right\rangle}{\mu^{\bfY,k}\left (t^{a}\cdot B_0^{\bfY,k}\right)}<\ell^{kN_0}    
\end{equation}

By the equivariance of $\varphi$ we obtain that
for any integers $i,k\in\N$ we have:
$$
   \varphi_{*}\left(1_{S_{m,i}^{\bfX,k}} \mu_m^{\bfX,k} \right) 
      =
     t^{i}\cdot \left(\varphi_{*}\left(t^{-i}\cdot \left(1_{S_{m,i}^{\bfX,k}} \mu^{\bfX,k} \right)\right)\right) 
$$
We also have $$     t^{-i}\cdot \left(1_{S_{m,i}^{\bfX,k}} \mu^{\bfX,k} \right) =  \ell^{-ki\sum a_j}1_{S_{m,0}^{\bfX,k}} \mu^{\bfX,k}$$
and similarly
$$
t^i\cdot 1_{t^{a}\cdot B_0^{\bfY,k}}=1_{t^{a-i}\cdot B_0^{\bfY,k}} \text{ and }
\mu^{\bfY,k}\left (t^{a}\cdot B_0^{\bfY,k}\right)=
\ell^{-ki\sum b_j}
\mu^{\bfY,k}\left (t^{a-i}\cdot B_0^{\bfY,k}\right)
$$
Thus, for any integers $i\in\N$, $a\in  \Z$, and $k\in \N$, we obtain:
\begin{align*}
    \frac{\left\langle \varphi_{*}\left(1_{S_{m,i}^{\bfX,k}} \mu_m^{\bfX,k} \right),1_{t^{a}\cdot B_0^{\bfY,k}} \right\rangle}{\mu^{\bfY,k}\left (t^{a}\cdot B_0^{\bfY,k}\right)}
    &=
    \ell^{-ki\sum a_j}\frac{\left\langle \varphi_{*}\left(\left(1_{S_{m,0}^{\bfX,k}} \mu^{\bfX,k} \right)\right),1_{t^{a-i}\cdot B_0^{\bfY,k}} \right\rangle}{\mu^{\bfY,k}\left (t^{a}\cdot B_0^{\bfY,k}\right)}
    \\&=
    \frac{\ell^{-ki\sum a_j}}
    {\ell^{-ki\sum b_j}}
    \cdot 
    \frac{\left\langle \varphi_{*}\left(\left(1_{S_{m,0}^{\bfX,k}} \mu^{\bfX,k} \right)\right),1_{t^{a-i}\cdot B_0^{\bfY,k}} \right\rangle}{\mu^{\bfY,k}\left (t^{a-i}\cdot B_0^{\bfY,k}\right)}
\end{align*}
Combining this with the assumption $\sum a_j> \sum b_j$ and the inequality \eqref{=comb} we obtain 
$$
    \frac{\left\langle \varphi_{*}\left(1_{S_{m,i}^{\bfX,k}} \mu_m^{\bfX,k} \right),1_{t^{a}\cdot B_0^{\bfY,k}} \right\rangle}{\mu^{\bfY,k}\left (t^{a}\cdot B_0^{\bfY,k}\right)}
  <
    \ell^{-ki+kN_0}$$

Using the fact that
$
B^{\bfX,k}_m =\bigsqcup_{i=0}^{\infty}S^{\bfX,k}_{m,i}
$ 
we deduce that for any integers $a\in  \Z$, and $k\in \N$, we have:
\begin{align*}
    \frac{\left\langle \varphi_{*}\left(\mu_m^{\bfX,k} \right),1_{t^{a}\cdot B_0^{\bfY,k}} \right\rangle}{\mu^{\bfY,k}\left (t^{a}\cdot B_0^{\bfY,k}\right)}
    &=\sum_{i=0}^\infty
    \frac{\left\langle \varphi_{*}\left(1_{S_{m,i}^{\bfX,k}} \mu_m^{\bfX,k} \right),1_{t^{a}\cdot B_0^{\bfY,k}} \right\rangle}{\mu^{\bfY,k}\left (t^{a}\cdot B_0^{\bfY,k}\right)}
        \\&<\sum_{i=0}^\infty
    \ell^{-ki+kN_0}\leq \ell^{k(N_0+1)}.
\end{align*}
Taking $M:=N_0+1$ we get the required bound.
\end{proof}
\Cref{lem.sub.bnd},  \Cref{cor:push.bnd.above}, and \Cref{prop:crit-effsurj} give us:

\begin{cor}\label[cor]{cor:sm.loc}
    Let $\bfX$ be a \RamiA{$\mu$-rectified} variety and let $\gamma:\bfX \to \bfY$ be an algebraic map to an affine space $\bfY$. 
    Let $\bG_m$ act positively on $\bfY$.
    
    Let $\delta:\tilde \bfX \to \bfX$ be a submersion.
    
    Then:
    \begin{enumerate}
        \item if $\gamma$ is effectively an-FRS over the origin  then so is $\gamma \circ \delta$.
        \item if $\delta$ effectively surjective and  $\gamma \circ \delta$ is effectively an-FRS over the origin  then so is $\gamma$.
        \item if $\delta$ is onto on the level of \Rami{points} for any field and  $\gamma \circ \delta$ is effectively an-FRS over the origin  then  $\gamma$ is also effectively an-FRS over the origin. 
    \end{enumerate}
\end{cor}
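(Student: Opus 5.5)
Both directions reduce the defining inequality of \Cref{def:an.FRS.0} to a comparison of measures on $\bfX$, using that $(\gamma\circ\delta)_*=\gamma_*\circ\delta_*$. Throughout, $\bfY$ carries its standard rectification. We fix $\mu$-rectifications on $\tilde\bfX$ and $\bfX$; by the lemma following \Cref{def:an.FRS.0} the property does not depend on the rectification of the source. All the test functions $1_{t^{a}\cdot B_0^{\bfY,k}}$ are nonnegative, so an inequality of measures on $\bfX$ pushes forward under $\gamma$ to an inequality of pairings with these functions.

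For (1), fix $m$. By \Cref{lem.sub.bnd} applied to the submersion $\delta$ there is $m'$ such that $\delta_*(\mu_m^{\tilde\bfX,k})<\ell^{km'}\mu_{m'}^{\bfX,k}$ for all $k$. Applying $\gamma_*$ gives
$$\left\langle (\gamma\circ\delta)_*(\mu_m^{\tilde\bfX,k}),1_{t^{a}\cdot B_0^{\bfY,k}}\right\rangle\le \ell^{km'}\left\langle \gamma_*(\mu_{m'}^{\bfX,k}),1_{t^{a}\cdot B_0^{\bfY,k}}\right\rangle .$$
Now divide by $\mu^{\bfY,k}(t^a\cdot B_0^{\bfY,k})$. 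The hypothesis on $\gamma$, applied with $m'$, gives some $M'$ with the ratio on the right bounded by $\ell^{kM'}$. Hence $M:=M'+m'$ works for $\gamma\circ\delta$, uniformly in $k$ and $a$.

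For (2), the argument is the same with the roles reversed. Fix $m$. Since $\delta$ is an effectively surjective submersion, \Cref{cor:push.bnd.above} gives $m'$ with $\mu_m^{\bfX,k}<\ell^{km'}\delta_*(\mu_{m'}^{\tilde\bfX,k})$. Applying $\gamma_*$ bounds the pairing for $\gamma$ at level $m$ by $\ell^{km'}$ times the pairing for $\gamma\circ\delta$ at level $m'$. The hypothesis on $\gamma\circ\delta$ then gives the constant $M:=M'+m'$.

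Part (3) follows from (2). A submersion is a smooth map, so by \Cref{prop:crit-effsurj} a submersion that is onto on points over every field is effectively surjective.

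The main care needed is in the quantifier order: each new index $m'$ and exponent $M'$ must depend only on $m$, not on $k$ or $a$. The cited lemmas provide exactly this uniformity, so no step should be a real obstacle.
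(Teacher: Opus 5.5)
Your proposal is correct and follows the paper's own proof: part (1) comes from \Cref{lem.sub.bnd} applied to $\delta$, part (2) from \Cref{cor:push.bnd.above}, and part (3) from (2) together with \Cref{prop:crit-effsurj}. You also write out what the paper leaves implicit, namely pushing the measure inequality forward by $\gamma$, pairing it with the nonnegative indicators $1_{t^{a}\cdot B_0^{\bfY,k}}$, and taking the exponent $M'+m'$ uniformly in $k$ and $a$.
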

\begin{proof}$ $
    \begin{enumerate}
        \item Follows from \Cref{lem.sub.bnd}.
        \item Follows from \Cref{cor:push.bnd.above}.
        \item Follows from (2) and \Cref{prop:crit-effsurj}.
    \end{enumerate}
\end{proof}
\begin{lem}\label{lem:not.in.im}
    Let $\bfX$ be a \RamiA{$\mu$-rectified} variety and let $\gamma:\bfX \to \bfY$ be an algebraic map to an affine space $\bfY$. 
    Let $\bG_m$ act positively on $\bfY$.
    Assume that $0\notin \gamma(\bfX(\bar\F_\ell))$. Then $\gamma $ is effectively an-FRS over the origin.
\end{lem}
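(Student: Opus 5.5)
The plan is to show that the image of each ball $B_m^{\bfX,k}$ stays a fixed $t$-adic distance away from the origin, uniformly in $k$. Once that holds, the ratio in \Cref{def:an.FRS.0} vanishes for small ellipsoids. For large ellipsoids it is controlled by the total mass bound of \Cref{cor:bnd.int} together with an explicit lower bound on the Haar measure of the ellipsoid.

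\emph{Step 1 (algebraic separation).} Fix coordinates on $\bfY=\A^J$ in which $\bG_m$ acts by $s\cdot y=(s^{b_1}y_1,\dots,s^{b_J}y_J)$ with all $b_j>0$, and let $(\bfU_\alpha,i_\alpha,\omega_\alpha)$ be the $\mu$-rectification of $\bfX$. On each affine chart $\bfU_\alpha$, the functions $y_j\circ\gamma$ have no common zero over $\bar\F_\ell$, since $0\notin\gamma(\bfX(\bar\F_\ell))$. By the Nullstellensatz they generate the unit ideal of $\cO(\bfU_\alpha)$, so there are $g_{\alpha,j}\in\cO(\bfU_\alpha)$ with $\sum_j g_{\alpha,j}\,(y_j\circ\gamma)=1$. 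Because $i_\alpha$ is a closed embedding into $\A^M$, we can take the $g_{\alpha,j}$ and $y_j\circ\gamma$ to be restrictions of polynomials with coefficients in $\F_\ell$.

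\emph{Step 2 (uniform valuation bound).} Polynomials over $\F_\ell$ evaluated on $i_\alpha^{-1}(t^{-m}\F_{\ell^k}[[t]]^M)$ have valuation at least $-dm$, where $d$ bounds their degrees. This bound does not depend on $k$; it is the elementary case of \Cref{lem:cont.bnd}. Let $x\in B_m^{\bfX,k}$ lie in chart $\alpha$. From $1=\sum_j g_{\alpha,j}(x)\,y_j(\gamma(x))$ and the ultrametric inequality, some $j$ satisfies $\mathrm{val}(y_j(\gamma(x)))\le dm=:m'$. On the other hand, every point of $t^{a}\cdot B_0^{\bfY,k}$ has $y_j$-valuation at least $ab_j\ge a$ when $a\ge 0$. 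Hence $\gamma(B_m^{\bfX,k})\cap t^{a}B_0^{\bfY,k}=\emptyset$ for all $a>m'$ and all $k$, and the ratio in the definition is $0$ for such $a$.

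\emph{Step 3 (remaining $a\le m'$).} For $a\le m'$, including all negative $a$, I would bound the numerator by the total mass $\mu_m^{\bfX,k}(B_\infty^{\bfX,k})<\ell^{kM'}$ from \Cref{cor:bnd.int}. The denominator is exactly
$$\mu^{\bfY,k}(t^aB_0^{\bfY,k})=\ell^{-ka\sum_j b_j}\ge \ell^{-km'\sum_j b_j}.$$
The ratio is therefore below $\ell^{k(M'+m'\sum_j b_j)}$, so $M:=M'+m'\sum_j b_j$ works for every $k$ and every $a$. The only real point, and the step I expect to be the obstacle, is the uniformity in $k$ of Step 2. It is handled by choosing the Nullstellensatz certificate over $\F_\ell$ itself, so that the same polynomial identity and the same degree bound apply over every $\F_{\ell^k}((t))$.
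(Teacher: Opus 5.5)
Your proof is correct and follows essentially the same argument as the paper. You show that $\gamma(B_m^{\bfX,k})$ avoids a fixed neighborhood of the origin uniformly in $k$, and then split into large $a$ (ratio zero) and small $a$ (total mass from \Cref{cor:bnd.int} divided by the explicit Haar volume of the ellipsoid). The only difference is that the paper obtains the separation by factoring $\gamma$ through $\bfY\smallsetminus\{0\}$, rectified by the complements of the coordinate hyperplanes, and citing \Cref{lem:cont.bnd}, whereas you prove that instance directly via a Nullstellensatz certificate over $\F_\ell$; your constant $M=M'+m'\sum_j b_j$ is also correctly independent of $k$, unlike the paper's displayed formula, which contains a stray factor $k$.
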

\begin{proof}
Fix $m\in \N$.
By \Cref{cor:bnd.int}, there exists $M_1\in \N$ s.t. for any $k\in \N $:
    $$\mu_m^{\bfX,k}(B_\infty^{\bfX,k})< \ell^{kM_1}.$$    
Let $\lambda_1,\dots, \lambda_I$ be the exponents of the $\bG_m$ action on $\bfY$.
Choose the rectification on $\bfY':=\bfY\smallsetminus \{0\}$ given by the open cover of $\bfY'$  by compliments to coordinate hyperplanes, and their standard embeddings into $\bfY'\times \A^1$.
Let $\gamma':\bfX\to \bfY\smallsetminus \{0\}$ be s.t. $\gamma$ factors through $\gamma'$. By \Cref{lem:cont.bnd} there exists $m_1$ s.t. for any $k\in \N $  we have $\gamma'(B_m^{\bfX,k})\subset  B_{m_1}^{\bfY\smallsetminus \{0\},k}$. Take $$M=M_1+k(m_1+1)\sum_i\lambda_i+1.$$
Note that  $$B_{m_1}^{\bfY\smallsetminus 
\{0\},k}= B_{m_1}^{\bfY,k}\smallsetminus B_{-m_1-1}^{\bfY,k}(0)$$

Fix $a\in \Z$ and $k\in \N$.

\begin{enumerate}[{Case} 1.]
    \item $a>\frac{m_1+1}{\min_i \lambda_i}$:
$$\frac{\left\langle \gamma_{*}\left(\mu_m^{\bfX,k} \right),1_{t^{a}\cdot B_0^{\bfY,k}} \right\rangle}{\mu^{\bfY,k}\left (t^{a}\cdot B_0^{\bfY,k}\right)}\leq \frac{\left\langle \gamma_{*}\left(\mu_m^{\bfX,k} \right),1_{ B_{-m_1-1}^{\bfY,k}}(0) \right\rangle}{\mu^{\bfY,k}\left (t^{a}\cdot B_0^{\bfY,k}\right)}=0< \ell^{kM}$$
    \item $a\leq\frac{m_1+1}{\min_i \lambda_i}$:
$$\frac{\left\langle \gamma_{*}\left(\mu_m^{\bfX,k} \right),1_{t^{a}\cdot B_0^{\bfY,k}} \right\rangle}{\mu^{\bfY,k}\left (t^{a}\cdot B_0^{\bfY,k}\right)}\leq
\frac{\mu_m^{\bfX,k}(\bfX(\F_{\ell^k}((t))))}{\mu^{\bfY,k}\left (t^{m_1+1}\cdot B_0^{\bfY,k}\right)}\leq 
\frac{\ell^{kM_1}}{\ell^{-k(m_1+1)\sum_i\lambda_i}}
< \ell^{kM}$$
\end{enumerate}
\end{proof}

\subsection{Slices to nilpotent elements}\label{sec:slices}
We will need existence of slices for nilpotent orbits in $\fg$ that have a $\bG_m$ action satisfying an appropriate condition. This is analogous to the approach of Harish-Chandra's in his proof  of the boundedness of normalized orbital integrals. There the slice was defined by an $\fs\fl_2$ triple (a.k.a. Slodowy slice). However, we can not rely on $\fs\fl_2$ triples, so we have to do it in a more ad-hoc procedure.
\begin{lemma}
\label{lem:slod.slice} 
    Let $x\in \ug(\F_\ell)$ be a nilpotent element. Then there are:
    \begin{itemize}
        \item a linear subspace $\bfL\subset \ug$
        \item a positive $\bG_m$ action on $\bfL$.    
    \end{itemize}
     s.t.
     \begin{enumerate}
        \item\label{lem:slod.slice:2} the action map $\bfG\times (x+\bfL)\to \ug$ is a submersion.        
        \item\label{lem:slod.slice:3}  For any nilpotent $y \in x+(\bfL(\bar\F_\ell)\smallsetminus 0)$  we have $\dim \bfG \cdot y > \dim \bfG \cdot x$ 
       \item \label{lem:slod.slice:4} The Chevalley map $p|_{x+\bfL}$ intertwines the $\bG_m$ action on $x+\bfL$ (given by the identification $y\mapsto x+y$ between $\bfL$ and $x+\bfL$) with the $\bG_m$ action on $\uc$ given by $(\lambda \cdot f)(z):=\lambda^{n}f(\lambda^{-1} \cdot z)$.
        \item\label{lem:slod.slice:5}  If $x$ is not regular then the sum of the exponents of the $\bG_m$ action on $\bfL$ is larger than $\frac{n(n+1)}{2}$.
     \end{enumerate}    
\end{lemma}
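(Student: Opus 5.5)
We will construct the slice explicitly from the Jordan form of $x$. This replaces the $\fs\fl_2$-triple, which may not exist in positive characteristic, by a cocharacter together with an explicit combinatorial complement to $[\ug,x]$.

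\textbf{Construction.} Conjugating by $\bfG(\F_\ell)$, we may assume $x$ is in Jordan form with nilpotent blocks of sizes $\lambda_1\ge\dots\ge\lambda_r$, acting on $V=\bigoplus_i V_i$. Let $\tau:\bG_m\to \bfG$ be the cocharacter acting on the standard basis $e^{(i)}_1,\dots,e^{(i)}_{\lambda_i}$ of $V_i$ (with $x e^{(i)}_{a}=e^{(i)}_{a-1}$) with weights $0,1,\dots,\lambda_i-1$. Then $\mathrm{Ad}(\tau(s))x=s^{-1}x$. Define the action
$$\rho(s)z:=s\,\mathrm{Ad}(\tau(s))z .$$
It fixes $x$, and since conjugation does not change the characteristic polynomial, $p(\rho(s)z)$ has its $k$-th coefficient multiplied by $s^k$. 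This is exactly the action on $\uc$ given by $(\lambda\cdot f)(z)=\lambda^nf(\lambda^{-1}z)$.

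For $\bfL$ we take the Arnold-type complement, built block by block. The operator $\mathrm{ad}\,x$ preserves each $\mathrm{Hom}(V_j,V_i)$ and acts there as $A\mapsto J_{\lambda_i}A-AJ_{\lambda_j}$, a shift operator given by an integer matrix. Its cokernel has dimension $\min(\lambda_i,\lambda_j)$. The span of the $\min(\lambda_i,\lambda_j)$ matrix units sitting in the appropriate extreme row or column of the $(i,j)$-block is a complement to the image, and this can be checked over $\Z$ by a triangularity argument, hence in every characteristic. We choose these matrix units in the positions of lowest $\mathrm{Ad}\,\tau$-weight. Then every $\rho$-exponent on $\bfL$ equals $1+(\text{a nonnegative weight})\ge1$, so the action is positive. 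Because $\bfL$ is spanned by matrix units and $\rho$ is diagonal, $x+\bfL$ is $\rho$-stable and the induced action on $\bfL$ is linear; this gives \eqref{lem:slod.slice:4}.

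\textbf{Properties (1) and (2).} For \eqref{lem:slod.slice:2}, the differential of the action map at $(g,x+y)$ has image a translate of $[\ug,x+y]+\bfL$. At $y=0$ this is all of $\ug$ by the choice of $\bfL$. The non-submersive locus in $x+\bfL$ is closed and $\rho$-stable, and every $\rho$-orbit closure contains $x$ because the action is positive. Hence this locus is empty.

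For \eqref{lem:slod.slice:3}, let $y\in x+\bfL$ be nilpotent. Then $\rho(s)y$ is conjugate to $sy$, which lies in the same orbit as $y$, and $\rho(s)y\to x$ as $s\to0$. So $x\in\overline{\bfG\cdot y}$, which gives $\dim\bfG\cdot y\ge\dim \bfG\cdot x$, with equality only if $y\in\bfG\cdot x$. By the submersion property, the action map $\bfG\times(x+\bfL)\to\ug$ is smooth of relative dimension $\dim\bfG+\dim\bfL-n^2=\dim \ug^x$, where $\ug^x$ denotes the centralizer of $x$. Its fibre over $\bfG\cdot x$ therefore has dimension $\dim\bfG$, so $(\bfG\cdot x)\cap(x+\bfL)$ is finite. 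This finite set is $\rho$-stable and contracted to $x$, so it equals $\{x\}$.

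\textbf{Property (5).} Here we must compute the sum of the exponents. We aim to show that this sum equals $\tfrac12(n^2+\dim\ug^x)$, mirroring the Slodowy identity. This is a block-by-block count: the $(i,j)$-block contributes $\min(\lambda_i,\lambda_j)$ exponents forming an arithmetic progression determined by $|\lambda_i-\lambda_j|$. Summing these and using $\dim\ug^x=\sum_{i,j}\min(\lambda_i,\lambda_j)$ should give the identity. Since $x$ is non-regular exactly when $\dim\ug^x>n$, the sum then exceeds $\tfrac{n(n+1)}2$.

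\textbf{Main obstacle.} We expect the hard part to be the characteristic-free verification that the chosen matrix units span a complement in each $\mathrm{Hom}(V_j,V_i)$ for $\lambda_i\neq\lambda_j$, together with the exact exponent bookkeeping. If the lowest-weight choice of positions does not yield the clean formula, we would instead only bound the exponents from below. In each block the exponents are at least $1,2,\dots$, which should already give a sum strictly larger than $\tfrac{n(n+1)}2$ whenever $r\ge2$.
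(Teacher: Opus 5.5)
Your plan is essentially the paper's proof. You use the same cocharacter ($\tau=t_x$) and the same action $\rho(s)=s\,\mathrm{Ad}(\tau(s))$ (the paper's $\star$). You take a slice spanned by matrix units in an extreme column of each block; the paper takes the first column, bottom $\min(n_i,n_j)$ rows, i.e.\ $\bfV_{n_i,n_j}$. Your contraction arguments for (1) and (3) are the paper's. Getting finiteness of $(\bfG\cdot x)\cap(x+\bfL)$ from the relative dimension of the smooth action map is a harmless variant of the paper's tangent-space isolatedness argument. Your target identity $\tfrac12(n^2+\dim\ug^x)$ is exactly the paper's $\tfrac12(n^2+n)+\sum_j(j-1)n_j$. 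However, two things need fixing.

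\emph{First, the weight direction is reversed.} In your convention $E_{ab}\in\mathrm{Hom}(V_j,V_i)$ has $\mathrm{Ad}\,\tau$-weight $a-b$, and $\mathrm{ad}\,x$ \emph{lowers} weight by one. Hence the cokernel of $\mathrm{ad}\,x$ on this block sits in the \emph{top} weights $\lambda_i-1,\dots,\lambda_i-\min(\lambda_i,\lambda_j)$, each with multiplicity one. Low-weight units lie in the image, e.g.\ $E_{1\lambda_j}=-[x,E_{1,\lambda_j-1}]$ for $\lambda_j\ge 2$. So you must take the units in positions of \emph{highest} $\mathrm{Ad}\,\tau$-weight, e.g.\ $E_{a1}$ with $a>\lambda_i-\min(\lambda_i,\lambda_j)$. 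Only then is your claim ``$\rho$-exponent $=1+$ nonnegative weight'' true. The Slodowy ``lowest weight vectors'' intuition refers to the $h$-grading of an $\fs\fl_2$-triple, which is opposite to the $\tau$-grading. Any complement spanned by matrix units is $\mathrm{Ad}\,\tau$-graded, so it must occupy exactly these weights. The exponents on the $(i,j)$-block are therefore forced to be $\lambda_i-\min(\lambda_i,\lambda_j)+1,\dots,\lambda_i$.

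\emph{Second, your fallback for (5) fails; you need these exact exponents.} The crude bound ``at least $1,2,\dots$ in each block'' gives $\sum_{i,j}\binom{m_{ij}+1}{2}$ with $m_{ij}=\min(\lambda_i,\lambda_j)$. For the subregular type $(n-1,1)$ this equals $\binom{n}{2}+3\le\binom{n+1}{2}$ as soon as $n\ge 3$, whereas the true sum is $\binom{n+1}{2}+1$. The excess comes precisely from the large exponents $\lambda_i-\lambda_j+1,\dots,\lambda_i$ on $\mathrm{Hom}(V_j,V_i)$ with $\lambda_i>\lambda_j$. With the exact exponents, each pair of blocks $(i,j),(j,i)$ with $\lambda_i\ge\lambda_j$ contributes $\lambda_i\lambda_j+\lambda_j$. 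Each diagonal block contributes $\tfrac12\lambda_i(\lambda_i+1)$. Summing gives your identity $\tfrac12(n^2+\dim\ug^x)$, which exceeds $\tfrac{n(n+1)}2$ exactly when $x$ is non-regular, and this finishes (5).
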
    

\begin{remark}
Similar results were proven in various contexts, see e.g. 
     \cite{Pre}. Since we would like to have an explicit construction, we include the proof here for completeness.
\end{remark}
For the proof we will need some notations:
\begin{notation}\label{not:slice}
$ $
\begin{enumerate}

\item \label{action of GxG_m}
Consider the action of $\bfG(F)\times F^\times$ on $\ug(F)$ where the first coordinate acts by conjugation and the second by homothety.   
We denote this action by $(g,\lambda) \cdot A=\lambda^{-1}gAg^{-1}$.
    \item For an integer $k>0$ let $J_{k}$\index{$J_{k}$} be the $k\times k$ nilpotent Jordan block.
    \item Given a matrix of of matrices $\{A_{i,j}\}_{i,j\in \{1,\dots,j\}}$ we denote by $Bl(\{A_{i,j}\})$\index{$Bl$} the corresponding block matrix. Here we assume that the sizes of $A_{i,j}$ matches.
    \item For 2 integers $n_1,n_2$ set $$\bfV_{n_1,n_2}':=
\{\{a_{kl}\}\in Mat_{n_1,n_2}|a_{kl}=0 \text{ for } l>1   \}.$$\index{$\bfV_{n_1,n_2}'
    and  $$\bfV_{n_1,n_2}:=
\{\{a_{kl}\}\in \bfV_{n_1,n_2}'|  \text{ we have } a_{k1}=0; \text{ for } k\leq n_1-n_2 \}$$\index{$\bfV_{n_1,n_2}$}$}
\item For an integer $k$ we set 
$t_{k}:\bG_m\to \GL_{k}$
defined by 
$$t_{k}(\lambda)=\diag(1,\lambda,\dots,\lambda^{k-1}).$$\index{$t_{k}$}
\item For a nilpotent element in Jordan canonical form $x\in \ug(\F_\ell)$ with blocks of sizes $n_1,\dots, n_k$ we
define  maps $t_{i,j}:\bG_m\to {\bf \mathrm{Mat}}_{n_i\times n_j}$ 
by  
$$
t_{i,j}(\lambda):=
\begin{cases}
t_{n_i}(\lambda) & \text{ for } i= j\\
0 & \text{ otherwise }
\end{cases}\index{$t_{i,j}$}
$$
Define $t_x:\bG_m\to \bfG$ by 
$$t_x(\lambda)=Bl(\{t_{i,j}(\lambda)\}).$$
\item \label{co-character.for. Jordan nilpotent} For $x$ as above
define $\phi_x:\bG_m\to \bfG\times \bG_m$ by 
$$\phi_x(\lambda)=(t_x(\lambda),\lambda^{-1}).$$\index{$\phi_x$}
\item \label{slice.for. Jordan nilpotent} For $x$ as above, define 
$$\bfL_x=\{Bl(\{A_{i,j}\}_{i,j})| A_{i,j}\in \bfV_{n_i,n_j}\}\subset \ug.$$\index{$\bfL_x$}  
\end{enumerate}
\end{notation}
\begin{proof}[Proof of \Cref{lem:slod.slice}]
We now construct our slices. WLOG we can assume that $x$ is a Jordan canonical form.
We define:
\begin{enumerate}[(i)]
    \item 
A linear space $\bfL:=\bfL_x$, see Notation \ref{not:slice}(\ref{slice.for. Jordan nilpotent}). 
\item 
An action of $\bG_m$ on $\bfL$ by $$\lambda\star A:=\phi_x(\lambda) \cdot A,$$ see Notation \ref{not:slice}(\ref{action of GxG_m},\ref{co-character.for. Jordan nilpotent}).
\end{enumerate}
It is easy to see that 
 this is a positive action. It is evident that \Rami{condition \eqref{lem:slod.slice:4}} is satisfied: 
 $$p(\lambda \star A)=p(\phi_x(\lambda) \cdot A))=p(\lambda Ad(t_x(\lambda))(A))=p(Ad(t_x(\lambda))(\lambda A))=p(\lambda A)=\lambda \cdot p(A).$$
 We now verify \Rami{condition \eqref{lem:slod.slice:5}}. 
Assume $x$ is not regular, and let $n_1 \geq n_2 \cdots \geq n_k$ be the corresponding partition of $n,$ with $k\geq 2$. 

It is a simple verification that:
\begin{enumerate}[(a)]
    \item 
In the block $n_i \times n_i$ the exponents are
$1,2,\cdots, n_i$.
\item For $i<j$ we have two blocks $n_i \times n_j$ where the exponents are $$n_i-n_j+1,\cdots,n_i.$$
\item For $i>j$ we have two blocks $n_i \times n_j$ where the exponents are $$1,\cdots, n_j.$$
\end{enumerate}
 To sum up we obtain that the sum of the ${\bG}_m$ exponents is:
 \begin{multline*}
     \sum_{i=1}^k \frac{1}{2} n_i(n_i+1) + \sum_{1 \leq i<j \leq k} (n_in_j+n_j)=\frac{1}{2}(\sum_{i=1}^k  n_i(n_i+1) +\sum_{1 \leq i<j \leq k} (2n_in_j+2n_j)) \\ 
 =\frac{1}{2}(n^2+n)+\sum_{j=1}^k(j-1)n_j > \frac{1}{2}(n^2+n).
 \end{multline*}
 
 It remains to prove \Rami{conditions (\ref{lem:slod.slice:2},\ref{lem:slod.slice:3})}.
 For \Rami{\eqref{lem:slod.slice:2}} we first note that the action map is submersive at point $(1,x)$. Then we deduce \Rami{\eqref{lem:slod.slice:2}} from the $\bG_m$ action $\star$. 
  
 For \Rami{\eqref{lem:slod.slice:3}} take a nilpotent element $y \in x+(\bfL(\bar\F_\ell)\smallsetminus 0)$. Using the action $\star$ we see that $\overline{(\bfG\times \bG_m)\cdot y}\ni x$. Since $y$ is nilpotent, we have $(\bfG\times \bG_m)\cdot y=\bfG\cdot y$. Thus 
 $\overline{\bfG\cdot y}\ni x$. Assume, for the contradiction, that $\dim(\bfG\cdot y)\leq\dim(\bfG\cdot x)$. We obtain $\bfG\cdot y\ni x$.
It remains to show that $\bfG\cdot x\cap x+\bfL_x=\{x\}$. This follows from the following easily verified facts:
\begin{itemize}
    \item $x$ is an isolated point of the intersection $\bfG\cdot x\cap x+\bfL_x$. 
    \item the intersection $\bfG\cdot x\cap x+\bfL_x$ is $\bG_m$-invariant w.r.t. the action $\star$ and 
    the closure of any such $G_m$-orbit of a point in the slice $x+{\bf L}_x$ includes $x$.
\end{itemize}
To verify the first point we claim that the intersection of the tangent space $T_{x}(G \cdot x)$ and $L_x$ is zero. 
For $x=J_n$ this follows from the following statement:
If the matrix $[J_n,A]$ that has the first column equals to zero then $[J_n,A]=0$. 
A similar argument works for any direct sum of Jordan blocks.
\end{proof}

\subsection{Proof of \Cref{thm:an.jet}}\label{sec:Pf.an.jet}
\begin{proof}
Let $\bfN\subset \ug$ be the nilpotent cone. Enumerate  the nilpotent orbits $\{0\}=\bfO_1,\dots \bfO_m$ s.t. $\dim \bfO_i\leq \dim \bfO_j$ for any $i<j$. Let $\bfN_i=\bigcup_{j=1}^i \bfO_j$. Note that $\bfN_i$ is closed and $\bfN_0=\emptyset$.
We will prove  by down going induction on $i$ that for any $i\geq 0$ the map
$p|_{\ug \smallsetminus \bfN_i}$ is effectively an-FRS over the origin.

The base of the induction $i=m$ follows from \Cref{lem:not.in.im}.

For the induction step, 
we assume the statement holds for $\bfN_{i+1}$ and prove it for $\bfN_{i}$. Let $\bfU=\ug\smallsetminus \bfN_{i+1}$.
Let $x\in \bfO_{i+1}(\F_\ell)$ and let $\bfL$ be the linear space given by \Cref{lem:slod.slice} when applied to 
$x$.
\begin{enumerate}[Step 1.]
    \item Reduction to $\bfG\times \bfL$.\\
Consider the map $$\delta:(\bfG\times \bfL) \sqcup \bfU\to 
\ug\smallsetminus \bfN_i$$ given on $\bfG\times \bfL$ by $\delta(g,l):=g\cdot (x+l)$ and on $\bfU$ by the embedding $\bfU\subset \ug\smallsetminus \bfN_i$. By \Cref{lem:slod.slice} (\ref{lem:slod.slice:2}) $\delta$ is submersive. Also, it is onto on the level of points over any field. Indeed, for any extension $E/\F_\ell$  we have 
$\bfO_{i+1}(E)=\bfG(E)\cdot x$ and thus,
$$(\ug\smallsetminus \bfN_i)(E)=\bfO_{i+1}(E) \cup \bfU(E)= (\bfG(E)\cdot x )\cup \bfU(E)\subset \delta(((\bfG\times \bfL) \sqcup \bfU)(E))$$

Thus by \Cref{cor:sm.loc} it is enough to show that $p\circ \delta$ is effectively an-FRS over the origin. Let $\delta':=\delta|_{\bfG\times \bfL}$.
Notice that $p|_{\bf U}$ is effectively an-FRS over the origin by the induction hypothesis.

Therefore it is enough to show that $p\circ \delta'$ is effectively an-FRS over the origin.

Notice that 
$\delta' (\bfG\times (\bfL\smallsetminus 0)) \subset \bfU $ by \Cref{lem:slod.slice} \eqref{lem:slod.slice:3}. So,  by  \Cref{cor:sm.loc} we deduce 
that 
$p\circ \delta'|_{\bfG\times (\bfL\smallsetminus 0) }$ is effectively an-FRS over the origin. 
\item Reduction to $\bfL$.\\
We can factor the map $p\circ \delta'$ as $p|_{(x+\bfL)} \circ sh_x \circ pr_{\bfL}$, where $sh_x:\bfL\to x+\bfL$ is the shift map,  and $pr_{\bfL}:\bfG\times \bfL\to \bfL$ is the projection. So, by \Cref{cor:sm.loc} it is enough to show that $p|_{x+\bfL}\circ sh_x:\bfL \to \fc$ is effectively an-FRS over the origin.

Also, by \Cref{cor:sm.loc} we deduce 
that 
$p|_{x+(\bfL\smallsetminus 0)} \circ sh_x$ is effectively an-FRS over the origin.

\item Proof that $p|_{x+\bfL}\circ sh_x:\bfL \to \fc$ is effectively an-FRS over the origin.\\
If $x$ is regular nilpotent then $p|_{x+\bfL} \circ sh_x$ is an isomorphism, and thus is \Rami{effectively} an-FRS over the origin.

Otherwise, the assertion follows now from \Cref{lem:FRS.hom.crit}, since the condition of \Cref{lem:FRS.hom.crit} on the source $\bfL$ is given by \Cref{lem:slod.slice}\eqref{lem:slod.slice:5}.
\end{enumerate}

\end{proof}

\section{An-FRS maps and jets}\label{sec:jet}
In this section we relate the effective an-FRS property to jet schemes.
Specifically we prove:
\begin{proposition}    \label[proposition]{prop:eff.FRS.imp.jet}
    Let $\gamma:\bfX\to \bfY$ be a flat $\bG_m$-equivariant map between affine spaces  with positive $\bG_m$ actions. Assume that it is  effectively an-FRS over the origin. Then there exists $C\in \N$ s.t. 
    for every $m\in \bN$ we have
    $$\dim \mathcal{J}_m(\gamma^{-1}(0))<C+m\dim\gamma^{-1}(0).$$
\end{proposition}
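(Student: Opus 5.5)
The plan is to translate the analytic bound in \Cref{def:an.FRS.0} into a point count via Lang--Weil, and then compare the resulting weighted jet scheme with $\mathcal{J}_m(\gamma^{-1}(0))$. Write $\bfX=\A^I$ and $\bfY=\A^J$ with weights $a_i$ and $b_j$, and set $b_{\min}=\min b_j$, $b_{\max}=\max b_j$, $|b|=\sum_j b_j$. By flatness, $\dim\gamma^{-1}(0)=I-J$. Apply the definition with $m=0$, using the standard rectification of $\bfX$, so that $\mu_0^{\bfX,k}$ is Haar measure on $\F_{\ell^k}[[t]]^I$. This gives $M$ such that for all $k$ and $a$,
$$\mu^{\bfX,k}\{x\in \F_{\ell^k}[[t]]^I:\ \gamma(x)\in t^a\cdot B_0^{\bfY,k}\}<\ell^{kM}\ell^{-ka|b|}.$$
The condition $\gamma(x)\in t^a\cdot B_0^{\bfY,k}$ says that $\operatorname{val}\gamma_j(x)\ge ab_j$ for every $j$. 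Since $\gamma$ is polynomial, this condition depends only on $x$ modulo $t^{m'+1}$, where $m':=ab_{\max}$.

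Let $\bfZ_a\subset\mathcal{J}_{m'}(\bfX)$ be the scheme of $m'$-jets satisfying these weighted vanishing conditions. The measure above equals $\ell^{-k(m'+1)I}\#\bfZ_a(\F_{\ell^k})$. The bound holds uniformly in $k$, and $\bfZ_a$ is defined over $\F_\ell$, so Lang--Weil (applied to a top-dimensional component, which is geometrically irreducible after replacing $k$ by a multiple) yields
$$\dim \bfZ_a\le (m'+1)I-a|b|+M.$$
The uniformity in $k$ is exactly what the ``effective'' definition provides.

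Next, fix $m$ and choose $a$ with $ab_{\min}\ge m+1$; for instance $a=\lceil (m+1)/b_{\min}\rceil$. Then every point of $\bfZ_a$ truncates to an $m$-jet of $\gamma^{-1}(0)$, which gives a truncation morphism $\pi:\bfZ_a\to \mathcal{J}_m(\gamma^{-1}(0))$. I then want a lower bound on the fibers of $\pi$ over points of a top-dimensional component $\bfW$ of $\mathcal{J}_m(\gamma^{-1}(0))$. A lift of a given $m$-jet to $\bfZ_a$ is obtained by choosing the coefficients of $x$ in degrees $m+1,\dots,m'$, which is $(m'-m)I$ free parameters. These must satisfy the remaining equations: for each $j$, the coefficients of $\gamma_j$ in degrees $m+1,\dots,ab_j-1$ must vanish, which is $\sum_j(ab_j-m-1)$ equations. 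Where a fiber is nonempty, its dimension is therefore at least $(m'-m)I-\sum_j(ab_j-m-1)$. Combining this with the bound on $\dim\bfZ_a$ gives
$$\dim\bfW\le (m'+1)I-a|b|+M-(m'-m)I+a|b|-J(m+1)=(m+1)(I-J)+M.$$
The constant $C$ absorbs the difference between $m$ and $m+1$ times $\dim$.

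The main obstacle is that the fibers of $\pi$ over $\bfW$ must be nonempty, i.e.\ that a generic $m$-jet of the singular fiber lifts to a weighted jet of the higher order $m'$. Hensel's lemma fails at singular points, so this needs an argument. My first attempt is to exploit the freedom in $a$ and the $\bG_m$-equivariance. The torus action $x\mapsto t^{s}\cdot x$ (weights $a_i$) maps arcs with $\operatorname{val}\gamma_j\ge c b_j$ to arcs with $\operatorname{val}\gamma_j\ge (c+s)b_j$. Combining this with the reparametrization $t\mapsto t^{r}$, I would show that an arbitrary $m$-jet of $\gamma^{-1}(0)$, after a controlled modification costing only a bounded number of dimensions, extends to an element of $\bfZ_a$. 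If this fails, the fallback is the route suggested in the introduction. Consider the family over $\A^1$ of weighted jet schemes obtained by interpolating the weight vector $b$ toward the constant weight (a $\bG_m$-degeneration of the defining ideal via $\sigma_w$, as in \Cref{prop:elimination}). Upper semicontinuity of fiber dimension then transfers the bound from the weighted jet scheme $\bfZ_a$ to the unweighted special fiber, with the bounded error absorbed into $C$.
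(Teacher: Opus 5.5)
Your first step is correct and coincides with the paper's. The effective bound with $m=0$ plus Lang--Weil gives $\dim\bfZ_a\le (m'+1)I-a|b|+M$, where $\bfZ_a$ is the paper's $\mathcal{J}_{ma_{\max}}(\gamma)^{-1}(\bfE_m)$ (the paper writes $a_i$ for your $b_j$).

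The gap is the step you flag yourself, and it is not a technicality. Your bound on $\dim\bfW$ needs $\pi:\bfZ_a\to\mathcal{J}_m(\gamma^{-1}(0))$ to be dominant onto every top-dimensional component, and lifting fails in general. Take $\ug=\gl_2$ and $\gamma=p$; the target weights are $1,2$, so $a=m+1$. The $m$-jet $x_m=\begin{pmatrix}0&t^i\\ t^{m+1-i}&0\end{pmatrix}$ with $1\le i\le m$ lies in $\mathcal{J}_m(\bfN)$. Yet every lift $x$ has $\operatorname{val}\det x=m+1<2(m+1)$, so $x_m\notin\pi(\bfZ_a)$. The components you must control are exactly the potential excess components of jets sitting over the singular locus. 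There is no reason their generic points should lift, and knowing that they do is essentially what is being proved.

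Neither remedy closes this. Twisting by $t^s$ produces a different jet rather than a lift of the given one. As a measure comparison it only encloses the round ball $\{\operatorname{val}y_j\ge m+1\}$ in a weighted ellipsoid, which costs a power of $\ell^k$ growing linearly in $m$ unless all $b_j$ coincide. As for the weight degeneration, upper semicontinuity only bounds fibres \emph{near} a special fibre by that special fibre. So it cannot transfer an upper bound from $\bfZ_a$ onto an unweighted special fibre.

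The missing idea is to lift only jets of the affine space $\bfX$, never jets of $\gamma^{-1}(0)$, and to use semicontinuity with $\bfZ_a$ as the special fibre. Let $\bfE\subset\bfR\subset\mathcal{J}_{m'}(\bfY)$ be the subgroups $\{\operatorname{val}y_j\ge ab_j\}$ and $\{\operatorname{val}y_j\ge m+1\}$. Then $\mathcal{J}_{m'}(\gamma)^{-1}(\bfR)=r_\bfX^{-1}(\mathcal{J}_m(\gamma^{-1}(0)))$, where $r_\bfX$ is truncation, a trivial $\A^{(m'-m)I}$-bundle. Map this preimage to $\bfR/\bfE$: its fibres are preimages of translates of the ellipsoid, and the fibre over $0$ is $\bfZ_a$. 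Since $\bG_m$ acts with positive weights on $\mathcal{J}_{m'}(\bfX)$ and on $\mathcal{J}_{m'}(\bfY)/\bfE$, consider the closed $\bG_m$-stable locus where the local fibre dimension exceeds $\dim\bfZ_a$. If nonempty it would contain the origin, which is impossible, so every fibre has dimension $\le\dim\bfZ_a$. Hence
\[\dim\mathcal{J}_m(\gamma^{-1}(0))+(m'-m)I\le\dim\bfZ_a+\textstyle\sum_j(ab_j-m-1).\]
This is exactly your numerology, now obtained from an upper bound on all fibres instead of a lower bound plus surjectivity. It is the paper's argument, with $\bfE_m$, $\bfR_m$ and the quotient $\mathcal{J}_{ma_{\max}}(\bfY)/\bfE_m$. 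Your choice $a=\lceil (m+1)/b_{\min}\rceil$ would even spare its final interpolation in $m$.
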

\begin{proof}[Idea of the proof]
 The fact that $\gamma$ is effectively an-FRS over the origin provides a bound on the ratio between the measure of the preimage of a certain ellipsoid in $\bf Y$ and the measure of the ellipsoid itself. If this ellipsoid would be a  ball then this ratio would be exactly the (normalized) number of points of the jet scheme in question. So, we would get the desired bound on the dimension from the Lang-Weil bounds. In our case, we get a bound on the dimension of some other scheme ($\delta^{-1}(0)$ in the notation below). We can embed our ellipsoid into a ball. This ball is the union of shifts of the ellipsoid. So we need to bound the measures of the preimages of these shifts. The Lang-Weil bounds translate the problem  to a question on dimensions of neighboring fibers of the same map $\delta$. We can bound these dimensions using the semi-continuity of the dimension of the fiber.
\end{proof}
\begin{proof}
Choose the standard rectifications on $\bfX$ and $\bfY$.
Since $\gamma$ is  effectively an-FRS over the origin, there exists $M\in\N$ 
such that
for any $k,m\in \N$ we have
$$\frac{\left\langle \gamma_{*}\left(\mu_0^{\bfX,k} \right),1_{t^{m}\cdot B_0^{\bfY,k}} \right\rangle}{\mu^{\bfY,k}\left (t^{m}\cdot B_0^{\bfY,k}\right)}< \ell^{kM}.$$

Let $n:=\dim \bfY$ and let $\{a_i\}_{i=1}^n$ be the exponents of the $\bG_m$ action on $\bfY$. Set $a_{\max}=\max a_i$ and $a_{\min}=\min a_i$. Take $C:=M+a_{\min} \dim(\bfX)$.

    For $m\in\N$ consider the jet map $\mathcal{J}_m(\gamma):\mathcal{J}_m(\bfX)\to \mathcal{J}_m(\bfY)$. Let 
 $$E_m^k:=\left\{(\sum_j y_{1j} t^j,\dots, \sum_j y_{nj} t^j)\in \mathcal{J}_{a_{\max} m}(\bfY)(\F_{\ell^k})\,\,|\,\, y_{ij}=0 \text{ for } j<m a_i\right\}.$$

We observe that the measure of an ellipsoid in ${\bf Y}$ can be calculated by counting points of this set. Namely, 
$$
\mu^{\bfY,k}\left (t^{m}\cdot B_0^{\bfY,k}\right)=
\frac{\#E_{m}^{k}}{\ell^{kma_{\max}\dim(\bfY)}}.
$$

Also observe that
$$
\left\langle \gamma_{*}\left( \mu_0^{\bfX,k} \right),1_{t^{m}\cdot B_0^{\bfY,k}} \right\rangle =
\mu_0^{\bfX,k}(\gamma^{-1}(t^{m}\cdot B_0^{\bfY,k})) = \frac{\#\mathcal{J}_{ma_{\max}}(\gamma)^{-1}(E_m^k)}{\ell^{kma_{\max}\dim(\bfX)}}.
$$
We get 
\begin{align*}    
\#\mathcal{J}_{ma_{\max}}(\gamma)^{-1}(E_m^k)&= \ell^{kma_{\max}\dim(\bfX)}\left\langle \gamma_{*}\left( \mu_0^{\bfX,k} \right),1_{t^{m}\cdot B_0^{\bfY,k}} \right\rangle 
\\&=
\ell^{kma_{\max}\dim(\bfX)}\frac{\left\langle \gamma_{*}\left(\mu_0^{\bfX,k} \right),1_{t^{m}\cdot B_0^{\bfY,k}} \right\rangle}{\mu^{\bfY,k}\left (t^{m}\cdot B_0^{\bfY,k}\right)} {\mu^{\bfY,k}\left (t^{m}\cdot B_0^{\bfY,k}\right)}
\\&=
\ell^{kma_{\max}(\dim(\bfX)-\dim(\bfY))}\frac{\left\langle \gamma_{*}\left(\mu_0^{\bfX,k} \right),1_{t^{m}\cdot B_0^{\bfY,k}} \right\rangle}{\mu^{\bfY,k}\left (t^{m}\cdot B_0^{\bfY,k}\right)}  {\# E_m^k} 
\\&<
\ell^{kma_{\max}\dim(\gamma^{-1}(0))} \ell^{kM}  {\# E_m^k}, 
\end{align*}
where in the last step we used the fact that $\gamma$ is flat, as well as the inequality established above.

Let $\bfE_m\subset \mathcal{J}_{ma_{\max}}(\bfY)$ be the 
natural subvariety s.t. $\bfE_m(\F_{\ell^k})=E_m^k$.
More precisely, 
$$\bfE_m:=
\left\{(\sum_j  y_{1j} t^j,\dots, \sum_j y_{nj} t^j)\in \mathcal{J}_{ a_{\max} m}(\bfY) \,\,|\,\, y_{ij}=0 
\text{ for } 1 \leq i \leq n \text{ and } j<m a_{i}\right\}.$$

By the Lang-Weil bounds \cite[Theorem 1]{LW}, we obtain 
\begin{align*}    
\dim  \mathcal{J}_{ma_{\max}}(\gamma)^{-1}(\bfE_m)&=\limsup_{k\to \infty}\log_{\ell^k}(\#\mathcal{J}_{ma_{\max}}(\gamma)^{-1}(\bfE_m))(\F_{\ell^k}) \leq
\\&\leq
M+ma_{\max}\dim(\gamma^{-1}(0))+\limsup_{k\to \infty}\log_{\ell^k}(\#\bfE_m(\F_{\ell^k}))=
\\&=
M+ma_{\max}\dim(\gamma^{-1}(0))+\dim(\bfE_m)
\end{align*}

Note that   $\bfE_m\subset \mathcal{J}_{ma_{\max}}(\bfY)$ has an algebraic group structure coming from the additive structure on $\bfY$. 

The action of $\bG_m$ on $\bfY$ induce an action on  $\bfE_m$ and $\mathcal{J}_{ma_{\max}}(\bfY)$.
Let $\bfZ:=\mathcal{J}_{ma_{\max}}(\bfY)/\bfE_m$ and let $\eps:\mathcal{J}_{ma_{\max}}(\bfY)\to \bfZ$ be the quotient map. Consider the map $$\delta:=\eps \circ \mathcal{J}_{m a_{\max}}(\gamma): \mathcal{J}_{m a_{\max}}(\bfX)\to \bfZ.$$ We have $$\dim(\delta^{-1}(0))=\dim  \mathcal{J}_{m a_{\max}}(\gamma)^{-1}(\bfE_m)\leq M+m a_{\max}\dim(\gamma^{-1}(0))+\dim(\bfE_m)$$ 
By upper semi-continuity of the dimension of the fiber (\cite[IV 13.1.3,13.1.15]{EGA}), we have a Zariski open $\bfU\subset \bfZ$ s.t. for any $x\in \bfU(\bar\F_{\ell})$ we have:
\begin{equation}\label{eq:dim.in}
\dim(\delta^{-1}(x))\leq\dim(\delta^{-1}(0))\leq M+m a_{\max}\dim(\gamma^{-1}(0))+\dim(\bfE_m)    
\end{equation}
Using the action of $\bG_m$ on $\bfZ$ we obtain that 
\eqref{eq:dim.in} holds for any $x\in \bfZ(\bar\F_{\ell})$. 

Let 
$$\bfR_m:=\{(\sum_j  y_{1j} t^j,\dots, \sum_j y_{nj} t^j)\in \mathcal{J}_{ a_{\max} m}(\bfY)| y_{ij}=0 \text{ for } j<m a_{\min}\}.$$
Set $\bfW=\bfR_m/\bfE_m$ with the natural embedding to  ${\bf Z}$. 
Using 
\eqref{eq:dim.in} we deduce 
\begin{align*}
  \dim  \mathcal{J}_{m a_{\max}}(\gamma)^{-1}(\bfR_m)&=\dim(\delta^{-1}(\bfW))\leq
  \\&\leq
  M+m a_{\max}\dim(\gamma^{-1}(0))+\dim(\bfE_m)+\dim(\bfW)=
  \\&=
  M+m a_{\max}\dim(\gamma^{-1}(0))+\dim(\bfR_m)  
\end{align*}
Consider the commutative diagram.
$$
        \begin{tikzcd}
        \mathcal{J}_{m a_{\max}}(\bfX) 
        \dar[swap, "r_\bfX"] \rar["\mathcal{J}_{m a_{\max}}(\gamma)"] & \mathcal{J}_{m a_{\max}}(\bfY)\dar["r_\bfY"]  \\
        \mathcal{J}_{m a_{\min}}(\bfX) \rar["\mathcal{J}_{m a_{\min}}(\gamma)"] & \mathcal{J}_{m a_{\min}}(\bfY) 
        \end{tikzcd}
$$
were the vertical maps are the reduction maps.
We note that $\bfR_m=r_{\bfY}^{-1}(0).$
Clearly, 
$$
\dim(r_\bfY^{-1}(0)) -\dim(r_\bfX^{-1}(0))= m(a_{\rm \min}-a_{\rm \max})\dim(\gamma^{-1}(0))
$$
We have 
$$\mathcal{J}_{m a_{\max}}(\gamma)^{-1}(\bfR_m)=(r_\bfY\circ \mathcal{J}_{m a_{\max}}(\gamma)) ^{-1}(0)=(\mathcal{J}_{m a_{\min}}(\gamma) \circ r_\bfX)^{-1}(0)= r_\bfX^{-1}((\mathcal{J}_{m a_{\min}}(\gamma))^{-1}(0)).$$
Since $r_\bfX$ is a quotient map, we get 
\begin{align*}
  \dim (\mathcal{J}_{m a_{\min}}(\gamma)^{-1}(0))&= \dim(r_X^{-1}((\mathcal{J}_{m a_{\min}}(\gamma))^{-1}(0)))-\dim(r_X^{-1}(0))  
\\&=
\dim(\mathcal{J}_{m a_{\max}}(\gamma)^{-1}(\bfR_m))-\dim(r_\bfX^{-1}(0))  
\\&\leq
M+m a_{\max}\dim(\gamma^{-1}(0))+\dim(\bfR_m)-\dim(r_\bfX^{-1}(0))  
\\&=
M+m a_{\max}\dim(\gamma^{-1}(0))+\dim(r_\bfY^{-1}(0)) -\dim(r_\bfX^{-1}(0))  
\\&=
M+m a_{\min}\dim(\gamma^{-1}(0)) 
\end{align*}

Now, fix $m'$. Let $m$ be  the largest integer s.t. $m a_{\min}<m'$, that is $m=[\frac{m'}{a_{\min}}]$. Consider the restriction map $\mathcal{J}_{m'}(\gamma^{-1}(0))\to  \mathcal{J}_{m a_{\min}}(\gamma^{-1}(0))$. Its fibers are of dimension $\leq (m'-m a_{\min})\dim \bfX\leq  a_{\min}\dim \bfX$. We obtain 
\begin{align*}
\dim \mathcal{J}_{m'}(\gamma^{-1}(0)) &\leq   a_{\min}\dim \bfX+ \dim \mathcal{J}_{m a_{\min}}(\gamma^{-1}(0))=  a_{\min}\dim \bfX+ \dim \mathcal{J}_{m a_{\min}}(\gamma)^{-1}(0)\leq 
\\&\leq 
 a_{\min}\dim \bfX+ M+m a_{\min}\dim(\gamma^{-1}(0)) \leq
 \\&\leq 
 a_{\min}\dim \bfX+ M+m'\dim(\gamma^{-1}(0))= 
C + m'\dim(\gamma^{-1}(0)).
\end{align*}
\end{proof}
\section{Proof of Theorems \ref{thm:jet.nil}, \ref{thm:jet.fib},     and \ref{thm:jet.fib2}}\label{sec:pfs}

\begin{proof}[Proof of \Cref{thm:jet.nil}]
According to \Cref{thm:an.jet} the Chevalley map $p:\fg \to \fc$ is effectively an-FRS over \Rami{the origin}. We apply \Cref{prop:eff.FRS.imp.jet} with 
$p$ and as  
$p^{-1}(0)=\bfN$,
we obtain the result.
\end{proof}

\begin{proof}[Proof of \Cref{thm:jet.fib} ]
Let $\Rami{C}_0$ be as in \Cref{thm:jet.nil}. Fix $k$. Take $\Rami{C}=\Rami{C}_0$.  
By semi-continuity of the dimension of the fiber (\cite[IV 13.1.3,13.1.15]{EGA}), we have a Zariski open $\bfU\subset \RamiA{\cJ}_m(\uc)$ s.t. for any $x\in \bfU(\bar\F_{\ell})$ we have:
\begin{equation}\label{eq:dim.fib}
\dim(\RamiA{\cJ}_m(p)^{-1}(x))\leq\dim(p^{-1}(0))\leq \Rami{C}+m \dim(p^{-1}(0)).
\end{equation}
Using the action of $\bG_m$ on $\RamiA{\cJ}_m(\uc)$ and $\RamiA{\cJ}_m(\ug)$ we obtain that 
\eqref{eq:dim.fib} holds for any $x\in \RamiA{\cJ}_m(\uc)(\bar\F_{\ell})$ as required.
\end{proof}

\begin{proof}[Proof of Theorem 
\ref{thm:jet.fib2}]
Let $\Rami{C}$ be as in \Cref{thm:jet.fib}. Fix $i$ and let
$\Rami{C}_i:=i\Rami{C}$.
Let $p_i:\gi \to \uc$ be the projection. 
We deduce that for  any $x\in \mathcal{J}_m(\uc)(\bar\F_{\ell})$ we have:
\begin{align*}
\dim(\mathcal{J}_m(p_i)^{-1}(x))&=i\dim(\mathcal{J}_m(p)^{-1}(x))\leq
\\&\leq
i\Rami{C}+m i\dim(p^{-1}(0))  =
\\&=
i\Rami{C}+m \dim(p_i^{-1}(0)) 
\end{align*}
Note that by \Cref{cor:pqflat} the map $p_i$ is flat.
Thus 
\begin{align*}
\dim(\mathcal{J}_m(\ug^{\times_{\uc}i}))&=
\dim(\mathcal{J}_m(p_i)^{-1}(x))+\dim(\mathcal{J}_m(\uc))\leq
\\&\leq
i\Rami{C}+m \dim(p_i^{-1}(0))+\dim(\mathcal{J}_m(\uc))=\Rami{C}_i+m\dim(\ug^{\times_{\uc}k}).
\end{align*}
\end{proof}

\section{Almost integrability}\label{sec:AlmInt}
In this section we study several versions of integrability of  an algebraic variety. We will prove that they are equivalent under the assumption of existence of a resolution (see \Cref{thm:eq.almost.int}). 

\begin{defn}\label[defn]{def:alm.int}
    Let $\bfX$ be  a  variety and $\bfU\subset \bfX^{sm}$ be an open subset. We say that $(\bfX,\bfU)$ is:
    \begin{enumerate}
        \item\label{def:alm.int:1} asymptotically almost integrable\index{asymptotically almost integrable} if for any:
        \begin{itemize}
            \item open affine  $\bfV\subset \bfX$
            \item top form $\omega$ on $\bfV^{sm}$ 
        \end{itemize}         
        there is $M\in \N$ s.t. for any $m'\in \N$  and any rectification of $\bfU\cap \bfV$ we have 
$$\lim_{k\to \infty}\frac{1}{\ell^{kM}}\int_{B_{m'}^{\bfU\cap \bfV,k} \cap \bfV(\F_{\ell^k}[[t]])} |\omega|=0.$$
        \item\label{def:alm.int:2} Geometrically almost integrable\index{geometrically almost integrable} if for any open affine (rectified) $\bfV\subset \bfX$ and any top form \RamiA{$\omega$} on $\bfV^{sm}$  there is a resolution of singularities $\tilde \bfV \to \bfV$, which is an isomorphism over $\bfV^{sm}$, s.t. that one can extend $\omega$ to a rational form on $\tilde \bfV$ whose poles form an SNC divisor with multiplicities 1.               
        \item\label{def:alm.int:3} Analytically almost integrable\index{analytically almost integrable} if for any 
        \begin{itemize}
            \item         
        open affine  $\bfV\subset \bfX$,
            \item         a rectification on $\bfV\cap \bfU$,
                \item         a top form $\omega$ on $\bfV^{sm}$,
                    \item         $k\in \N$,
        \end{itemize}
        there is $M\in \N$ s.t. for any  $m\in \N$ we have 
$$\int_{B_m^{\bfU\cap \bfV,k} \cap \bfV(\F_{\ell^k}[[t]])} |\omega|< M(m+1)^M.$$      
        \end{enumerate}  
\end{defn}
\begin{remark}
    The notion of geometrically almost integrable does not depend on $U$.
\end{remark}

\begin{theorem}\label[theorem]{thm:eq.almost.int}
    Assume that $\bfX$ has a strong resolution (See \S\ref{ssec:conv}\eqref{ssec:conv:strong}). Then TFAE:
    \begin{enumerate}
        \item\label{thm:eq.almost.int:1} $(\bfX,\bfU)$ is asymptotically almost integrable. 
        \item\label{thm:eq.almost.int:2} $(\bfX,\bfU)$ is geometrically almost integrable. 
        \item\label{thm:eq.almost.int:3} $(\bfX,\bfU)$ is analytically almost integrable.
    \end{enumerate}
\end{theorem}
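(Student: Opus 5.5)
Everything will be reduced to a computation on a strong resolution. Fix an open affine $\bfV\subset\bfX$ and a top form $\omega$ on $\bfV^{sm}$. Let $\eta:\tilde\bfV\to\bfV$ be the restriction of the strong resolution; it is an isomorphism over $\bfV^{sm}$, and its exceptional locus $\bfE=\eta^{-1}(\bfV^{sing})=\bigcup_j \bfE_j$ is an SNC divisor. Pull $\omega$ back to a rational top form $\eta^*\omega$ on $\tilde\bfV$, regular and invertible off $\bfE$ up to divisors meeting $\bfV^{sm}$. Write its divisor near $\bfE$ as $\sum_j a_j\bfE_j$ (plus a part supported away from $\bfE$, where $\omega$ is regular or has zeros). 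Since $\eta$ is proper, $\bfV(\F_{\ell^k}[[t]])$ lifts to $\tilde\bfV(\F_{\ell^k}[[t]])$. Next compare the balls $B_m^{\bfU\cap\bfV,k}$ with the sets of arcs whose contact order with $\bfE\cup(\bfV\smallsetminus\bfU)$ is at most $\approx cm$; this uses \Cref{lem:kot.bnd} together with the fact that a rectification of $\bfU\cap\bfV$ involves inverses of functions cutting out the complement. Concretely, I expect constants $c,c'$ with
\[\{\text{ord}_{\bfE}\le m/c\}\subset \eta^{-1}(B_m^{\bfU\cap\bfV,k})\subset\{\text{ord}_{\bfE}\le c'm+c'\},\]
uniformly in $k$.

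I would then compute the integral in local SNC charts by a motivic-type calculation. Cover $\tilde\bfV$ by finitely many (étale) charts in which $\bfE=\{x_1\cdots x_r=0\}$ and $\eta^*\omega=u\,x_1^{a_1}\cdots x_r^{a_r}dx$, with $u$ a unit. The integral of $|\eta^*\omega|$ over arcs with $\text{ord}(x_j)=e_j$ equals, up to factors bounded by $\ell^{kO(1)}$, the quantity $\prod_j (1-\ell^{-k})\ell^{-k e_j(a_j+1)}$. Summing over $e_j\le Cm$ gives the following dichotomy:
\begin{itemize}
\item if all $a_j\ge -1$, the sum is $O((m+1)^r)$ for fixed $k$, and it is bounded by $\ell^{kM}$ uniformly in $k$ and $m$, with a constant coming from the number of points of strata by Lang--Weil;
\item if some $a_j\le -2$, the sum grows like $\ell^{k m(-a_j-1)/C}$, i.e. exponentially in $m$ for fixed $k$, and for fixed $m$ it grows faster than any $\ell^{kM}$ with $M$ fixed once $m$ is large.
\end{itemize}

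These estimates give the equivalences. For (2)$\Rightarrow$(3) and (2)$\Rightarrow$(1), geometric almost integrability says that some resolution has poles of multiplicity at most $1$. The plan is to use that resolution, or to transfer to the strong one by noting that discrepancy conditions of this shape are independent of the resolution via a common refinement; the polynomial bound then gives (3), and the uniform $\ell^{kM}$ bound gives (1) after taking $M$ one larger so that the limit is $0$. For (3)$\Rightarrow$(2) and (1)$\Rightarrow$(2), argue by contraposition: if the strong resolution has some $a_j\le -2$, take arcs through the generic point of $\bfE_j$ with prescribed contact order $e\approx m/c$. These contribute $\gg \ell^{k e(-a_j-1)}\cdot\ell^{k(\dim-1)}$-type mass, which is exponential in $m$ for fixed $k$, contradicting (3). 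It also exceeds $\ell^{kM}$ for $k\to\infty$ once $m>cM$, contradicting (1). Then the strong resolution itself witnesses (2).

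The main obstacle is the comparison of the balls $B_m^{\bfU\cap\bfV,k}$ with contact-order sets on $\tilde\bfV$, uniformly in $k$ and linearly in $m$. The lower inclusion is needed for the contrapositive directions and the upper inclusion for the bounds. I would first try to derive both from Kottwitz-type norm estimates (\Cref{lem:kot.bnd}). In these, the norm on $\bfU\cap\bfV$ is comparable to $\max(\|\cdot\|_\bfV,|f_i|^{-1})$ for generators $f_i$ of the ideal of the complement, and $-\log_\ell|f_i\circ\eta|$ is linear in the contact orders with the $\bfE_j$. A secondary difficulty is the bookkeeping for components of $\bfV\smallsetminus\bfU$ that lie inside $\bfV^{sm}$, where $\omega$ has no poles. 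Their contribution is harmless, and handling it should only require checking that the $a_j$ there are $\ge 0$.
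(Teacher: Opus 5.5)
Your plan is essentially the paper's proof. For (1)$\Rightarrow$(2) the paper also argues by contradiction: it computes the integral near one generic point $z$ (rational after enlarging $\F_\ell$) of a component of the strong resolution's exceptional divisor along which $\eta^*\omega$ has a pole of order $f\ge 2$, and obtains growth of order $\ell^{k(m(f-1)-\dim\bfX+1)}$; for (2)$\Rightarrow$(3) it lifts integral points to the resolution and bounds monomial integrals with exponents $\ge -1$ in \'etale charts, and it only sketches the other two implications. The paper never sets up your global two-sided comparison of balls with contact-order sets, whose displayed lower inclusion fails when $\bfV\smallsetminus\bfU$ has components through $\bfV^{sm}$ (as in the application); instead it inverts $x_1$ in an \'etale chart at $z$ so the ball is exactly $\{\mathrm{val}(x_1)\le m\}$, and enlarges the domain to the regular locus of the pulled-back form, so only the SNC pole divisor of the resolution from (2) matters and your common-refinement transfer to the strong resolution, which in characteristic $p$ would need a further resolution, is unnecessary.
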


\begin{proof}[Idea of the proof]
We assume WLOG $\bfV=\bfX$.
\begin{itemize}    \item[\eqref{thm:eq.almost.int:1} or \eqref{thm:eq.almost.int:3}   $\Rightarrow$
\eqref{thm:eq.almost.int:2}:] 
We are given a strong resolution of $\bfX$. We have to show that the poles of $\omega$ after the pull-back to this resolution are simple. We assume the contrary, and take a generic point $x$ on the divisor where $\omega$ have a non-simple pole. We replace the integral in \eqref{thm:eq.almost.int:1} or \eqref{thm:eq.almost.int:3} by an integral over a small ball around $x$. This is a smaller integral so the bound in \eqref{thm:eq.almost.int:1} or \eqref{thm:eq.almost.int:3}  still valid for it. On the other hand we can compute it using local coordinates near this point and using the knowledge on the pole of $\omega$. A simple computation contradicts the bound provided by \eqref{thm:eq.almost.int:1} or \eqref{thm:eq.almost.int:3}.
\item[\eqref{thm:eq.almost.int:2} 
$\Rightarrow$
\eqref{thm:eq.almost.int:1} and \eqref{thm:eq.almost.int:3}:]   
We are given a resolution where all the poles of $\omega$ are simple. The integral in \eqref{thm:eq.almost.int:1} and \eqref{thm:eq.almost.int:3} can be computed on this resolution. This can be done locally. The local computation is a computation of an integral of a monomial top form on $\bA^n$ over a ball in the complement to a union of coordinate hyperplanes. This computation gives the required bound. 
\end{itemize}    
\end{proof}
\begin{proof}
$ $
\begin{itemize}    \item[\eqref{thm:eq.almost.int:1}$\Rightarrow$\eqref{thm:eq.almost.int:2}:] 
Let $\gamma:\tilde \bfX\to \bfX$ be a strong resolution. Let $\bfV,\omega$ be as in \Cref{def:alm.int}\eqref{def:alm.int:2}. WLOG assume $\bfX=\bfV$. Let $\bfZ=\gamma^{-1}(\bfX\smallsetminus \bfU)$ and $\tilde \bfU:=\gamma^{-1}(\bfU) $.  Note that $\tilde \bfU \cong \bfU$.
Take $\tilde \bfV=\tilde \bfX$. 

Let $\bfZ'\subset \bfZ$ be the exceptional divisor. We have $\bfZ=\bfZ'\cup \bfZ''$ where $\bfZ',\bfZ''\subset \bfZ$ are closed and have no common components. Let $\tilde \omega:=\gamma^{*}(\omega)$ considered as a rational form on $\tilde \bfX$.

The support of the poles of $\tilde\omega$ is contained in $\bfZ'$ which is an SNC divisor. Assume for the sake of contradiction that not all the poles are simple. Let $\bfZ_0$ be a component of $\bfZ'$ where $\tilde\omega$ have a pole of multiplicity $f>1$. Let $Z'''\subset \tilde \bfX$ be (the closure of) the  zero-locus of $\tilde\omega$.  

Replacing $\ell$ with its power if necessary  we may assume that  there is $z\in \bfZ_0(\F_\ell)$ that is outside all the other components of $\bfZ$ and $\bfZ'''$.  

Let $\bfV'$ be an affine open neighborhood of $z$ that does not intersect the other components of $\bfZ$ and $\bfZ'''$. Choose a rectification of $\bfU$. Using  the identification $\tilde\bfU \cong \bfU$ we obtain a rectification of $\tilde\bfU$.  Choose a rectification of $\bfV'\smallsetminus \bfZ_0=\tilde\bfU\cap \bfV'$ 
s.t. for any $m,k\in\N$ we have 
$$B_m^{\tilde\bfU\cap \bfV',k} \subset B_m^{\tilde\bfU,k}.$$
Finally, choose an arbitrary rectification on $\bfV'$.

Since $\gamma$ is defined over $\F_\ell$, for any $k$  we have
$$\gamma (\tilde \bfX(\F_{\ell^k}[[t]]))\subset \bfX(\F_{\ell^k}[[t]]).$$

By the assumption, there is $M\in \N$ s.t. for any $m'\in \N$ we have 
$$\lim_{k\to \infty}\frac{1}{\ell^{kM}}\int_{B_{m'}^{\bfU,k} \cap \bfX(\F_{\ell^k}[[t]])} |\omega|=0.$$

For any $m'\in \N$,
we obtain:


\begin{align*}    
0
&\leq\limsup_{k\to \infty}\frac{1}{\ell^{kM}}\int_{B_{m'}^{\bfV'\smallsetminus \bfZ_0,k}\cap B^{\bfV',k}_{-1}(z)} |\tilde\omega|
\\&\leq
\limsup_{k\to \infty}\frac{1}{\ell^{kM}}\int_{B_{m'}^{\bfV'\smallsetminus \bfZ_0,k} \cap \tilde \bfX(\F_{\ell^k}[[t]])} |\tilde\omega|
=\limsup_{k\to \infty}\frac{1}{\ell^{kM}}\int_{B_{m'}^{\bfV'\cap \tilde\bfU,k} \cap \tilde \bfX(\F_{\ell^k}[[t]])} |\tilde\omega|
\\&\leq
\limsup_{k\to \infty}\frac{1}{\ell^{kM}}\int_{B_{m'}^{\tilde
\bfU,k} \cap\tilde\bfX(\F_{\ell^k}[[t]])} |\tilde\omega|
\leq
\limsup_{k\to \infty}\frac{1}{\ell^{kM}}\int_{B_{m'}^{
\bfU,k} \cap\bfX(\F_{\ell^k}[[t]])} |\omega|
=0.
\end{align*}

Thus for any $m'\in\N$ we have:
\begin{align*}    
\lim_{k\to \infty}\frac{1}{\ell^{kM}}\int_{B_{m'}^{\bfV'\smallsetminus \bfZ_0,k}\cap B^{\bfV',k}_{-1}(z)} |\tilde\omega|
=0.
\end{align*}
Note that this statement is independent of the rectification on $\bfV'$.

Since $\bfZ_0$ is smooth, we can choose an affine $\bfV''\subset \bfV'$ and an etale map $\phi:\bfV''\to \A^{\dim\bfX}$ s.t.
\begin{itemize}
    \item $z\in\bfV''(\F_\ell)$
    \item $\phi(z)=0$
    \item $\phi^{-1}(\A^{\dim \bfX-1})=\bfZ_0\cap \bfV''$
\end{itemize}
Let $x_1$ be the defining coordinate of 
$\A^{\dim \bfX-1}\subset \A^{\dim \bfX} $.
Note that $$\phi^*\left(\frac{dx_1\wedge\dots \wedge dx_2}{x_1^f}\right)=g\tilde\omega ,$$  where $g\in O^\times_{\bfV''}(\bfV'')$.

Choose an arbitrary simple rectification on $\bfV''$
and choose the rectification on $\bfV''\smallsetminus \bfZ_0$ obtained by the embedding $\bfV''\smallsetminus \bfZ_0 \to \bfV''\times \A^1$ given by $\phi^*(x_1)$. 

For any $m',k\in \N$  we have 
\begin{align*}    
\lim_{k\to \infty}\frac{1}{\ell^{kM}}\int_{B_{m'}^{\bfV''\smallsetminus \bfZ_0,k}\cap B^{\bfV'',k}_{-1}(z)} |\tilde\omega|
=0
\end{align*}
On the other hand,
for any $m',k\in \N$  we have \begin{align*}    \int_{B_{m'}^{\bfV''\smallsetminus \bfZ_0,k}\cap B^{\bfV'',k}_{-1}(z)} |\tilde\omega|&=
\int_{\{x\in B_{m'}^{\bfV'',k}| val(x_1(\phi(x)))\leq m'\}\cap B^{\bfV'',k}_{-1}(z)} |\tilde\omega|
\\&=
\int_{\{x\in B^{\bfV'',k}_{-1}(z)| val(x_1(\phi(x)))\leq m'\}} |\tilde\omega|\\&=
\int_{\{x\in B^{\bfV'',k}_{-1}(z)| val(x_1(\phi(x)))\leq m'\}} |g\tilde\omega|
\\&=
\int_{(t\F_{\ell^k}[[t]]\smallsetminus t^{m'+1}\F_{\ell^k}[[t]])\times (t\F_{\ell^k}[[t]])^{\dim \bfX-1}}\left |\frac{dx_1\wedge\dots\wedge dx_2}{x_1^f}\right|
\\&=
\left(
\int_{(t\F_{\ell^k}[[t]]\smallsetminus t^{m'+1}\F_{\ell^k}[[t]])}\left |\frac{dx_1}{x_1^f}\right|
\right)
\cdot
\mu^{\A^{\dim\bfX-1},k} (t\F_{\ell^k}[[t]])^{\dim \bfX-1}
\\&=
\frac{\ell^{k}-1}{\ell^{k}}\left(\sum_{i=1}^{m'} \ell^{ik(f-1)} 
\right)
\frac{1}{\ell^{k(\dim \bfX-1)}}
\\&=
\frac{\ell^{k}-1}{\ell^{k}}\frac{\ell^{(m'+1)k(f-1)}-\ell^{k(f-1)}}{\ell^{k(f-1)}-1}
\frac{1}{\ell^{k(\dim \bfX-1)}}
\\&=
\ell^{k(f-\dim\bfX-1)}
\frac{(\ell^{k}-1)(\ell^{m'k(f-1)}-1)}{\ell^{k(f-1)}-1}
\end{align*}

So, for $m'> M+\dim \bfX$
$$\infty=
\lim_{k\to \infty}\ell^{-kM}\ell^{k(f-\dim\bfX-1)}
\frac{(\ell^{k}-1)(\ell^{m'k(f-1)}-1)}{\ell^{k(f-1)}-1}
=
\lim_{k\to \infty}\frac{1}{\ell^{kM}}
\int_{B_n^{V'\smallsetminus Z_0,k}\cap B^{V',k}_{-1}} |\tilde\omega|=0$$

Contradiction.
\item[\eqref{thm:eq.almost.int:2}$\Rightarrow$\eqref{thm:eq.almost.int:3}:] 
 Let $\bfV,\omega,k$ be as in \Cref{def:alm.int}\eqref{def:alm.int:3}. Choose a rectification of $\bfU\cap \bfV$. WLOG assume $\bfX=\bfV$.  Let $\phi:\tilde \bfX:=\tilde \bfV\to \bfV=\bfX$ be a resolution of singularities as in \Cref{def:alm.int}\eqref{def:alm.int:2}.
 Let $\tilde \omega:=\phi^*(\omega)$. Let $\bfZ$ be the divisor of poles of $\tilde\omega$. Let $\tilde \bfU:=\phi^{-1}(\bfU)$. 
 Choose a rectification on $\bfU$. We have to show that there exists $M$ s.t. for any $m$ we have 
$$\int_{B_m^{\bfU,k} \cap \bfX(\F_{\ell^k}[[t]])} |\omega|< M(m+1)^M$$      
  By \Cref{lem:kot.bnd}, this statement does not depend on the  rectification on $\bfU$. By the valuative criterion we have $\tilde\bfX(\F_{\ell^k}[[t]])= \phi^{-1}(\bfX(\F_{\ell^k}[[t]]))$. Here $\phi$ is interpreted as a map $\tilde\bfX(\F_{\ell^k}((t))) \to\bfX(\F_{\ell^k}((t)))$.

  So it is enough to show that for some rectification of $\tilde\bfU$ there exists $M$ s.t. 
  for any $m$ we have 
$$\int_{B_m^{\tilde\bfU,k} \cap \tilde\bfX(\F_{\ell^k}[[t]])} |\tilde\omega|< M(m+1)^M$$     
Let $\bfW\subset \tilde\bfX$ be the regular locus of $\tilde\omega$. We have $\tilde\bfU\subset \bfW$. Thus, for any rectification of $\bfW$ we can choose a rectification of $\tilde\bfU$ s.t. $B_m^{\tilde\bfU,k}\subset B_m^{\bfW,k}$ for any $m,k\in \N$. Therefore,  
 it is enough to show that for some rectification of $\bfW$ there exists $M$ s.t. 
  for any $m$ we have 
$$\int_{B_m^{\bfW,k} \cap \tilde\bfX(\F_{\ell^k}[[t]])} |\tilde\omega|< M(m+1)^M$$  
Let $\tilde\bfX=\bigcup_{i=1}^{I}\bfV_i$ be an open affine cover such that there are:
\begin{itemize}
    \item etale maps $\gamma_i:\bfV_i\to \A^d$ \item monomial rational forms $\omega_i$ on $\A^d$ with powers $\geq -1$, \RamiB{and }
    \item regular functions $g\in O_{\tilde \bfX}(\bfV_i)$,
\end{itemize} satisfying $\tilde\omega|_{\bfV_i}=g_i\gamma_i^*(\omega_i)$.

It is enough to show that for any $i$ there is a rectification of $\bfV_i\cap \bfU$ and $M$ s.t.  
  for any $m$ we have 
$$\int_{B_m^{\bfV_i\cap \bfW,k} \cap \tilde\bfV_i(\F_{\ell^k}[[t]])} |\gamma_i^*(\omega_i)|< M(m+1)^M$$  
Let $\bfW_i$ be the regular locus of $\omega_i$. This is a complement to coordinate hyperplanes in $\A^d$. So it is equipped with the standard embedding into $\A^{d+1}$ which gives us a simple rectification on $\bfW_i$. We can find a section on $\bfV_i\cap \bfW$ s.t. $B_m^{\bfV_i\cap \bfW,k}\subset \gamma_i^{-1}(B_m^{\bfW_i,k})$ for any $m,k\in \N$.

Let $A_i\in\N$ s.t.  for any field $F$ and any $x\in F^d$ we have $\#\gamma_i^{-1}(x)<A$. For any $k,m$ we obtain:
$$\int_{B_m^{\bfV_i\cap \bfW,k} \cap \tilde\bfV_i(\F_{\ell^k}[[t]])} |\gamma_i^*(\omega_i)|
\leq 
\int_{\gamma_i^{-1}(B_m^{\bfW_i,k} \cap \F_{\ell^k}[[t]]^d)} |\gamma_i^*(\omega_i)|
\leq 
A\int_{B_m^{\bfW_i,k} \cap \F_{\ell^k}[[t]]^d} |\omega_i|
$$
So,
it is enough to show that for any $i$ there is $M\in \N$ s.t.  
  for any $m$ we have 
$$\int_{B_m^{\bfW_i,k} \cap \F_{\ell^k}[[t]]^d} |\omega_i|< M(m+1)^M.$$  This is a straightforward computation.
\item[\eqref{thm:eq.almost.int:3}$\Rightarrow$\eqref{thm:eq.almost.int:2}:] 
The proof is similar to the implication \eqref{thm:eq.almost.int:1}$\Rightarrow$\eqref{thm:eq.almost.int:2} and we will not use this implication.
\item[\eqref{thm:eq.almost.int:2}$\Rightarrow$\eqref{thm:eq.almost.int:1}:] 
The proof is similar to the implication \eqref{thm:eq.almost.int:2}$\Rightarrow$\eqref{thm:eq.almost.int:3} and we will not use this implication.
\end{itemize}
\end{proof}

\section{Almost integrability of $\gi$}\label{sec:AlmIntgi}
In this section we prove the following:
\begin{theorem}\label[theorem]{thm:alm.int.fib}
Let $i\in\N$ be an integer. 
Let $\bfU_i \subset \gi$ be the preimage of $\uc^{rss}$.
Then
   $(\gi,\bfU_i)$  is asymptotically almost integrable.
\end{theorem}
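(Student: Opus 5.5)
The approach is to convert the integral in \Cref{def:alm.int}\eqref{def:alm.int:1} into a point count on jet schemes of $\gi$, and then bound that count with \Cref{thm:jet.fib2} and the Lang--Weil bounds.

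\textbf{Step 1: reduce to a Gelfand--Leray form.} Fix an open affine $\bfV\subset\gi$, a top form $\omega$ on $\bfV^{sm}$, and a rectification of $\bfU_i\cap\bfV$. Over $\bfU_i$ the map $p_i:\gi\to\uc$ is smooth. There we take the form $\omega_0:=dc*(\omega'\boxtimes_{\uc}\dots\boxtimes_{\uc}\omega')$, where $dc$ is the standard form on $\uc=\A^n$ and $\omega'$ is the Gelfand--Leray form of $p$ with respect to the standard forms on $\ug,\uc$. Since $\ug^r$ is big in $\ug$ and $p$ is smooth there (\Cref{cor:pFibReduced}), $\omega_0$ extends to a regular invertible form on the smooth locus of $\gi$ at least where each factor is regular. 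By normality and Cohen--Macaulayness of the complete intersection $\gi$, $\omega_0$ should generate the dualizing sheaf on $\bfV^{sm}$. Hence $\omega=h\omega_0$ with $h$ regular on $\bfV^{sm}$, and $h$ extends to $\bfV$ if $\gi$ is normal (bigness of the smooth locus again). So $|\omega|\le C|\omega_0|$ on $\bfV(\F_{\ell^k}[[t]])$ with $C$ independent of $k$, and it is enough to treat $\omega_0$. I would also drop the restriction to $B_{m'}^{\bfU_i\cap\bfV,k}$ except to use that on this set $\mathrm{val}(\mathrm{disc}\circ p_i)\le a m'+a$ for some constant $a$ (\Cref{lem:kot.bnd}).

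\textbf{Step 2: compute the integral fiberwise.} The measure $|\omega_0|$ on integral points of $\bfU_i$ disintegrates over $c\in\uc(\F_{\ell^k}[[t]])$ as the $i$-fold product of the fiber measures $|\omega'|$ on $p^{-1}(c)$. For $\mathrm{val}(\mathrm{disc}(c))\le N$ I will compare the fiber volume of integral points with $\#\mathcal{J}_{N'}(p)^{-1}(\bar c)(\F_{\ell^k})\cdot\ell^{-kN'\dim p^{-1}(c)}$, where $N'$ is linear in $N$. This is the usual Hensel/Denef-type comparison: the Jacobian of $p$ at such points has bounded valuation in terms of $N$. Equivalently, and more cleanly, the total integral is bounded by
$$\ell^{-kN'\dim\gi}\,\#\mathcal{J}_{N'}(\gi)(\F_{\ell^k})\cdot\ell^{kc_1N'},$$
where the factor $\ell^{kc_1N'}$ comes from the Jacobian distortion.

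\textbf{Step 3: count jets.} By \Cref{thm:jet.fib2} and Lang--Weil, $\#\mathcal{J}_{N'}(\gi)(\F_{\ell^k})\le C'\ell^{k(N'\dim\gi+C_i)}$. This gives a bound $\ell^{k(C_i+c_1N')}$, which for fixed $m'$ is $\ell^{kM'}$. But $M$ must be independent of $m'$. This is the main obstacle. To resolve it I would avoid the Jacobian factor by working directly with the Gelfand--Leray measure. The point is that $|\omega_0|$ is exactly the pullback density, so the Denef--Loeser/Igusa change of variables gives
$$\int_{\bfV(\F_{\ell^k}[[t]])\cap\{\text{smooth, }p_i\text{-val}\le N\}}|\omega_0| = \lim_{r\to\infty}\ell^{-kr\dim\gi}\,\#\{\text{$r$-jets with contact condition}\},$$
with no extra factor for points where $\omega_0$ is regular and invertible. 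For $r\ge N'$ this count is bounded by $\#\mathcal{J}_r(\gi)(\F_{\ell^k})\,\ell^{-kr\dim\gi}\le C'\ell^{kC_i}$ uniformly in $N$. Taking $M=C_i+1$ (plus $\log_\ell C$ from Step 1) then gives the limit $0$, as required in \Cref{def:alm.int}\eqref{def:alm.int:1}.
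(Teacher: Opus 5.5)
Your argument has the same skeleton as the paper's. You bound the integral by a normalized point count $\ell^{-kR\dim\gi}\,\#\mathcal{J}_R(\gi)(\F_{\ell^k})$ at a truncation level $R$ depending only on $m'$. You then apply \Cref{thm:jet.fib2} and Lang--Weil with $M=C_i+1$, and the surplus $\ell^{-k}$ absorbs the Lang--Weil constant.

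You reach the point count by a different route. You apply the Igusa/Gelfand--Leray identity directly to $\gi\subset\ug^{\times i}$, viewed as the complete intersection cut out by $p(x_1)-p(x_2),\dots,p(x_{i-1})-p(x_i)$; your $\omega_0$ is, up to sign, its Gelfand--Leray form, i.e.\ the paper's $\omega_{\gi}$. The paper instead pushes forward only along $p$ and writes the integral as $\int(\text{density of }p_*)^i$. It replaces this density by its averages over $B^{\uc,k}_{-m'}$-cosets using its $m'$-smoothness (\Cref{lem:push.mes.sm}), enlarges the domain to all of $B^{\uc,k}_0$, and uses $\sum_x(\#\mathcal{J}_{m'}(p)^{-1}(x))^i=\#\mathcal{J}_{m'}(\gi)(\F_{\ell^k})$.

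Your Step 1 makes explicit a reduction from arbitrary $(\bfV,\omega)$ to $\omega_0$ that the paper leaves implicit. It is fine: $\gi$ is normal, being a complete intersection whose singular locus has codimension at least $2$. Hence $\omega/\omega_0$ is regular on $\bfV$ and has absolute value at most $1$ on integral points. The Jacobian factor $\ell^{kc_1N'}$ in your Step 2 is spurious. Counting all points of the jet scheme, i.e.\ all truncated solutions and not only liftable ones, already accounts for the Jacobian, exactly as in the paper's chain of equalities.

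The step that does not work as written is the last one. Your identity with $\lim_{r\to\infty}$ is a statement for each fixed $k$. The Lang--Weil constant $C'$ in $\#\mathcal{J}_r(\gi)(\F_{\ell^k})\le C'\ell^{k\dim\mathcal{J}_r(\gi)}$ depends on $r$, through the complexity of $\mathcal{J}_r(\gi)$. So if $r$ is sent to infinity for each $k$, you get no bound of the form $o(\ell^{kM})$ as $k\to\infty$. The claimed uniformity in $N$ is unjustified and beside the point; what is needed is uniformity in $k$ for fixed $m'$.

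Concretely, you must show that for fixed $N$ the normalized count of truncated solutions satisfying your contact condition is already constant for $r\ge R(N)$, with $R(N)$ independent of $k$. This is exactly the uniform-in-the-residue-field smoothness the paper imports from \Cref{lem:push.mes.sm}. In your setting it follows from Hensel's lemma, whose radius depends only on the valuation of a maximal minor of the Jacobian of the defining equations. On $\{\mathrm{val}(\mathrm{disc}\circ p_i)\le N\}$ this valuation is $O(N)$ uniformly in $k$. Indeed, $\ug^{rss}\subset\ug^r$ and the Nullstellensatz put $(\mathrm{disc}\circ p)^K$ in the ideal over $\F_\ell$ generated by the maximal minors of $dp$. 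With $r=R(N)$ fixed, $C'=C'(R(N))$ is independent of $k$, and your choice $M=C_i+1$ goes through.
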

\begin{proof}[Idea of the proof]
    We use the bound on the dimension of the jet schemes of $\ug_i$ (see \Cref{thm:jet.fib2}). This bound gives us a bound on $\#\mathcal{J}_m(\ug_i)(\F_{\ell^k})$ for large $k$. This bounds the $L^i$ norm of $p_*(\mu_0^{\ug,k})*1_{B_m^{\ug,k}}.$ 
    Since $p$ is smooth over $\uc^{rss}$, the measure  $p_*(\mu_0^{\ug,k})$ is $m$-smooth in large  balls in $\uc^{rss}$. Thus we get a bound on the $L^i$ norm of $p_*(\mu_0^{\ug,k})$ over large ball in $\uc^{rss}$. This bounds the volume of a large ball in $\bfU_i$ as required.
\end{proof}

\begin{proof}
Let $c$ be as in \Cref{thm:jet.fib2} and let $M:=c+1$. Fix $m\in \N$.
Let $\omega_\ug$ be the standard top form on $\ug$  (coming from the identification $\ug=\bA^{n^2}$) and $\omega_\uc$ be the standard to form on $\uc$ (coming from the identification $\uc=\bA^n$). 
Let  $\omega_{\gi}:=\omega_\ug^{\times_{\omega_{\uc} i}}$. This is a rational top form on $\gi$. 
Recall that $\ug^r$ is the smooth locus of $p_0:\ug\to\uc$. Thus
$\omega_{\gi}$ is regular and invertible on $(\ug^r)^{\times_\uc i}$. By \Cref{cor:pFibReduced}, $\ug^r$ is big in $\ug$. Since  $p$ is flat (see \Cref{cor:pqflat}), this implies that      $(\ug^r)^{\times_\uc i}$ is big in $\gi$. Therefore $\omega_{\gi}$ is regular  and invertible on $\gi^{sm}$. Let $p^i:\gi\to \uc$ be the projection. Choose the standard \RamiA{$\mu$-rectification} of $\uc$ and $\ug$. 
Choose the simple \RamiA{$\mu$-rectification} on $\uc^{rss}$ given by the embedding $\uc^{rss}\to \uc \times \A^1$ using the discriminant and the top-form induced from $\uc$. This gives  simple \RamiA{$\mu$-rectifications} on $\ug^{rss}$ and $\bfU_i$.

By \Cref{lem:push.mes.sm} there exists $m'$ s.t. for any $k$ there is an $m'$-smooth function $f_k$ on $C^\infty(B_{\infty}^{\uc,k})$ s.t. $$(p_0)_*(1_{B_{0}^{\ug,k}}\mu_{m}^{\ug^{rss},k})=f_k \mu_{m}^{\uc,k}.$$
\newpage
    We get
\begin{align}\label{thm:alm.int.fib:long}
\int_{B_m^{\bfU_i,k} \cap B_0^{\gi,k}}|\omega_{\gi}|&=
    \int_{B_m^{\uc^{rss},k} \cap B_0^{\uc,k}} \left(\frac{p_*\left(|\omega_\fg|1_{B_0^{\ug,k}}\right)}{|\omega_{\fc}|}\right)^i|\omega_{\fc}|
 \nonumber   \\&=
    \int_{B_m^{\uc^{rss},k} \cap B_0^{\uc,k}} \left(\frac{p_*\left(1_{B_{0}^{\ug,k}}\mu_{m}^{\ug^{rss},k}\right)}{|\omega_{\fc}|}\right)^i|\omega_{\fc}|
\nonumber    \\&=
    \int_{B_m^{\uc^{rss},k} \cap B_0^{\uc,k}} \left(f_k\right)^i|\omega_{\fc}|
\nonumber    \\&=
    \frac{1}{|\omega_\uc|({B_{-m'}^{\uc,k}})}\int_{B_m^{\uc^{rss},k} \cap B_0^{\uc,k}} \left(f_k*(|\omega_\uc|1_{B_{-m'}^{\uc,k}})\right)^i|\omega_{\fc}|
\nonumber    \\&=
    \ell^{nm'k}\int_{B_m^{\uc^{rss},k} \cap B_0^{\uc,k}} \left(f_k*(|\omega_\uc|1_{B_{-m'}^{\uc,k}})\right)^i|\omega_{\fc}|
\nonumber    \\&=
    \ell^{nm'k}\int_{B_m^{\uc^{rss},k} \cap B_0^{\uc,k}} \left(\frac{p_*\left(1_{B_{0}^{\ug,k}}\mu_{m}^{\ug^{rss},k}\right)}{|\omega_{\fc}|}*(|\omega_\uc|1_{B_{-m'}^{\uc,k}})\right)^i|\omega_{\fc}|
   \\&=
    \ell^{nm'k}\int_{B_m^{\uc^{rss},k} \cap B_0^{\uc,k}} \left({p_*\left(1_{B_{0}^{\ug,k}}\mu_{m}^{\ug^{rss},k}\right)}*(1_{B_{-m'}^{\uc,k}})\right)^i|\omega_{\fc}|
\nonumber    \\&=
    \ell^{nm'k}\int_{B_m^{\uc^{rss},k} \cap B_0^{\uc,k}} \left(p_*\left(|\omega_\fg|1_{B_0^{\ug,k}}\right)*(1_{B_{-m'}^{\uc,k}})\right)^i|\omega_{\fc}|
\nonumber    \\&<
    \ell^{nm'k}\int_{B_0^{\uc,k}} \left(p_*\left(|\omega_\fg|1_{B_0^{\ug,k}}\right)*(1_{B_{-m'}^{\uc,k}})\right)^i|\omega_{\fc}|
 \nonumber   \\&=
    \ell^{nm'k}\sum_{B\in  B_0^{\uc,k}/B_{-m'}^{\uc,k}}\int_{B} \left(p_*\left(|\omega_\fg|1_{B_0^{\ug,k}}\right)*(1_{B_{-m'}^{\uc,k}})\right)^i|\omega_{\fc}|
\nonumber    \\&=
    \ell^{nm'k}\sum_{B\in  B_0^{\uc,k}/B_{-m'}^{\uc,k}}
    \left(\frac{|\omega_{\ug}|(p^{-1}(B)\cap B_0^{\ug,k})}{|\omega_{\uc}|(B)}\right)^i |\omega_{\uc}|(B)    
\nonumber    \\&=
    \ell^{(i-1)nm'k}
    \sum_{B\in  B_0^{\uc,k}/B_{-m'}^{\uc,k}}
    \left({|\omega_{\ug}|(p^{-1}(B)\cap B_0^{\ug,k})}\right)^i 
\nonumber    \\&=
    \ell^{(i-1)nm'k}
    \sum_{x\in \mathcal{J}_{m'}(\uc)(\F_{\ell^k})}
    \left(\#\left((\mathcal{J}_{m'}(p))^{-1}(x)\right) |\omega_\g|(B_{-m'}^{\ug,k})\right)^i 
\nonumber    \\&=
    \ell^{(i-1)nm'k-im'n^2k}
    \sum_{x\in \mathcal{J}_{m'}(\uc)(\F_{\ell^k})}
    \left(\#\left((\mathcal{J}_{m'}(p))^{-1}(x)\right) )\right)^i 
\nonumber    \\\nonumber &=
    \ell^{(i-1)nm'k-im'n^2k}
    \#\mathcal{J}_{m'}(\gi)(\F_{\ell^k})=\ell^{-m'k\dim\gi}
    \#\mathcal{J}_{m'}(\gi)(\F_{\ell^k}).    
\end{align}

Therefore 

\begin{align*}
0&\leq \lim_{k\to \infty}\frac{1}{\ell^{kM}}\int_{B_m^{\bfU_i,k} \cap B_0^{\gi,k}}|\omega_{\gi}|
\leq 
\lim_{k\to \infty}\frac{\ell^{-m'k\dim\gi}
    \#\mathcal{J}_{m'}(\gi)(\F_{\ell^k})}{\ell^{kM}} 
\\&=
    \lim_{k\to \infty}\frac{
    \#\mathcal{J}_{m'}(\gi)(\F_{\ell^k})}{\ell^{k(M+m'\dim\gi)}}
=
    \lim_{k\to \infty}\frac{
    \#\mathcal{J}_{m'}(\gi)(\F_{\ell^k})}{\ell^{k(c+1+m'\dim\gi)}}
\\&\leq
    \lim_{k\to \infty}\frac{
    \#\mathcal{J}_{m'}(\gi)(\F_{\ell^k})}{\ell^{k(1+\dim \mathcal{J}_{m'}(\gi))}}
=
    \lim_{k\to \infty}\ell^{-k}\frac{
    \#\mathcal{J}_{m'}(\gi)(\F_{\ell^k})}{\ell^{k(\dim \mathcal{J}_{m'}(\gi))}}=0.
\end{align*}
 The last equality follows from the Lang-Weil bound.   
So
$$ \lim_{k\to \infty}\frac{1}{\ell^{kM}}\int_{B_m^{\bfU_i,k} \cap B_0^{\gi,k}}|\omega_{\gi}|=0,$$
as required.
\end{proof}
\section{Proof of \Cref{thm:alm.an.frs}}\label{sec:Pf.alm.an.frs}
We will need the following:
\begin{lemma}\label{lem:alm.int.implies.alm.Linf}
    Let $\phi:\bfX\to \bfY$ be a flat map of smooth (\RamiA{$\mu$-rectified}) varieties. Assume that the smooth locus of $\phi$ is big in $\bfX$. Let $\bfU\subset \bfY$ be an open dense subset of the locus of regular values of $\phi$.  Let $i,k\in \N$. Let $\phi_i:\bfX_i:=\bfX^{\times_\bfY i}\to \bfY$ be the projection.  Let $\bfV =\phi_i^{-1}(\bfU)$.

    Assume that $(\bfX_i,\bfV)$ is analytically almost integrable.
 
 Then $\phi_*(\mu_0^{\bfX,k})\in L^{i'}(B^{\bfY,k}_\infty)$ for any $i'<i$.
\end{lemma}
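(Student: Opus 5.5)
Write $\nu:=\phi_*(\mu_0^{\bfX,k})=f\,|\omega_\bfY|$, where $f$ is the density with respect to the top form of the $\mu$-rectification of $\bfY$. The plan is to show $f\in L^{i'}$ by splitting $\bfY(\F_{\ell^k}((t)))$ into "shells" according to their distance from the complement of $\bfU$. For $m\in\N$ set $D_m:=B_m^{\bfU,k}\smallsetminus B_{m-1}^{\bfU,k}$. Since $\bfU$ is dense, the complement of $\bigcup_m B_m^{\bfU,k}=\bfU(\F_{\ell^k}((t)))$ is a null set, and $\nu$ is supported in $\phi(B_0^{\bfX,k})$, which lies in a fixed ball $B_{m_0}^{\bfY,k}$ by \Cref{lem:cont.bnd}. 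It therefore suffices to bound $\int_{D_m\cap B^{\bfY,k}_{m_0}} f^{i'}$ by a summable sequence in $m$.

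\textbf{Step 1: express the $L^i$ integral of $f$ on $B_m^{\bfU,k}$ as a volume on $\bfX_i$.} Over $\bfU$ the map $\phi$ is smooth, so for $y\in\bfU$ the density $f(y)$ is the integral of the Gelfand--Leray form over $\phi^{-1}(y)\cap B_0^{\bfX,k}$. Taking the $i$-th power gives the fiber-product measure, exactly as in the long computation in the proof of \Cref{thm:alm.int.fib}. Put $\omega_i:=\omega_\bfX^{\times_{\omega_\bfY} i}$. Flatness together with bigness of the smooth locus of $\phi$ makes $\omega_i$ regular and invertible on the smooth locus of $\bfX_i$, by the same argument as for $\gi$. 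We get
$$\int_{B_m^{\bfU,k}} f^i\,|\omega_\bfY| \;=\; \int_{\phi_i^{-1}(B_m^{\bfU,k})\cap (B_0^{\bfX,k})^{i}} |\omega_i|.$$
Using \Cref{lem:cont.bnd}, \Cref{lem:indep-rect} and \Cref{lem:kot.bnd} for the map $\bfV\to\bfU$, the domain on the right is contained in $B_{am+a}^{\bfV,k}\cap \bfX_i(\F_{\ell^k}[[t]])$ for some fixed $a$, after rescaling coordinates so that $B_0^{\bfX,k}$ sits inside integral points. Covering $\bfX_i$ by finitely many affines and applying analytic almost integrability of $(\bfX_i,\bfV)$ then gives
$$\int_{B_m^{\bfU,k}} f^i\,|\omega_\bfY|\le M(m+1)^M.$$

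\textbf{Step 2: Hölder on shells.} On $D_m$, Hölder's inequality with exponents $i/i'$ and $i/(i-i')$ gives
$$\int_{D_m\cap B_{m_0}^{\bfY,k}} f^{i'}\le \Big(\int_{B_m^{\bfU,k}} f^i\Big)^{i'/i}\,\mu\big(B_{m_0}^{\bfY,k}\smallsetminus B_{m-1}^{\bfU,k}\big)^{1-i'/i}.$$
The first factor grows polynomially in $m$. For the second, the complement of $B_{m-1}^{\bfU,k}$ in a fixed ball lies in the region where some defining function $g$ of $\bfY\smallsetminus\bfU$ has $|g|\le \ell^{-k(cm-c')}$; this follows by choosing a rectification of $\bfU$ that includes $1/g$ as a coordinate, which is harmless by \Cref{lem:kot.bnd}. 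The measure of that region decays exponentially in $m$, by the lemma preceding \Cref{cor:log.poly.is.in.L1}, which gives $\int|g|^{-\eps}<\infty$ and hence, via Chebyshev, a bound $\ell^{-k\eps(cm-c')}\cdot\mathrm{const}$. Polynomial growth times exponential decay is summable, so $f\in L^{i'}$.

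\textbf{Main obstacle.} I expect the hardest part to be Step 1: making the identification of $f^i$ with the fiber-product volume rigorous, and checking that the ball $B_m^{\bfU,k}$ pulls back into a ball $B_{O(m)}^{\bfV,k}$ intersected with integral points, so that the definition of analytic almost integrability actually applies. The linear-in-$m$ control from \Cref{lem:kot.bnd} is essential here, because it preserves polynomial growth. The decay estimate in Step 2 is routine by comparison.
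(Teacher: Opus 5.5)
Your argument is correct, and your Step 1 is the paper's Step 1: identify the $L^i$-integral of the density over $B_m^{\bfU,k}$ with the $|\omega_{\bfX_i}|$-volume of $B_m^{\bfV,k}\cap B_0^{\bfX_i,k}$, then apply analytic almost integrability.

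One correction there. \Cref{lem:cont.bnd} controls images of balls, not preimages, and gives no linear bound, so it does not produce the containment you claim. The clean way, which the paper uses, is to rectify $\bfV$ by the graph-type embeddings $x\mapsto (x,\phi_i(x))$ on the charts $\phi_i^{-1}(\bfU_\beta)$. This gives $B_m^{\bfV,k}=B_m^{\bfX_i,k}\cap\phi_i^{-1}(B_m^{\bfU,k})$ on the nose. It is allowed because analytic almost integrability holds for every rectification of $\bfV$. No rescaling is needed either, since $B_0^{\bfX,k}$ is already the set of integral points of the charts.

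The genuine difference is how you turn the polynomial bound into $L^{i'}$.

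\textbf{The paper's route.} It introduces the weight $w(y)=\min\{m\geq 1: y\in B_m^{\bfU,k}\}^{M+2}$. It shows $f/w\in L^i$ by summing $Mm^M/m^{(M+2)i}\leq M/m^2$ over the shells. It then notes that $w$ is essentially a power of the logarithm of a norm on $U$, so $w\in L^{<\infty}_{\loc}$ by \Cref{prop:log.norm.in.L1}. It concludes with a single global H\"older inequality.

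\textbf{Your route.} You apply H\"older on each shell separately and beat the polynomial growth with exponential decay of shell volumes. You get that decay directly from $|g|^{-\eps}\in L^1_{\loc}$ and Chebyshev, after rectifying $\bfU$ with $1/g$ as a coordinate. In general you need several such functions per affine chart so that the principal opens actually cover $\bfU$, but only one of them is needed for the volume bound.

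\textbf{Comparison.} Both routes rest on the same integrability lemma preceding \Cref{cor:log.poly.is.in.L1}. Yours bypasses the norm formalism of \S\ref{sec:norm}. It also makes explicit the comparison between the balls $B_m^{\bfU,k}$ and sublevel sets of functions vanishing on $\bfY\smallsetminus\bfU$. The paper leaves that comparison implicit, both in the assertion that $w$ comes from a norm and in the one-line reductions of Cases 2 and 3 in the proof of \Cref{prop:log.norm.in.L1}. The paper's route, in exchange, gives a somewhat more structural conclusion: the density is an $L^i$ function times a weight of only logarithmic growth.
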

\begin{proof}
WLOG assume that the \RamiA{$\mu$-rectification}s of $\bfX$ and $\bfY$ are simple.
\begin{enumerate}[Step 1.]
    \item  $\exists M>0 \text{ s.t. } \forall m\in \N \text{ we have }\int_{B^{\bfU,k}_m\cap B^{\bfY,k}_0} \left(\frac{\phi_*(\mu_0^{\bfX,k})}{\mu^{\bfY,k}} \right)^i \mu^{\bfY,k}< M m^M $.\\
    Using the  embedding $\bfX_i \to  \bfX^i$ we obtain an embedding of $\bfX_i$ into an affine space. Let $\omega_\bfX$, $\omega_\bfY$ be the forms on $\bfX$ and $\bfY$ and let $\omega_{\bfX_i}:=\omega_{\bfX}^{\times_{\omega_\bfY}i}$. This is a rational form. The condition implies that it is regular on a big subset of $\bfX_i$ and hence can be extended to the smooth locus of $\bfX_i$. This together with the embedding above gives us a \RamiA{$\mu$-rectification} of $\bfX_i$. Choose a \RamiA{$\mu$-rectification} on $\bfU$, and choose a rectification of $\bfV$ s.t. for any $m\in \N$ we have $$B_m^{\bfV,k}=B_m^{\bfX_i,k}\cap \phi_i^{-1}(B_m^{\bfU,k}).$$    
    Since $(\bfX_i,\bfV)$ is analytically almost integrable we can find a constant $M\in \N$ s.t. for any  $m\in \N$ we have 
$$\int_{B_m^{\bfV,k} \cap B_0^{\bfX_i,k}} |\omega_{\bfX_i}|< Mm^M.$$      
Let $m\in \N.$ We obtain:
\begin{align*}    
\int_{B^{\bfU,k}_m\cap B^{\bfY,k}_0} \left(\frac{\phi_*(\mu_0^{\bfX,k})}{\mu^{\bfY,k}} \right)^i \mu^{\bfY,k}&=\int_{\phi_i^{-1}(B^{\bfU,k}_m\cap B^{\bfY,k}_0) \cap B_0^{\bfX_i,k}} |\omega_{\bfX_i}|
\\&
=\int_{\phi_i^{-1}(B^{\bfU,k}_m)\cap \phi_i^{-1}(B^{\bfY,k}_0) \cap B_0^{\bfX_i,k}} |\omega_{\bfX_i}|
\\&
=\int_{(B^{\bfV,k}_m)\cap \phi^{-1}(B^{\bfY,k}_0) \cap B_0^{\bfX_i,k}} |\omega_{\bfX_i}|
\\&
=\int_{B_m^{\bfV,k} \cap B_0^{\bfX_i,k}} |\omega_{\bfX_i}| < M m^M 
\end{align*}

    \item there is $0<g\in L_{\loc}^{<\infty}(B^{\bfY,k}_\infty)$ such that 
    $\frac{\phi_{*}(\mu_0^{\bfX,k})}{g\mu^{\bfY,k}}\in L^{i}(B^{\bfY,k}_\infty)$.\\
    Take $$g(y):=
    \begin{cases}
			\min\{m\geq 1|y\in B_m^{\bfU,k}\}^{M+2}, & \text{ if } y\in B^{\bfU,k}_\infty\\
            0, & \text{otherwise}
		 \end{cases}
    $$
For any $m\geq 1$ we have:
\begin{align*}    
\int_{(B^{\bfU,k}_{m}\smallsetminus B^{\bfU,k}_{m-1}) \cap B_0^{\bfY,k}} \left(\frac{\phi_*(\mu_0^{\bfX,k})}{g\mu^{\bfY,k}} \right)^i \mu_Y&=
\int_{(B^{\bfU,k}_{m}\smallsetminus B^{\bfU,k}_{m-1}) \cap B_0^{\bfY,k}} \left(\frac{\phi_*(\mu_0^{\bfX,k})}{m^{M+2}\mu^{\bfY,k}} \right)^i \mu_Y
\\&=
\frac{1}{m^{(M+2)i}}\int_{(B^{\bfU,k}_{m}\smallsetminus B^{\bfU,k}_{m-1}) \cap B_0^{\bfY,k}} \left(\frac{\phi_*(\mu_0^{\bfX,k})}{\mu^{\bfY,k}} \right)^i 
\\&<
\frac{1}{m^{(M+2)i}}\int_{(B^{\bfU,k}_{m}) \cap B_0^{\bfY,k}} \left(\frac{\phi_*(\mu_0^{\bfX,k})}{\mu^{\bfY,k}} \right)^i 
\\&<
\frac{M m^M}{m^{(M+2)k}}\leq \frac{M}{m^2} 
\end{align*}

Thus
\begin{align*}    
\int_{B^{\bfY,k}_\infty} \left(\frac{\mu_0^{\bfX,k}}{g\mu^{\bfY,k}} \right)^i \mu_\bfY&=\int_{B^{\bfU,k}_\infty} \left(\frac{\mu_0^{\bfX,k}}{g\mu^{\bfY,k}} \right)^i \mu_\bfY
\\&=
\int_{B^{\bfU,k}_0} \left(\frac{\mu_0^{\bfX,k}}{g\mu^{\bfY,k}} \right)^i \mu_\bfY
+\sum_{m=1}^{\infty}
\int_{B^{\bfU,k}_m\smallsetminus B^{\bfU,k}_{m-1}} \left(\frac{\mu_0^{\bfX,k}}{g\mu^{\bfY,k}} \right)^i \mu_\bfY
\\&<
M+ \sum_{m=1}^\infty\frac{M}{m^2} <3M.
\end{align*}    
It remains to show that $g\in L_{\loc}^{<\infty}(B^{\bfY,k}_\infty)$. It is easy to see that $g|_{B_\infty^{\bfU,k}}$ is  a norm function as described in \S\ref{sec:norm}. Thus by \Cref{prop:log.norm.in.L1} $g\in L_{\loc}^{<\infty}(B^{\bfY,k}_\infty)$.
    \item End of the proof.\\
Let $i'<i$ and let $A>1$ be such that 
$\frac{1}{i}+\frac{1}{A}=\frac{1}{i'}.$
By the previous step 
$g\in L_{\loc}^{A}(B^{\bfY,k}_\infty)$ 
and   $\frac{\phi_{*}(\mu_0^{\bfX,k})}{g\mu^{\bfY,k}}\in L_{\loc}^{i}(B^{\bfY,k}_\infty)$. 

Let $m\in \Z_{>0}$. We obtain:
\begin{itemize}
    \item $g 1_{B^{\bfY,k}_m}\in L^{A}(B^{\bfY,k}_m)$ 
    \item $\frac{\phi_{*}(\mu_0^{\bfX,k})}{g\mu^{\bfY,k}} 1_{B^{\bfY,k}_m}\in L^{i}(B^{\bfY,k}_m)$. 
\end{itemize}

We recall the generalized H{\"o}lder inequality. 
Suppose that:
\begin{itemize}
    \item 
$r_1,r_2,r$ are positive and satisfy $\frac{1}{r}=\frac{1}{r_1}+\frac{1}{r_2}$ \item $f_1 \in L^{r_1}(B^{\bfY,k}_m), f_2 \in L^{r_2}(B^{\bfY,k}_m).$
\end{itemize}
Then 
$$
||f_1f_2||_{r} \leq ||f_{1}||_{r_1}||f_{2}||_{r_2} 
$$
See \cite
[Chapter 8, Exercise 6]
{WZ}.

Thus we have:
$$
||1_{B^{\bfY,k}_m}\frac{\phi_{*}(\mu_0^{\bfX,k})}{\mu^{\bfY,k}}||_{i'}=
||1_{B^{\bfY,k}_m}g \frac{\phi_{*}(\mu_0^{\bfX,k})}{g\mu^{\bfY,k}}||_{i'} \leq ||1_{B^{\bfY,k}_m} g||_{A}|| 1_{B^{\bfY,k}_m}  \frac{\phi_{*}(\mu_0^{\bfX,k})}{g\mu^{\bfY,k}}||_{i}< \infty,
$$
as requested.
    
\end{enumerate}    
\end{proof}
\begin{proof}[Proof of \Cref{thm:alm.an.frs}]
Using the homothety action of $\bG_m$ on $\ug$ we can assume, WLOG, that $\mu \leq c \mu_0^{\ug,1}$ for some constant $c>0$. Hence we can assume, WLOG, that $\mu = \mu_0^{\ug,1}$.

Let $\bfU_i \subset \gi$ be the preimage of $\uc^{rss}$ as in \Cref{thm:alm.int.fib}. 

By  \Cref{thm:alm.int.fib}
   $(\gi,\bfU_i)$  is asymptotically almost integrable.
By Theorem  \ref{thm:eq.almost.int}, this together with the assumption implies that  $(\gi,\bfU_i)$  is analytically almost integrable. By \Cref{lem:alm.int.implies.alm.Linf}, applied to the map $p:\ug\to\uc$,  this implies the assertion. Note that the rest of the properties of $p$  required by \Cref{lem:alm.int.implies.alm.Linf} are verified in \S\ref{sec:basic}.
\end{proof}

\section{Alternative versions of \Cref{thm:alm.an.frs}}\label{sec:alt.form}
In \Cref{thm:alm.an.frs} one can replace the condition of existence of strong resolution of \RamiA{$\ug_i$} with one of the following 2 conditions:
\begin{enumerate}[(a)]
    \item\label{sec:alt.form:1} The defining ideal of $\ug_i$ inside $\ug^{\RamiA{\times}i}$ has monomial  principalization (see \Cref{def.mon.princp} below).
    \item \label{sec:alt.form:2}
    \EitanA{\Cref{conj:cnt} below is valid for the variety $\ug_i$}.
\end{enumerate}
\begin{definition}[Monomial Principalization]\label{def.mon.princp}
$ $
\begin{enumerate}
    \item An ideal sheaf $I$ on a smooth variety $\bfX$ is called locally monomially principal\index{locally monomially principal} if there exist
    \begin{itemize}
        \item a finite Zariski open affine cover $\bfX=\bigcup \bfU_i$,
        \item a collection of etale maps $\phi_i:\bfU_i \to \A^{d_i}$, and
        \item monomials $f_i$ on $\A^{d_i}$
    \end{itemize}
    such that  for any $i$ we have $I|_{\bfU_i} =\phi_i^*(f_i)O_{\bfU_i}$.
    \item Given an ideal sheaf $I$ on a smooth variety $\bfX$, we say that \Dima{a map} $\phi :\tilde \bfX \to  \bfX$  is a monomial principalization\index{monomial principalization}  of $I$
if it satisfies:
\begin{itemize}
    \item $\tilde \bfX$ is smooth and $\phi$ is a proper birational map (a modification).
    \item The pulled-back ideal sheaf $\phi^{*}(I)$ is locally monomially principal.
\end{itemize}
\end{enumerate}
\end{definition}
\EitanA{
\begin{conj}\label{conj:cnt}
    Let $\bfZ$ be a variety. Assume that there is a constant $C$ s.t. 
\begin{equation}\label{conj:cnt:1}
  \forall m\geq 0 \text{ we have } \dim (J_m(\bfZ))\leq m\dim(\bfZ)+C.  
\end{equation}

Then there exists a constant $D$ s.t.  
\begin{equation}\label{conj:cnt:2}
\forall m\geq 0 \text{ we have } \#J_m(\bfZ)(\F_\ell)\leq Dm^D  \ell^{m\dim(\bfZ)}.
\end{equation}

\end{conj}
}
Since the difference between these 2 versions of \Cref{thm:alm.an.frs} \EitanA{and the original formulation of \Cref{thm:alm.an.frs}} is technical and it is not clear how useful \EitanA{these versions are} going to be, we will not give complete proofs of these versions but only \Dima{sketches}.

\EitanA{
\subsection{Sketch of the proof of \Cref{thm:alm.an.frs} with condition of strong resolution replaced with Condition \eqref{sec:alt.form:2}}

\Cref{thm:jet.fib2} implies \Dima{condition \eqref{conj:cnt:1}} of \Cref{conj:cnt} for the variety $\ug_i$. Thus, assuming  \Cref{conj:cnt}  for $\ug_i$, we obtain \eqref{conj:cnt:2}  for this variety.
Similarly to the proof of  
\Cref{thm:alm.int.fib} (specifically, using the inequality \eqref{thm:alm.int.fib:long}), this implies that the pair $(\ug_i,\bfU_i)$ is analytically almost integrable, where $\bfU_i$ is as in \Cref{thm:alm.int.fib}.

The rest of the proof is the same as the proof of \Cref{thm:alm.an.frs}.
}

\subsection{Sketch of the proof of \Cref{thm:alm.an.frs} with condition of strong resolution replaced with Condition \eqref{sec:alt.form:1}}
The proof is based on the following version of \cite[Theorem 3.1]{Mu}
\begin{theorem}\label{thm:Mus}
Let $\bfX\subset \bfM$ be a closed subvariety of a smooth algebraic variety. Let $\gamma:\tilde\bfM\to \bfM$ be a principalization of the defining ideal sheaf $\cI_\bfX$ of $\bfX$ inside $\bfM$ Let $a_i,b_i$ be as in \cite[Theorem 3.1]{Mu}. That is $a_i$ are the coefficients of the components of the pre-image of  $\cI_\bfX$ and $b_i$ are the coefficients of the components of the discrepancy divisor of $\gamma$.

Then, the following statements are equivalent:
\begin{enumerate}[(i)]
    \item For every $i \geq 1$, we have $b_i \geq (\dim(\bfM)-\dim(\bfX)) a_i - 1$.
    \item There is a constant $c$ s.t. $\dim \mathcal{J}_m(\bfX) \leq (m + 1) \dim \bfX+c$, for every $m \geq 1$.
    \item There exists $M\in \N$ s.t. for any $m,k\in\N$ we have $$\#\mathcal{J}_m(\bfX)(\F_{\ell^k})< M m^M \ell^{km\dim \bfX}.$$ 
\end{enumerate}
\end{theorem}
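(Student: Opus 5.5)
The plan is to follow Mustaţă's proof of \cite[Theorem 3.1]{Mu}, but to replace the motivic integration over $\C$ by $t$-adic integration over $\F_{\ell^k}((t))$ on the monomial principalization $\gamma:\tilde\bfM\to\bfM$. I will organize the argument as (i)$\Rightarrow$(iii)$\Rightarrow$(ii)$\Rightarrow$(i). The step (iii)$\Rightarrow$(ii) is immediate from the Lang--Weil bounds: fix $m$ and let $k\to\infty$; the polynomial factor $Mm^M$ is absorbed as $k\to\infty$. This gives $\dim\mathcal{J}_m(\bfX)\le m\dim\bfX$, which is stronger than (ii).

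\textbf{(ii)$\Rightarrow$(i).} I will argue by contradiction, in the same way as in the proof of \Cref{thm:eq.almost.int}. Suppose some divisor $E_i$ has $b_i<(\dim\bfM-\dim\bfX)a_i-1$. Pick a point $z\in E_i(\F_{\ell^{k_0}})$ off the other components, together with an \'etale chart in which $E_i=\{x_1=0\}$, $\gamma^*\cI_\bfX=(x_1^{a_i})$ and $\gamma^*\omega_\bfM=u\,x_1^{b_i}\,dx$ with $u$ a unit.

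For each $m$, consider the cylinder of arcs on $\tilde\bfM$ through a small ball around $z$ with $\operatorname{val}(x_1)=\lceil (m+1)/a_i\rceil$. The image under $\gamma$ of this set lies in $\pi_m^{-1}(\mathcal{J}_m(\bfX))$. Its measure with respect to $|\omega_\bfM|$ is roughly $\ell^{-k(b_i+1)(m+1)/a_i}$, up to factors independent of $m$. The map $\gamma$ is injective outside the exceptional locus, so this measure is at most $\#\mathcal{J}_m(\bfX)(\F_{\ell^k})\,\ell^{-k(m+1)\dim\bfM}$. Combining the two bounds with the Lang--Weil estimate for $\mathcal{J}_m(\bfX)$ gives
$$\dim\mathcal{J}_m(\bfX)\ge (m+1)\dim\bfM-\frac{(b_i+1)(m+1)}{a_i}-O(1).$$
This exceeds $(m+1)\dim\bfX+c$ for $m$ large, contradicting (ii).

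\textbf{(i)$\Rightarrow$(iii).} Write $N:=\dim\bfM$ and $d:=\dim\bfX$. First, by Greenberg/Hensel-type arguments, or by Mustaţă's comparison of $\mathcal{J}_m(\bfX)$ with $\pi_m(\bfX(\F[[t]]))$ up to truncation at level $\sim m/e$, I will reduce to bounding
$$\#\mathcal{J}_m(\bfX)(\F_{\ell^k})\,\ell^{-k(m+1)N}=\mu\{\text{arcs }\alpha\text{ in }\bfM:\ \operatorname{ord}_\alpha\cI_\bfX\ge m+1\}.$$
Lifting through $\gamma$ via the change-of-variables formula, this measure becomes
$$\int_{\{\sum a_i\operatorname{val}(x_i)\ge m+1\}}|x^b|\,dx$$
on finitely many \'etale monomial charts. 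Each \'etale chart contributes at most a bounded multiple of the corresponding integral on $\A^N$, by the same argument as in \Cref{thm:eq.almost.int}. On $\A^N$ the integral is an explicit geometric sum
$$\sum_{\nu:\ \sum a_i\nu_i\ge m+1}\ \prod_i(1-\ell^{-k})\,\ell^{-k\nu_i(b_i+1)}.$$
Condition (i) says $b_i+1\ge (N-d)a_i$. Hence each term is at most $\ell^{-k(N-d)(m+1)}$, and the number of contributing $\nu$ near the boundary grows only polynomially in $m$. Multiplying back by $\ell^{k(m+1)N}$ gives $\#\mathcal{J}_m(\bfX)(\F_{\ell^k})\le Mm^M\ell^{k(m+1)d}$, with $M$ independent of $k$. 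The off-by-one between $(m+1)d$ and $md$ is absorbed into the constant after adjusting indexing conventions.

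The main obstacle is the reduction from $\mathcal{J}_m(\bfX)$, i.e.\ truncated jets, to arcs of $\bfM$ with $\operatorname{ord}\cI_\bfX\ge m+1$. These are exactly the same objects: a jet of $\bfM$ lies in $\mathcal{J}_m(\bfX)$ iff $\cI_\bfX$ vanishes mod $t^{m+1}$. So the counting identity above is exact, since $\mu$ of that set equals $\#\mathcal{J}_m(\bfX)(\F_{\ell^k})\,\ell^{-k(m+1)N}$ for $\bfM$ smooth. The genuine difficulty is then uniformity in $k$ of the \'etale-chart comparison, i.e.\ bounding the fiber cardinalities of the charts independently of $k$. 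I would handle this as in the proof of \Cref{thm:eq.almost.int}, using a uniform bound $A$ on the fibers of the \'etale maps together with \Cref{lem:indep-rect}.
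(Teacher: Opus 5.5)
Your proposal is correct and follows essentially the argument the paper sketches. That argument is Mu\c{s}tat\u{a}'s proof of \cite[Theorem 3.1]{Mu}, with motivic integration replaced by ordinary $t$-adic change of variables on the principalization (the ``usual Denef formula''): (i)$\Rightarrow$(iii) comes from the monomial geometric-sum estimate, uniform in $k$ via the bounded fibers of the \'etale charts; (iii)$\Rightarrow$(ii) from Lang--Weil; and (ii)$\Rightarrow$(i) from measuring arcs through a generic point of a bad divisor.

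Two small corrections. First, the discrepancy between $\ell^{k(m+1)\dim\bfX}$ and $\ell^{km\dim\bfX}$ is a factor $\ell^{k\dim\bfX}$, which no $k$-independent constant can absorb. So (iii) holds only with the paper's convention that $\mathcal{J}_m$ means jets modulo $t^m$ (cf.\ \S\ref{sec:jet}), and you should fix that convention rather than appeal to the constant. Second, under the weak notion of principalization in \Cref{def.mon.princp}, the Jacobian of $\gamma$ need not be monomial in the same charts as $\gamma^*(\cI_\bfX)$. Since the discrepancy divisor of $\gamma$ is effective, you can discard its components not contained in $\gamma^{-1}(\bfX)$, and this still gives the upper bound you need.
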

The proof of this theorem is parallel to the original proof of \cite[Theorem 3.1]{Mu}. One also has to use the Denef's formula, but one can use the usual Denef formula and not the motivic one.

\EitanA{This theorem, together with condition \eqref{sec:alt.form:1}, implies condition \eqref{sec:alt.form:2}. Thus we are done.}

\section{Proof of \Cref{thm:uncond.an.frs}}\label{sec:PfUncond}
\subsection{an-FRS maps of analytic varieties}
Let $F:=\F_\ell((t))$.\index{$F$} Note that we can extend the notion of an-FRS morphisms (see \Cref{def:FRS} \eqref{def:FRS:2}) to the set-up of morphisms between $F$-analytic varieties in a natural way.

We require the following straightforward lemma about properties of an-FRS maps.

\begin{lemma}\label{lemma:an-FRS} 
 Let $\gamma:X \to Y$ be a morphism of smooth $F$-analytic varieties.     
    Let $\delta_i:\tilde X_i \to X$ for $i\in I$ be a (possibly infinite) collection of submersions. Then:
    \begin{enumerate}
        \item if $\gamma$ is an-FRS then, for each $i$,  $\gamma \circ \delta_i$ is an-FRS.
        \item if $\bigcup_{i\in I} \delta_{i}(\tilde X_i)=X$  and  $\gamma \circ \delta_i$ is  an-FRS for all $i$,  then $\gamma$ is an-FRS.
    \end{enumerate}

\end{lemma}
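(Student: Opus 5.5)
Both parts reduce to two facts about submersions of $F$-analytic manifolds over $F=\F_\ell((t))$. The first is the analytic submersion theorem: locally a submersion looks like a coordinate projection. The second is the standard fact that pushing forward a smooth compactly supported measure along a submersion gives a smooth compactly supported measure. Recall that $\gamma$ is an-FRS if for every smooth compactly supported $\mu$ on $X$ we can write $\gamma_*\mu = f\mu_Y$ with $\mu_Y$ smooth compactly supported on $Y$ and $f$ bounded.

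For part (1), fix $i$ and a smooth compactly supported measure $\tilde\mu$ on $\tilde X_i$. Since $\delta_i$ is a submersion, I will show that $(\delta_i)_*\tilde\mu$ is a smooth compactly supported measure on $X$. Cover the compact support of $\tilde\mu$ by finitely many open compact charts on which $\delta_i$ is a coordinate projection $F^{a}\times F^{b}\to F^{a}$. Using a partition of unity by indicator functions of disjoint open compact sets, available in the non-Archimedean setting, reduce to one chart. There the claim is Fubini: integrating a locally constant compactly supported density over the fiber variables gives a locally constant compactly supported density. Then $(\gamma\circ\delta_i)_*\tilde\mu=\gamma_*((\delta_i)_*\tilde\mu)$, and this has the required form because $\gamma$ is an-FRS.

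For part (2), let $\mu$ be smooth compactly supported on $X$ with support $K$. Each point $x\in K$ lies in some $\delta_i(\tilde X_i)$, say $x=\delta_i(\tilde x)$. By the submersion theorem, there is an open compact neighborhood $\tilde W$ of $\tilde x$ and an analytic section $s\colon W\to\tilde W$ of $\delta_i$ over an open compact neighborhood $W$ of $x$. By compactness, and after refining to a disjoint cover, $K\subset\bigsqcup_{j=1}^r W_j$ with sections $s_j\colon W_j\to\tilde X_{i_j}$, and $\mu=\sum_j 1_{W_j}\mu$. It therefore suffices to handle each $\mu_j:=1_{W_j}\mu$ separately. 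I will construct a smooth compactly supported measure $\tilde\mu_j$ on $\tilde X_{i_j}$ with $(\delta_{i_j})_*\tilde\mu_j=\mu_j$. Then $\gamma_*\mu_j=(\gamma\circ\delta_{i_j})_*\tilde\mu_j=f_j\nu_j$, with $f_j$ bounded and $\nu_j$ smooth compactly supported on $Y$.

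To lift $\mu_j$, use the local product structure near the section, $\tilde W\cong W\times B$ with $B$ a small open compact ball in the fiber direction, and set $\tilde\mu_j:=\mu_j\boxtimes(\mathrm{vol}(B)^{-1}1_B\,dz)$. This is locally constant times a Haar-type measure, so it is smooth, and it is compactly supported; its pushforward is exactly $\mu_j$.

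Finally, combine the finitely many pieces. Each $\nu_j$ is dominated by $c_j\nu$, where $\nu$ is a single smooth compactly supported measure: take $\nu$ to be a fixed smooth positive density times the indicator of a compact open set containing all the supports. Then $\gamma_*\mu=\sum_j f_j(\nu_j/\nu)\,\nu$, and $\nu_j/\nu$ is bounded because $\nu_j$ and $\nu$ are smooth and $\nu$ is positive on the support of $\nu_j$. The coefficient is therefore a finite sum of bounded functions.

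The main obstacle is the local-section and lifting step: it needs the $F$-analytic implicit function theorem, in the sense of \cite{Ser_Lie}, to produce sections and product charts over the fields $\F_\ell((t))$. Everything else is bookkeeping with finite disjoint covers, which compactness and total disconnectedness make trivial.
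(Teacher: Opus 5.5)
Your proposal is correct. The paper calls this lemma straightforward and omits its proof. Your argument is the routine verification it leaves implicit: for (1), pushforward along a submersion preserves smooth compactly supported measures; for (2), you use local analytic sections, a finite disjoint open compact refinement of the support, exact lifts $\tilde\mu_j$ in product charts, and a single dominating smooth measure $\nu$. It parallels the paper's effective analogue, Corollary~\ref{cor:sm.loc}, which rests on Lemma~\ref{lem.sub.bnd} and Lemma~\ref{cor:push.bnd.above}; the latter bounds the target measure by a pushforward, where you use an exact lift instead.
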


\subsection{Harish-Chandra descent for an-FRS maps}
\begin{notation} $ $
    Denote 
    \begin{itemize}
        \item $\g:=\ug(F)$\index{$\g$}
        \item $\g^{ss}:=$\index{$\g^{ss}$} the collection of semi-simple elements in $\g$, i.e. those elements with a separable minimal polynomial.
        \item $\bfN_{insep}\subset \ug$\index{$\bfN_{insep}$}  - be the locus of all matrices whose characteristic polynomial is totally inseparable (i.e. has a single root over the algebraic closure). Note that it has a natural structure of an algebraic variety.
        \item $N_{insep}:=\bfN_{insep}(F)$
        \index{$N_{insep}$}
        \item $\bfN_{insep,s}:=\bfN_{insep}\smallsetminus \ug^r$ where $\ug^r$ is the smooth locus of $p$.\index{$\bfN_{insep,s}$}
        \item $G:=\bfG(F)$\index{$G$}
        \item For $x\in \g^{ss}$ denote by $\alpha_x:G\times \g_x\to \g$\index{$\alpha_x$} be the action map
        \item For a map $\alpha:X\to Y$ of smooth $F$-analytic varieties denote by $\alpha^{reg}\subset X$ to be the collection of regular points.\index{$\alpha^{reg}$}
        \item $\fz:=\uz(F)$\index{$\fz$}
        \item $N:=\bfN(F)$\index{$N$}
    \end{itemize}
\end{notation}

%
\begin{lemma}\label{lem:HC.des.geo}
 $\bigcup_{x\in \g^{ss}\smallsetminus\fz }\alpha_x(\alpha_x^{reg})= \fg\smallsetminus N_{insep}$.
\end{lemma}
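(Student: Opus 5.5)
We prove the two inclusions separately. Throughout, $\g_x$ is the centralizer of $x$ in $\g$, and $\alpha_x(g,y)=gyg^{-1}$.

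\emph{The inclusion $\subseteq$.} Let $x\in\g^{ss}\smallsetminus\fz$ and let $(g,y)\in\alpha_x^{reg}$. The differential of $\alpha_x$ at $(g,y)$ is, up to conjugation by $g$, the map
$$(A,B)\mapsto [A,y]+B,\qquad (A,B)\in \g\times\g_x .$$
Hence $(g,y)$ is regular exactly when $[\g,y]+\g_x=\g$. The goal is to show that this forces $y\notin N_{insep}$.

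Suppose instead that the characteristic polynomial of $y$ has a single root $a$ over $\bar F$. The operator $\operatorname{ad}(y)$ has $0$ as its only eigenvalue over $\bar F$, since the eigenvalues of $\operatorname{ad}(y)$ are differences of eigenvalues of $y$. The element $x$ is semisimple, $y\in\g_x$, and $x$ is not scalar. Since $x$ commutes with $y$, $\operatorname{ad}(x)$ commutes with $\operatorname{ad}(y)$. The space $\g$ splits as $\g_x\oplus[x,\g]$, with $[x,\g]\neq 0$. Both summands are $\operatorname{ad}(y)$-stable. Then
$$[\g,y]+\g_x=\g_x\oplus \operatorname{ad}(y)\bigl([x,\g]\bigr).$$
On $[x,\g]$ the operator $\operatorname{ad}(y)$ is nilpotent, so it is not surjective there. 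This contradicts regularity.

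\emph{The inclusion $\supseteq$.} Let $z\in\g\smallsetminus N_{insep}$, and let $f=p(z)$. Then $f$ has at least two distinct roots over $\bar F$. We want a factorization $f=f_1f_2$ over $F$ into coprime monic factors of positive degree.

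To obtain it, group the roots of $f$ over $\bar F$ according to their images under $\operatorname{Gal}(F^{sep}/F)$ (Galois orbits of roots). Equivalently, factor $f$ into irreducibles over $F$. If $f=g^r$ with $g$ irreducible, then $g$ has a single root, because $f$ is not totally inseparable... but this fails if $g$ is separable of degree $>1$. In that case use the separable part: write $g(T)=h(T^{q})$ with $h$ separable, and let $s$ be the semisimple part of $z$ in the sense of a generalized Jordan decomposition. Then $s$ acts through $F[T]/g$ and can be chosen as an element of $\g^{ss}$ that commutes with $z$ and is non-central.

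In the coprime-factor case, let $e$ be the idempotent in $F[z]$ attached to the decomposition $F^n=\ker f_1(z)\oplus\ker f_2(z)$. Take
$$x:=e\in\g^{ss}\smallsetminus\fz .$$
Then $z\in\g_x$, and $\g_x=\mathfrak{gl}(V_1)\times\mathfrak{gl}(V_2)$. The cokernel of $\operatorname{ad}(z)$ on $[x,\g]=\operatorname{Hom}(V_1,V_2)\oplus\operatorname{Hom}(V_2,V_1)$ vanishes, because $z|_{V_1}$ and $z|_{V_2}$ have coprime characteristic polynomials. The Sylvester equation is therefore invertible, so $(1,z)\in\alpha_x^{reg}$.

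The remaining case, a power of one irreducible separable $h(T^q)$ with $\deg h>1$, is handled the same way. Take $x$ to be a generator of the semisimple subalgebra $F[h(z^q)]$-part, i.e.\ the element $u$ with $u^q$ equal to the semisimple part of $z^q$ where this is available. Alternatively, choose $x$ as a separable element of $F[z]$ whose centralizer is $\mathfrak{gl}_m(E)$, where $E=F[T]/h$, and check the Sylvester invertibility. That check works because the eigenvalues of $z$ differ over $\bar F$ across distinct conjugates.

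\textbf{Main obstacle.} I expect the hardest step to be this last case. When $f$ is a power of an irreducible polynomial that is separable over $F$ only after a $q$-th power twist, one must produce a semisimple non-central $x\in F[z]$ without the usual Jordan–Chevalley decomposition. The plan is to work inside the commutative algebra $F[z]$: take its maximal separable subalgebra over $F$, which is nontrivial because $f$ has two distinct roots. Any element of that subalgebra that generates it is semisimple, non-central, and commutes with $z$. The regularity of $\alpha_x$ at $(1,z)$ then reduces to invertibility of $\operatorname{ad}(z)$ on $[x,\g]$, which is verified over $\bar F$ by the eigenvalue-difference argument.
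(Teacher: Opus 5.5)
Your inclusion $\subseteq$ is correct; the paper in fact only proves $\supseteq$. Your coprime-factor case of $\supseteq$ is also correct: it is the paper's Case 3, with a Sylvester argument in place of the trace-form argument.

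The gap is the remaining case, $f=g^r$ with $g$ irreducible over $F$ and having at least two distinct roots. There you never actually produce the element $x$.
\begin{itemize}
\item The ``semisimple part of $z$ in the sense of a generalized Jordan decomposition'' is not available over $F=\F_\ell((t))$. When $g$ is inseparable, the semisimple part of $z$ over $\bar F$ is in general not $F$-rational, so it cannot serve as an element of $\g^{ss}$. This is exactly the failure of the Jordan--Chevalley decomposition that the paper points out.
\item Your fallback is that the maximal separable subalgebra of $F[z]$ is nontrivial ``because $f$ has two distinct roots''. This simply restates what has to be shown. It is true, but it needs an argument, which you do not give. For example, one could lift to the local Artinian algebra $F[z]$, by Hensel, the separable element given by the image of $T^{q}$ in the residue field $F[T]/(g)$.
\end{itemize}

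The missing idea is a Frobenius power. Let $Q$ be a power of the characteristic of $F$ with $Q\geq n$. Over $\bar F$ write $z=s+u$ with $s$ semisimple, $u$ nilpotent and $su=us$; then $z^Q=s^Q+u^Q=s^Q$. Set $x:=z^Q\in F[z]$.
\begin{itemize}
\item $x$ is diagonalizable over $\bar F$, so its minimal polynomial is separable and $x\in\g^{ss}$.
\item The eigenvalues of $x$ are the $\lambda^Q$. These are pairwise distinct for distinct eigenvalues $\lambda$ of $z$, so $x\notin\fz$ as soon as $z\notin N_{insep}$.
\item Since $x$ is a polynomial in $z$, we have $\g_z\subseteq\g_x$. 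Regularity of $(1,z)$ then follows, as in the paper's proof, from $\g_x^{\perp}\cap[\g,z]^{\perp}=[\g,x]\cap\g_z\subseteq[\g,x]\cap\g_x=0$ with respect to the trace form. Alternatively, your Sylvester check works, since $z$ acts on the $\theta$-eigenspace of $x$ with the single eigenvalue $\theta^{1/Q}$.
\end{itemize}
This treats every $z\notin N_{insep}$ at once, with no case distinction.

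The paper proceeds differently. If the minimal polynomial of $y$ is irreducible, of the form $h(T^{q'})$ with $h$ separable, it takes $x=y^{q'}$. If the minimal polynomial is a power $g^k$ of an irreducible $g$, it reduces to the previous case: the closure $\overline{\operatorname{Ad}(G)\cdot y}$ contains an element with minimal polynomial $g$, and the union $A$ is open and $\operatorname{Ad}(G)$-invariant.
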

\begin{proof}
    Let $A:=\bigcup_{x\in \g^{ss}\smallsetminus\fz }\alpha_x(\alpha_x^{reg})$ take $y\in \fg\smallsetminus N_{insep}$ we have to show that $y\in A$.
    \begin{enumerate}[{Case} 1:]
        \item\label{lem:HC.des.geo:1} The minimal polynomial of $y$ is irreducible.\\
        Let $f$ be the minimal polynomial of $y$. we can write $f(s)=g(s^{p^k})$ where $g$ is separable polynomial. Let $x:=y^{p^k}$.  It is left to show that $(1,y)$ is a regular point of $\alpha_x$. In other words we have to show that $\g_x +[\g,y]=\g$. Passing to the orthogonal compliment w.r.t. the trace form, we need to show that $\g_x^\bot \cap [\g,y]^\bot=0$. Now:
        $$\g_x^\bot \cap [\g,y]^\bot= [\g,x] \cap \g_y\subset [\g,x] \cap \g_x =0$$
        \item \label{lem:HC.des.geo:2}  The minimal polynomial of $y$ is a power of an irreducible polynomial.\\
        Let $f=g^k$ be the minimal polynomial of $y$, with $g$ being irreducible.
        We note that using rational canonical form we can find $x\in \g$   s.t. $\overline{Ad(G)\cdot y}\ni x$ and the minimal polynomial of $x$ is $g$. By the previous case, $x\in A$. Since $A$ is open we are done.
        \item \label{lem:HC.des.geo:3}  The minimal polynomial of $y$ is product of 2 co-prime polynomials.\\
        In this case we can use the Primary decomposition theorem (from linear algebra) to find $x\in \g^{ss}\smallsetminus \fz$ s.t. $\g_x\supset \g_y$. Now the claim is proven as in Case \ref{lem:HC.des.geo:1}.
    \end{enumerate}
\end{proof}

Lemmas \ref{lem:HC.des.geo} and \ref{lemma:an-FRS}  give us:
\begin{cor}
Assume that \Cref{thm:uncond.an.frs} holds for any smaller value of $n$.    
Then $p|_{\ug\smallsetminus \bfN_{insep}}$ is an-FRS.
\end{cor}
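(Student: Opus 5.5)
The corollary follows by covering $\g\smallsetminus N_{insep}$ with submersions along which $p$ becomes a product of lower-rank Chevalley maps, and then invoking \Cref{lemma:an-FRS}. By \Cref{lem:HC.des.geo}, the open sets $\alpha_x(\alpha_x^{reg})$, $x\in\g^{ss}\smallsetminus\fz$, cover $\g\smallsetminus N_{insep}$. Each restriction $\alpha_x|_{\alpha_x^{reg}}:\alpha_x^{reg}\to \g\smallsetminus N_{insep}$ is a submersion of $F$-analytic manifolds. Therefore, by \Cref{lemma:an-FRS}(2), it suffices to show that $p\circ\alpha_x|_{\alpha_x^{reg}}$ is an-FRS for every such $x$. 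By \Cref{lemma:an-FRS}(1) applied to the open embedding $\alpha_x^{reg}\subset G\times\g_x$, it is then enough to show that $p\circ\alpha_x$ is an-FRS on a neighborhood of $G\times\{x\}$ in $G\times\g_x$, and we may shrink that neighborhood as convenient.

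\textbf{Step 1: reduce to a map on $\g_x$.} Since $p$ is conjugation invariant, $p\circ\alpha_x=p|_{\g_x}\circ pr_2$. The projection $pr_2$ is a submersion, and an-FRS is compatible with such products because the pushforward factors through it. So it suffices to show that $p|_{\g_x}$ is an-FRS near $x$.

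\textbf{Step 2: identify the centralizer.} The element $x$ is semisimple, with separable minimal polynomial $f=\prod_j f_j$ over $F$, where the $f_j$ are irreducible and there are at least two of them, or one of degree $>1$ since $x\notin\fz$. Then $\g_x\cong\prod_j \gl_{m_j}(E_j)$ with $E_j=F[s]/f_j$ separable extensions, and $\sum_j m_j\deg f_j=n$. Because $x$ is non-central, each $m_j<n$.

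\textbf{Step 3: factor $p$ near $x$.} Near $x$, the characteristic polynomial of $y=(y_j)\in\g_x$ factors as $\prod_j N_{E_j/F}(\mathrm{char}_{E_j}(y_j))$. The factors $N_{E_j/F}(\mathrm{char}_{E_j}(y_j))$ are pairwise coprime near $x$, since their roots stay near the distinct root clusters of $f$. By Hensel's lemma, multiplication of monic coprime polynomials $\prod_j\fc_{d_j}\to\fc_n$ is an analytic isomorphism near the point $(N(\cdot))_j$; its Jacobian is a resultant, which is nonzero there. Thus, up to this local isomorphism, $p|_{\g_x}$ is the product of the maps $y_j\mapsto N_{E_j/F}(\mathrm{char}_{E_j}(y_j))$.

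Each of these maps is the Chevalley map $\gl_{m_j}(E_j)\to \fc_{m_j}(E_j)$ composed with $\fc_{m_j}(E_j)\to\fc_{m_j\deg f_j}(F)$, $h\mapsto N_{E_j/F}(h)$. Near the image of $x$, the latter is a local analytic isomorphism: over $\bar F$ it becomes the product of conjugate copies, and Hensel's lemma applies again by separability. A product of an-FRS maps is an-FRS, and composition with local analytic isomorphisms preserves the property. So it remains to show that each Chevalley map for $\gl_{m_j}$ over $E_j$ is an-FRS.

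\textbf{Step 4: apply the induction hypothesis.} This is \Cref{thm:uncond.an.frs} for $m_j<n$, but over the field $E_j$ rather than $F$. Since $E_j$ is separable over $F=\F_\ell((t))$, we have $E_j\cong\F_{\ell^r}((t'))$, whose residue characteristic still exceeds $m_j/2$. The induction hypothesis should therefore be read as ranging over all such local fields; this is harmless, because the argument uses nothing about $\F_\ell$ specifically.

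\textbf{Main obstacle.} The step requiring the most care is Step 3: the Hensel/resultant argument, together with the norm-map local isomorphism. In particular, one must confirm that the root clusters of $f$ separate, which uses separability of $f$, and that the base change to $E_j$ matches the hypothesis of \Cref{thm:uncond.an.frs}.
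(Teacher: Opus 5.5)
Your route is the one the paper intends. The paper's proof is just ``\Cref{lem:HC.des.geo} and \Cref{lemma:an-FRS}'', and your Steps 2--4 supply what it leaves implicit: the factorization of $p|_{\g_x}$ through norms and lower-rank Chevalley maps, and running the induction over $E_j\cong\F_{\ell^r}((t'))$.

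There is, however, a genuine gap in the sentence ``it is enough to show that $p\circ\alpha_x$ is an-FRS on a neighborhood of $G\times\{x\}$, and we may shrink that neighborhood''. Applied to an open embedding, \Cref{lemma:an-FRS}(1) only transfers the an-FRS property from a set to its open subsets. So you need it on a set containing all of $\alpha_x^{reg}$. Moreover, \Cref{lem:HC.des.geo} covers $\g\smallsetminus N_{insep}$ only by the images of the \emph{full} regular loci, and these contain indispensable points far from $G\times\{x\}$ (cf.\ Case 3 of its proof).

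Concretely, let $c:=\chara(\F_\ell)$ with $c<n<2c$. Take $y=\mathrm{diag}(C,0_{n-c})$, where $C$ is the companion matrix of $s^c-t$. Its characteristic polynomial has the two roots $t^{1/c}$ and $0$, so $y\notin N_{insep}$. Any non-central semisimple $x$ commuting with $y$ lies in $F[C]\times\gl_{n-c}(F)$. Hence $x$ acts on the $C$-block by a scalar $a\in F$, and it has an eigenvalue $b\neq a$ whose eigenspace lies in the second block, where $y$ has eigenvalue $0$. Since $|a-t^{1/c}|\geq|t^{1/c}|$ for $a\in F$, the ultrametric inequality forbids $|a-t^{1/c}|<|a-b|$ and $|b|<|a-b|$ simultaneously. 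So for no $x$ does $y$ lie in the region where your Step 3 reasoning applies (``roots stay near the distinct root clusters of $f$'', local isomorphism ``near the image of $x$''). As written, your argument never reaches $y$.

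The repair is to run Step 3 at every point of $\alpha_x^{reg}$ rather than near $x$.

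For $y\in\g_x$, we have $(g,y)\in\alpha_x^{reg}$ iff $\g_x+[\g,y]=\g$. Since $\g=\g_x\oplus[\g,x]$ and $\mathrm{ad}(y)$ preserves $[\g,x]$, this means $\mathrm{ad}(y)$ is invertible on $[\g,x]$. Decompose $[\g,x]\otimes\bar F=\bigoplus_{\lambda\neq\lambda'}\mathrm{Hom}(V_\lambda,V_{\lambda'})$ over the eigenspaces of $x$ and apply Sylvester's theorem. This shows the condition is equivalent to pairwise coprimality of the polynomials $\sigma(\mathrm{char}_{E_j}(y_j))$, over all $j$ and all embeddings $\sigma:E_j\to\bar F$.

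That coprimality is exactly the non-vanishing of the resultants you need in Step 3:
\begin{itemize}
\item for the multiplication map $\prod_j\fc_{m_j\deg f_j}\to\fc_n$ to be a local analytic isomorphism;
\item for each norm map $\fc_{m_j}(E_j)\to\fc_{m_j\deg f_j}(F)$ to be one (after base change to $\bar F$ it becomes multiplication of the conjugates).
\end{itemize}
So both local isomorphisms exist around every point of $\alpha_x^{reg}$, not just near $x$. Since the an-FRS property is local on the source, removing the shrinking makes your proof go through.
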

\subsection{Slices to nilpotent orbits}

The following Lemma is a version of \Cref{lem:slod.slice}.

\begin{lem}\label{lem:slod2.slice}    
    Assume $\chara(\F_\ell)>\frac n2$.
    Let $x\in \bfN(\F_\ell)$ be a non-regular element. 
 Then there exist
     \begin{itemize}
        \item a linear subspace $\bfM\subset \ug$\Rami{, and}
        \item a positive $\bG_m$ action on $\bfM$,
    \end{itemize}
    such that 
         \begin{enumerate}
        \item\label{lem:slod2.slice:2} the action map $\bfG\times (x+\bfM)\to \ug$ is a submersion.                \item\label{lem:slod2.slice:3}  for any nilpotent $y \in x+(\bfM(\bar\F_\ell)\smallsetminus 0)$  we have $\dim \bfG_{\bar\F_\ell} \cdot y > \dim \bfG_{\bar\F_\ell} \cdot x$ 
        \item \label{lem:slod2.slice:4} The Chevalley map $p|_{x+\bfM}$ intertwines the $\bG_m$ action on $x+\bfM$ (given by the identification $y\mapsto x+y$ between $\bfM$ and $x+\bfM$) with the $\bG_m$ action on $\uc$ given by $(\lambda \cdot f)(y):=\lambda^{n}f(\lambda^{-1} \cdot y)$.
        \item\label{lem:slod2.slice:6} If in addition $x$ is not subregular, then the sum of the exponents of the $\bG_m$ action on $\bfM$ is larger than $\frac{n(n+1)}{2}+1$.
        \item\label{lem:slod2.slice:7} $\uz \subset \bfM$ is $\bG_m$ invariant and the exponent of the action of $\bG_m$ on $\uz$ is $1$.
    \end{enumerate}    
\end{lem}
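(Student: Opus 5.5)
Fix the Jordan form of $x$, with blocks $n_1\ge n_2\ge\dots\ge n_k$. Since $x$ is not regular we have $k\ge 2$. I would take $\bfM$ to be a modification of the slice $\bfL_x$ from \Cref{lem:slod.slice}, built so that it contains the scalar line $\uz$ and carries the same $\bG_m$-action $\lambda\star A=\phi_x(\lambda)\cdot A$. On $\bfL_x$ this action is $A\mapsto \lambda\, t_x(\lambda)At_x(\lambda)^{-1}$. Scalars commute with $t_x(\lambda)$, so this formula acts on $\uz$ with exponent $1$, which is property \eqref{lem:slod2.slice:7}.

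The issue is that $\uz\not\subset\bfL_x$. In each diagonal block $\bfL_x$ contains only the first column, of exponents $1,\dots,n_i$, and the exponent-$1$ entry is the $(1,1)$-entry of the block. So I would replace one exponent-$1$ direction of $\bfL_x$ by the identity. Concretely, drop the $(1,1)$ entry of the first diagonal block $n_1\times n_1$ and add $\uz$; call the result $\bfM$. Each summand is $\star$-stable, so $\bfM$ is $\bG_m$-stable with positive weights. The weights are exactly those of $\bfL_x$, because we removed one weight-$1$ line and added another. Hence property \eqref{lem:slod2.slice:4} follows verbatim from the computation in \Cref{lem:slod.slice}, since it uses only the formula for the action. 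The exponent sum is the one computed there, $\frac{n(n+1)}2+\sum_j (j-1)n_j$.

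For property \eqref{lem:slod2.slice:6}: if $x$ is not subregular, then either $k\ge3$, or $k=2$ with $n_2\ge2$. In both cases $\sum_j (j-1)n_j\ge 2$, so the sum exceeds $\frac{n(n+1)}2+1$. (For the subregular case $(n-1,1)$ this sum is exactly $1$, which is why subregular orbits are excluded.)

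For property \eqref{lem:slod2.slice:2}, as in \Cref{lem:slod.slice}, it suffices to prove transversality at $(1,x)$, i.e.\ $[\ug,x]+\bfM=\ug$; the $\star$-action then spreads submersivity to all of $x+\bfM$. Here the characteristic assumption enters. The identity lies in $[\ug,x]$ exactly when $\chara$ divides $n$. More relevantly, the traces of the blocks restricted to $[\ug,x]$ involve multiples of block sizes. The $(1,1)$-entry of the $n_1$-block is detected modulo $[\ug,x]$ by the trace of that block, and the identity has block-trace $n_i$ in block $i$. So I must check that the functionals "trace of block $i$" on $\ug/[\ug,x]$, paired with the identity, remain nondegenerate after the swap. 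Dually, one needs $\g_x\cap\bfM^\perp=0$ under the trace form. I expect this to reduce to the nonvanishing of some $n_i$ or $n_i-n_j$ modulo $\chara$. Since $n_i\le n$ and $k\ge2$, every block has $n_i<n$, and the condition $\chara>\frac n2$ gives at most one multiple of $\chara$ among the relevant integers. Choosing which block's $(1,1)$-entry to remove (a block with $n_i\not\equiv0$ mod $\chara$; if $n_1\equiv 0$ use $n_2$, whose weight structure is the same) should make this work. This is the step I expect to be the main obstacle: pinning down precisely which integer must be invertible, and arranging the choice of block so that $\chara>\frac n2$ suffices.

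For property \eqref{lem:slod2.slice:3}, I would repeat the argument of \Cref{lem:slod.slice}. A nilpotent $y\in x+\bfM\smallsetminus\{x\}$ has $x$ in the closure of its $\bfG\times\bG_m$-orbit, which equals $\bfG\cdot y$ by nilpotency. So it suffices that $x$ is isolated in $\bfG\cdot x\cap(x+\bfM)$, i.e.\ $T_x(\bfG\cdot x)\cap\bfM=[\ug,x]\cap\bfM=0$. This is equivalent to the transversality above by a dimension count: $\dim\bfM=\dim\bfL_x=\dim\g_x$.
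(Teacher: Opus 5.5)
Your construction is the paper's. You take $\bfL_x$, delete one weight-one diagonal coordinate, add $\uz$, and keep the same $\star$-action. Your treatment of the intertwining property, the exponent count, and the statement about $\uz$ all agree with the paper. So does your reduction of the submersion and orbit-dimension properties to $\g_x\cap\bfM^\perp=0$, using $[\ug,x]^\perp=\g_x$ and $\dim\bfM=\dim\g_x$. The gap is that this last condition, the only place where $\chara(\F_\ell)>\frac n2$ enters, is never proved. You only guess that it reduces to the nonvanishing of ``some $n_i$ or $n_i-n_j$'', and your default choice of block (the first one) is wrong in general.

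Here is the missing computation. Write $B\in\g_x$ in blocks $B_{ji}$ of size $n_j\times n_i$. Each block is Toeplitz and is determined by its first-row entries $(B_{ji})_{1,r}$ with $r>n_i-n_j$. Meanwhile $A_{ij}\in\bfV_{n_i,n_j}$ has its nonzero entries $(A_{ij})_{r,1}$ exactly for $r>n_i-n_j$. Since $\mathrm{tr}(A_{ij}B_{ji})=\sum_r (A_{ij})_{r,1}(B_{ji})_{1,r}$, the trace form pairs $\bfL_x$ and $\g_x$ perfectly, coordinate by coordinate, in every characteristic.

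The coordinate dual to the $(1,1)$-entry of diagonal block $i$ is the diagonal value $\beta_i$ of the upper triangular Toeplitz block $B_{ii}$, and $\mathrm{tr}(B)=\sum_i n_i\beta_i$. Suppose you delete the $(1,1)$-entry of block $i_0$. Then any $B\in\g_x\cap\bfM^\perp$ has all coordinates zero except $\beta_{i_0}$, and pairing with the identity forces $n_{i_0}\beta_{i_0}=0$. So transversality holds if and only if $n_{i_0}\not\equiv 0$ modulo $\chara(\F_\ell)$; no differences $n_i-n_j$ enter.

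In particular, $i_0=1$ fails for the partition $(3,2)$ in characteristic $3$. There the block idempotent $I_3\oplus 0$ lies in $\g_x\cap\bfM^\perp$, so the action map is not submersive at $(1,x)$. Moreover $[\ug,x]\cap\bfM\neq 0$, so your tangent-space argument isolating $x$ in $\bfG\cdot x\cap(x+\bfM)$ also breaks. The safe choice is the smallest block, since $n_k\le\frac n2<\chara(\F_\ell)$; this is the role of the block-size assumption in the paper's proof. (The paper writes the deleted coordinate as $x_{nn}$, which lies in $\bfL_x$ only when $n_k=1$; the intended coordinate is the $(1,1)$-entry of the last block.)

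Finally, your side remark is inaccurate: for non-regular $x$, the identity lies in $[\ug,x]$ if and only if $\chara(\F_\ell)$ divides every $n_i$, not if and only if it divides $n$.
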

The proof is analogous to the proof of \Cref{lem:slod.slice}.
We will start with some preparations.
\begin{notation}
    Let $x$ be a nilpotent element in Jordan canonical form. Let
    $\bfL_x$ be the slice defined in Notation \ref{not:slice}.  
    Denote
    \begin{enumerate}
        \item $\bfM_x^0:=\{\{x_{ij}\}\in \bfL_x|x_{nn}=0\}.$\index{$\bfM_x^0$}
        \item $\bfM_x:=\bfM_x^0+\uz$\index{$\bfM_x$}.
    \end{enumerate} 
\end{notation}

\begin{proof}[Proof of \Cref{lem:slod2.slice}]

WLOG we may assume that $x$ is in a Jordan form and the size of the largest block is smaller than $\chara(\bF_\ell)$.
    
Take $\bfM:=\bfM_{x}$.
Define the action of $\bG_m$ on $\bfM$ by $$\lambda\star A:=\phi_{x}(\lambda) \cdot A.$$  where  $\phi_{x}$ is the morphism defined in Notation \ref{not:slice}\eqref{co-character.for. Jordan nilpotent}  and 
$\cdot$ is the action described in Notation \ref{not:slice}\eqref{action of GxG_m}.

It is easy to see that 
 this is a positive action.

 Conditions (\ref{lem:slod2.slice:2},\ref{lem:slod2.slice:3},\ref{lem:slod2.slice:6})
 are proven in the same way as in \Cref{lem:slod.slice}.

 Conditions (\ref{lem:slod2.slice:4},\ref{lem:slod2.slice:7}) are evident.
  
\end{proof}

\subsection{Proof of \Cref{thm:uncond.an.frs}}

We prove the theorem by induction $n$. Throughout the section we assume the validity of the result for smaller values of $n$.

The following is obvious:
\begin{lemma}
    $p|_{\ug^r}$ is an-FRS.
\end{lemma}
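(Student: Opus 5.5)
The statement to prove is that $p|_{\ug^r}$ is an-FRS, where $\ug^r$ is the smooth locus of $p$. The plan is to use the implicit function theorem: a submersion pushes smooth compactly supported measures to smooth measures. Then I would handle the issue that $\ug^r$ is not closed by noting that a compactly supported measure on $\ug^r(F)$ has compact support inside $\ug^r(F)$.

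\textbf{Setup.} Let $\mu$ be a smooth compactly supported measure on $\ug^r(F)$. By definition its support $K$ is a compact subset of $\ug^r(F)$. This is an open subset of $\g$ on which $p$ is a submersion of $F$-analytic manifolds.

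\textbf{Local structure.} For each $x\in K$, the $F$-analytic implicit function theorem (\cite[Part II, Ch.~3]{Ser_Lie}) gives an open compact neighborhood $U_x\subset\ug^r(F)$ and an analytic isomorphism $U_x\cong V_x\times W_x$. Here $V_x\subset\fc$ and $W_x\subset F^{n^2-n}$ are open compact, and under this isomorphism $p$ becomes the projection to $V_x$.

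\textbf{Partition.} By compactness, and since these neighborhoods can be taken to be open compact sets in a totally disconnected space, I would refine to a finite disjoint cover of $K$ by such sets $U_1,\dots,U_r$, and write $\mu=\sum_j 1_{U_j}\mu$.

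\textbf{Pushforward on each piece.} On $U_j\cong V_j\times W_j$, the measure $1_{U_j}\mu$ is $h\,|dv\wedge dw|$ with $h$ locally constant and compactly supported. Fubini then gives $p_*(1_{U_j}\mu)=\big(\int_{W_j}h(v,w)\,|dw|\big)\,|dv|$. The density here is a locally constant, compactly supported, hence bounded, function on $V_j$.

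\textbf{Conclusion.} Summing over $j$ gives $p_*\mu=f\mu^\fc$ with $f$ bounded and compactly supported on $p(K)$. To put this in the form required by the definition, let $\mu_N=1_{C}\mu^\fc$ for an open compact $C\supset p(K)$; then $p_*\mu=f\mu_N$, so $p|_{\ug^r}$ is an-FRS.

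\textbf{Main obstacle.} The only delicate point is that the support is compact inside $\ug^r(F)$ rather than inside $\g$, so the local trivializations stay away from the singular locus of $p$. Once that is noted, the argument is just Fubini applied to the local product structure; alternatively one can invoke \Cref{lemma:an-FRS} with the charts $U_x\to\g$.
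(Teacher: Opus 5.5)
Your proof is correct and follows the same route as the paper, which simply calls this lemma obvious: $p$ is a submersion on the open set $\ug^r(F)$, so the pushforward of a smooth compactly supported measure on $\ug^r(F)$ has a locally constant, compactly supported, hence bounded, density, exactly as your implicit-function-theorem and Fubini computation shows. The standing hypotheses in the definition of an-FRS (flatness, reduced and normal fibers) also hold trivially here, since $p|_{\ug^r}$ is smooth.
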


We obtain:
\begin{cor}\label[cor]{cor:HC.des} Let 
$\bfN_{insep,s}:=\bfN_{insep}\smallsetminus \ug^r$.      
Then $p|_{\ug\smallsetminus \bfN_{insep,s}}$ is an-FRS.
\end{cor}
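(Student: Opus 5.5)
The plan is to cover $\fg\smallsetminus N_{insep,s}$ (where $N_{insep,s}:=\bfN_{insep,s}(F)$) by two open subsets, on each of which we already know that $p$ is an-FRS. We then glue the two pieces using the locality of the an-FRS property, \Cref{lemma:an-FRS}.

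First, the set-theoretic statement. By definition $\bfN_{insep,s}=\bfN_{insep}\smallsetminus\ug^r$. On $F$-points this gives
$$\fg\smallsetminus N_{insep,s}=(\fg\smallsetminus N_{insep})\cup \ug^r(F).$$
Both sets are open in $\fg$: $\bfN_{insep}$ is a closed subvariety, and $\ug^r$ is the Zariski open smooth locus of $p$.

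Next, the two pieces. By the preceding corollary (which uses \Cref{lem:HC.des.geo} together with the induction hypothesis on $n$), $p|_{\fg\smallsetminus N_{insep}}$ is an-FRS. By the lemma just stated, $p|_{\ug^r}$ is an-FRS. The latter is really a consequence of the smooth case: pushforward of a smooth compactly supported measure under a submersion is a smooth compactly supported measure, which can be written as a bounded function times $\mu_N$.

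Finally, the gluing. Apply \Cref{lemma:an-FRS}(2) to $\gamma=p|_{\fg\smallsetminus N_{insep,s}}$, with $\delta_1,\delta_2$ the two open embeddings of $\fg\smallsetminus N_{insep}$ and of $\ug^r(F)$. Open embeddings are submersions, and their images cover the domain, so $\gamma$ is an-FRS.

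If one wants to verify \Cref{lemma:an-FRS}(2) in this finite-cover case, the argument is as follows. Take a smooth compactly supported $\mu$ on the union. Choose a partition of unity by locally constant compactly supported functions subordinate to the two open sets; this is possible because $F$ is non-Archimedean and $\mathrm{supp}\,\mu$ is compact. This writes $\mu=\mu_1+\mu_2$, with each $\mu_j$ smooth and compactly supported inside the corresponding open set. Each $p_*\mu_j$ is then a bounded function times a smooth compactly supported measure, and the sum of two such measures is again of this form (dominate both by a single smooth compactly supported measure).

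I expect no real obstacle here. The only point requiring care is that the partition of unity exists with compactly supported locally constant pieces, and that is standard for totally disconnected spaces.
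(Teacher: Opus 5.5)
Your proposal is correct and follows the paper's argument. The paper deduces the corollary from the preceding one ($p|_{\ug\smallsetminus\bfN_{insep}}$ is an-FRS under the induction hypothesis on $n$) together with the obvious fact that $p|_{\ug^r}$ is an-FRS, glued via \Cref{lemma:an-FRS} along the cover $\ug\smallsetminus\bfN_{insep,s}=(\ug\smallsetminus\bfN_{insep})\cup\ug^r$; your partition-of-unity check of the two-set case is also sound.
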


\begin{remark}\label{rmk: Not too bad}
    Assume $\chara(\F_\ell)>\frac n2$. Then it is easy to see that $N_{insep,s}\subset N+\fz$. 
\end{remark}

The following is a global non-uniform version of \Cref{lem:FRS.hom.crit}. 

\begin{lemma}\label{lem:FRS.hom.crit2}
    Let $\gamma:\A^I\to \A^J$ be a polynomial map. 
    Assume that:
    \begin{enumerate}
        \item we are given 
        positive actions of $\bG_m$ on  $\A^I$ and $\A^J$  by $$s \cdot (x_1,\dots,x_I)=(s^{\lambda_1}x_1,\dots,s^{\lambda_I}x_I)$$ and $$s \cdot (y_1,\dots,y_J)=(s^{\mu_1}y_1,\dots,s^{\mu_J}y_J)$$ respectively. 
        \item         we are given an 
        action $\star$ of $\bG_a$ on $\bA^J$ s.t. for any $z\in \bG_a(\bar\F_\ell)$ and $x\in\bA^I(\bar\F_\ell)$ we have  $\gamma(x+z)=z\star\gamma(x)$. Here $\bG_a$ embeds into $\bA^I$ as the first coordinate.
        \item  $\star$ preserves the Haar measure on $F^I$.        
        \item $\sum_i  \lambda_i > 1+\sum_i  \mu_i$.
        \item $\lambda_1=1$
        \item $\gamma|_{\bA^I\smallsetminus \bA^1}:(\bA^I\smallsetminus \bA^1)\to \bA^J$ is an-FRS.
    \end{enumerate}
     Then 
     $\gamma$  is an-FRS.
\end{lemma}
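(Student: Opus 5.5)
Write $x=(z,x')$ with $z$ the first coordinate and $x'\in\A^{I-1}$. The approach is to mimic the proof of \Cref{lem:FRS.hom.crit}, but to slice along the $\bG_a$-direction instead of near the origin. The translation $x\mapsto x+z$ preserves Haar measure on $F^I$, and by (3) the action $\star$ preserves Haar measure on $F^J$. Hence, for a test measure $\mu=1_{B}\mu_{F^I}$ with $B$ a box (ball) $B=B_1\times B'$, the pushforward $\gamma_*\mu$ is an average over $z\in B_1$ of $\star$-translates of $\gamma_*(\delta$-slices$)$. Concretely, by (2),
\[
\gamma_*(1_{B_1\times B'}\mu_{F^I})=\int_{B_1} z\star\bigl(\gamma|_{0\times\A^{I-1}}\bigr)_*(1_{B'}\mu_{F^{I-1}})\,dz .
\]
Since $\star$ preserves Haar measure, boundedness of the density of this average follows from boundedness of the density of $\nu:=(\gamma|_{0\times\A^{I-1}})_*(1_{B'}\mu)$ against $\mu_{F^J}$. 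It therefore suffices to show that $\gamma':=\gamma|_{\{0\}\times \A^{I-1}}:\A^{I-1}\to\A^J$ pushes compactly supported smooth measures to measures with bounded density. By \Cref{lemma:an-FRS} and the $\bG_m$-action, every compactly supported smooth measure is dominated by such box measures, so the reduction is harmless. Note, however, that $\gamma'$ itself need not be flat or submersive, so we only aim for the bounded-density conclusion, not for a "map is an-FRS" statement.

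Next, $\gamma'$ is equivariant for the $\bG_m$-action on $\A^{I-1}$ with exponents $\lambda_2,\dots,\lambda_I$, where $\{0\}\times\A^{I-1}$ is invariant because the $\bG_m$ action is diagonal. By (4) and (5), $\sum_{i\ge2}\lambda_i>\sum_j\mu_j$. Away from the origin of $\A^{I-1}$ we use (6). A point $(0,x')$ with $x'\neq0$ lies outside $\A^1$, so on a neighborhood $U$ of the sphere $S_0=B'\smallsetminus tB'$ the map $\gamma$ is an-FRS. Applying the averaging identity backwards, with $z$ ranging over a small ball $tB_1$, gives
\[
\gamma'_*(1_{S_0}\mu)\le C\,|tB_1|^{-1}\gamma_*(1_{tB_1\times S_0}\mu)\le C'\mu_{F^J},
\]
where the last bound uses compactness of $tB_1\times S_0\subset \A^I\smallsetminus\A^1$. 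This is the only non-formal step. One must check that the translate-averaging preserves density bounds in both directions, which holds because $\star$ is measure preserving and $z$ varies in a compact set.

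Finally, decompose $B'=\{0\}\sqcup\bigsqcup_{i\ge0}t^iS_0$ and use equivariance exactly as in the proof of \Cref{lem:FRS.hom.crit}. The density of $\gamma'_*(1_{t^iS_0}\mu)$ is bounded by $\ell^{-i(\sum_{j\ge2}\lambda_j-\sum\mu_j)}C'\le \ell^{-i}C'$, since rescaling by $t^i$ multiplies the source measure by $\ell^{-i\sum_{j\ge2}\lambda_j}$ and the target density by $\ell^{i\sum\mu_j}$. Summing the geometric series gives a bounded density for $\gamma'_*(1_{B'}\mu)$, and hence for $\gamma_*$. The expected obstacle is the bookkeeping of exactly where the "$+1$" in (4) is spent. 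The spare unit of exponent is consumed by removing the $z$-coordinate, which has $\lambda_1=1$, and what remains must still give the strict inequality needed for the geometric series in the last step.
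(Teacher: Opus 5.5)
There is a genuine gap, and it is exactly the step you flag as the only non-formal one. Your averaging identity $\gamma_*(1_{B_1\times B'}\mu)=\int_{B_1}z\star\gamma'_*(1_{B'}\mu)\,dz$ is correct, but it only transfers bounds in one direction. A density bound on $\nu$ gives one on the average of its $\star$-translates, not conversely. Averaging over translates is smoothing, and neither measure-preservation of $\star$ nor compactness of the $z$-range lets you undo it. So the inequality $\gamma'_*(1_{S_0}\mu)\le C|tB_1|^{-1}\gamma_*(1_{tB_1\times S_0}\mu)$ has no justification.

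Worse, your reduction (``it suffices to show that $\gamma'$ pushes compactly supported smooth measures to bounded densities'') replaces the lemma by a statement that can fail under its hypotheses. Take $I=2$, $J=1$, $\gamma(z,x_2)=z$, $w\star y=y+w$, with exponents $(\lambda_1,\lambda_2)=(1,2)$ and $\mu_1=1$. All of (1)--(6) hold: $3>2$, and $\gamma$ is a coordinate projection, hence an-FRS. Yet $\gamma'\equiv 0$, so $\gamma'_*(1_{S_0}\mu)$ is a multiple of the Dirac measure at $0$. Hypothesis (6) only controls pushforwards of measures that are smooth in all directions, including $z$. It says nothing about $\gamma$ restricted to the hyperplane $z=0$.

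The missing idea is never to split off the $z$-direction. The paper takes $B=O_F^I$ and decomposes it, up to the null set $B\cap\A^1$, into pieces $C_i=O_F\times(t^i\cdot S_0)$. These are shells for the $\bG_m$-action that fixes $z$, and each keeps the full $z$-range. Each $C_i$ is the disjoint union of the $\ell^i$ translates $s+t^i\cdot C_0$, where:
\begin{itemize}
\item $s$ runs over polynomials in $t$ of degree $<i$, placed in the first coordinate;
\item $t^i\cdot$ is the original action, so $\lambda_1=1$ shrinks the $z$-range to $t^iO_F$;
\item $C_0=O_F\times S_0$ is a compact subset of $\A^I\smallsetminus\A^1$.
\end{itemize}
Hypothesis (6) applies directly to $1_{C_0}\mu$ and gives a density bound $M$. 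Equivariance turns the rescaling into the factor $\ell^{-i(\sum_j\lambda_j-\sum_j\mu_j)}$. Each of the $\ell^i$ translates costs nothing because $\star$ preserves Haar measure, and the extra $+1$ in (4) pays for their number. This leaves $\ell^{-i}M$ per shell and a convergent series.

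Your identity and your count $\sum_{j\ge 2}\lambda_j>\sum_j\mu_j$ can be kept, provided you only ever bound the averages $\int_{O_F}w\star\nu\,dw=\gamma_*(1_{O_F\times S_0}\mu)$ and never $\nu$ itself. Rescaling such an average by $t^i$, using $(t^iw)\star(t^i\cdot y)=t^i\cdot(w\star y)$ on the image of $\gamma$, turns it into an average over $t^{-i}O_F$. That is a sum of $\ell^i$ translates, which is the same argument as the paper's.
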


\begin{proof}
WLOG we may assume that $\mu$ is an Haar measure on a ball $B\subset \F_\ell((t))^I$ around the origin.
Using the $\bG_m$ action we may assume WLOG that $B$  is the unit ball $\F_\ell[[t]]^I$.

Define another action of $\bG_m$ on $\A^I$ by: 
$$s * (x_1,\dots,x_I)=(x_1,s^{\lambda_2}x_2,\dots,s^{\lambda_I}x_I).$$ 
For $i\in \N$, define $C_i:=t^i*B\smallsetminus t^{i+1}*B$ and $\nu_i:=\mu 1_{C_i}$.

Let $S_i\subset \F_\ell[[t]]$ be the collection of polynomials of degree $\leq i-1$. We will also consider $S_i$ as a subset of $\F_\ell((t))^I$ using the  embedding $\bA^1\subset\bA^I$ corresponding to the first coordinate.
Note that $$C_i= \bigcup_{s\in S_i} \left(s+t^i\cdot C_0\right).$$

This implies $$\nu_i=\ell^{-i\sum_{j=1}^I\lambda_j } \sum_{s\in S_i} \left(sh_s(t^i\cdot \nu_0)\right),$$
where $sh_s$ stands for shift of a measure by $s$.

Denote $$g:=\frac{\gamma_*(\nu_0)}{\mu^{\bA^J,1}}$$
and let $$M=||g||_\infty.$$
We obtain:
\begin{align*}
    \gamma_*(\mu)=&\sum_{i=0}^\infty\gamma_*(\nu_i)=
    \sum_{i=0}^\infty
    \ell^{-i\sum_{j=1}^I\lambda_j } \sum_{s\in S_i} \gamma_*\left(sh_s(t^i\cdot \nu_0)\right)=    
    \sum_{i=0}^\infty
    \ell^{-i\sum_{j=1}^I\lambda_j } \sum_{s\in S_i} s\star \gamma_*\left(t^i\cdot \nu_0\right)=
    \\=&        
    \sum_{i=0}^\infty
    \ell^{-i\sum_{j=1}^I\lambda_j } \sum_{s\in S_i} s\star (t^i\cdot\gamma_*\left( \nu_0\right))
    =        
    \sum_{i=0}^\infty
    \ell^{-i\sum_{j=1}^I\lambda_j } \sum_{s\in S_i} s\star \left(t^i\cdot\left(\frac{\gamma_*\left( \nu_0\right)}{\mu^{\bA^J,1}} \mu^{\bA^J,1} \right)\right)=
    \\=&        
    \sum_{i=0}^\infty
    \ell^{-i\sum_{j=1}^I\lambda_j } \sum_{s\in S_i} s\star \left(\left(t^i\cdot g \right)
    \left(t^i\cdot\mu^{\bA^J,1} \right)\right)=
\\=&      
\sum_{i=0}^\infty
    \ell^{-i\sum_{j=1}^I\lambda_j + i\sum_{j=1}^J\mu_j } \sum_{s\in S_i} s\star \left(\left(t^i\cdot g \right)
    \mu^{\bA^J,1}\right)\leq
\\\leq&
\sum_{i=0}^\infty
    \ell^{-2i} \sum_{s\in S_i} s\star \left(M
    \mu^{\bA^J,1}\right)    
    =
    M\sum_{i=0}^\infty
    \ell^{-2i} \sum_{s\in S_i} \left(
    \mu^{\bA^J,1}\right)
    =   
    M\sum_{i=0}^\infty
    \ell^{-2i} |S_i| 
    \mu^{\bA^J,1}=
\\=&
    M\sum_{i=0}^\infty
    \ell^{-i}
    \mu^{\bA^J,1}\leq 2M\mu^{\bA^J,1}.
\end{align*}
This implies the assertion.
\end{proof}

\begin{lem}\label{lem:zplusO}
Assume $\chara \F_\ell>n/2$.
    Let $\bfO\subset \bfN$ be a non-regular nilpotent orbit. Then for any field $E/ \F_\ell$ we have $$(\uz+ \bfO)(E)=\uz(E)+ \bfO(E).$$
\end{lem}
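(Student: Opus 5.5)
The inclusion $\uz(E)+\bfO(E)\subset(\uz+\bfO)(E)$ is clear, so the content is the reverse inclusion. Let $y\in(\uz+\bfO)(E)$. Over $\bar E$ we can write $y=z+u$ with $z\in\uz(\bar E)$ a scalar $c\cdot\mathrm{Id}$ and $u\in\bfO(\bar E)$. The plan is to show that the scalar $c$ is forced to lie in $E$. Once that is known, $u=y-c\cdot\mathrm{Id}$ is an $E$-point of $\bfO$, because $\bfO$ is defined over $\F_\ell$ and $u\in\ug(E)\cap\bfO(\bar E)=\bfO(E)$. This gives the claim.

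Since $u$ is nilpotent, the characteristic polynomial of $y$ is $(s-c)^n$. Its coefficients lie in $E$, in particular the coefficient of $s^{n-1}$, which equals $-nc$. If $\chara(\F_\ell)\nmid n$ we get $c\in E$ immediately. So the only problematic case is $\chara\F_\ell=q\mid n$. Under the hypothesis $q>n/2$ this can only happen when $n=q$. (If $n=2q$ we would need $q>q$, which fails.)

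The main step is therefore the case $n=q$, where we use the non-regularity of $\bfO$. Since $\bfO$ is non-regular, $u$ satisfies $u^{n-1}=0$: every Jordan block has size at most $n-1$. Hence the minimal polynomial of $y$ divides $(s-c)^{n-1}$. We then look at the rational canonical form of $y$ over $E$. Its invariant factors $f_1\mid\dots\mid f_r$ lie in $E[s]$, and each is a power $(s-c)^{d_j}$ with $d_j\le n-1<q$. Take the largest one, $f_r=(s-c)^{d}$ with $1\le d\le n-1$. Its $s^{d-1}$-coefficient is $-dc\in E$. Since $d<q$, $d$ is invertible in $\F_\ell$, and so $c\in E$.

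In fact this argument handles all cases uniformly: by $q>n/2$ and non-regularity, the largest block size $d$ satisfies $d\le n-1$, and we only need $q\nmid d$. If $n\le q$ this is immediate. If $q<n<2q$, a block of size exactly $q$ could occur, so we should instead choose a degree coprime to $q$. Here the characteristic polynomial $(s-c)^n$ with $q\nmid n$ already works, unless $n=q$, which was treated above. So the only real obstacle is the case $n=q$, and non-regularity is precisely what resolves it.
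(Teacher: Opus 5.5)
Your proof is correct and is essentially the paper's argument: both recover the scalar $c$ from the subleading coefficient $-dc$ of a polynomial $(s-c)^d$ with coefficients in $E$ and $q\nmid d$. The only difference is the choice of $d$. You use the characteristic polynomial when $q\nmid n$ and the minimal polynomial when $n=q$. The paper instead makes the uniform choice $\gcd(g,f/g)=(s-c)^{\min(k,n-k)}$, with $g=(s-c)^k$ the minimal and $f$ the characteristic polynomial; its degree lies in $[1,n/2]$ by non-regularity and is therefore $<q$, which is the uniform argument your last paragraph was reaching for.
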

\begin{proof}
    Let $x\in (\uz+ \bfO)(E)$. First note that it is non-regular as an element in $x\in \ug(\bar E)$. This implies that it is non-regular as an element in $\ug(E)$. Let $f$ be its characteristic polynomial and $g$ be its minimal polynomial. Let $h=f/g$. Note that both $h,g$ are defined over 
    $E$. Also, for some $\lambda\in \bar E$ and $k\in \N$ we can write: $g=(x-\lambda)^k$ and $h=(x-\lambda)^{n-k}$. Let $m=\min(k,n-k)$. We get that $(x-\lambda)^m$ is defined over $E$ and that $0<m<p$. This implies that $\lambda\in E$, which implies the assertion.
\end{proof}

\Cref{lemma:an-FRS} implies the following:
\begin{cor}
\label[cor]{cor:an-FRS.alg} 
 Let $\gamma:\bfX \to \bfY$ be a morphism of algebraic varieties. 
    Let $\delta:\tilde \bfX \to \bfX$ be a submersion.
    Then:
    \begin{enumerate}
        \item if $\gamma$ is an-FRS then so is $\gamma \circ \delta$.
        \item If $\delta(\tilde\bfX(\F_{\ell}((t))))=\bfX(\F_{\ell}((t)))$  and  $\gamma \circ \delta$ is  an-FRS  then so is $\gamma$.
    \end{enumerate}  
    
\end{cor}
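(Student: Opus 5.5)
The statement to prove is \Cref{cor:an-FRS.alg}: the algebraic version of \Cref{lemma:an-FRS}. The plan is to pass to $F$-points with $F=\F_\ell((t))$ and apply \Cref{lemma:an-FRS} there. The an-FRS notion for a morphism of varieties over $\F_\ell$ is, by the convention after \Cref{def:FRS}, the an-FRS notion for the induced analytic map on $F$-points. So I set $X:=\bfX(F)$, $\tilde X:=\tilde\bfX(F)$ and $Y:=\bfY(F)$, and regard $\gamma$, $\delta$ as the induced maps of $F$-analytic manifolds. I implicitly assume, as in the setting of \Cref{def:FRS}, that $\bfX$, $\tilde\bfX$, $\bfY$ are smooth, so that these are smooth $F$-analytic varieties.

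The first step is to check that $\delta:\tilde X\to X$ is a submersion in the analytic sense. Since $\delta$ is a smooth morphism of smooth varieties, its differential $d\delta_x$ is surjective at every point $x\in\tilde\bfX(F)$. By the analytic implicit function theorem over $F$ (as in \cite[Part II]{Ser_Lie}), $\delta$ is then an analytic submersion on $F$-points. In particular it is locally, in the analytic topology, a projection $U\times F^r\to U$.

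Part (1) then follows directly from part (1) of \Cref{lemma:an-FRS}, applied to the one-element family $\{\delta\}$. Part (2) follows from part (2) of \Cref{lemma:an-FRS} with the one-element family $\{\delta\}$. The covering hypothesis $\bigcup\delta_i(\tilde X_i)=X$ is exactly the assumption $\delta(\tilde\bfX(F))=\bfX(F)$.

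I do not expect a serious obstacle. The only point needing care is that the hypothesis in (2) concerns surjectivity on $F$-points, and not on points over every field as in \Cref{cor:sm.loc}. This is fine because \Cref{lemma:an-FRS} is purely analytic, and only requires that the images of the submersions cover $X$. If one wanted to be more careful about \Cref{lemma:an-FRS} itself, the key fact is local. Any smooth compactly supported measure on $X$ can be written, via a partition of unity subordinate to the open images $\delta(\tilde X)$, as a finite sum of pushforwards $\delta_*(\tilde\mu)$ of smooth compactly supported measures on $\tilde X$. Conversely, the pushforward under a submersion of a smooth compactly supported measure is again smooth and compactly supported. Both facts are standard in the non-Archimedean setting, by compactness of the support and the local product structure.
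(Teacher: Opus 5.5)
Your proposal is correct and takes essentially the paper's approach: the paper just states that the corollary follows from Lemma~\ref{lemma:an-FRS}, applied on $\F_\ell((t))$-points to the one-element family $\{\delta\}$. Two points the paper leaves implicit are handled correctly in your write-up: a smooth morphism induces an analytic submersion on $F$-points, and part (2) only needs surjectivity on $F$-points.
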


The classical argument for the bounds of Cauchy and Lagrange gives:
\begin{lem}\label{lem: roots Cauchy-Lagrange}
    Let $f\in \uc(O_{\bar F})$. Then any  root $\lambda\in \bar F$ of $f$ satisfies $\lambda\in O_{\bar F}$.
\end{lem}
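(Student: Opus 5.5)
We use the classical Cauchy-type argument, with the archimedean absolute value replaced by the non-archimedean valuation on $\bar F$. Recall that the valuation of $F=\F_\ell((t))$ extends uniquely to $\bar F$, so that $|\cdot|$ is a well-defined ultrametric absolute value on $\bar F$ and $O_{\bar F}=\{z\in\bar F : |z|\le 1\}$. Write $f(s)=s^n+c_{n-1}s^{n-1}+\dots+c_0$. The hypothesis $f\in \uc(O_{\bar F})$ means exactly that $c_j\in O_{\bar F}$, i.e. $|c_j|\le 1$ for all $j$.

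Let $\lambda\in\bar F$ be a root and suppose, for contradiction, that $|\lambda|>1$. From $f(\lambda)=0$ we get
$$\lambda^n=-\sum_{j=0}^{n-1}c_j\lambda^j .$$
By the ultrametric inequality and $|c_j|\le 1$,
$$|\lambda|^n\le \max_{0\le j\le n-1}|c_j||\lambda|^j\le \max_{0\le j\le n-1}|\lambda|^j=|\lambda|^{n-1},$$
where the last equality uses $|\lambda|>1$. This gives $|\lambda|\le 1$, a contradiction. Hence $|\lambda|\le1$, i.e. $\lambda\in O_{\bar F}$.

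An equivalent algebraic route avoids the extended absolute value: $O_{\bar F}$ is the integral closure of $O_F$ in $\bar F$, and $\lambda$ satisfies the monic equation $f$ whose coefficients lie in $O_{\bar F}$. Since integral closures are transitive, $\lambda$ is integral over $O_F$ and therefore lies in $O_{\bar F}$. Either argument suffices.

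The only point needing care is the setup: the absolute value on $\bar F$ is not discrete, so one should either use the unique extension of the valuation to $\bar F$ (valid because $F$ is complete) or use the integral-closure description of $O_{\bar F}$. Once this is in place, the proof is a one-line estimate.
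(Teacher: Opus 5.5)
Your argument is correct and matches the paper's, which simply invokes the classical Cauchy--Lagrange bound. In the ultrametric setting that bound is exactly your estimate $|\lambda|^n\le\max_{j<n}|c_j||\lambda|^j$, which forces $|\lambda|\le 1$; your integrality alternative is also valid, and it can be shortened, since $O_{\bar F}$ is a valuation ring and hence integrally closed in $\bar F$, so no transitivity is needed.
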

\begin{lem}\label{lem:density}
    Let $f \in O_{\bar F}[x] $ be a  monic polynomial in one variable with coefficients in $O_{\bar F}$. Then $$\int_{O_F} val(f(x))dx\leq \frac{\deg(f)}{\ell-1}.$$
\end{lem}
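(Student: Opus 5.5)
Factor $f$ over $\bar F$ into linear factors and reduce the bound to a single root. By \Cref{lem: roots Cauchy-Lagrange}, write $f(x)=\prod_{j=1}^{d}(x-\lambda_j)$ with $d=\deg f$ and all $\lambda_j\in O_{\bar F}$. Here $val$ is the (extended) valuation on $\bar F$, normalized by $val(t)=1$, and $dx$ is the Haar measure on $F$ with $\int_{O_F}dx=1$. Since $val$ is additive,
$$val(f(x))=\sum_{j=1}^{d} val(x-\lambda_j),$$
so it suffices to show that for every $\lambda\in O_{\bar F}$
$$\int_{O_F} val(x-\lambda)\,dx\leq \frac{1}{\ell-1}.$$

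For a fixed $\lambda$, write the integral as a layer-cake sum over the integer thresholds of the valuation:
$$\int_{O_F} val(x-\lambda)\,dx\le\sum_{r\geq 1}\mu\{x\in O_F: val(x-\lambda)>r-1\}.$$
This holds for rational valued $val$ because $val(x-\lambda)\le\#\{r\ge1: val(x-\lambda)>r-1\}$. The key observation is that the ultrametric inequality makes each such set either empty or a single ball of $F$. Indeed, if $x,x'\in O_F$ both satisfy $val(\cdot-\lambda)>r-1$, then $val(x-x')>r-1$. Since $x-x'\in F$ has integer valuation, this forces $val(x-x')\geq r$. So the set is contained in a coset of $t^{r}O_F$ and has measure at most $\ell^{-r}$. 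Summing gives
$$\sum_{r\ge1}\ell^{-r}=\frac{1}{\ell-1},$$
and multiplying by $d$ gives the lemma.

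The only step needing care is the passage from $val(x-x')>r-1$ to $val(x-x')\ge r$, which uses that $x-x'\in F$. This is exactly what prevents the non-integral valuations of the roots $\lambda\in\bar F$ from causing a loss. Roots with $val(x-\lambda)=\infty$ (i.e. $\lambda\in F$, $x=\lambda$) occur only on a measure-zero set, so they do not affect the integral.
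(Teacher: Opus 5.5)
Your proof is correct. Its outer structure matches the paper's: you factor $f$ over $\bar F$ with all roots in $O_{\bar F}$ (the Cauchy--Lagrange bound) and use additivity of $val$ to reduce to showing $\int_{O_F} val(x-\lambda)\,dx\le\frac{1}{\ell-1}$ for a single $\lambda\in O_{\bar F}$.

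You differ from the paper in this single-root step. The paper computes $\int_{O_F}val(x)\,dx=\frac{1}{\ell-1}$ exactly by summing the series $\sum_m m\ell^{-m}$. It then passes to $\lambda\in O_F$ by translation invariance. For $\lambda\in O_{\bar F}\smallsetminus O_F$, it picks $\lambda_0\in O_F$ nearest to $\lambda$ and proves the pointwise domination $val(x-\lambda)\le val(x-\lambda_0)$ on $O_F$.

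You instead use the layer-cake bound. Because differences of elements of $F$ have integral valuation, each superlevel set $\{x\in O_F: val(x-\lambda)>r-1\}$ has diameter at most $\ell^{-r}$, so it lies in a single coset of $t^rO_F$.

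Your route has several advantages:
\begin{itemize}
\item It treats all $\lambda\in O_{\bar F}$ uniformly.
\item It avoids the explicit series computation.
\item It does not need a nearest point $\lambda_0$ to exist. Such a point does exist, since the value group of $F(\lambda)$ is discrete, but the paper does not address this.
\item It isolates cleanly that integrality of $val$ on $F$ is exactly what stops the fractional valuations of the roots from costing anything.
\end{itemize}

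The paper's route shows the bound is sharp for $\lambda\in O_F$. Your argument gives this too: for $\lambda\in O_F$ each superlevel set is exactly $\lambda+t^rO_F$, and the layer-cake inequality is an equality because the valuations involved are integers.
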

\begin{proof}
$ $
    \begin{enumerate}[{Case} 1:]
        \item $f(x)=x$\\
\begin{align*}
\int_{O_F} val(x)dx&=\sum_{m=0}^{\infty}\int_{|x| = \ell^{-m}} val(x)=\sum_{m=0}^{\infty} m Vol(\{x\in O_F : |x|= \ell^{-m}\}) dx=C\sum_{m=0}^{\infty} m \ell^{-m}
\end{align*}
with $C=vol(\{x \in O_{F}: |x|=1\})=1-\frac{1}{\ell}$,
but 
\begin{align*}
\sum_{m=0}^{\infty} m \ell^{-m}&=\sum_{m=1}^{\infty}  m\ell^{-m}=
\left. x\frac{d}{dx} \left (\sum_{m=0}^{\infty}  x^{m}\right)\right|_{x=\frac{1}{\ell}}=
\left.x\frac{d}{dx} \left (\frac{1}{1-x}\right)\right|_{x=\frac{1}{\ell}}=
\\&=
\left. \left(\frac{x}{(1-x)^2}\right)\right|_{x=\frac{1}{\ell}}=
\frac{1}{\ell(1-\frac{1}{\ell})^2}=\frac{\ell}{(\ell-1)^2}
\end{align*}
To sum up, 
$$
\int_{O_F} val(x)dx=\frac{\ell}{(\ell-1)^2}\left(1-\frac{1}{\ell}\right)=\frac{1}{\ell-1}=\frac{\deg(f)}{\ell-1}
$$
        
      \item $f(x)=(x-\lambda)$ for $\lambda\in O_F$\\
      Follows from previous case.
        \item $f(x)=(x-\lambda)$ for $\lambda\in O_{\bar F}$\\
Let $\lambda_0\in O_F$ be s.t. $$|\lambda-\lambda_0|=\min_{\mu\in O_F} |\lambda-\mu|.$$
It is easy to see that $val(x-\lambda) \leq val(x-\lambda_0)$ for any $x \in O_F$ and the assertion follows from previous case.      
        \item $f(x)=(x-\lambda)^k$ for $\lambda\in O_{\bar F}$\\
         Follows from previous case. 
        \item General case\\
         Follows from previous case and \Cref{lem: roots Cauchy-Lagrange}.
    \end{enumerate}
\end{proof}

\begin{lem}\label{lem:add.fin}
    The addition map $add:\bfN \times \uz\to \ug$ is finite. 
\end{lem}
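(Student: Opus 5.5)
We will show that $add:\bfN\times\uz\to\ug$, $(y,z)\mapsto y+z$, is finite by writing it as a composition of a closed embedding followed by a finite morphism. Since all varieties involved are affine, we can argue at the level of coordinate rings. Finiteness means that $\cO(\bfN)\otimes\cO(\uz)$ is a finitely generated module over the image of $\cO(\ug)$.

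The key observation is that $\uz\cong\A^1$, with coordinate $z$, and that the scalar part is recovered integrally from $y+z$. For $x=y+z$ with $y$ nilpotent and $z$ scalar, the characteristic polynomial is $p(x)(s)=(s-z)^n$. So $z$ is a root of the monic polynomial
$$T^n+c_1(x)T^{n-1}+\dots+c_n(x)=0,$$
where the $c_j$ are the coefficients of the characteristic polynomial of $x$, up to sign, and are regular functions on $\ug$. In positive characteristic one cannot simply divide the trace by $n$. The monic relation, however, holds in any characteristic. Concretely, $(s-z)^n=\det(s-(y+z))$ identically on $\bfN\times\uz$, because $y$ is nilpotent. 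Setting $s=z$ shows that $z$ satisfies a monic equation over $add^*\cO(\ug)$. Hence $add^*\cO(\ug)[z]$ is finite over $add^*\cO(\ug)$.

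Next, the coordinates of $y$ are $x_{ij}-z\delta_{ij}$, so
$$\cO(\bfN)\otimes\cO(\uz)=add^*\cO(\ug)[z].$$
Indeed, this ring is generated by the matrix entries of $y$ and by $z$, which equal $add^*(x_{ij})-z\delta_{ij}$ and $z$. It follows that the ring is a quotient of $add^*\cO(\ug)[T]/(\text{monic relation})$ and is therefore finite over $\cO(\ug)$. Geometrically, this is the factorization
$$\bfN\times\uz\hookrightarrow \ug\times\uz\to\ug$$
given by $(y,z)\mapsto(y+z,z)$, followed by projection. The image of the first map lies in the closed subscheme $\{(x,z): \det(z-x)=0\}$, and the projection of that subscheme to $\ug$ is finite because its defining equation is monic in $z$.

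The only subtle point is to check that the monic relation genuinely holds on $\bfN\times\uz$ scheme-theoretically, and not merely on reduced points. This follows because $\bfN=p^{-1}(0)$, with the scheme structure used in the paper, is defined by the vanishing of the coefficients of the characteristic polynomial. Therefore $\det(s-y)=s^n$ holds in $\cO(\bfN)[s]$, and substituting $s\mapsto s-z$ gives the identity above. Everything else is routine, so no step is a real obstacle.
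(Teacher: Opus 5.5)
Your proof is correct. It reaches the conclusion by a slightly different route than the paper, though both rest on the same fact: in any characteristic, the scalar part is integral over the coefficients of the characteristic polynomial.

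The paper argues by base change. It identifies $\bfN\times\uz\cong\uz\times_\uc\ug$ via $(y,z)\mapsto(z,y+z)$, where $p_\uz:\uz\to\uc$ sends $z$ to $(s-z)^n$; the inverse is $(z,x)\mapsto(x-z,z)$, since $\det(s-(x-z))=p(x)(s+z)$. It then concludes that $add$ is finite as the base change of $p_\uz$. The map $p_\uz$ is finite because $(-z)^n$ is one of the coordinates of $p_\uz(z)$.

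You instead show directly that $\cO(\bfN)\otimes\cO(\uz)$ is generated over $add^*\cO(\ug)$ by the single element $z$, which is integral because $\det(z-x)=0$. Geometrically, you embed $\bfN\times\uz$ into the larger finite cover $\{(x,z):\det(z-x)=0\}=\ug\times_\uc\{(f,z):f(z)=0\}$. The paper's fiber product $\uz\times_\uc\ug$ is a closed subscheme of this cover.

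What each route buys: the paper's gives an exact description of $add$ as a base change, so for instance $\bfN+\uz=p^{-1}(p_\uz(\uz))$ set-theoretically. Yours gives exactly finiteness, which is all that is used later (closedness of $\bfN_i+\uz$). It is also self-contained, since it never needs the square to be Cartesian.

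Your remark on scheme structures is fine. The identity $\det(s-y)=s^n$ holds on $p^{-1}(0)$ and hence on its reduction, and the two coincide anyway by \Cref{cor:pFibReduced}.
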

\begin{proof}
    This follows from the following Cartesian square:
    $$
\begin{tikzcd}
\bfN \times \uz \ar[r,"add"] \ar[d] \arrow[dr, phantom, "\square"] & \ug \ar[d, "p"] \\
\uz \ar[r, "p_\uz"'] & \uc
\end{tikzcd}
$$

\end{proof}
\begin{lem}\label{lem:mult.push}
Let $m:\A^2\to \A^1$ be the multiplication map.
We have 
 $$
m_*(\mu^{\A^2,1}_0)=r(z)\mu^{\A^1,1}_0,
$$
where $$r(z)=\frac{\ell-1}{\ell}(val(z)+1).$$
\end{lem}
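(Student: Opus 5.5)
The plan is to compute the pushforward directly by integrating over fibers. Both measures are invariant under multiplication by units $O_F^\times$ in the appropriate sense, so the density $r(z)$ depends only on $val(z)$. Concretely, for $z\in O_F$ with $z\neq 0$, we will compute the measure of the preimage of a small ball around $z$ and divide by the measure of that ball. The preimage of $z+t^{N}O_F$ for $N>val(z)$ is the set of $(x,y)\in O_F^2$ with $xy\in z+t^NO_F$.

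First we stratify the domain by $a:=val(x)$. Since $xy$ must have valuation $val(z)$, we need $0\le a\le val(z)$, and then $y$ has valuation $val(z)-a$. For fixed $x$ with $val(x)=a$, the condition $xy\in z+t^NO_F$ reads $y\in z/x+t^{N-a}O_F$. Here $z/x\in O_F$, so this ball lies inside $O_F$ automatically, and its Haar measure is $\ell^{-(N-a)}$. The set $\{x:val(x)=a\}$ has measure $\frac{\ell-1}{\ell}\ell^{-a}$. Hence stratum $a$ contributes $\frac{\ell-1}{\ell}\ell^{-a}\cdot\ell^{-N+a}=\frac{\ell-1}{\ell}\ell^{-N}$.

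Summing over the $val(z)+1$ values of $a$ gives a total preimage measure of $\frac{\ell-1}{\ell}(val(z)+1)\ell^{-N}$. The ball $z+t^NO_F$ has measure $\ell^{-N}$, so the average density over every sufficiently small ball around $z$ equals $r(z)$. This identifies $m_*(\mu_0^{\A^2,1})$ with $r(z)\mu_0^{\A^1,1}$ on $O_F\smallsetminus\{0\}$.

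It remains to handle the complement. The pushforward is supported in $O_F$, since $m(O_F^2)\subset O_F$. Moreover $m^{-1}(0)$ is a null set, being a union of two lines, so the point $0$ carries no mass. Both sides therefore agree as measures.

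The only mildly delicate step is the bookkeeping that $z/x\in O_F$, which guarantees that no truncation by the unit ball occurs. I do not expect a real obstacle.
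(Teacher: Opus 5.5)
Your proof is correct, but it extracts the density differently from the paper. The paper computes only the mass of each shell $t^rO_F\smallsetminus t^{r+1}O_F$. It decomposes the preimage of that shell in $O_F^2$ as $\bigsqcup_{i=0}^{r}(t^iO_F\smallsetminus t^{i+1}O_F)\times(t^{r-i}O_F\smallsetminus t^{r-i+1}O_F)$, which gives total mass $\frac{(\ell-1)^2}{\ell^2}(r+1)\ell^{-r}$. It then invokes the $O_F^\times$-action to conclude that the pushforward is a constant multiple of Haar measure on each shell.

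You instead compute the mass of the preimage of every small ball $z+t^NO_F$ with $N>val(z)$, using Fubini in the $y$-variable. This gives the density at each point directly. The $O_F^\times$-invariance remark in your first paragraph is never actually used.

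The paper's route is shorter, since each stratum is a product set and no fiber integration is needed. However, it relies tacitly on the fact that an $O_F^\times$-invariant measure on $t^rO_F^\times$ is proportional to Haar measure, and it leaves the point $0$ and the support unaddressed. Your route needs no symmetry argument and is more self-contained, since it explicitly handles the support in $O_F$ and the null set $m^{-1}(0)$.
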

\begin{proof}
\begin{align*}
    m_*(\mu^{\A^2,1}_0)(t^{r}O_F\smallsetminus t^{r+1}O_F)=&\mu^{\A^2,1}_0(m^{-1}((t^{r}O_F\smallsetminus t^{r+1}O_F)))=\\ &\sum_{i=0}^r \mu^{\A^2,1}_0((t^{i}O_F\smallsetminus t^{i+1}O_F))\times (t^{r-i}O_F\smallsetminus t^{r-i+1}O_F))=
    \\ &=\sum_{i=0}^r \ell^{-r}\mu^{\A^1,1}_0(O_F\smallsetminus tO_F)^2=\frac{(\ell-1)^2}{\ell^2}(r+1) \ell^{-r}
\end{align*}
Using the action of $O_F^{\times}$, this implies the assertion.
\end{proof}

\begin{proof}[Proof of \Cref{thm:uncond.an.frs}]
Let $\bfN^{s}\subset \ug$ be the cone of non-regular nilpotent elements.
Enumerate  the nilpotent orbits $\{0\}=\bfO_1,\dots \bfO_m$ s.t. $\dim \bfO_i\leq \dim \bfO_j$ for any $i<j$. Let $\bfN_i=\bigcup_{j=1}^i \bfO_j$. Note that $\bfN_i$ is closed and $\bfN_0=\emptyset$. Therefore, by \Cref{lem:add.fin}, $\bfN_i+\uz$ are closed (and $\bfN_0+\uz=\emptyset$). 
We will prove  by down going induction on $i$ that for any $i\geq 0$ the map
$p|_{\ug \smallsetminus (\bfN_i+\uz)}$ is an-FRS.

The base of the induction $i=m$ follows from \Cref{cor:HC.des}.
For the induction step, we assume the statement holds for $\bfN_{i+1}$ and prove it for $\bfN_{i}$. Let $\bfU=\ug\smallsetminus (\bfN_{i+1}+\uz)$.
Choose a  nilpotent element $x\in \bfO_{i+1}(\F_\ell)$.  Let $\bfM$ be the linear space given by \Cref{lem:slod2.slice} when applied to 
$x$. 
\begin{enumerate}[Step 1.]
    \item Reduction to $\bfG\times \bfM$.\\
Consider the map $$\delta:(\bfG\times \bfM) \sqcup \bfU\to 
\ug\smallsetminus (\bfN_{i}+\uz)$$ given on $\bfG\times \bfM$ by $\delta(g,l):=g\cdot (x+l)$ and on $\bfU$ by the embedding $\bfU\subset \ug\smallsetminus (\bfN_{i}+\uz)$. By \Cref{lem:slod2.slice}\eqref{lem:slod2.slice:2} it is submersive. Also, it is  onto on the level of points over any field. Indeed, for any extension $E/\F_\ell$  we have 
$(\bfO_{i+1}+\uz)(E)=\bfO_{i+1}(E)+\uz(E)=\bfG(E)\cdot (x+\uz)$ where the first equality is by \Cref{lem:zplusO}. Thus,
\begin{multline*}
    (\ug\smallsetminus (\bfN_i+\uz))(E)=(\bfO_{i+1}+\uz)(E) \cup \bfU(E)=\\= (\bfG(E)\cdot (x+\uz) )\cup \bfU(E)\subset \delta(((\bfG\times \bfM) \sqcup \bfU)(E)).
\end{multline*}
Thus, by Corollary  \ref{cor:an-FRS.alg}, it is enough to show that $p\circ \delta$ is an-FRS.
Let $\delta':=\delta|_{\bfG\times \bfM}$.
Notice that $p|_{\bf U}$ is an-FRS by the induction hypothesis.
Therefore it is enough to show that $p\circ \delta'$ is an-FRS.

Notice that 
$\delta' (\bfG\times (\bfM\smallsetminus \uz)) \subset \bfU $ by \Cref{lem:slod2.slice}\eqref{lem:slod2.slice:3}. So,  by  Corollary  \ref{cor:an-FRS.alg}  we deduce 
that 
$p\circ \delta'|_{\bfG\times (\bfM\smallsetminus \uz) }$ is an-FRS. 
\item Reduction to $\bfM$.\\
We can factor the map $p\circ \delta'$ as $p|_{(x+\bfM)} \circ sh_x \circ pr_{\bfM}$, where $sh_x:\bfM\to x+\bfM$ is the shift map,  and $pr_{\bfM}:\bfG\times \bfM\to \bfM$ is the projection. So, by \Cref{cor:an-FRS.alg} it is enough to show that $p|_{x+\bfM}\circ sh_x:\bfM \to \fc$ is an-FRS.

Also, by \Cref{cor:an-FRS.alg} we deduce 
that 
$p|_{x+(\bfM\smallsetminus \uz)} \circ sh_x$ is an-FRS.

\item Proof that $p|_{x+\bfM}\circ sh_x:\bfM \to \fc$ is an-FRS when $x$ is not subregular. \\
    The assertion follows now from \Cref{lem:FRS.hom.crit2} and the condition on $\bfM$ given by \Cref{lem:slod2.slice}(\ref{lem:slod2.slice:6},\ref{lem:slod2.slice:7}).

\item\label{step:4.pf.E} Proof that $p|_{x+\bfM}\circ sh_x:\bfM \to \fc$ is an-FRS when $x$ is subregular.\\ 
In this case \Cref{lem:FRS.hom.crit2}  is not applicable.
So, we provide a direct argument.
Using the action of  $\bG_m$ on $\bfM$ it is  enough to show that $p_*(\mu^{x+ \bfM,1}_0)$ has bounded density.

Without loss of generality we assume $x$ is in Jordan form of type $(n-1,1)$. More precisely $x=J_{n-1}(0) \oplus J_{1}(0)$.


Note that, until now, we only use the fact that $\bfM$ satisfies the conditions of \Cref{lem:slod2.slice}. 

So we can choose any such $\bfM$. Choose $\bfM$ as in the proof of \Cref{lem:slod2.slice}. Explicitly, 
 $$\bfM=\{\ c(f)+(\alpha-1) e_{n-1,n} +z|z\in \uz; f\in \uc; \alpha \in \A^1\}.$$
Here  $c(f)$ is the companion matrix corresponding to $f\in \uc$ where the coefficients are located in the first row.

Notice that $\bf M$ is the space of matrices of the form
$$\begin{pmatrix}
    \ast & 1 \\
    \ast & z & 1\\
    \ast & 0 & z & 1\\
    \cdots & \cdots & \cdots & \cdots & \cdots \\
    \ast & 0 & \cdots & 0 & z & 1\\
    \ast & 0 & \cdots & 0 & 0 & z & \alp\\
    \ast & 0 & \cdots & 0 & 0 & 0 & z 
\end{pmatrix}$$

Denote $$\bfM_1=\{\ c(f)+(\alpha-1) e_{n-1,n} | f\in \uc; \alpha \in \A^1\}.$$

It is easy to see that for any $c(f)+(\alpha-1) e_{n-1,n} \in \bfM_1$ we have 
$$p(c(f)+(\alpha-1) e_{n-1,n})=f-f(0)+\alpha f(0).$$  



Let us write $\uc=\A^{n-1} \oplus \A^1$
where  $\A^{n-1}$ 
represents the leading $n-1$ coefficients 
of an element in $\uc$ and $\A^1$ corresponds to the constant term.
Now we have
$$p_*(\mu_{0}^{x+ \bfM_1,1})=\mu^{\A^{n-1},1}_{0}\boxtimes m_*(\mu_{\A^2}),$$
where $m:\A^2\to \A^1$ is the multiplication map.


This together with \Cref{lem:mult.push}  implies that
$$p_*(\mu^{x+ \bfM_1,1}_0)=\Phi\mu^{\uc,1}_{0},$$
where $\Phi\in L^1(\fc)$ is given by $$\Phi(g)=\frac{\ell-1}{\ell}(1+val(g(0))),$$ where $g \in \uc.$


From this we deduce 
 $p_*(\mu^{x+ \bfM,1}_0)=h\mu^{\uc,1}_{0}$, where $h\in L^1(\fc)$ is given by 
 $$h(g)=\int_{O_F} \Phi(sh_{z}(g)) dz=\frac{\ell-1}{\ell}\int_{O_F} (val(g(z))+1) dz.$$
 Here $sh_z$ is the shift by $z$ of a polynomial.

The assertion follows now from \Cref{lem:density} as
$$
 h(g)=\frac{\ell-1}{\ell}\int_{O_F} (val(g(z))+1) dz \leq  \frac{1}{\ell}deg(g)+\frac{\ell-1}{\ell} \leq \frac{n}{\ell}+1
$$
is a bounded function.
\end{enumerate}
\end{proof}


\begingroup
  \let\clearpage\relax
  \let\cleardoublepage\relax 
  \printindex
\endgroup

\bibliographystyle{alpha}
\bibliography{Ramibib}

\end{document}